\theoremstyle{plain}
\newtheorem{theorem}{Theorem}
\newtheorem{lemma}[theorem]{Lemma}
\newtheorem{proposition}[theorem]{Proposition}
\newtheorem{corollary}[theorem]{Corollary}
\newtheorem{question}[theorem]{Question}
\newtheorem{definition}[theorem]{Definition}
\newtheorem{rmk}[theorem]{Remark}
\theoremstyle{remark}
\mathchardef\emptyset="001F
\numberwithin{theorem}{section}
\numberwithin{equation}{section}
\newcommand{\op}[1]{{\rm{#1}}}
\newcommand{\be}{\begin{equation}}
\newcommand{\ee}{\end{equation}}
\newcommand{\R}{\mathbb{R}}
\def\lf{\left}
\def\rg{\right}
\newcommand{\verti}[1]{\ensuremath{\left\lvert #1 \right\rvert}}
\newcommand{\vertii}[1]{\ensuremath{\left\lVert #1 \right\rVert}}
\newcommand{\gauges}[1]{\ensuremath{(#1,G)}}
\newcommand{\connforms}[1]{\ensuremath{(#1, \wedge^1#1\otimes\mathfrak{g})}}
\newcommand{\curvforms}[1]{\ensuremath{(#1, \wedge^2#1\otimes\mathfrak{g})}}
\newcommand{\connformsr}[2]{\ensuremath{(#1, \wedge^1#2\otimes\mathfrak{g})}}
\begin{document}
\title{The Space of Weak Connections in High Dimensions}
\author{ Mircea Petrache\footnote{Pontificia Universidad Catolica de Chile, Facultad de Matematicas, Av. Vicuna Mackenna 4860, Santiago, 6904441, Chile.} and Tristan Rivi\`ere\footnote{Department of Mathematics, ETH Z\"urich, R\"amistrasse 101, CH-8092 Z\"urich, Switzerland.} }
\date{ }
\maketitle
\begin{abstract}
The space of Sobolev connections, as it has been introduced for studying the variation of Yang-Mills Lagrangian in the critical dimension $4$, happens not to be weakly sequentially complete in dimension larger than $4$. This is a major obstruction for studying the variations of this important Lagrangian in high dimensions.
The present paper generalizes the result \cite{PRym} valid in $5$ dimensions to arbitrary dimension and introduces a space of so called ''weak connections'' for which we prove the weak sequential closure under Yang-Mills energy control. We also establish a strong approximation property of any weak connection by smooth connections away from codimension 5 polyhedral sets. This last property is used in a subsequent work in preparation \cite{PRepsi} for establishing the partial regularity property for general stationary Yang-Mills weak connections.
\end{abstract}

\section{Introduction}
%
Motivated by geometric applications of first importance, the analysis of Yang-Mills energy up to the conformal dimension $4$ (the dimension at which the Lagrangian is invariant under dilations) has known a fast and spectacular development in the late 70's early 80's. To that purpose, the space of 
{\it Sobolev Connections} of a given smooth bundle have been introduced and studied (see \cite{Uhl1}, \cite{Uhl2}, \cite{FrUh}). This space enjoys a {\it sequential almost weak closure property} under Yang-Mills Energy control assumptions. ``Almost'' in the sense that, from a sequence of uniformly bounded Yang-Mills energy Sobolev connections of a given bundle, one can extract a subsequence such that it converges weakly modulo gauge transformations away from finitely many points to a limiting Sobolev connection, but on a possibly different smooth bundle.

\medskip

In \cite{PRym} (see also \cite{Riv1}) the two authors reformulated this classical ``sequential almost weak closure property'' into an exact ``sequential weak closure property'' in the following way. Let $G$ be a compact Lie group and 
$(M^n,h)$  a compact riemannian manifold. Introduce the space of so called {\it Sobolev connections} defined by
\[
{\mathfrak A}_G(M^n):=\lf\{ 
\begin{array}{l}
A\in L^2(\wedge^1M^n,{\mathfrak g})\ ; \ \int_{M^n}|dA+A\wedge A|_h^2\ \mathrm{d}vol_h<+\infty\\[5mm]
\mbox{ locally }\exists\ g\in W^{1,2}\quad\mbox{ s.t. }\quad A^g:=g^{-1} dg+g^{-1}\, A\, g\in W^{1,2}
\end{array} 
\rg\}
\]
then we have proved the following result.
\begin{theorem}[Compactness in dimensions $\le 4$ \cite{Riv1}]\label{th-VII.1}
For $n\le 4$ the space ${\mathfrak A}_G(M^n)$ is weakly sequentially closed below any given Yang-Mills energy level: precisely
For any $A^k\in  {\mathfrak A}_G(M^n)$ satisfying
\[
\limsup_{k\rightarrow+\infty}YM(A^k)=\int_{M^n}|dA^k+A^k\wedge A^k|_h^2\ \mathrm{d}vol_h<+\infty
\]
there exists a subsequence $A^{k'}$ and a Sobolev connection $A^\infty\in  {\mathfrak A}_G(M^n)$ such that
\[
d^2_\mathrm{conn}(A^{k'},A^\infty):=\inf_{g\in W^{1,2}(M^n,G)}\int_{M^n}|A^{k'}-(A^\infty)^g|_h^2\ \mathrm{d}vol_h\longrightarrow 0
\]
moreover
\[
YM(A^\infty)\le\liminf_{k'\rightarrow 0} YM(A^{k'})\quad.
\]
\end{theorem}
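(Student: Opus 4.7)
My strategy would follow the classical Uhlenbeck compactness scheme, refined so that the limit lies in $\mathfrak{A}_G(M^n)$ rather than on a possibly different smooth bundle. First I would invoke Uhlenbeck's $\varepsilon$-regularity gauge-fixing theorem: there exists $\varepsilon_0 = \varepsilon_0(G,h) > 0$ such that for any Sobolev connection $A$ satisfying $\int_B |F_A|^2 \, \mathrm{d}vol_h < \varepsilon_0$ on a small enough ball $B \subset M^n$, one can find a local gauge transformation $g \in W^{2,2}(B,G)$ putting $A$ into Coulomb form $d^\ast A^g = 0$ with
\[
\|A^g\|_{W^{1,2}(B)} \le C \|F_A\|_{L^2(B)}.
\]
This reduces the local analysis of $A^k$ to a sequence of $W^{1,2}$ one-forms satisfying a semilinear elliptic system.

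\medskip

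Next I would run a concentration-compactness argument. Since $\sup_k YM(A^k) < \infty$, the curvature densities $|F_{A^k}|^2 \mathrm{d}vol_h$ are uniformly bounded and, up to a subsequence, converge weakly-$\ast$ to a Radon measure $\mu$; only finitely many points $x_1,\dots,x_N \in M^n$ can carry $\mu$-mass exceeding $\varepsilon_0/2$. Away from these concentration points, and for $k$ large, one covers the complement by good balls on which Uhlenbeck gauge fixing applies. A diagonal extraction yields Coulomb representatives $A^{k, g_k^i}$ converging weakly in $W^{1,2}$, and strongly in $L^q$ for every $q < 4$ by Rellich-Kondrachov, to limits $A^{\infty, i}$. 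The transition maps $g_k^i (g_k^j)^{-1}$ are bounded in $W^{2,2}$ thanks to the two Coulomb conditions and converge strongly in $W^{1,p}$ for $p < 4$, providing a coherent limit cocycle on $M^n \setminus \{x_1, \dots, x_N\}$.

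\medskip

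The crucial ingredient placing the limit in $\mathfrak{A}_G(M^n)$ is the removable singularity property at each $x_i$: because the residual curvature $\int_{B_r(x_i)} |F_{A^\infty}|^2 \mathrm{d}vol_h$ tends to zero as $r \to 0$ and the Yang-Mills integrand is conformally invariant in dimension four (and subcritical for $n < 4$), Uhlenbeck's removable singularity theorem produces a local $W^{1,2}$ Coulomb gauge extending through $x_i$, which is exactly the local condition defining membership in $\mathfrak{A}_G(M^n)$. Lower semicontinuity $YM(A^\infty) \le \liminf_k YM(A^k)$ then follows since weak $W^{1,2}$ convergence yields $dA^{k,g_k} \rightharpoonup dA^{\infty,g}$ weakly in $L^2$, while Rellich gives $A^{k,g_k} \to A^{\infty,g}$ strongly in $L^3$, hence $A^{k,g_k} \wedge A^{k,g_k} \to A^{\infty,g} \wedge A^{\infty,g}$ strongly in $L^{3/2}$ and weakly in $L^2$; the weak $L^2$-lower semicontinuity of the squared norm concludes.

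\medskip

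I expect the main obstacle to be the synthesis of these local pictures into a genuine global element of $\mathfrak{A}_G(M^n)$: the Coulomb gauges produced at the concentration points via removable singularities need not glue continuously with the Coulomb gauges on surrounding good balls, and a priori bubbling may concentrate positive curvature energy on instanton profiles escaping the limit. Passing from the classical ``almost weak closure'' to the exact weak closure stated in Theorem \ref{th-VII.1} relies precisely on tolerating Sobolev---rather than smooth---bundle structures, so that the limit object lives on $M^n$ without a change of underlying bundle. Obtaining convergence in the $L^2$ distance $d^2_\mathrm{conn}(A^{k'}, A^\infty) \to 0$, rather than merely in a gauge-invariant curvature quantity, is the subtlest point, since it requires constructing global gauge transformations that bridge the local Coulomb representatives across the bad points.
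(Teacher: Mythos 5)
Your scheme is the classical Uhlenbeck compactness argument, and this is indeed the right framework. But the two delicate steps you flag are genuine gaps and are left unfilled, so the proof is incomplete. First, the removable singularity you invoke at the concentration points $x_i$ is not Uhlenbeck's analytic removable singularity theorem for Yang-Mills fields --- that theorem requires the limit to solve the Yang-Mills equations, which $A^\infty$ need not do. What you need is the gauge-theoretic version: a connection which is $W^{1,2}$ in local gauges away from a puncture, with $L^2$ curvature across it, admits a single $W^{1,2}$ gauge on a full ball. This is obtained by applying $\varepsilon$-regularity gauge-fixing on dyadic annuli around $x_i$ (where the residual curvature energy is eventually below threshold) and gluing the annular Coulomb gauges with controlled transition maps; it is exactly this gluing mechanism that breaks down for $n>4$ (compare Proposition \ref{pr-VII.1}), so it is the heart of the matter and cannot be dispatched in a sentence.

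Second, establishing $d^2_\mathrm{conn}(A^{k'},A^\infty)\to 0$ is, as you note, the crux, and your proposal stops short. Away from the concentration points strong $L^2$ convergence follows by Rellich once the local Coulomb gauges are patched into global gauges $g_{k'}$; but near each $x_i$ one must further show that $\int_{B_\sigma(x_i)}|(A^{k'})^{g_{k'}}|^2\to 0$ uniformly in $k'$ as $\sigma\to 0$. The scaling heuristic that makes this plausible in dimension $4$ is that under the dilation $A\mapsto\lambda A(\lambda\,\cdot)$, which leaves the Yang-Mills energy invariant, the $L^2$-norm of the connection form scales like $\lambda^{-1}$: bubbles concentrating at vanishing scales carry no $L^2$-mass of $A$ in suitable gauges. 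Turning this into a proof requires either a uniform $L^4$-bound on the gauged connections $(A^{k'})^{g_{k'}}$ (yielding equi-integrability of $|A^{k'}|^2$ and hence global $L^2$ convergence by Vitali's theorem), obtained by patching Uhlenbeck gauges over both good and bad regions, or an explicit neck-annulus analysis isolating the bubbles. Neither is supplied, and neither is routine; this is precisely the step that upgrades the classical ``almost weak closure modulo change of bundle'' to the exact weak closure asserted in the theorem.
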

\begin{rmk}\label{rm-VII.1}
Observe that the space ${\mathfrak A}_G(M^n)$ contains for instance global $L^2$ one forms taking values into the Lie algebra ${\mathfrak g}$ that correspond to smooth connections of sore Yang-Mills energy of  a sequence of such smooth connections is uniformly bounded, we can extract a subsequence converging weakly to a Sobolev connection and  corresponding 
possibly to \underbar{another} $G-$bundle. This possibility of ''jumping'' from one bundle to another is encoded in the definition of ${\mathfrak A}_G(M^n)$.
\end{rmk}
Because of this weak closure property the space ${\mathfrak A}_G(M^n)$ is the ad-hoc space for studying the variations of Yang-Mills energy in dimension less or equal than 4. This is however not the case in higher dimension. We have the following proposition.
\begin{proposition}[Sobolev connections in dimension $>4$ \cite{Riv1}]\label{pr-VII.1}
For $n> 4$ the space ${\mathfrak A}_{SU(2)}(M^n)$ is \underbar{ not} weakly sequentially closed below any given Yang-Mills energy level: namely
there exist $A^k\in  {\mathfrak A}_{SU(2)}(M^n)$ satisfying
\[
\limsup_{k\rightarrow+\infty}YM(A^k)=\int_{M^n}|dA^k+A^k\wedge A^k|^2\ \mathrm{d}vol_h<+\infty
\]
and a Sobolev connection $A^\infty\in  L^2$ such that
\begin{equation}\label{donaldson_conn}
d^2_\mathrm{conn}(A^{k'},A^\infty):=\inf_{g\in W^{1,2}(M^n,SU(2))}\int_{M^n}|A^{k'}-(A^\infty)^g|_h^2\ \mathrm{d}vol_h\longrightarrow 0
\end{equation}
but such that in every neighborhood $U$ of every point of $M^n$ there is \underbar{no} $g$ such that $(A^\infty)^g\in W^{1,2}(U)$.
\end{proposition}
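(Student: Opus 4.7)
My strategy is to construct an explicit counter-example by pulling back BPST instantons from a transverse $4$-slice and concentrating them at every point of a dense countable subset of $M^n$. The $L^2$-limit will be a flat connection whose bundle has nontrivial second Chern class around each singular locus, and this topological obstruction will forbid any local $W^{1,2}$ gauge.

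\textbf{Local bubble and singular limit.} In local coordinates $(x,y)\in\R^4\times\R^{n-4}$ around a point of $M^n$, take the standard BPST instanton $A_\lambda(x)=\operatorname{Im}(\bar x\,dx)/(|x|^2+\lambda^2)$ on $\R^4\cong\mathbb{H}$, with values in $\mathfrak{su}(2)\cong\operatorname{Im}\mathbb{H}$, and set $\widetilde A_\lambda:=\pi^*A_\lambda$ using the projection $\pi(x,y)=x$. Each $\widetilde A_\lambda$ is smooth, and $\int_{B^4_R\times B^{n-4}_R}|F_{\widetilde A_\lambda}|^2\le CR^{n-4}$ uniformly in $\lambda$. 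As $\lambda\downarrow 0$ one has $\widetilde A_\lambda\to\widetilde A_0:=\pi^*A_0$ in $L^2_{\mathrm{loc}}$, where $A_0=g_0^{-1}dg_0$ for $g_0(x)=x/|x|:\R^4\setminus\{0\}\to SU(2)$, a map of degree one. The limit $\widetilde A_0$ is flat but its restriction to any small $3$-sphere linking the singular set $\{x=0\}$ carries second Chern number $1$.

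\textbf{Global construction and approximants.} Fix a countable dense subset $\{p_j\}\subset M^n$ and disjoint geodesic balls $B_j=B(p_j,r_j)$ with $r_j\downarrow 0$ so fast that $\sum_j r_j^{n-2}+\sum_j r_j^{n-4}<\infty$. In each $B_j$, using the local model above and a smooth radial cutoff $\chi_j$ equal to $1$ near $p_j$ and vanishing near $\partial B_j$, set $A^\infty:=\chi_j\widetilde A_0$ on $B_j$ and $A^\infty:=0$ elsewhere; similarly set $A^k:=\chi_j\widetilde A_{\lambda_{j,k}}$ on $B_j$, with $\lambda_{j,k}\downarrow 0$ chosen as $k\to\infty$. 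The summability conditions give $A^\infty\in L^2$ and $\sup_k YM(A^k)<\infty$; each $A^k$ is smooth and therefore lies in $\mathfrak{A}_{SU(2)}(M^n)$. The pointwise $L^2$ estimate $\|A_{\lambda}-A_0\|^2_{L^2(B^4_r)}=O(\lambda^2)$ for the BPST family, combined with a suitable choice of $\lambda_{j,k}$, yields $\|A^k-A^\infty\|_{L^2(M^n)}\to 0$, which gives \eqref{donaldson_conn} with the trivial gauge $g\equiv e$.

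\textbf{Topological obstruction and main difficulty.} To rule out a local $W^{1,2}$ gauge on any open $U\subset M^n$, argue by contradiction: by density, pick $j$ with $B_j\subset U$, and work in the local model. If some $g\in W^{1,2}(U,SU(2))$ satisfies $(A^\infty)^g\in W^{1,2}(U)$, a Fubini-type slicing in the $y$-direction produces a full-measure set of transverse slices $\{y=y_0\}\cap B_j$ on which the restricted connection $A_0^{g(\cdot,y_0)}$ is $W^{1,2}$ on a neighborhood of $0\in\R^4$. By Uhlenbeck's theorem \cite{Uhl1}, a flat $W^{1,2}$ connection on a $4$-ball trivializes the underlying $SU(2)$-bundle, forcing the second Chern number on the linking $3$-sphere to vanish; but $A_0$ has Chern number $1$ on that sphere, a contradiction. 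The main difficulty is precisely this Chern-class invariance: the trace of a $W^{1,2}$ connection on the linking $3$-sphere only lies in $W^{1/2,2}$, where the Chern--Simons integrand is not classically defined. One handles this either by reducing to the $4$-dimensional rigidity via Fubini slicing, or, as in \cite{PRym} for $n=5$, by introducing a gauge-invariant relative Chern--Simons form that is stable under $W^{1,2}$-equivalence.
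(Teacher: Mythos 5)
Your construction fails at its core: the local model you chose does not produce a non-gaugeable singularity. Your limit on each ball is $\widetilde A_0=\pi^*A_0$ with $A_0=g_0^{-1}dg_0$, $g_0(x)=x/|x|$. But $g_0\in W^{1,2}(B^4,SU(2))$, since $|dg_0|\lesssim |x|^{-1}$ and $\int_{B^4}|x|^{-2}\,dx<\infty$; hence $\widetilde g_0:=g_0\circ\pi\in W^{1,2}(B^4\times B^{n-4},SU(2))$ as well, and $(\widetilde A_0)^{\widetilde g_0^{-1}}=0$. More precisely, $\bigl(\chi_j\widetilde A_0\bigr)^{\widetilde g_0^{-1}}=(\chi_j-1)\,d\widetilde g_0\,\widetilde g_0^{-1}$, which vanishes near $p_j$ and is smooth where $\chi_j<1$; so your $A^\infty$ admits on every $B_j$ a $W^{1,2}$ gauge in which it is smooth, i.e.\ $A^\infty\in\mathfrak A_{SU(2)}(M^n)$ and nothing has been disproved. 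Correspondingly, your obstruction argument is false: there is no ``second Chern number on the linking $3$-sphere'' obstructing a $W^{1,2}$ gauge on a $4$-ball. The degree of $g_0$ on linking $S^3$'s is not gauge-invariant data for $W^{1,2}$ gauges (maps in $W^{1,2}(B^4,S^3)$ carry no topological restriction at points --- $x/|x|$ itself is such a map), and Uhlenbeck's theorem applied to the flat connection $A_0$ on $B^4$ simply confirms that it is trivializable, consistently with the explicit gauge $g_0^{-1}$. A singular set of codimension $4$ is exactly the removable case.

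The correct counterexample needs a singular set of codimension $5$. The local model is $\mathbb{R}^n=\mathbb{R}^5\times\mathbb{R}^{n-5}$ with $A^\infty$ the pullback of the standard $S^4$-instanton under $(z,w)\mapsto z/|z|$, singular along the $(n-5)$-plane $\{z=0\}$; one checks $|A^\infty|\sim|z|^{-1}$, $|F_{A^\infty}|\sim|z|^{-2}$, so $A^\infty\in L^2$ and $F_{A^\infty}\in L^2$ near the singularity. The obstruction is then the genuine integer $\tfrac{1}{8\pi^2}\int_{S^4_\rho}\mathrm{tr}(F\wedge F)=1$ on $4$-spheres linking the singular plane: a local $W^{1,2}$ gauge would force the bundle to extend trivially across the plane and this integer to vanish (this is where the real work lies, exactly as in \cite{PRym} for $n=5$, since the sliced data is only $L^2$). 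The approximants desingularize at scale $1/k$ with uniformly bounded energy. Your global architecture --- a dense countable family of such singularities in disjoint balls with summable weights, so that every neighborhood of every point contains one --- is the right one and carries over verbatim once the local model is replaced; but as written, your $A^\infty$ is a Sobolev connection and the proposition is not proved.
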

In the search of the suitable formulation for variational problems involving Yang-Mills Lagrangians in higher dimensions, \cite{tian} and \cite{taotian} introduced the class of so-called \emph{admissible connections}, which are connections that are smooth outside a rectifiable set of codimension $4$. We note here that even assuming that our conections locally coincide with a Sobolev connection outside a rectifiable set of codimension $4$ still \emph{does not allow} to recover the weak closure under Yang-Mills energy control denied by Proposition \ref{pr-VII.1}.

\medskip

The main purpose of the present work is to propose a space of so called {\bf weak connections}, which extends the space of Sobolev connections ${\mathfrak A}_G(M^n)$ in the case $n>4$ and enjoying a {\it sequential weak closure property} under the control of Yang-Mills energy. This space also includes the class of admissible connections from \cite{tian, taotian}. In line with \cite{PRym} where the case $n=5$ is presented, we introduce the following definition.
\begin{definition}[Weak connections]\label{def:weakconn}
Let $G$ be a compact Lie group and 
$(M^n,h)$  a compact riemanian manifold. For $n\le 4$ the space of {\it weak connections} ${\mathcal A}_G(M^n)$ is defined to coincide with the space of  {\it Sobolev connections} defined by
\[
{\mathcal A}_G(M^4)={\mathfrak A}_G(M^4):=\lf\{ 
\begin{array}{l}
A\in L^2(\wedge^1M^n,{\mathfrak g})\ ; \ \int_{M^n}|dA+A\wedge A|_h^2\ \mathrm{d}vol_h<+\infty\\[5mm]
\mbox{ locally }\exists\ g\in W^{1,2}\quad\mbox{ s.t. }\quad A^g\in W^{1,2}
\end{array} 
\rg\}
\]
For $n>4$ we define the space of {\it weak connections} ${\mathcal A}_G(M^n)$ to be
\[
{\mathcal A_G}( M^n):=\lf\{ 
\begin{array}{l}
A\in L^2(M^n,\wedge^1TM\otimes{\mathfrak g})\ ; \ \int_{M^n}|dA+A\wedge A|_h^2\ \mathrm{d}vol_h<+\infty\\[5mm]
\forall f\in C^\infty(M^n,\mathbb R^{n-4}),\ \mbox{ a.e. }y\in \mathsf{Reg}(f),\  \iota_{f^{-1}(y)}^\ast A\in {\mathcal A}_G(f^{-1}(y))
\end{array} 
\rg\}\,,
\]
where $\mathsf{Reg}(f)\subset\mathbb R^{n-4}$ denotes the regular values of $f$, and for a submanifold $\Sigma\subset M^n$ we denote by $\iota_{\Sigma}^\ast A$ is the restriction of the 1-form $A$ to $\Sigma$.
\end{definition}
We next introduce a gauge-invariant pseudo-distances $\delta$ between weak connection forms $A,A'\in \mathcal A_G(M^n)$, fitting to the above definition, by setting
\begin{eqnarray}\label{def_delta}
\delta^2_{\mathrm{conn}}(A,A')&:=&\sup_{f\in C^\infty(M^n,\mathbb R^{n-4})}\inf_{g:M^n\to G}\int_{M^n}\left|\left(dg + Ag-gA'\right)\wedge f^*\omega\right|_h^2\frac{\mathrm{d}vol_h}{\left|f^*\omega\right|_h}\\
&=&\sup_{f\in C^\infty(M^n,\mathbb R^{n-4})}\inf_{g:M^n\to G}\int_{\mathbb R^{n-4}}\mathrm{d}\omega\int_{f^{-1}(y)}\left| i^*_{f^{-1}(y)}\left(dg + Ag-gA'\right)\right|_h^2\mathrm{d}\sigma_h,\qquad \label{coarea_dist}
\end{eqnarray}
where the infimum is taken over all measurable $g$, and we use the notations $\mathrm{d}\omega=\mathrm{d}\omega_{n-4}=dy_1\wedge\cdots\wedge dy_{n-4}$ for the volume form of $\mathbb R^{n-4}$ and $\mathrm{d}\sigma_h$ for the $4$-dimensional surface element of $f^{-1}(y)$ for $y\in \mathsf{Reg}(f)$. The above integrals are well-defined because almost all values of $f$ are regular by Sard's theorem. In order to justify the good-definition of the expressions in \eqref{def_delta}, \eqref{coarea_dist} under the low regularity assumption on $g$, we note that $dg$ can be interpreted as a distribution, and testing it against $f^*\omega$ is a well-defined operation. Because $G$ is bounded, the terms $Ag,gA'$ are the product of an $L^\infty$ and an $L^2$ function, and thus their wedge with $f^*\omega$ is also well-defined. Finally, we can pass from \eqref{def_delta} to \eqref{coarea_dist} via the co-area formula.

\medskip

Note that the integrand in \eqref{def_delta},\eqref{coarea_dist} is finite precisely if there exists a (measurable on $M^n$) gauge $g$ which \emph{restricted to a.e. levelsets} $f^{-1}(y)$ is in $W^{1,2}(f^{-1}(y),G)$, which is fitting to Definition \ref{def:weakconn}.

\medskip

In order to see that our defintion \eqref{def_delta} is a higher-dimensional extension of the Donaldson distance $d$ given in \eqref{donaldson_conn} and used in dimension $n\le 4$, we note the following alternative expression\footnote{For two functions $f,g: X\to \mathbb R$ we use the notation $f\asymp g$ if there exists $C>0$ such that for all $x\in X$ there holds $C^{-1}f(x)\le g(x)\le Cf(x)$.} of \eqref{donaldson_conn}, paralleling \eqref{def_delta} (for the proof, see Lemma \ref{lem:equiv}):
\begin{equation}\label{def_donaldson_conn}
d^2_{\mathrm{conn}}(A,A')\asymp\inf_{g:M^n\to G}\sup_{f\in C^\infty(M^n,\mathbb R^{n-4})}\int_{M^n}\left| \left(dg + Ag-gA'\right)\wedge f^*\omega\right|_h^2\frac{\mathrm{d}vol_h}{\left|f^*\omega\right|_h}.
\end{equation}
We will use the notation $A\simeq A'$ whenever $A, A'\in\mathcal A_G(M^n)$ are related by a gauge map $g\in W^{1,2}(M^n,G)$, i.e. $g^{-1}dg+g^{-1}Ag=A'$. Then (see \cite{isobehigher} for example) the so-defined relation $\simeq$ is an equivalence relation, and we may consider the quotient space $\mathcal A_G(M^n)/\simeq$ formed of equivalence classes 
\begin{equation}\label{eqclass}
[A]:=\{A'\in \mathcal A_G(M^n): A'\simeq A\}.
\end{equation}
For \emph{curvature forms} $F,F'\in L^2(\wedge^2M^n,\mathfrak{g})$ we also obtain the well-known equivalence relation $\simeq$ (denoted by abuse of notation by the same symbol, because as it turns out $A\simeq A'$ is equivalent to $F_A\simeq F_{A'}$), according to which $F\simeq F'$ if there exists a measurable $g:M^n\to G$ such that $g^{-1}Fg=F'$. On $\mathfrak g$-valued forms we use the canonical conjugation-invariant norm, under which if $F\simeq F'$ then pointwise a.e. there holds $|F|=|F'|$. This means that, as is well-known, the Yang-Mills energy is constant on each equivalence class $[A]$.

\medskip

Between curvature forms $F,F'\in L^2(\wedge^2 M^n,\mathfrak{g})$ we introduce (and by abuse of notation, we use the same notation as for connection forms), pseudo-distances defined by the following formulas, where again the infimum is taken over measurable $g:M^n\to G$:
\begin{eqnarray}
d^2_{\mathrm{curv}}(F,F')&:=&\inf_{g:M^n\to G}\int_{M^n}\left|g^{-1}Fg-F'\right|_h^2\mathrm{d}vol_h\nonumber\\[3mm]
&\asymp&\inf_{g:M^n\to G}\sup_{f\in C^\infty(M^n,\mathbb R^{n-4})}\int_{M^n}\left| \left(g^{-1}Fg-F'\right)\wedge f^*\omega\right|_h^2\frac{\mathrm{d}vol_h}{\left|f^*\omega\right|_h},\label{def_donaldson_curv}
\end{eqnarray}
and
\begin{equation}
\delta^2_{\mathrm{curv}}(F,F'):=\sup_{f\in C^\infty(M^n,\mathbb R^{n-4})}\inf_{g:M^n\to G}\int_{M^n}\left| \left(g^{-1}Fg-F'\right)\wedge f^*\omega\right|_h^2\frac{\mathrm{d}vol_h}{\left|f^*\omega\right|_h},\label{def_delta_curv}
\end{equation}
The evident motivation for introducing the Donaldson pseudo-distance $d$ from \eqref{donaldson_conn} or \eqref{def_donaldson_conn} between connection forms (and respectively, the distance \eqref{def_donaldson_curv} between curvature forms), is that it induces, arguably, \emph{the simplest possible} geometric distance on equivalence classes of connections (resp. curvatures). 

\medskip

We have the following results concerning the above distances, proved in Appendix \ref{app:distances}:
\begin{proposition}\label{prop:distances}
The following hold:
\begin{enumerate}
\item For $G=SU(2)$ there hols $d_\mathrm{curv}\asymp\delta_\mathrm{curv}$. 
\item For any $G$ we have for weak connections $A,B$ that $d_\mathrm{curv}(F_A,F_B)=0$ if and only if $d_\mathrm{conn}(A,B)=0$ and $\delta_\mathrm{curv}(F_A,F_B)=0$ if and only if $\delta_\mathrm{conn}(A,B)=0$.
\end{enumerate}
\end{proposition}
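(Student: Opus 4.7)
For Part 1 the strategy is to exploit that both curvature distances admit pointwise-algebraic reformulations. Because no differentiation of $g$ is involved, the infimum commutes with integration (pointwise minimization with measurable selection), and one may write
\[
d_{\mathrm{curv}}^2(F, F') = \int_{M^n} h(F(x), F'(x))\, \mathrm{d}vol_h, \qquad \delta_{\mathrm{curv}}^2(F, F') = \sup_f \int_{M^n} |f^*\omega|_h\, h(\pi_{P_f(x)} F, \pi_{P_f(x)} F')\, \mathrm{d}vol_h,
\]
where $h(X, Y) := \inf_{g \in G} |g^{-1} X g - Y|^2$ and $\pi_{P_f(x)}$ denotes restriction of a 2-form to the 4-plane tangent at $x$ to $f^{-1}(f(x))$. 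The inequality $\delta_{\mathrm{curv}} \leq C\, d_{\mathrm{curv}}$ is the standard minimax inequality $\sup_f\inf_g \leq \inf_g\sup_f$ applied to the equivalent formulation \eqref{def_donaldson_curv}. The reverse bound, specific to $SU(2)$, reduces to a pointwise algebraic rigidity
\[
h(F, F') \leq C(n)\, \sum_{P \in \mathcal P}\, h(\pi_P F, \pi_P F')
\]
for a fixed finite family $\mathcal P$ of coordinate 4-planes in $\mathbb R^n$, which is then integrated after testing $\delta_{\mathrm{curv}}^2$ against linear projections $f$ with $|f^*\omega|_h = 1$ whose level sets realize each $P \in \mathcal P$. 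The rigidity uses that $\mathrm{Ad}(SU(2)) = SO(3)$ acts on $\mathfrak{su}(2)\cong \mathbb R^3$ by all rotations, so the $SO(3)$-orbit of $F$ viewed as a $3 \times \binom{n}{2}$ matrix is recovered from enough 4-plane projections by quadratic invariants, with a uniform quantitative estimate on the sublevel sets.

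For Part 2 the main tool is the gauge-covariance identity $F_{A^g} = g^{-1} F_A g$. For the implication $d_{\mathrm{conn}}(A, B) = 0 \Rightarrow d_{\mathrm{curv}}(F_A, F_B) = 0$, take $g_k \in W^{1,2}(M^n, G)$ with $A^{g_k} \to B$ in $L^2$ and extract a subsequential a.e. limit $g_\infty \in L^\infty$ using compactness of $G$. Passing to the limit in the identity $g_k^{-1} dg_k = A^{g_k} - g_k^{-1} A g_k$, using dominated convergence on the right and weak $L^2$-compactness of $dg_k$ on the left (whose distributional limit is forced to be $dg_\infty$), one obtains $g_\infty \in W^{1,2}$ with $A^{g_\infty} = B$, and hence $g_\infty^{-1} F_A g_\infty = F_B$. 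For the converse $d_{\mathrm{curv}}(F_A, F_B) = 0 \Rightarrow d_{\mathrm{conn}}(A, B) = 0$, a measurable $g$ with $g^{-1} F_A g = F_B$ a.e.\ must be upgraded to a Sobolev gauge equating $A$ and $B$. The plan is to use Definition \ref{def:weakconn}: on a.e.\ 4-slice $f^{-1}(y)$ the restrictions $\iota^* A$ and $\iota^* B$ lie in $\mathcal A_G(f^{-1}(y))$, and the 4-dimensional theory of Theorem \ref{th-VII.1} combined with the forward direction just proved yields slice-wise Sobolev gauges; these are patched by a measurable selection along the coarea decomposition into a global $W^{1,2}$ gauge $h$ with $A^h = B$. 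The equivalence for $\delta_{\mathrm{conn}}$ and $\delta_{\mathrm{curv}}$ then follows directly from the coarea formulation \eqref{coarea_dist}, since both vanish precisely when for every $f$ the corresponding 4-dimensional distance vanishes on a.e.\ slice $f^{-1}(y)$, reducing to the case already handled.

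The main obstacle is the converse step of Part 2: lifting pointwise curvature conjugacy to a $W^{1,2}$ gauge between the connections themselves. This is where the weak-connection hypothesis enters essentially, through the slice-wise Sobolev gauges it guarantees, and where the slice-to-ambient patching has to be carried out carefully using the coarea formula together with a measurable selection argument; without the weak-connection structure, no such lift is available in general.
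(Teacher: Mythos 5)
Your Part 1 follows the paper's own route: the inequality $\delta_\mathrm{curv}\lesssim d_\mathrm{curv}$ by minimax/counting, and the reverse via the identification $\mathfrak{su}(2)\cong\mathbb R^3$, $\mathrm{Ad}(SU(2))=SO(3)$, and the fact that pairwise scalar products are a complete quadratic $SO(3)$-invariant (the paper reduces the quantitative $\asymp$ to equality of zero sets by homogeneity and finite-dimensionality, which is what your ``uniform estimate on sublevel sets'' amounts to). The forward half of Part 2 is also fine, and your $W^{1,2}$-upgrade of the limiting gauge is consistent with the paper's Lemma \ref{lem:equiv}.

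The genuine gap is in the converse direction of Part 2: from a measurable $h$ with $h^{-1}F_Ah=F_B$ a.e.\ to a gauge $g$ with $A^g=B$. You propose to obtain slice-wise Sobolev gauges from ``the $4$-dimensional theory of Theorem \ref{th-VII.1} combined with the forward direction just proved,'' but neither tool does this job: Theorem \ref{th-VII.1} is a sequential compactness statement and the forward implication goes the wrong way. The step you need --- that two (weak) connections whose curvatures are pointwise conjugate are gauge-equivalent --- is precisely the hard content, and it is not a formal consequence of anything proved earlier; the paper establishes it (Proposition \ref{prop:2-planes_eq}) by an entirely different mechanism, namely parallel transport: one solves the ODE $g^{-1}\partial_{\dot\gamma}g+g^{-1}A_{\dot\gamma}g=B_{\dot\gamma}$ along curves, expresses the solution through path-ordered exponentials, and proves path-independence of the resulting $g$ from the identity $F_A=F_B$ via a discrete-homotopy (non-abelian Stokes) argument on $2$-surfaces, after an approximation lemma on a.e.\ $2$-slice. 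Without some such holonomy argument your slice-wise gauges simply do not exist as claimed.

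Even granting slice-wise gauges $g_y$ on a.e.\ $f^{-1}(y)$, the proposed ``patching by measurable selection along the coarea decomposition'' would not yield $d_\mathrm{conn}(A,B)=0$: each $g_y$ is determined only up to the stabilizer of the slice connection, and a measurable choice of $g_y$ controls only the tangential components of $dg+Ag-gB$ on each level set, saying nothing about the components transverse to the slices (which $d_\mathrm{conn}$, unlike $\delta_\mathrm{conn}$, does see) nor about the Sobolev regularity of $g$ across slices. The paper avoids both problems by constructing one global $g$ directly from holonomies along arbitrary curves, with path-independence guaranteeing consistency in all directions at once.
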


The above proves that the pseudo-distance $\delta_\mathrm{conn}$ from \eqref{def_delta} (respectively $\delta_\mathrm{curv}$ from \eqref{def_delta_curv}) induces a distance on equivalence classes $[A]$ as in \eqref{eqclass} (resp. on equivalence classes of curvature forms) in the case $G=SU(2)$. In the case of other simply connected Lie groups $G$, we are tempted to conjecture that this continues to hold:
\begin{question}\label{q:equiv_general}
 Is it true that for general simply-connected compact Lie groups $G$ there holds $d_\mathrm{curv}\asymp\delta_\mathrm{curv}$ and $d_\mathrm{conn}\asymp\delta_\mathrm{conn}$?
\end{question}
\begin{rmk}\label{q:equival_general}
When comparing two pseudo-distances $\delta_1,\delta_2$ over a space $X$ we can compare them at increasing levels of precision, and ask:
\begin{enumerate}
 \item whether $\delta_1=0\Leftrightarrow \delta_2=0$, i.e. if they generate the same metric space;
 \item whether the topology induced by $\delta_1$ is the same as the topology induced by $\delta_2$;
 \item whether $\delta_1\asymp\delta_2$.
\end{enumerate}
While it seems plausible that Question \ref{q:equiv_general} holds and the strong statement $d_\mathrm{conn}\asymp\delta_\mathrm{conn}$ is valid for general Lie groups $G$, outside the case $G=SU(2)$ we don't even know a proof (or counterexample) of the assertion that, given two constant $\mathfrak g$-valued $2$-forms $F,F'\in \wedge^2\mathbb R^n\otimes\mathfrak g$, the following conditions are equivalent: 
\begin{enumerate}
\item[(a)] For all $4$-dimensional subspaces $H\in Gr(n,4)$ there exists $g_H\in G$ such that the restrictions to $H$ are conjugated by $g_H$, i.e. $g_H^{-1}i^*_HFg_H=i^*_HF$.
\item[(b)] There exists $g\in G$ such that $g^{-1}Fg=F'$.
\end{enumerate}
The non-trivial part of the question is to prove that (a) implies (b), as the reverse implication follows by restriction.
\end{rmk}
The important property of interest for us is the fact that the pseudo-distance $\delta_\mathrm{conn}$ \emph{metrizes the weak convergence} under a Yang-Mills energy control, and we have:
\begin{theorem}[Compactness of weak connections in dimension $>4$]\label{thm:weakclosure}
The space ${\mathcal A}_{G}(M^n)$ is weakly sequentially closed below any Yang-Mills energy level. More precisely, let $A^j\in {\mathcal A}_{G}(M^n)$ such that
\[
\limsup_{j\rightarrow +\infty}\int_{M^n}|dA^j+A^j\wedge A^j|^2\ d\mathrm{Vol}_{M^n}<+\infty
\]
then there exists a subsequence $j'$ and  $A^\infty\in {\mathcal A}_{G}(M^n)$ such that for the equivalence classes of connections we have 
\[
\delta_{\mathrm{conn}}(A_j,A)\longrightarrow 0.
\]
\end{theorem}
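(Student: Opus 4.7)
The plan is a slicing argument that reduces the theorem to the $4$-dimensional compactness Theorem~\ref{th-VII.1} applied to generic level sets of smooth maps $f:M^n\to\mathbb R^{n-4}$, exploiting Definition~\ref{def:weakconn}.

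For any $f\in C^\infty(M^n,\mathbb R^{n-4})$, the co-area formula
\[
\int_{\mathsf{Reg}(f)}\left(\int_{f^{-1}(y)}|F_{A^j}|_h^2\,\mathrm{d}\sigma_h\right)\mathrm{d}\omega(y)\le C_f\cdot YM(A^j)
\]
shows that for a.e.\ regular value $y$, the restrictions $\iota^*_{f^{-1}(y)}A^j$ are Sobolev connections on the $4$-manifold $f^{-1}(y)$ with uniformly bounded Yang-Mills energy. Theorem~\ref{th-VII.1} then yields, for a.e.\ $y$, a slicewise Uhlenbeck-type gauge, a weak $L^2$ limit $A^\infty_y\in\mathfrak A_G(f^{-1}(y))$, and strong $d_{\mathrm{conn}}$-convergence along some subsequence. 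A Fubini-compatible diagonal argument in $(y,f)$ produces a single subsequence and a measurable-in-$y$ global limit $A^\infty\in\mathcal A_G(M^n)$ whose slicewise gauge-classes are exactly the $A^\infty_y$; the consistency of these slicewise limits across different $f$'s is forced by the fact that the weak $L^2$ limit on a slice is gauge-independent.

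Next, the per-slice convergence in the $4$-dimensional $d_{\mathrm{conn}}$ (with the slicewise gauge $g$ produced by Theorem~\ref{th-VII.1}), together with integration in $y$ and dominated convergence — with majorant provided by the slicewise $L^2$ bounds inherited from the YM control — gives, for each fixed $f$, as $j\to\infty$,
\[
\inf_g\int_{M^n}\left|(dg+A^jg-gA^\infty)\wedge f^*\omega\right|_h^2\,|f^*\omega|_h^{-1}\,\mathrm{d}vol_h\longrightarrow 0,
\]
i.e.\ the quantity inside the supremum in \eqref{def_delta} vanishes in the limit, for every individual test map $f$.

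The main obstacle is to promote this per-$f$ convergence to the \emph{supremum over $f$} required by $\delta_{\mathrm{conn}}$. I would argue by contradiction: assuming a bad sequence $(j_k,f_k)$ with $\delta_{\mathrm{conn}}^2(A^{j_k},A^\infty)\ge\epsilon_0$, extract a weak $L^q$-limit of the normalized forms $f_k^*\omega/|f_k^*\omega|_h$ (using the smoothness of the $f_k$'s, possibly reducing to coordinate projections on a trivializing atlas), pass to the limit in the per-$f$ convergence with this effective $f_\infty$, and reach a contradiction. A secondary technical difficulty, handled by a standard Egorov/diagonal argument, is that the slicewise subsequence extraction a priori depends on $y$, while we need a single $j$-subsequence along which slicewise convergence holds for a.e.\ $y$. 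The per-slice reduction itself is relatively automatic once this Fubini-compatible extraction is organized; the principal burden is therefore the uniformity in $f$.
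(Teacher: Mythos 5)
Your overall slicing strategy points in the right direction, but the step you dismiss as ``a standard Egorov/diagonal argument'' is precisely where the theorem's difficulty lives, and as written it does not go through. Applying Theorem~\ref{th-VII.1} on each slice $f^{-1}(y)$ produces a subsequence $j'(y)$ \emph{depending on $y$}; since the set of regular values is uncountable, no diagonal extraction yields a single subsequence along which a.e.\ slice converges. What makes this possible in the paper is a quantitative control on how the gauge-equivalence class of the slice $\iota^*_{f^{-1}(y)}A^j$ varies in $y$: after solving a gauge-fixing ODE in the transversal directions (Lemma~\ref{lem:wsolode} and Corollary~\ref{cor:wsolode}), the transversal components of the curvature become the literal $y$-derivative of the sliced connection, so the map $y\mapsto[\iota^*_{f^{-1}(y)}A^j]\in\mathcal Y_4$ admits $\|F_{A^j}\|_{L^2(f^{-1}(y))}$ as a $2$-weak upper gradient (Corollary~\ref{cor:2wugs}). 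Only then does the metric-space-valued Rellich-type compactness of Proposition~\ref{prop:abstractconv} (using compactness of the sublevels of $\mathcal N_4$, Proposition~\ref{verifh1}) produce one subsequence with a.e.-slicewise convergence and the $L^2_y$-convergence of the slice distances that you want. Your proposal contains no substitute for this transversal control.

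There is a second, related gap: the gauge-fixing ODE cannot be solved directly for a weak connection (the coefficient $A_n$ is merely $L^2$), so the paper first proves the strong approximation Theorem~\ref{thm:strongapprox} by elements of $\mathcal R^\infty$ and passes to the limit in the ODE. Your argument never invokes any approximation result, yet the introduction states explicitly that the closure proof uses it. Relatedly, your claim that the slicewise limits for different $f$ ``are forced'' to assemble into a single $A^\infty\in\mathcal A_G(M^n)$ because weak $L^2$ limits are gauge-independent is too quick: Proposition~\ref{pr-VII.1} shows that a naive weak $L^2$ limit need not admit any local Sobolev gauge, so compatibility must be proved; the paper does this through the cocycle identities \eqref{gij_formula}--\eqref{indepslice} for the ODE gauges $g_j(I)$. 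Finally, the supremum over all $f\in C^\infty(M^n,\mathbb R^{n-4})$ is not handled in the paper by your compactness-in-$f$ contradiction argument but by reducing, via the localization and $C^1$-invariance Lemmas \ref{lem:localization}--\ref{lem:C1perturbation} and the equivalences \eqref{equivalences_cube}, to the finitely many coordinate slicings $\mathcal C_{n,n-4}$ in each chart; your proposed weak limit of $f_k^*\omega/|f_k^*\omega|_h$ is not developed enough to assess, but it is not needed once the reduction to coordinate slicings is made.
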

This result can be interpreted as a nonlinear version of \emph{Rellich-Kondrachov's compactness theorem}, in which the linear operation of taking the gradient of a Sobolev $W^{1,2}$-regular function, is now replaced by the nonlinear operation of taking the curvature of a weak connection.

\medskip

The proof of the sequential weak closure property of ${\mathcal A}_{G}(M^n)$ uses a strong approximation result of an elements in  ${\mathcal A}_{G}(M^n)$ 
by smooth connections away from polyhedral codimension 5 singularities, whose space $\mathcal R^\infty(M^n)$ is precisely defined in Section \ref{sec:strapp} (see \eqref{Rinf}). 
\begin{theorem}\label{thm:strongapprox_intro}
If $A\in\widetilde{\mathcal A}_G(M^n)$ then there exists a sequence of connection forms $A_j\in \mathcal R^\infty(M^n)$ and a sequence of gauge changes $g_j\in W^{1,2}(M^n,G)$ such that if $F_j:=dA_j+A_j\wedge A_j$ then as $j\to\infty$ there holds
\begin{equation}\label{strapp_intro}
\|g_j^{-1}dg_j+g_j^{-1}A_j g_j - A\|_{L^2(M^n)}\to 0, \quad \|g_j^{-1}F_jg_j - F_A\|_{L^2(M^n)}\to 0\ .
\end{equation}
\end{theorem}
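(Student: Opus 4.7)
The plan is to approximate $A$ by building explicit smooth representatives adapted to a cubic decomposition of $M^n$ at a scale $\e_j\to 0$, placing the singularities of the approximants on a polyhedral set of dimension exactly $n-5$ that plays the role of a dual skeleton. Working locally in charts, I would take the standard cubic grid on $\mathbb R^n$ at scale $\e$, translated by some $a\in[0,\e]^n$. By a Fubini argument applied to the coordinate projections $f:M^n\to\mathbb R^{n-4}$ together with the slicing condition in Definition \ref{def:weakconn}, for a.e. choice of $a$ the restriction $\iota_F^\ast A$ to every $4$-face $F$ of the grid belongs to $\mathfrak A_G(F)$, with Yang-Mills energies summing over all $4$-faces to at most $C\|F_A\|^2_{L^2(M^n)}$; a parallel Fubini argument ensures $L^2$-integrability of the restrictions to all faces of intermediate dimension.

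\medskip

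On each $4$-face $F$ I subdivide further if necessary so that the Yang-Mills energy lies below Uhlenbeck's small-energy threshold, and apply Uhlenbeck's Coulomb gauge theorem to obtain $g_F\in W^{2,2}(F,G)$ with $\|(A|_F)^{g_F}\|_{W^{1,2}(F)}\le C\|F_A\|_{L^2(F)}$. The classical density of smooth connections in $\mathfrak A_G(F)$ in dimension four (Uhlenbeck, Sedla\v{c}ek) provides a smooth $\tilde A_F$ approximating $(A|_F)^{g_F}$ in $W^{1,2}(F)$. The gauges $g_F$ on adjacent $4$-faces are then adjusted on the common $3$-dimensional sub-faces so as to match up, producing a smooth connection $A_\e$ on the $4$-skeleton $\Sigma_4$ of the grid, together with a gauge $g_\e$ defined on $\Sigma_4$.

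\medskip

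To extend $A_\e$ from $\Sigma_4$ to the whole $M^n$, I proceed inductively on the cell dimension $k=5,\ldots,n$. For each $k$-face $K$ I select a $(k-5)$-dimensional polyhedral ``core'' $C_K$ inside $K$ (e.g.\ the appropriate subface of the dual cell complex) together with a Lipschitz retraction $r_K:K\setminus C_K\to\partial K$; the pullback $r_K^\ast A_\e$, suitably regularized, gives an extension of $A_\e$ to $K$ which is smooth off $C_K$. The union $P_\e=\bigcup_K C_K$ of all cores across all cells is a polyhedral subset of $M^n$ of dimension exactly $n-5$, so the resulting $A_\e$ belongs to $\mathcal R^\infty(M^n)$. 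Taking $\e=\e_j\to 0$, the slicing structure of $A$ combined with the $W^{1,2}$-convergence on the $4$-skeleton yields both $\|g_j^{-1}F_{A_j}g_j-F_A\|_{L^2(M^n)}\to 0$ and $\|g_j^{-1}dg_j+g_j^{-1}A_j g_j-A\|_{L^2(M^n)}\to 0$, as required by \eqref{strapp_intro}.

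\medskip

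I expect the main obstacle to lie in the transition between the $4$-skeleton approximation and its extension: matching the cellwise Coulomb gauges on adjacent $4$-faces on their $3$-dimensional intersections so as to produce a global gauge in $W^{1,2}(M^n,G)$, and controlling uniformly in $\e$ the $L^2$-norms of the extended $1$-forms $r_K^\ast A_\e$ on each higher-dimensional cell $K$. This latter point is precisely where the extra structure encoded in $\widetilde{\mathcal A}_G(M^n)$ (as opposed to the full $\mathcal A_G(M^n)$) becomes essential, since it provides the additional slicing control needed to prevent the pullback extensions from concentrating energy around the cores $C_K$ as $\e\to 0$.
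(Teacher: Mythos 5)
Your overall architecture is the right one and matches the paper's: a generic cubic grid selected by a Fubini/Chebyshev argument so that the $4$-skeleton carries controlled energy and admits small-energy Coulomb gauges, gauges glued across $3$-faces, an inductive extension over cells of dimension $5,\dots,n$, a polyhedral $(n-5)$-dimensional singular set of dual-skeleton type, and a final mollification. (The gauge-matching issue you flag is handled in the paper by imposing the Dirichlet condition $g|_{\partial}=\mathrm{id}$ on each $4$-face, Proposition \ref{prop:gauge_dirichlet}, rather than by a posteriori adjustment.)

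However, there is a genuine gap in the extension step: you propose to extend from $\partial K$ to $K$ on \emph{every} cell by pulling back along a Lipschitz retraction $r_K:K\setminus C_K\to\partial K$. Such a cone-type extension preserves $L^2$ \emph{boundedness} (given the Fubini control on boundary traces), but it cannot give $L^2$ \emph{convergence} to $A$. Even for a constant $1$-form $\overline A$ on a cell of size $\e$, the pullback $r_K^*\left(i^*_{\partial K}\overline A\right)$ differs from $\overline A$ by an amount of order $1$ on a set of measure comparable to $|K|$ (the retraction kills the radial component and rescales the tangential one); summing over all cells yields an error of order $\|A\|_{L^2}$, not $o(1)$ as $\e\to 0$, and the same happens for the curvature. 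No amount of slicing control on the boundary traces fixes this, because the defect is produced by the extension map itself, not by energy concentration near the cores. This is precisely why the paper splits the cells into good and bad: on good cells (small oscillation of $A$ and $F$ around their cell averages) it uses the harmonic/Hodge-type extension of Theorem \ref{thm:extension}, built from an explicit linear primitive of $\overline F$ and an affine correction, whose output satisfies $\|\hat A-\overline A\|_{L^2(K)}\lesssim\|i^*_{\partial K}(A-\overline A)\|_{L^2(\partial K)}$ and the analogous curvature bound \eqref{AA}, so that summing the oscillations via \eqref{kskeletonaverage} gives $o(1)$; the retraction extension is reserved for the bad cells, whose total volume vanishes (Lemma \ref{goodballsaremany}), so that there only an $L^2$ bound is needed. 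A second, related defect of the all-retraction scheme is that the induction requires at each dimension $k$ a gauge in which the boundary trace is $L^k$ with small $L^{k/2}$ curvature in order to run the controlled gauge theorem at the next step; the harmonic extension delivers this improved integrability (\eqref{Fbetter}, \eqref{FbetterA}), while the retraction pullback does not.
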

With the definitions \eqref{def_donaldson_conn} and \eqref{def_donaldson_curv}, the existence of $g_j$ such that \eqref{strapp_intro} holds is equivalent to saying that $d(A_j,A)\to 0$. 

\medskip

The main consequence of Theorem \ref{thm:strongapprox_intro} in the study of partial regularity for stationary Yang-Mills connections, is that it paves the way to apply the partial regularity results of \cite{MR}, a step which we plan to take in the paper \cite{PRepsi} in general dimension. 

In \cite{MR} it was proved that if $F$ has small Morrey norm condition on $F$ and $A$ is approximated by smooth connections with Morrey norm control rather than by elements of $\mathcal R^\infty(M^n)$ as in Theorem \ref{thm:strongapprox_intro}, then it is then possible to extract controlled Coulomb gauges of $A$ which allow to prove sharp $\epsilon$-regularity estimates. In \cite{PRepsi}, by the approximation procedure leading to Theorem \ref{thm:strongapprox_intro} we prove precisely such approximating smooth connections, and thus \cite{MR} directly leads to the sharp partial regularity result for weak connections. This was done in dimension $5$ in \cite{PRym}. As a consequence of these results, we have in a unified variational framework in all dimensions the compactness result of Theorem \ref{thm:weakclosure} and the partial regularity theory extending \cite{taotian} and \cite{MR}.

\medskip

A further consequence of Theorem \ref{thm:strongapprox_intro} we also obtain the following important property of weak connections:
\begin{proposition}[Bianchi identity for weak curvatures]\label{Bianchi}
 Assume that $A, F$ are the $L^2$ curvature and connection forms corresponding to a weak connection class $[A]\in\mathcal A_{G}(\mathbb R^5)$. Then the equation
\begin{equation}\label{bianchi}
 d_AF:=dF+[F,A]=0
\end{equation}
holds in the sense of distributions.
\end{proposition}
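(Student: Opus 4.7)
The plan is to combine the strong approximation Theorem \ref{thm:strongapprox_intro} with a capacity-type argument: first establish the distributional Bianchi identity for the smooth approximants (which have codimension-$5$ polyhedral singularities), then transfer the identity via gauge covariance and pass to the $L^2$-limit. By Theorem \ref{thm:strongapprox_intro}, pick $A_j\in\mathcal R^\infty(\mathbb R^5)$ with gauges $g_j\in W^{1,2}(\mathbb R^5,G)$ such that $\widetilde A_j:=g_j^{-1}dg_j+g_j^{-1}A_jg_j\to A$ and $\widetilde F_j:=g_j^{-1}F_jg_j\to F$ in $L^2$, where $F_j:=dA_j+A_j\wedge A_j$. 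Let $\Sigma_j\subset\mathbb R^5$ denote the codimension-$5$ polyhedral singular set of $A_j$: in dimension five, this is a finite collection of points. Since the classical Bianchi identity $dF_j+[F_j,A_j]=0$ holds pointwise on $\mathbb R^5\setminus\Sigma_j$, the distribution $T_j:=dF_j+[F_j,A_j]$ is supported on $\Sigma_j$.

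The core step is to show $T_j\equiv 0$. Given $\psi\in C_c^\infty(\wedge^2\mathbb R^5\otimes\mathfrak g)$ and a standard cutoff $\chi\in C_c^\infty(\mathbb R^5)$ with $\chi\equiv 1$ near the origin, localize via $\chi_r^p(x):=\chi((x-p)/r)$ for $p\in\Sigma_j$ and small $r>0$, so that $\langle T_j,\psi\rangle=\sum_{p\in\Sigma_j}\langle T_j,\chi_r^p\psi\rangle$. Unwinding each pairing and applying the Leibniz rule, each summand is bounded in absolute value by
$$C\|F_j\|_{L^2(B_{Cr}(p))}\bigl(\|d\chi_r^p\|_{L^2}\|\psi\|_\infty+\|d\psi\|_\infty\,r^{5/2}+\|\psi\|_\infty\|A_j\|_{L^2(B_{Cr}(p))}\bigr).$$
The capacity-type scaling $\|d\chi_r^p\|_{L^2(\mathbb R^5)}=O(r^{3/2})$, combined with the absolute continuity of the $L^2$ integrals on $B_{Cr}(p)$, forces this to tend to $0$ as $r\to 0^+$. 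Hence $\langle T_j,\psi\rangle=0$ for every $\psi$, and $d_{A_j}F_j=0$ in $\mathcal D'(\mathbb R^5)$.

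The distributional Bianchi identity is gauge-covariant under our regularity ($A_j,F_j\in L^2$, $g_j\in W^{1,2}\cap L^\infty$), so the Leibniz rule applied distributionally gives $d_{\widetilde A_j}\widetilde F_j=g_j^{-1}(d_{A_j}F_j)g_j=0$. Pairing against a smooth compactly supported $\phi$ and using Cauchy--Schwarz with the $L^2$-convergences $\widetilde F_j\to F$ and $\widetilde A_j\to A$, both $\int\widetilde F_j\wedge d\phi\to\int F\wedge d\phi$ and $\int[\widetilde F_j,\widetilde A_j]\wedge\phi\to\int[F,A]\wedge\phi$, whence $dF+[F,A]=0$ distributionally.

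The main obstacle is the capacity estimate forcing $T_j=0$: the exponent $3/2$ is precisely the scaling that makes the $W^{1,2}$-capacity of a point in $\mathbb R^5$ vanish, which is to say that codimension-$5$ polyhedral singularities carry no distributional mass for $L^2$ curvature forms. This is also why $\mathcal R^\infty$ is defined with singular sets exactly of codimension $5$: with a lower codimension the scaling would fail and extra integrability beyond $L^2$ would be required to absorb the concentration. The remaining ingredients, gauge covariance and strong $L^2$ approximation, are routine once Theorem \ref{thm:strongapprox_intro} is available.
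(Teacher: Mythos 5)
Your proof is correct and implements precisely what the paper gestures at: Proposition~\ref{Bianchi} is stated there as ``a further consequence of Theorem~\ref{thm:strongapprox_intro}'' without a written-out argument, and your combination of strong approximation by $\mathcal R^\infty$-connections with a point-capacity estimate and a passage to the $L^2$-limit is the intended route. The capacity exponent is indeed the crux: in $\mathbb R^5$ the codimension-$5$ polyhedral set is a finite set of points, $\|d\chi_r^p\|_{L^2}=O(r^{3/2})\to 0$, and combined with the absolute continuity of $\int_{B_{Cr}}|F_j|^2$ and $\int_{B_{Cr}}|A_j|^2$ this kills the $r$-independent quantity $\langle T_j,\psi\rangle$, so $T_j\equiv 0$. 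One small imprecision worth flagging: the Bianchi identity for $A_j$ on $\mathbb R^5\setminus\Sigma_j$ does not hold ``pointwise'' --- $A_j$ is only an $L^2$ form that is \emph{locally} gauge-equivalent (via $W^{1,2}$ gauges, per Lemma~\ref{lem:mollif}) to smooth connection forms --- so it holds only in the distributional sense there, obtained from the classical identity in the local smooth gauges by the same gauge-covariance you invoke later. Since your capacity estimate only requires the distributional form, this does not affect the argument, but the wording should say ``distributionally on $\mathbb R^5\setminus\Sigma_j$'' rather than ``pointwise,'' and the gauge-covariance step should be understood as used twice (once locally to pass from the smooth gauge representatives to $A_j$, once globally to pass to $\widetilde A_j$), or else be bypassed entirely by applying the capacity argument directly to $\widetilde A_j,\widetilde F_j$, which are also locally gauge-equivalent to smooth forms away from $\Sigma_j$.
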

\subsection{Structure of the proof}\label{sec:structproof}
In Section \ref{sec:app1} we construct controlled gauges on the sphere by extending Uhlenbeck's method \cite{Uhl2} to a new setting. Using the outcome of Section \ref{sec:app1}, in Section \ref{sec:ext1gc} we prove a result which allows to bootstrap to higher dimensions the local regularity of connection forms. In Section \ref{sec:goodbad} we introduce a local version of Definition \ref{def:weakconn}, defining the space $\widetilde{\mathcal A}_G([-1,1]^n)$; we then present a setup for the proof of our approximation result, by separating regions of concentration and non-concentration of energy. Then in Section \ref{sec:strapp}, the result of Section \ref{sec:ext1gc} is fit to the setting prepared in Section \ref{sec:goodbad}, allowing to prove the strong approximation from Theorem \ref{thm:strongapprox_intro} on the space $\widetilde{\mathcal A}_G([-1,1]^n)$ (see Theorem \ref{thm:strongapprox}). In Section \ref{sec:strcpt}, again in the local space $\widetilde{\mathcal A}_G([-1,1]^n)$, we prove that the Yang-Mills energy forms a $2$-weak gradient structure for the connections in the appropriate metric, which allows to prove a local version (see Theorem \ref{thm:weakclosure}) of our compactness of Theorem \ref{thm:wclos}. In Section \ref{sec:global}, based on the outcomes of Sections \ref{sec:strapp} and \ref{sec:strcpt}, we complete the manifold-case of the proofs of Theorems \ref{thm:strongapprox_intro} and \ref{thm:weakclosure}. Finally, in Appendix \ref{app:distances} we prove that $\delta$ induces a distance on $\mathcal A_G(M^n)/\simeq$ for $G=SU(2)$, prove equivalence of different definitions of our distance, and briefly show how Question \ref{q:equival_general} is connected with other related questions.

\subsection*{List of notations}
%
\begin{itemize}
\item $G$: a compact Lie group
\item $\mathfrak g$: the Lie algebra of $G$
\item $n$: the dimension of the base space of our bundles.
\item $(M^n,h)$: a compact Riemannian manifold. Sometimes the Riemannian metric $h$ is omitted in the notation.
\item $A$: a $\mathfrak g$-valued $1$-form.
\item $F$: a $\mathfrak g$-valued $2$-form.
\item $g$: a map into the Lie group $G$, which can be interpreted as a ``singular gauge change'' of singular bundles.
\item $A^g$: the new expression $A^g=g^{-1}dg+g^{-1}Ag$ of the connection form after the gauge transformation $g$.
\item $\mathfrak A_G(M^n)$: the space of Sobolev connections which locally in some $W^{1,2}$-gauge are $W^{1,2}$. Used here only for $n\le 4$.
\item $\mathcal A_G(M^n)$: the space of weak connections on singular bundles. It coincides with $\mathfrak A_G(M^n)$ for $n\le 4$, but in $n=4$ we have the alternative definition based on $L^4$-spaces rather than $W^{1,2}$-spaces \eqref{ag4d}.
\item $\widetilde{\mathcal A}_{G}([-1,1]^n)$: the local model of the space $\mathcal A_G(M^n)$, described in Definition \ref{def:locmod}
\end{itemize}
%
\section{Controlled gauges on the $n$-sphere}\label{sec:app1}
%
In this section we follow the overall structure of the argument from \cite{Uhl2} to prove the following result:
\begin{theorem}\label{coulstrange}
Let $\pi:L^n(\mathbb S^n, \wedge^1T\mathbb S^n\otimes\mathfrak{g})\to L^n(\mathbb S^n, \wedge^1T\mathbb S^n\otimes\mathfrak{g})$ be a linear operator which is bounded on $L^p$ for $p\in[n,n+\epsilon_\pi[$ for some $\epsilon_\pi>0$, which satisfies $\pi\circ \pi=\pi$ and such that for $\omega\in L^1(\mathbb S^n, \wedge^1T\mathbb S^n\otimes\mathfrak{g})$ there holds
\begin{subequations}\label{newproppi}
\begin{equation}\label{newproppi1}
\|(id-\pi)\omega\|_{L^\infty(\mathbb S^n)}\le C_\pi \|\omega\|_{L^1(\mathbb S^n)}
\end{equation}
and
\begin{equation}\label{newproppi2}
d((id-\pi)\omega)=0.
\end{equation}
\end{subequations}
There exist constants $\epsilon_0, C$ with the following properties. If $A\in L^n(\mathbb S^n, \wedge^1T\mathbb S^n\otimes\mathfrak{g})$ is a connection form over $\mathbb S^n$ such that together with the corresponding curvature form $F$ satisfies
\[
\|F\|_{L^{n/2}(\mathbb S^n)}+\|A\|_{L^2(\mathbb S^n)}\leq \epsilon_0
\]
then there exists a gauge transformation $g\in W^{1,n}(\mathbb S^n, G)$ such that
\begin{subequations}\label{proppi-th}
\begin{equation}
g^{-1}dg\in \op{Im}(\pi),\label{proppi-th1}
\end{equation}
\begin{equation}
d^*_{\mathbb S^n}\left(\pi\left(A^g\right)\right)=0,\label{proppi-th2}
\end{equation}
\begin{equation}
\|A^g\|_{L^n(\mathbb S^n)}\leq C(\|F\|_{L^{n/2}(\mathbb S^n)}+ \|A\|_{L^2(\mathbb S^n)})\ .\label{proppi-th3}
\end{equation}
\end{subequations}
\end{theorem}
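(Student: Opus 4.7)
The plan is to adapt Uhlenbeck's continuity method from \cite{Uhl2} to the modified gauge-fixing prescribed by $\pi$. I will first record a structural observation essential throughout: the combination of idempotence of $\pi$ with the bound \eqref{newproppi1} forces $V:=\op{Im}(\mathrm{id}-\pi)$ to be a \emph{finite-dimensional} subspace of $L^n(\mathbb S^n,\wedge^1 T\mathbb S^n\otimes\mathfrak g)$. Indeed, $\mathrm{id}-\pi$, being bounded from $L^1$ to $L^\infty$, can be represented as integration against an $L^\infty$ kernel and is therefore Hilbert--Schmidt, in particular compact, on $L^2$; a compact projection has finite rank. By \eqref{newproppi2} combined with $H^1(\mathbb S^n)=0$ for $n\ge 2$, this finite-dimensional space lies inside exact $\mathfrak g$-valued $1$-forms. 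Consequently condition \eqref{proppi-th1} is a \emph{finite-codimension} linear constraint on $g^{-1}dg$, not a pointwise restriction.

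First I would derive the a priori estimate \eqref{proppi-th3}. Assuming $g$ satisfies \eqref{proppi-th1}-\eqref{proppi-th2}, the condition $g^{-1}dg\in\op{Im}(\pi)$ gives $(\mathrm{id}-\pi)A^g=(\mathrm{id}-\pi)(g^{-1}Ag)$, and \eqref{newproppi1} together with conjugation invariance of the norm on $\mathfrak g$ yields $\|(\mathrm{id}-\pi)A^g\|_{L^\infty}\le C_\pi\|g^{-1}Ag\|_{L^1}\le C\|A\|_{L^2(\mathbb S^n)}$. For the complementary piece, \eqref{newproppi2} and the Yang--Mills relation give $d(\pi A^g)=dA^g=g^{-1}Fg-A^g\wedge A^g$, while \eqref{proppi-th2} provides $d^*(\pi A^g)=0$; using $H^1(\mathbb S^n)=0$, Hodge theory and the Sobolev embedding $W^{1,n/2}(\mathbb S^n)\hookrightarrow L^n(\mathbb S^n)$ give
\[
\|\pi A^g\|_{L^n}\le C\|F\|_{L^{n/2}}+C\|A^g\|_{L^n}^2.
\]
Summing and absorbing the quadratic term for $\epsilon_0$ small (via a standard bootstrap maintained along the continuity path) yields \eqref{proppi-th3}.

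Next I would set up the continuity method along the path $A_t=tA$, $t\in[0,1]$. Let $T\subset[0,1]$ be the set of $t$ for which a gauge $g_t\in W^{1,n}(\mathbb S^n,G)$ satisfying \eqref{proppi-th1}-\eqref{proppi-th3} exists; then $0\in T$ via $g_0=e$. For closedness, \eqref{proppi-th3} controls $\|dg_{t_k}\|_{L^n}$ uniformly, and the compact Rellich--Kondrachov embedding $W^{1,n}(\mathbb S^n)\hookrightarrow L^r(\mathbb S^n)$ for every $r<\infty$ extracts a subsequential limit $g_\infty$ whose defining conditions pass to the limit using continuity of $\pi$ on $L^n$ and of $d^*$ on distributions.

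The hard part will be openness of $T$. At $t_0\in T$ with gauge $g_{t_0}$, I would write nearby candidates as $g_t=g_{t_0}\exp(u)$ with $u\in W^{1,n}(\mathbb S^n,\mathfrak g)$ and linearize \eqref{proppi-th1}-\eqref{proppi-th2} at $u=0$. Working in the gauge $g_{t_0}$, the linearized system reads $(\mathrm{id}-\pi)(du)=0$ together with an elliptic equation of the form $\Delta u+Lu=-(t-t_0)\,d^*\pi a$ for a compact lower-order operator $L$ built from $A_{t_0}^{g_{t_0}}$ and $a$ the tangent to the path. Restricting $u$ to the closed finite-codimension subspace $\{u\in W^{1,n}(\mathbb S^n,\mathfrak g):(\mathrm{id}-\pi)(du)=0\}$ turns the combined system into a Fredholm problem of index zero whose kernel consists only of constant gauges; invertibility modulo constants then drives a standard implicit function theorem / Banach fixed-point argument in $W^{1,n}$. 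The chief conceptual obstacle is that \eqref{proppi-th1}-\eqref{proppi-th2} a priori look like too many equations for the single $\mathfrak g$-valued unknown $g$, but the finite-dimensionality of $V$ established at the outset precisely resolves this apparent over-determination.
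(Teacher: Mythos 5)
Your overall strategy (continuity method in the spirit of Uhlenbeck, the same a priori estimate, and an implicit-function-theorem linearization of the $\pi$-Coulomb condition) is the right one, and your a priori estimate for \eqref{proppi-th3} matches the paper's Step 4. Your opening observation that $\op{Im}(\mathrm{id}-\pi)$ is finite-dimensional is correct (an idempotent bounded from $L^1$ to $L^\infty$ on a finite measure space is compact on $L^2$, hence finite rank) and consistent with the explicit $\pi_D$ used later in the paper, but it is not needed for the argument and does not substitute for the two points below. There are two genuine gaps.

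First, the continuity path $A_t=tA$ does not stay in the small-curvature set. One has $F_{tA}=tF+(t^2-t)\,A\wedge A$, so $\|F_{A_t}\|_{L^{n/2}}\le t\|F\|_{L^{n/2}}+t(1-t)\|A\|_{L^n}^2$; the hypotheses make $\|F\|_{L^{n/2}}$ and $\|A\|_{L^2}$ small but say nothing about the size of $\|A\|_{L^n}$, which is only assumed finite. Hence for intermediate $t$ the a priori estimate \eqref{proppi-th3} gives no smallness for $A_{t}^{g_t}$ in $L^n$, the absorption of the quadratic term fails, and both the closedness and the openness steps break down. This is exactly why the paper (Step 1) takes the dilation path $A_t(x):=tA(tx)$, whose scale-invariant curvature norm does not increase along the path, so that the whole family remains in $\mathcal U^{\epsilon}_p$.

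Second, you run the openness/implicit-function-theorem step directly at the critical exponent $p=n$. There $W^{1,n}(\mathbb S^n)$ does not embed into $C^0$, so $g_t=g_{t_0}\exp(u)$ with $u\in W^{1,n}(\mathbb S^n,\mathfrak g)$ is not a valid local chart for a $W^{1,n}$-neighborhood of $g_{t_0}$ in the gauge group (a gauge close to $g_{t_0}$ in $W^{1,n}$ can still be pointwise far from it on a small set), and the differentiability of the nonlinear map $u\mapsto d^*\pi\bigl((g_{t_0}e^u)^{-1}d(g_{t_0}e^u)+\cdots\bigr)$ in this topology is not available. The paper resolves this by proving the open/closed argument for supercritical exponents $p>n$, where $W^{1,p}\hookrightarrow C^0$ makes the exponential chart legitimate, and then recovering the $p=n$ statement in Step 5 by approximating $A$ in $L^n$ with $F_{A_k}\to F_A$ in $L^{n/2}$, applying the $p>n$ result to the approximants, and passing to the limit using the closedness of $\mathcal V^{\epsilon_0,C_p}_p$ together with the a priori estimate. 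Without this two-tier structure (or some replacement for it), your openness step does not go through.
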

The proof consists in studying the case where the integrability exponents $n/2, n$ are replaced by $p/2, p$ for $p>n$ first, and then obtaining the $p=n/2$ case as a limit. Note that for $p>n$ the space $W^{1,p}(\mathbb S^n, G)$ embeds continuously in $C^0(\mathbb S^n, G)$, thus gauges $g$ of small $W^{1,p}$-norm will be expressible as $g=\op{exp}(v)$ for some $v\in W^{1,p}(\mathbb S^n, \mathfrak{g})$, due to the local invertibility of the exponential map $\op{exp}:G\to\mathfrak{g}$.

\medskip

We then consider the space
\begin{subequations}
\begin{equation}\label{Ep}
 E_p:=\left\{v\in W^{1,p}(\mathbb S^n, \mathfrak{g}):\:\int_{\mathbb S^n}v= 0,\ dv\in\op{Im}(\pi)\right\}
\end{equation}
where $x_k$ are the ambient coordinate functions relative to the canonical immersion $\mathbb S^n\to \mathbb R^{n+1}$. In case $p>n$ the Banach space $E_p$ is, by the above considerations, the local model of the Banach manifold
\begin{equation}\label{Mp}
 M_p:=\left\{ g\in W^{1,p}(\mathbb S^n, G):\: \text{\eqref{proppi-th1} holds}\right\}\ .
\end{equation}
We then consider the sets
\begin{equation}\label{Uep}
 \mathcal U^\epsilon_p:=\left\{A\in L^p(\mathbb S^n,\wedge^1T\mathbb S^n\otimes \mathfrak{g}):\:\|F_A\|_{L^{p/2}(\mathbb S^n)}\leq \epsilon_0\right\}
\end{equation}
and their subsets
\begin{equation}\label{Vecp}
 \mathcal V^{\epsilon,C_p}_p:=\left\{
\begin{array}{c}
A\in\mathcal U^\epsilon_p:\:\exists g\in M_n\text{ s.t. \eqref{proppi-th2} holds},\\[3mm]
\|A^g\|_{L^q}\leq C_q\|F\|_{L^{q/2}}\text{ for }q=p,n\\[3mm]
\text{and }\|F\|_{L^{n/2}}+\|A\|_{L^2}<\epsilon
\end{array}
\right\}\ .
\end{equation}
\end{subequations}
%
\subsection{Constructing modified Coulomb gauges: proof of Theorem \ref{coulstrange}}
%
%
Like in \cite{Uhl2} we prove theorem \ref{coulstrange} by showing that if $\epsilon_0>0$ is small enough then for $p\geq n$ we may find $C_p$ such that
\begin{equation}\label{ueqv}
 \mathcal V^{\epsilon_0,C_p}_p=\mathcal U^{\epsilon_0}_p\ .
\end{equation}
We are interested in \eqref{ueqv} just for $p=n$ but we use the cases $p>n$ in the proof: we successively prove the following statements.
\begin{enumerate}
\item[1.] $\mathcal U^\epsilon_p$ is path-connected.
\item[2.] For $p\geq n$ the set $\mathcal V^{\epsilon, C_p}_p$ is closed in $L^p(\mathbb S^n, \wedge^1T\mathbb S^n\otimes\mathfrak{g})$.
\item[3.] For $p>n$ there exists $C_p, \epsilon_0$ such that the set $\mathcal V^{\epsilon_0,C_p}_p$ is open relative to $\mathcal U^{\epsilon_0}_p$. In particular \eqref{ueqv} is true for $p>n$.
\item[4.] There exists $K$ such that if $g\in M_p,\:\|A^g\|_{L^n}\leq K$ and 
\[
d^*_{\mathbb S^n}(\pi(A^g))=0,\quad \|F\|_{L^{n/2}}+\|A\|_{L^2}<\epsilon_0
\]
then 
\[
\|A^g\|_{L^n}\leq C_n(\|F\|_{L^{n/2}}+\|A\|_{L^2})\ .
\]
\item[5.] The case $p=n$ of \eqref{ueqv} follows from the case $p>n$.
\end{enumerate}
%
\subsubsection*{Proof of step 1}
%
Fix $p\geq n, \epsilon, A\in\mathcal U^\epsilon_p$. We observe that $0\in \mathcal U^\epsilon_p$. Moreover the connection forms $A_t(x):=tA(tx)$ for $t\in[0,1]$ all belong to $\mathcal U^\epsilon_p$ as well, like in \cite{Uhl2}.
%
\subsubsection*{Proof of step 2}
%
Let $A_k\in \mathcal V^{\epsilon,C_p}_p$ be a sequence of connection forms converging in $W^{1,p}$ to $A$. Consider the gauges $g_k$ as in the definition \eqref{Vecp} of $\mathcal V^{\epsilon,C_p}_p$. We may assume that the  $A_k^{g_k}$ have a weak $W^{1,p}$-limit $\tilde A$. The bounds and equation in \eqref{Vecp} are preserved under weak limit thus we finish if we prove that there exists a gauge $g\in M_p$ such that $\tilde A=A^g$. Writing 
\begin{equation}\label{gauge-step2}
dg_k=g_k A_k^{g_k} - A_kg_k,
\end{equation}
as $G\subset\mathbb R^N$ is bounded it follows that $\|dg_k\|_{L^p}\lesssim \|A_k^{g_k}\|_{L^p}+\|A_k\|_{L^p}$, thus up to extracting a subsequence we have $g_k\stackrel{W^{1,p}}{\rightharpoonup}g$. Thus we may pass to the limit equation \eqref{gauge-step2} and we obtain indeed $\tilde A=A^g$ and also $g\in M_p$.
%
\subsubsection*{Proof of step 3}
%
Fix $p>n$ and let $A\in \mathcal V^{\epsilon,C_p}_p$. Consider the following data:
\begin{eqnarray*}
 g&\in&M_p\ ,\\
 \eta&\in&L^p(\mathbb S^n, \wedge^1T\mathbb S^n\otimes\mathfrak{g}),\\
V_p&:=&d^*_{\mathbb S^n}\left(\pi\left(L^p(\mathbb S^n, \wedge^1T\mathbb S^n\otimes\mathfrak g)\right)\right)\ .
\end{eqnarray*}
Consider the function of such $g,\eta$, with values in $V_p\subset W^{-1,p}(\mathbb S^n, \mathfrak g)$ defined as follows:
\[
 N_A(g,\eta):=d^*_{\mathbb S^n}\left(\pi\left(g^{-1}dg + g^{-1}(A+\eta)g\right)\right)=d^*_{\mathbb S^n}\left(g^{-1}dg + \pi\left(g^{-1}(A+\eta)g\right)\right),
\]
where we used the fact that $\pi=id$ on $\op{Im}(\pi)$.

\medskip

Note that $N_A(id,0)=0$ and $N_A$ is $C^1$. We want to apply the implicit function theorem in order to solve in $g$ the equation $N_A(g,\eta)=0$ for $g$ in a $W^{1,p}$-neighborhood of $id\in M_p$. The implicit function theorem will imply also that the dependence of $g$ on $\eta$ is continuous. Note that there holds $\op{exp}(tv)^{\pm 1}= 1\pm tv+O(t^2)$ as $t\to 0$. Using this and the fact that $E_p$ is the tangent space to $M_p$ at $id$ we find the linearization of $N_A$ at $(id,0)$ in the first variable: 
\begin{eqnarray*}
 H_A(v)&:=&\partial_gN_A(id,0)[v]\\
&=&\left.\frac{\partial}{\partial t}\right|_{t=0}\left[d^*_{\mathbb S^n}\left(\pi\left((\op{exp} (tv))^{-1}d\op{exp}( tv) + \op{exp}(tv)^{-1}(A+\eta)\op{exp}(tv)\right)\right)\right]\\
 &=& d^*_{\mathbb S^n}\left(dv +\pi([A,v])\right)\\
 &=& \Delta_{\mathbb S^n}v + d^*_{\mathbb S^n}\left(\pi([A,v])\right)\ .
\end{eqnarray*}
To prove invertibility of $H_A: E_p\to V_p$ we note that for all $f\in W^{-1,p}$, we have that $\Delta_{\mathbb S^n} v  = f$ has a unique solution $v\in W^{1,p}$ of zero average, thus $\Delta_{\mathbb S^n}:W^{1,p}\cap\{\int v=0\}\to W^{-1,p}$ is bijective, and moreover in this case $\|dv\|_{L^p}\leq C_p \|f\|_{W^{-1,p}}$ by elliptic estimates. In order to impose the extra constraints coming from $\pi$, note that if $f=d^*_{\mathbb S^n}\pi \alpha$ then we may rewrite the equation $\Delta_{\mathbb S^n}v= d^*_{\mathbb S^n}\pi \alpha$ in the weak form 
\begin{equation}\label{eq-step3}
\langle dv, d\varphi\rangle =\langle \pi\alpha, d\varphi\rangle, \quad \forall\varphi\in W^{1,\frac{p}{p-1}}(\mathbb S^n, \mathfrak g).
\end{equation}
If $d\varphi$ represents a functional on $L^p$ which vanishes on $\op{Im}\pi$, then \eqref{eq-step3} gives $\langle dv, d\varphi\rangle=0$. As $\op{Im}(d)\subset L^p$ is a closed subspace, this implies that $dv\in \op{Im}(\pi)$, thus $v\in E_p$. Therefore the restriction of the Laplacean $\Delta_{\mathbb S^n}|_{E_p}:E_p\to V_p$ is invertible with bounded inverse. To conclude that $H_A$ is invertible with bounded inverse as well, we show that for $A\in\mathcal V^{\epsilon,C_p}_p$ we have that 
\[
v\mapsto d^*_{\mathbb S^n}\pi([A,v]): E_p\to V_p
\]
is a bounded linear operator of small operator norm, where the norms we consider are the ones induced by $W^{1,p}$ on $E_p$ and by $W^{-1,p}$ on $V_p$, respectively. Indeed for $v\in E_p$ and for all test functions $\varphi\in W^{1,p'}$ we have
\begin{eqnarray*}
|\langle d^*_{\mathbb S^n}\pi([A,v]), \varphi\rangle|&=&|\langle \pi([A,v]), d\varphi\rangle|\le\|d\varphi\|_{L^{p'}}\|\pi([A,v])\|_{L^p}\\[3mm]
&\le&C_{\pi,p}\|[A,v]\|_{L^p}\|\varphi\|_{W^{1,p'}}\le C_{\pi,p}\|A\|_{L^n}\|v\|_{L^{p^*}}\|\varphi\|_{W^{1,p'}}\ ,
\end{eqnarray*}
which implies the desired estimate. If $\|A\|_{L^n}$ is small enough (depending only on $p,\pi$), we thus obtain that $H_A$ is invertible. In particular this condition holds for all $A\in\mathcal V^{\epsilon_0,C_p}_p$ for $C_p, \epsilon_0$ small enough.
%
\subsubsection*{Proof of step 4}
%
For $g\in M_p$ and using $\pi\circ \pi=\pi$, we have $\pi(g^{-1}dg)=g^{-1}dg$,
thus $d_{\mathbb S^n}^*((id-\pi)A^g)=d_{\mathbb S^n}^*(g^{-1}Ag)$. Given this and \eqref{newproppi1}, we find 
\begin{equation}\label{bd1}
\|A^g\|_{L^n}\le \|\pi A^g\|_{L^n} + \|(id-\pi)(g^{-1}Ag)\|_{L^n}\le \|\pi A^g\|_{L^n} + C_\pi\|A\|_{L^2}.
\end{equation}
Then using \eqref{newproppi2} as well as the property $d_{\mathbb S^n}^*(\pi A^g)=0$, by Hodge and Sobolev inequalities we have
\begin{equation}\label{bd2}
\|\pi A^g\|_{L^n}\le \|\nabla \pi A^g\|_{L^{n/2}}\le \|d\pi A^g\|_{L^{n/2}} + \|d_{\mathbb S^n}^*(\pi A^g)\|_{L^{n/2}}=\|dA^g\|_{L^{n/2}}.
\end{equation}
Now just use the equation $F= dA+A\wedge A$ noting that pointwise a.e. there holds $|F_{A^g}|=|F_A|$, and then by triangle and H\"older inequalities, via \eqref{bd1} and \eqref{bd2} we conclude:
\begin{eqnarray*}
\|A^g\|_{L^n} &\le&\|\pi A^g\|_{L^n} + C_\pi\|A\|_{L^2}\le  C_n\|dA^g\|_{L^{n/2}}+C_\pi\|A\|_{L^2}\\[3mm]
 &\le& C_n\|F\|_{L^{n/2}}+C_n\|A^g\|_{L^n}^2+C_\pi\|A\|_{L^2} \ .
\end{eqnarray*}
If $\|A^g\|_{L^n}\leq K$ small enough then the second term above is estimated by $KC_n\|A^g\|_{L^n}$ which can then be absorbed to the left side of the inequality, giving the desired estimate.
%
\subsubsection*{Proof of step 5}
%
We may approximate $A\in \mathcal U^{\epsilon_0}_n$ by smooth $A_k$ in $L^n$ norm such that $F_{A_k}\to F_A$ in $L^{n/2}$ as well. In particular there holds $A_k\in L^p$ for all $p>n$. We may obtain that $A_k\in \mathcal U^{\epsilon_0}_p= \mathcal V^{\epsilon_0,C_p}_p, p>n$ and in particular we find $g_k\in M_p$ such that
\[
\|A_k^{g_k}\|_{L^n}\lesssim\|A_k\|_{L^n}\lesssim \|F_k\|_{L^{2/n}}\lesssim\epsilon_0\ ,
\]
where the constants depend only on the exponents $p$ and $n$. By possibly diminishing $\epsilon_0$ we thus achieve $\|A_k^{g_k}\|_{L^n}\leq K$ for all $k$. By the closure result of Step 2 for $p=n$ we thus obtain that the same estimate holds for $A$ and for some gauge $g\in M_n$ and by Step 4 we conclude that $A\in \mathcal V^{\epsilon_0,K}_p$, as desired. $\square$
%
\section{Replacement of nonabelian curvatures on Lipschitz domains in $n$ dimensions}\label{sec:ext1gc}
%
In this section we prove the extension result which will help to define our approximating connections.

\medskip

We consider the scale $r=1$. Theorem \ref{thm:extension} will later be used on all faces of skeleta of a cubeulation, even on the ones of higher codimension, but by a slight abuse of notation we denote still by ``$n$'' the dimension of our domains. The ambient dimension will be called $N$.

\medskip

Theorem \ref{thm:extension} has the role of translating the $L^n$-smallness condition \eqref{Fgood} on $F$ on the $n$-dimensional boundary to a similar smallness (with a slightly worse, but still small, constant) \eqref{Fbetter} in the $n+1$ dimensional interior, which will allow the theorem to be used iteratively. At each step of the iteration we use Theorem \ref{coulstrange}, which requests $A$ to already be $L^n$-integrable. For $n=4$ this is a consequence of Defnition \ref{def:weakconn}, which describes the class of weak connections. In higher dimension $n>4$ we achieve it by iteratve extension on the skeleta of dimension $5\le k\le n$, as a consequence of Theorem \ref{thm:extension} itself. The qualitative improvement \eqref{FbetterA} in $A$ is crucial for applying Theorem \ref{coulstrange} at the next step. 
\begin{theorem}\label{thm:extension}
Let $D\subset \mathbb R^N$ be homeomorphic to the $n+1$ dimensional ball $\mathbb B^{n+1}$ via map $\Psi_D: \mathbb R^{n+1}\supset\mathbb B^{n+1}_1\to D$. Assume that $\Psi_D$ is also bi-Lipschitz with bi-Lipschitz constant $C_D$.
 
\medskip

Let $F\in L^2(D,\wedge^2\mathbb R^N\otimes\mathfrak g)$ and $A\in L^2(D,\wedge^1\mathbb R^N\otimes\mathfrak g)$ (where the underlying measure is the Hausdorff $(n+1)$-dimensional measure on $D$) be such that in the sense of distributions $i^*_DF=di^*_DA+i^*_DA\wedge i^*_DA$ on $D$. Fix also constant forms $\overline{F}\in\wedge^2\mathbb R^N\otimes\mathfrak g$ and $\overline{A}\in\wedge^1\mathbb R^N\otimes\mathfrak g$. 
 
\medskip

Assume that 
\begin{equation}\label{Agood}
\exists\, g\in W^{1,2}(\partial D,G)\quad\text{ such that }\quad i^*_{\partial D}A^g\in L^n(\partial D, \wedge^1 T D\otimes \mathfrak g).
\end{equation}
Then there exists a constant $\epsilon_0>0$ depenging only on $n,C_D,G$ such that if in the above gauge there holds
\begin{equation}\label{Fgood}
 \int_{\partial D}|i^*_{\partial D}F|^{n/2}< \epsilon_0,\quad  \int_{\partial D}|i^*_{\partial D}A^g|^n< \epsilon_0,\quad\verti{\overline{F}}<\epsilon_0,\quad|\overline{A}|^2<\epsilon_0,
\end{equation}
then there exists $\hat A\in L^2(D,\wedge^1TD\otimes\mathfrak g)$ and $\hat g\in W^{1,2}(D,G)$ such that $\hat A^{\hat g}$ is Lipschitz in the interior of $D$, $i^*_{\partial D}\hat A=i^*_{\partial D}A$, $\hat g|_{\partial D}=g$ and the distribution $F_{\hat A}=d\hat A+\hat A\wedge \hat A$ is represented by an element of $L^2(D,\wedge^2TD\otimes\mathfrak g)$, and we have the approximation bounds
\begin{subequations}\label{bounds_FA}
\begin{multline}\label{AA}
 \|d\hat A + \hat A\wedge \hat A - \overline{F}\|_{L^2(D)}\lesssim  \|i_{\partial D}^\ast (F-\overline{F})\|_{L^2(\partial D)}\\[3mm]
+\ \verti{\overline{F}}\|i_{\partial D}^\ast (A - \overline{A})\|_{L^2(\partial D)} + \|i^*_{\partial D}A^g\|_{L^n(\partial D)}^2 +\ \|i^*_{\partial D}F\|_{L^{n/2}(\partial D)}^2 + \verti{\overline{F}}^2
\end{multline}
and
\begin{equation}\label{AAA}
 \|\hat A - \overline{A}\|_{L^2(D)}\lesssim\|i_{\partial D}^\ast (A -\overline{A})\|_{L^2(\partial D)}\ .
\end{equation}
Moreover we have the improved regularity
\begin{equation}\label{Fbetter}
\|d\hat A + \hat A\wedge \hat A \|_{L^{\frac{n+1}{2}}(D)}\lesssim\ \|i^*_{\partial D}A^g\|_{L^n(\partial D)}+\|i^*_{\partial D}F\|_{L^{n/2}(\partial D)} +\ |\overline{F}|
\end{equation}
and
\begin{equation}\label{FbetterA}
\hat A^{\hat g}\in L^{n+1}(D, \wedge^1 TD\otimes \mathfrak g).
\end{equation}
\end{subequations}
\end{theorem}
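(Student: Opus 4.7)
The plan is to reduce via the bi-Lipschitz parametrization $\Psi_D$ to the model case of the unit ball $\mathbb B^{n+1}$ with boundary $\mathbb S^n$, tracking $C_D$ as a multiplicative factor throughout. On $\mathbb S^n$, Theorem \ref{coulstrange} applied to $i^*_{\partial D}A^g$ — with $\pi$ taken as, for instance, the projection onto co-exact $1$-forms modulo harmonics — produces, under the smallness \eqref{Fgood}, a further gauge change yielding the Coulomb-type bound
\[
\|A^g\|_{L^n(\partial D)} \lesssim \|i^*_{\partial D}F\|_{L^{n/2}(\partial D)} + \|i^*_{\partial D}A^g\|_{L^2(\partial D)}.
\]
After absorbing this correction into $g$, we may assume that $A^g$ already satisfies this bound together with the Coulomb condition $d^*_{\mathbb S^n}\pi(A^g)=0$.

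Next I would construct the interior extension in polar coordinates $(r,\theta)\in(0,1]\times\mathbb S^n$ by combining radial parallel transport with scale-matched mollification. A natural choice is
\[
B(r,\theta) := \varphi(r)\,P_{1-r}\bigl(A^g\bigr)(\theta),
\]
where $P_s$ is the heat semigroup on $\mathbb S^n$ (providing mollification at spatial scale $\sim s$) and $\varphi$ is a smooth cutoff equal to $1$ near $r=1$ and vanishing near the origin, so that $B\in C^\infty$ in the interior of $D$ and Lipschitz there by the heat bound $\|dP_s u\|_{L^\infty} \lesssim s^{-1}\|u\|_{L^\infty}$. The gauge $\hat g$ is obtained by harmonically extending $g$ into a tubular neighborhood of $G\subset\mathbb R^N$ and projecting back onto $G$; the smallness of $\|g^{-1}dg\|_{L^2(\partial D)}$, which follows from $g^{-1}dg = A^g - g^{-1}Ag$ and the hypothesis, ensures this projection is well defined. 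Finally set $\hat A := \hat g\,B\,\hat g^{-1} - d\hat g\cdot\hat g^{-1}$, so that $\hat A^{\hat g} = B$ is Lipschitz in the interior, $i^*_{\partial D}\hat A = i^*_{\partial D}A$, and $\hat g|_{\partial D}=g$.

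One then computes $F_{\hat A} = \hat g\,F_B\,\hat g^{-1}$ and decomposes $F_B$ into a tangential part — essentially the curvature of the mollified boundary connection — and a radial part of the form $\partial_r B\wedge dr$. The tangential part is estimated by commuting $P_{1-r}$ with $d$ and with the wedge product, using the structure equation $i^*_{\partial D}F = di^*_{\partial D}A^g + i^*_{\partial D}A^g \wedge i^*_{\partial D}A^g$; this yields the linear term $\|i^*_{\partial D}(F-\overline F)\|_{L^2}$ plus quadratic corrections $\|A^g\|_{L^n}^2$ and $\|F\|_{L^{n/2}}^2$ appearing in \eqref{AA} and \eqref{Fbetter}. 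The radial part is controlled by the heat bounds $\|\partial_s P_s u\|_{L^p}\lesssim s^{-1}\|u\|_{L^p}$; integration in $r$ against the volume factor $r^n$ yields exactly the exponents $2$ and $(n+1)/2$. The comparison with the constant forms $\overline A,\overline F$ is accommodated by applying the same construction to the translated data and treating $\overline F$ as a bulk error term contributing the quadratic $|\overline F|^2$ and the cross term $|\overline F|\,\|i^*_{\partial D}(A-\overline A)\|_{L^2}$. Estimate \eqref{AAA} is immediate from the radial continuity of the extension, and \eqref{FbetterA} follows from the Lipschitz character of $B$ on any dyadic shell $\{r\le 1-2^{-k}\}$ combined with a Fubini-type argument in the boundary collar.

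The technical heart of the argument is producing a single extension whose curvature simultaneously satisfies the sharp $L^2$-estimate \eqref{AA} — with linear dependence on $\|i^*_{\partial D}(F-\overline F)\|_{L^2}$ — and the higher-integrability estimate \eqref{Fbetter}. These two are in tension: an unsmoothed conical extension gives curvature of order $(1-r)^{-1}\|A^g\|_{L^\infty}$, only borderline $L^2$-integrable, while over-smoothing destroys the quadratic bound by introducing large commutators between $P_s$ and the nonlinearity $A\wedge A$. Matching the mollification scale precisely to $1-r$, together with the a priori Coulomb $L^n$-smallness coming from Theorem \ref{coulstrange}, is what allows both estimates to close simultaneously.
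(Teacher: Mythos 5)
Your overall plan---reduce to the sphere via $\Psi_D$, extract a controlled gauge on $\partial D$ from Theorem \ref{coulstrange}, extend into the interior, then conjugate by an interior extension of $g$---matches the paper's skeleton, but two of your concrete choices break the estimates.

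First, the projection $\pi$. You propose ``the projection onto co-exact $1$-forms modulo harmonics,'' so that $1-\pi$ projects onto exact (plus harmonic, but $H^1(\mathbb S^n)=0$) forms. This fails hypothesis \eqref{newproppi1}: there are exact $1$-forms that are $L^1$ but not $L^\infty$, so $\|(1-\pi)\omega\|_{L^\infty}\lesssim\|\omega\|_{L^1}$ does not hold for this $\pi$. More importantly, the purpose of $\pi$ in this theorem is not merely to make Theorem \ref{coulstrange} applicable; its precise form drives the $L^2$ estimates. The paper's $\pi_D$ subtracts, from each form, its $L^2$-components along the \emph{finite} set of pullbacks $\Psi_D^*i^*_{\partial D}dx_j$. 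This makes $1-\pi_D$ finite rank---which is exactly why \eqref{newproppi1} holds---and, crucially, gives the comparison
\[
\|\pi_{nc,D}(i^*_{\partial D}A)\|_{L^2(\partial D)}\ \le\ \|i^*_{\partial D}(A-\overline A)\|_{L^2(\partial D)}\,,
\]
since $\pi_{nc,D}$ is the $L^2$-orthogonal projection onto the complement of all constant forms. This is what feeds \eqref{A-2}, $\|dg\|_{L^2}\lesssim\|i^*_{\partial D}(A-\overline A)\|_{L^2}$, and through it \eqref{AAA}. A Hodge-theoretic $\pi$ has no relation to the constant form $\overline A$ and cannot produce that bound.

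Second, the interior extension. You use a heat-semigroup extension $B(r,\theta)=\varphi(r)P_{1-r}(A^g)(\theta)$, but this does not give the $L^2$ curvature bound \eqref{AA}. The radial part of $F_B$ is $\partial_r B\wedge dr$, and $\|\partial_s P_s u\|_{L^2(\mathbb S^n)}\lesssim s^{-1}\|u\|_{L^2(\mathbb S^n)}$ with no improvement for $u\in L^n$ since $L^n\hookrightarrow L^2$ on the compact $\mathbb S^n$; thus $\|\partial_r B\|_{L^2(\mathbb S^n)}\gtrsim(1-r)^{-1}$ near $r=1$, and $\int_{1/2}^1(1-r)^{-2}\,dr$ diverges (the volume factor $r^n$ is $\sim 1$ there and cannot help). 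You acknowledge the tension but assert it is resolved by ``matching the mollification scale precisely to $1-r$''; as the computation shows it is not. The paper sidesteps this entirely: it extends by minimizing $\int|d(C-\Psi_D^*B)|^2+|d^*(C-\Psi_D^*B)|^2$, which forces $d\tilde\eta-\Psi_D^*\overline F$ to be \emph{itself} a harmonic $2$-form, so its interior $L^2$ norm is controlled by the boundary $L^2$ norm of $d A^g-i^*\Psi_D^*\overline F$ (estimate \eqref{A-18}). The heat extension has no analogous structure: $d$ does not commute with $P_s$ on a curved manifold and the radial component is not controlled. The paper's choice also gives the $W^{1/n,n}(\mathbb B^{n+1})\hookrightarrow L^{n+1}(\mathbb B^{n+1})$ embedding for $\tilde\eta$ directly from the elliptic regularity of the system, which is what produces \eqref{FbetterA} and \eqref{Fbetter}; your dyadic-shell argument would need to be replaced by something quantitative to recover the same integrability gain.
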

\begin{proof}
\textbf{Preparation.}\textit{ Transfer of the informations to the sphere.} We do all our estimates on $\mathbb S^n, \mathbb B^{n+1}$, and we use the bi-lipschitz homeomorphism $\Psi_D$ to link this model case with the case of general $D$. Indeed we note that for measurable $L^p$-integrable $k$-forms $\omega$ on $D$ and $\omega'$ on $\partial D$ there holds, with $C_\phi$ denoting here the Lipschitz constant of a function $\phi$,
\begin{eqnarray}\label{psipresnorm}
C_{\Psi_D^{-1}}^{-k}C_{\Psi_D}^{-\frac{n}{p}}\|\omega'\|_{L^p(\partial D)}\le&\|\Psi_D^*\omega'\|_{L^p(\mathbb S^n)}&\le C_{\Psi_D}^kC_{\Psi_D^{-1}}^{\frac{n}{p}}\|\omega'\|_{L^p(\partial D)},\nonumber\\
C_{\Psi_D^{-1}}^{-k}C_{\Psi_D}^{-\frac{n+1}{p}}\|\omega\|_{L^p(D)}\le&\|\Psi_D^*\omega\|_{L^p(\mathbb B^{n+1})}&\le C_{\Psi_D}^kC_{\Psi_D^{-1}}^{\frac{n+1}{p}}\|\omega\|_{L^p(D)}.
\end{eqnarray}
Thus the bounds obtained on $\mathbb S^n$ and $\mathbb B^{n+1}$ which we obtain below are comparable to those on general $D$.

\medskip 

Let $\pi_{nc,D}:L^p(\partial D, \wedge^1T\partial D\otimes\mathfrak{g})\to L^p(\partial D, \wedge^1T\partial D\otimes\mathfrak{g})$ be define by
\[
\pi_{nc,D}(A) = A - \sum_{j=1}^N \left(\int_{\partial D} \langle A, i^*_{\partial D}dx_j\rangle d\mathcal H^n\right)i^*_{\partial D}dx_j\ .
\]
In other words, $\pi_{nc,D}$ removes from $A$ to the $L^2$-components parallel to $\op{Span}\{i^*_{\partial D}dx_k, k=1,\ldots,N\}$. In particular for any $\overline{A}$ as in the thesis of our theorem, then we have
\begin{eqnarray}\label{comparepibara}
\|\pi_{nc,D}(i^*_{\partial D}A)\|_{L^2(\partial D)}&=&\inf\{\|i^*_{\partial D}(A - B)\|_{L^2(\partial D)}: B\in\wedge^1\mathbb R^N\otimes\mathfrak g\}\nonumber\\[3mm]
 &\le& \|i^*_{\partial D}(A - \overline{A})\|_{L^2(\partial D)}.
\end{eqnarray}
We are now going to define an involution $\pi_D:L^p(\mathbb S^n, \wedge^1T\mathbb S^n\otimes\mathfrak{g})\to L^p(\mathbb S^n, \wedge^1T\mathbb S^n\otimes\mathfrak{g})$ like in Theorem \ref{coulstrange} and related to $\pi_{nc, D}$ by the following property:
\begin{equation}\label{compatpi}
\pi_D(\Psi_D^*i_{\partial D}^*A)=\Psi^*_D\left(\pi_{nc,D}(i^*_{\partial D}A)\right) \quad\text{ for all }\quad A\in L^p(\mathbb R^N, \wedge^1\mathbb R^N\otimes\mathfrak{g}).
\end{equation}
This involution can be written explicitly:
\begin{equation}\label{explicitpi}
\pi_D(A)= A - \sum_{j=1}^N\left(\int_{\mathbb S^n} \langle A, \Psi_D^*i^*_{\partial D}dx_j\rangle d\mathcal H^n\right)\Psi_D^*i^*_{\partial D}dx_j\ .
\end{equation}
This clearly implies \eqref{newproppi2}. Then \eqref{newproppi1} holds with $\pi_D$ in the place of $\pi$ because $\Psi_D\circ i_{\partial D}$ is Lipschitz and $dx_j$ is smooth. In fact, as $\Psi_D$ is Lipschitz, from \eqref{explicitpi} we also obtain for $1\le p\le\infty$ that
\begin{equation}\label{bound1-pi}
\|(1-\pi_D)(i_{\mathbb S^n}^*\Psi_D^*A)\|_{L^\infty(\mathbb S^n)}\le C_{N,p} \|i_{\mathbb S^n}^*\Psi_D^*A\|_{L^p(\mathbb S^n)}\le C_{N,p}C_{\Psi_D}\|i_{\partial D}^*A\|_{L^p(\partial D)}\ .
\end{equation} 
%
\textbf{Step 1.}\textit{ Coulomb gauge on the boundary.} Denote $\Psi_D^*A:=A_D$. Let $g$ be the change of gauge $g$ given by Theorem \ref{coulstrange} applied with $A$ replaced by $i^*_{\mathbb S^n}\Psi^*_DA=i^*_{\mathbb S^n}A_D$. Then $g$ satisfies
\begin{subequations}\label{A-1}
\begin{align}
&\pi_D(g^{-1}dg)=g^{-1}dg,\label{A-1-1}\\[3mm]
& d^\ast_{\mathbb S^n}\pi_D(i^*_{\mathbb S^n}A_D^g)=0\label{A-1-2}
\end{align}
and
\begin{equation}\label{A-1-3}
\|i^*_{\mathbb S^n}A_D^g\|_{L^n(\mathbb S^n)}\le C_n\left(\|i^*_{\mathbb S^n}F_{A_D}\|_{L^{n/2}(\mathbb S^n)} +\|i^*_{\mathbb S^n}A_D\|_{L^2(\mathbb S^n)}\right).
\end{equation}
\end{subequations}
From $i^*_{\mathbb S^n}A_D^g=g^{-1}dg+g^{-1}i^*_{\mathbb S^n}A_Dg$ we obtain using the $\pi_D$-Coulomb condition on $A^g$ (where we identify $1$-forms and vector fields using the metric)
\begin{eqnarray*}
 \Delta_{\mathbb S^n}g&=&dg\cdot i^*_{\mathbb S^n}A_D^g +(g-id)\,d^*_{\mathbb S^n}i^*_{\mathbb S^n}A_D^g- d^*_{\mathbb S^n}(\pi_D(i^*_{\mathbb S^n}A_D)g)\\[3mm]
 && - d^*_{\mathbb S^n}[(1-\pi_D)A_D\,(g-id)]+ d^*_{\mathbb S^n}[(1-\pi_D)(i_{\mathbb S^n}^*(A_D^g-A_D))]\ .
\end{eqnarray*}
The last term can be simplified using \eqref{A-1-1}:
\begin{equation}\label{simplifyg}
d^*_{\mathbb S^n}[(1-\pi_D)(i_{\mathbb S^n}^*(A_D^g-A_D))]=d^*_{\mathbb S^n}[(1-\pi_D)(i_{\mathbb S^n}^*(g^{-1}A_D g-A_D))]\ .
\end{equation}
If $\bar g$ is the average of $g$ on $\mathbb S^n$ taken in $\mathbb R^{n+1}$, then by the mean value formula there exists $x\in\mathbb S^n$ such that $|g(x) - \bar g|\leq C\|g-\bar g\|_{L^2}$. Up to changing $g$ to $gg_0$ where $g_0$ is a constant rotation, we may also assume $g(x)=id$. Using the embedding $W^{1,2}\to L^{\frac{2n}{n-2}}$ valid in $n$ dimensions, \eqref{simplifyg}, \eqref{bound1-pi} and the H\"older inequalities we deduce (estimating the $W^{-1,2}$-norm by duality with $W^{1,2}$):
\begin{align}
 \|dg\|_{L^2(\mathbb S^n)}&\lesssim \|\Delta_{S^n}g\|_{W^{-1,2}(\mathbb S^n)} \lesssim\|dg\|_{L^2(\mathbb S^n)}\|i^*_{\mathbb S^n}A_D^g\|_{L^n(\mathbb S^n)}\nonumber\\[3mm]
 & + \|g-id\|_{L^{\frac{2n}{n-2}}(\mathbb S^n)}\|i^*_{\mathbb S^n}A_D^g\|_{L^n(\mathbb S^n)}+ \|\pi_D(i^*_{\mathbb S^n}A_D)\|_{L^2(\mathbb S^n)} \nonumber\\[3mm]
 &+\|g-id\|_{L^2(\mathbb S^n)}\|(1-\pi_D)(i^*_{\mathbb S^n}A_D)\|_{L^\infty(\mathbb S^n)}\ .\label{dgbound}
\end{align}
By using Sobolev inequality $ \|g-id\|_{L^{\frac{2n}{n-2}}(\mathbb S^n)}\lesssim \|dg\|_{L^2(\mathbb S^n)}$, the boundedness $|g(x)-id|\le \op{diam}(G)$, \eqref{A-1-1} and $\|i^*_{\partial D}\overline{A}\|_{L^\infty(\partial D)}\lesssim|\overline{A}|<\epsilon_0$, we absorb the terms not containing $\pi_D(A_D)$ from the right hand side of \eqref{dgbound} to the left hand side. Using \eqref{bound1-pi}, for $\epsilon_0>0$ small enough we reabsorb also the $(1-\pi_D)$-term. By \eqref{comparepibara}, \eqref{compatpi} we obtain
\begin{equation}\label{A-2}
\|dg\|_{L^2(\mathbb S^n)}\leq C\|\pi_D(i_{\mathbb S^n}^*A_D)\|_{L^2(\mathbb S^n)}\le C \|i^*_{\partial D}(A-\overline{A})\|_{L^2(\partial D)}\ .
\end{equation}
%
\textbf{Step 2.}\textit{ Estimates on $i_{\mathbb S^n}^*\Psi_D*\overline{F}$.} By \eqref{A-2}, Poincar\`e's inequality and the Lipschitz bounds \eqref{psipresnorm}, we obtain
\begin{multline}\label{f-1}
\|g^{-1}(i_{\mathbb S^n}^\ast\Psi_D^*\overline{F}) g-i_{\mathbb S^n}^\ast\Psi_D^*\overline{F}\|_{L^2(\mathbb S^n)}\lesssim \|i_{\mathbb S^n}^\ast\Psi_D^*\overline{F}\|_{L^{\infty}(\mathbb S^n)} \|g-id\|_{L^2(\mathbb S^n)}\\[3mm]
\lesssim\|i_{\partial D}^\ast\overline F\|_{L^\infty(\partial D)}\|i_{\partial D}^\ast (A - \overline{A})\|_{L^2(\partial D)}\ .
\end{multline}
Since equation $F_{A^g}=g^{-1}\, F\, g$ is invariant under pullback and the norm on $\mathfrak{g}$-valued forms is invariant under conjugation, using then \eqref{A-1} and the triangular inequality we obtain
\begin{equation}\label{A-7}
\begin{array}{rcl}
\|i_{\mathbb S^n}^\ast (F_{A_D^g}-\Psi_D^*\overline{F})\|_{L^2(\mathbb S^n)} &\lesssim&  \|i_{\partial D}^\ast\overline F\|_{L^\infty(\partial D)}\|i_{\partial D}^\ast (A - \overline{A})\|_{L^2(\partial D)}\\[3mm]
&&+\|i_{\partial D}^\ast (F-\overline{F})\|_{L^2(\partial D)}\ .
\end{array}\
\end{equation}
Using \eqref{A-1-3} and the bounded embedding $L^4\to L^n$ for $n\ge 4$ to bound $\vertii{A_D^g\wedge A_D^g}_{L^2}\le \vertii{A^g_D}_{L^4}^2$, from \eqref{A-7} and \eqref{psipresnorm} we deduce
\begin{equation}\label{A-9}
\begin{array}{rcl}
\|i_{\mathbb S^n}^\ast (dA_D^g-\Psi_D^*\overline{F})\|_{L^2(\mathbb S^n)} &\lesssim&\|i_{\partial D}^\ast\overline F\|_{L^\infty(\partial D)}\|i_{\partial D}^\ast (A - \overline{A})\|_{L^2(\partial D)}\\[3mm]
&&+\|i_{\partial D}^\ast (F-\overline{F})\|_{L^2(\partial D)}+\|i^*_{\mathbb S^n}F\|_{L^{n/2}(\mathbb S^n)}^2 \\
&\lesssim&\verti{\overline F}\|i_{\partial D}^\ast (A - \overline{A})\|_{L^2(\partial D)}\\[3mm]
&&+\|i_{\partial D}^\ast (F-\overline{F})\|_{L^2(\partial D)}+\|i^*_{\partial D}F\|_{L^{n/2}(\partial D)}^2\ .
\end{array}
\end{equation}
%
\textbf{Step 3.} \textit{Extension to the interior.} Define the following form belonging to $C^\infty(\mathbb R^N, \wedge^1\mathbb R^N\otimes\mathfrak{g})$:
\begin{equation}
\label{A-14}
B:=\sum_{i<j}\overline{F_{ij}}\ \frac{x_i\,dx_j-x_j\, dx_i}{2}\ .
\end{equation}
For an 1-form $\eta\in L^n(\mathbb S^n, \wedge^1T\mathbb S^n\otimes \mathfrak g)$ we then consider the minimization problem
\begin{equation}\label{A-10}
\min\left\{\begin{array}{c}\int_{\mathbb B^{n+1}}|d(C-\Psi_D^*B)|^2+|d^{\ast_{{\mathbb R}^{n+1}}}(C-\Psi^*_DB)|^2\\[3mm]
\text{s. t. } C\in W^{\frac{1}{n},n}(\mathbb B^{n+1}, \wedge^1\mathbb B^{n+1}\otimes \mathfrak g),\ i_{\mathbb S^n}^\ast C=\eta
\end{array}\right\}\ .
\end{equation}
A classical argument (and the fact that $d\Psi_D^*B=\Psi^*_DdB=\Psi_D^*\overline{F}$) shows that the solution $\tilde\eta$ to \eqref{A-10} is uniquely given by
\begin{equation}\label{eta}
\left\{
\begin{array}{l}
 d^{\ast_{{\mathbb R}^{n+1}}}(\widetilde{\eta}-\Psi^*_D B)=0\quad\quad\text{ in } \mathbb B^{n+1}\ ,\\[3mm]
 d^{\ast_{{\mathbb R}^{n+1}}}(d\widetilde{\eta} - \Psi_D^*\overline{F})=0\quad\quad\text{ in } \mathbb B^{n+1}\ ,\\[3mm]
 i_{\mathbb S^n}^\ast \widetilde{\eta}=\eta\quad\quad\text{ on } \partial \mathbb B^{n+1}\ ,
\end{array}
\right.
\end{equation}
and one has
\begin{equation}\label{A-13}
\|\widetilde{\eta}-\Psi_D^*B\|_{L^{n+1}(\mathbb B^{n+1})}\le C\,\|\widetilde{\eta}-\Psi_D^*B\|_{W^{\frac{1}{n},n}(\mathbb B^{n+1})}\le C\ \|\eta-i_{\mathbb S^n}^*\Psi_D^*B\|_{L^n(\mathbb S^n)}\ .
\end{equation}
Note that $\Psi_D^*B$ is the solution to \eqref{A-10} for the choice $\eta=i_{\mathbb S^n}^\ast\Psi_D^* B$. If we choose $\eta=\pi_D(i^*_{\mathbb S^n}A_D^g)$ in \eqref{A-10} then we find the extension $\widetilde{\pi_D(i^*_{\mathbb S^n}A_D^g)}$. Next, for $A\in L^1(D,\wedge^1TD\otimes\mathfrak g)$ we denote
\begin{equation}\label{ovelinea}
 \overline{A}:=\sum_{k=1}^{n+1}\Psi_D^*dx_k \frac{1}{C_n}\int_{\mathbb S^n}\langle i^*_{\mathbb S^n}\Psi_D^*A,\Psi_D^*i^*_{\mathbb S^n}dx_k\rangle,
\end{equation}
where $C_n$ is a normalization constant such that $i^*_{\mathbb S^n}\overline{i^*_{\mathbb S^n}\Psi_D^*\alpha}=i^*_{\mathbb S^n}\Psi_D^*\alpha$ for any constant form $\alpha$. With notation \eqref{ovelinea} we define
\[
\widetilde{A^g_D}:=\widetilde{\pi(i^*_{\mathbb S^n}A_D^g)}+\ \overline{A^g}\ .
\]
%
\textbf{Step 4.} \textit{Estimates on the extended curvatures.} Note that $d \Psi_D^*dx_k=\Psi_D^*d^2x_k=0$, therefore by \eqref{ovelinea} we have $d\overline{A^g}=0$. Similarly we find $d(1-\pi_D)(\alpha)=0$ for general $1$-forms $\alpha$ on $\mathbb S^n$. Using this, analogously to \eqref{A-13} we obtain
\begin{eqnarray}\label{A-18}
\|d\widetilde{A_D^g}-\Psi_D^*\overline{F}\|^2_{L^2(\mathbb B^{n+1})}&=&\|d\widetilde{\pi_D(i^*_{\mathbb S^n}A_D^g)}-\Psi_D^*\overline{F}\|_{L^2(\mathbb B^{n+1})}\nonumber\\[3mm]
&\lesssim&\|d\pi_D(i^*_{\mathbb S^n}A_D^g)-i_{\mathbb S^n}^\ast \Psi_D^*\overline F\|_{L^2(\mathbb S^n)} \nonumber\\[3mm]
&=&\|i^*_{\mathbb S^n}(d A_D^g-\Psi_D^*\overline F)\|_{L^2(\mathbb S^n)}\ .
\end{eqnarray}
By \eqref{A-13} and the triangle inequality we obtain
\begin{multline}\label{A-19}
\|\widetilde{A_D^g}\wedge\widetilde{A_D^g}\|^{1/2}_{L^{\frac{n+1}{2}}(\mathbb B^{n+1})}\lesssim\|\widetilde{A_D^g}\|_{L^{n+1}(\mathbb B^{n+1})}\\[3mm]
\lesssim\|\widetilde{\pi_D(i^*_{\mathbb S^n}A_D^g)}- \Psi_D^*B\|_{L^{n+1}(B^{n+1})} + \|\Psi^*_DB\|_{L^{n+1}(\mathbb B^{n+1})} +\|\overline{A^g}\|_{L^{n+1}(\mathbb B^{n+1})}\\[3mm]
\lesssim\|\pi_D(i^*_{\mathbb S^n}A_D^g) - i^*_{\mathbb S^n}\Psi_D^*B\|_{L^n(\mathbb S^n)}+ \|\Psi^*_DB\|_{L^{n+1}(\mathbb B^{n+1})} +\|\overline{A^g}\|_{L^{n+1}(\mathbb B^{n+1})}\\[3mm]
\lesssim \|i^*_{\mathbb S^n}A_D^g\|_{L^n(\mathbb S^n)} + \|(1-\pi_D)(i^*_{\mathbb S^n}A_D^g)\|_{L^n(\mathbb S^n)}  +\|\overline{A^g}\|_{L^{n+1}(\mathbb B^{n+1})}\\[3mm]
+ \|i^*_{\mathbb S^n}\Psi_D^*B\|_{L^n(\mathbb S^n)}+ \|\Psi^*_DB\|_{L^{n+1}(\mathbb B^{n+1})}\ .
\end{multline}
Similarly we find the two lines in the following estimate, while the others are deduced by triangle inequality and by the gauge invariance of $F$'s norm:
\begin{eqnarray}\label{A-20}
\|d\widetilde{A_D^g}\|_{L^{\frac{n+1}{2}}(\mathbb B^{n+1})}&=&\|d\widetilde{\pi(i_{\mathbb S^n}^*A_D^g)}\|_{L^{\frac{n+1}{2}}(\mathbb B^{n+1})}\nonumber\\[3mm]
&\lesssim&\|i_{\mathbb S^n}^*d A_D^g\|_{L^{\frac{n}{2}}(\mathbb S^n)}+\|\Psi^*_DB\|_{L^{\frac{n+1}{2}}(\mathbb B^{n+1})}\nonumber\\[3mm]
&\lesssim&\|i_{\partial D}^*F_{A_D}\|_{L^{\frac{n}{2}}(\mathbb S^n)} + \|i^*_{\mathbb S^n}A_D^g\|_{L^n(\mathbb S^n)}^2+ \|\Psi^*_DB\|_{L^{\frac{n+1}{2}}(\mathbb B^{n+1})}.\quad\quad
\end{eqnarray}
To estimate the $\Psi_D^*B$-terms in \eqref{A-19} and \eqref{A-20} we use \eqref{A-14}, obtaining
\[
\vertii{i^*_{\mathbb S^n}\Psi_D^*}_{L^n(\mathbb S^n)} + \vertii{\Psi_D^*B}_{L^{n+1}(\mathbb B^{n+1})}\lesssim \verti{\overline{F}}\quad \mbox{and}\quad \vertii{\Psi^*_DB}_{L^{\frac{n+1}{2}}(\mathbb B^{n+1})}\lesssim \verti{\overline{F}}.
\]
By \eqref{bound1-pi} with $p=n$ combined with \eqref{A-1} and \eqref{psipresnorm}, we find
\[
\|(1-\pi_D)(i^*_{\mathbb S^n}A_D^g)\|_{L^n(\mathbb S^n)}+\|\overline{A^g}\|_{L^{n+1}(\mathbb B^{n+1})}\lesssim\vertii{i^*_{\partial D}F}_{L^{\frac{n}{2}}(\partial D)}+\vertii{i^*_{\partial D}A^g}_{L^n(\partial D)},
\]
whereas the remaining terms appear directly in \eqref{A-1}. Thus from \eqref{A-19} and \eqref{A-20}, respectively, we obtain the following two bounds:
\begin{eqnarray}
\vertii{\widetilde{A^g}\wedge\widetilde{A^g}}_{L^{\frac{n+1}{2}}(\mathbb B^{n+1})} &\lesssim&\vertii{i^*_{\partial D} A^g}_{L^n(\partial D)}^2 + \vertii{i^*_{\partial D}F}_{L^{\frac{n}{2}}(\partial D)}^2 + \verti{\overline F}^2,\label{A-19-1}\\
\vertii{d\widetilde{A^g_D}}_{L^{\frac{n+1}{2}}(\mathbb B^{n+1})}&\lesssim&\vertii{i^*_{\partial D} A^g}_{L^n(\partial D)} + \vertii{i^*_{\partial D}F}_{L^{\frac{n}{2}}(\partial D)} + \verti{\overline F},\label{A-20-1}
\end{eqnarray}
Summing \eqref{A-9}, \eqref{A-18} and \eqref{A-19-1} we have
\begin{multline}\label{A-21}
\|d\widetilde{A^g_D}+\widetilde{A^g_D}\wedge\widetilde{A^g_D}-\Psi_D^*\overline{F}\|^2_{L^2(\mathbb B^{n+1})}\lesssim |\overline F|\|i_{\partial D}^\ast (A - \overline{A})\|_{L^2(\partial D)} \\[3mm]
+\  \|i_{\partial D}^\ast (F-\overline{F})\|_{L^2(\partial D)}+ \|i^*_{\partial D}A^g\|_{L^n(\partial D)}^2 +\ \|i^*_{\partial D}F\|_{L^{n/2}(\partial D)}^2 + |\overline{F}|^2\ ,
\end{multline}
which will help to prove \eqref{AA} in a later step. Next, we perform the analogous boud where instead of \eqref{A-9} we use \eqref{A-20-1}, and we obtain
\begin{eqnarray}\label{A-21-1}
\|d\widetilde{A^g_D}+\widetilde{A^g_D}\wedge\widetilde{A^g_D}\|_{L^{\frac{n+1}{2}}(\mathbb B^{n+1})}&\lesssim&\|i^*_{\partial D}A^g\|_{L^n(\partial D)}+\|i^*_{\partial D}F\|_{L^{n/2}(\partial D)} +\ |\overline{F}|\nonumber \\
&&+\|i^*_{\partial D}A^g\|_{L^n(\partial D)}^2+\|i^*_{\partial D}F\|_{L^{n/2}(\partial D)}^2 +\ |\overline{F}|^2\nonumber\\
&\lesssim&\|i^*_{\partial D}A^g\|_{L^n(\partial D)}+\|i^*_{\partial D}F\|_{L^{n/2}(\partial D)}+\verti{\overline{F}},\quad\quad
\end{eqnarray}
where due to hypothesis \eqref{Fgood}, we were able to absorb the second line into the first.

\medskip

\textbf{Step 5.} \textit{Correcting the restriction on the boundary.} Extend now $g$ harmonically in $\mathbb B^{n+1}$ and denote by $\hat{g}$ this extension. Using \eqref{A-2} together with classical elliptic estimates, we find
\begin{multline}\label{A-22}
\|\hat{g}^{-1}(\Psi_D^*\overline{F})\hat{g}-\Psi_D^*\overline{F}\|_{L^2(\mathbb B^{n+1})}\\[3mm]
\lesssim\ |\overline{F}|\ \|\hat{g}-id\|_{L^2(\mathbb B^{n+1})}\, \lesssim\, |\overline{F}|\ \|i_{\partial D}^\ast (A -\overline{A})\|_{L^2(\partial D)}\ .
\end{multline}
Combining \eqref{A-21} and \eqref{A-22} gives
\begin{eqnarray}\label{A21}
\lefteqn{\|d\widetilde{A^g_D}+\widetilde{A_D^g}\wedge\widetilde{A_D^g}-\hat{g}^{-1}(\Psi_D^*\overline{F})\hat{g}\|_{L^2(\mathbb B^{n+1})} \lesssim  \|i_{\partial D}^\ast (F-\overline{F})\|_{L^2(\partial D)}}\nonumber\\[3mm]
&&+\ |\overline F|\|i_{\partial D}^\ast (A - \overline{A})\|_{L^2(\partial D)} + \|i^*_{\partial D}A^g\|_{L^n(\partial D)}^2 +\ \|i^*_{\partial D}F\|_{L^{n/2}(\partial D)}^2 + |\overline{F}|^2.\quad\quad
\end{eqnarray}
Denote
\begin{equation}\label{hatadef}
\hat{A}_D:=(\widetilde{A_D^g})_{\hat g^{-1}}.
\end{equation}
With this notation one has $F_{\hat{A}_D}=\hat{g}\, F_{A^g_D}\, \hat{g}^{-1}$, and after applying \eqref{psipresnorm} to pass from $\mathbb B^{n+1}$ back to $D$, we see that \eqref{A21} implies the estimate \eqref{AA}. By the same token, estimate \eqref{Fbetter} follows from \eqref{A-21-1}. Moreover we have 
\begin{equation}\label{hatg}
\hat A^{\hat g}_D=\tilde A^g_D
\end{equation}
in the notations \eqref{hatadef} and $\hat A^{\hat g}$ harmonic, and thus is smooth in the interior of $\mathbb B^{n+1}$. Note that
\begin{equation}\label{restrad}
i^*_{\mathbb S^n}\hat A_D=i^*_{\mathbb S^n}(\tilde A_D^g)_{\hat g^{-1}}=(i^*_{\mathbb S^n}\tilde A_D^g)_{\hat g^{-1}}=i^*_{\mathbb S^n}A_D\ .  
\end{equation}
Define 
\begin{equation}\label{hata}
\hat A:=(\Psi_D^{-1})^*\hat A_D.
\end{equation}
We observe that since $\hat A_D$ has $L^{n+1}$ bounds \eqref{A-19}, by the bound on $\overline{A^g}$, and by the Lipschitz bounds \eqref{psipresnorm}, it follows that $\hat A$ has $L^{n+1}$ bounds as well, as requested in \eqref{FbetterA}.

\medskip

By \eqref{psipresnorm} we also obtain that the distributional expression $F_{\hat A}=d\hat A+\hat A\wedge \hat A$ is well defined and $F_{\hat A}\in L^2$.

\medskip

\textbf{Step 6.} \textit{Verification of \eqref{AAA}.} We now use the definition \eqref{hatadef} of $\hat A_D$, as well as the estimates $\|d\hat g\|_{L^2(\mathbb B^{n+1})}\lesssim\|dg\|_{L^2(\mathbb S^n)}$ together with \eqref{A-2}, and then the bounds \eqref{A-1} on $i^*_{\mathbb S^n}A^g$ from Theorem \ref{coulstrange}. Note also that if $g\in G$ and $M\in\mathfrak{g}$ then $\verti{g^{-1}Mg - M}\lesssim\verti{g-id}\verti{M}$. We thus obtain the following chain of inequalities:
\begin{eqnarray}\label{verifaaa1}
\lefteqn{\|\hat A_D - \Psi_D^*\overline{A}\|_{L^2(\mathbb B^{n+1})}\lesssim\|d\hat g\|_{L^2(\mathbb B^{n+1})} + \vertii{\verti{\hat g -id}\verti{\widetilde{A_D^g} -\Psi_D^*\overline{A}}}_{L^2(\mathbb B^{n+1})}}\nonumber\\
&&+\vertii{\verti{\hat g - id}\verti{\Psi_D^*\overline{A}}}_{L^2(\mathbb B^{n+1})}+\vertii{\widetilde{A_D^g}-\Psi_D^*\overline{A}}_{L^2(\mathbb B^{n+1})}\nonumber\\
 &\lesssim&\|i_{\partial D}^\ast (A -\overline{A})\|_{L^2(\partial D)}+\|\overline{A^g} - \Psi_D^*\overline{A}\|_{L^2(\mathbb B^{n+1})}\nonumber\\
 &&+\|i_{\partial D}^\ast (A -\overline{A})\|_{L^2(\partial D)}\|\pi_D(i^*_{\mathbb S^n}A_D^g) - i^*_{\mathbb S^n}\Psi_D^*B\|_{L^2(\mathbb S^n)}\nonumber\\
 &&+ \|i_{\partial D}^\ast (A -\overline{A})\|_{L^2(\partial D)}\left(\|\Psi^*_DB\|_{L^{n+1}(\mathbb B^{n+1})} +\vertii{\psi_D^*B}_{L^{n+1}(\mathbb B^{n+1})}\right)\ .
\end{eqnarray}
In the last two lines from \eqref{verifaaa1}, we recognize the same expression as in the third line of \eqref{A-19}, thus we can use the same reasoning that leads from \eqref{A-19} to \eqref{A-19-1} to write
\begin{eqnarray}\label{verifaaa1-2}
\lefteqn{\|\hat A_D - \Psi_D^*\overline{A}\|_{L^2(\mathbb B^{n+1})}\lesssim\|i_{\partial D}^\ast (A -\overline{A})\|_{L^2(\partial D)}+\|\overline{A^g} - \Psi_D^*\overline{A}\|_{L^2(\mathbb B^{n+1})}}\nonumber\\
&&+\|i_{\partial D}^\ast (A -\overline{A})\|_{L^2(\partial D)}\left(\vertii{i^*_{\partial D} A^g}_{L^n(\partial D)} + \vertii{i^*_{\partial D}F}_{L^{\frac{n}{2}}(\partial D)} + \verti{\overline F}\right).
\end{eqnarray}
The remaining estimate we need is the one below, which follows from the definition of $\overline{A^g}$. We will use also the fact that $(1-\pi_D)(g^{-1}dg)=0$ and \eqref{bound1-pi} for $p=1$ together with the H\"older inequality and \eqref{A-2}, and \eqref{psipresnorm}.
\begin{eqnarray}\label{barad}
\|\overline{A^g} - \Psi_D^*\overline{A}\|_{L^2(\mathbb B^{n+1})}&\lesssim_\Psi& \|(1-\pi_D)(i^*_{\mathbb S^n}(A_D^g - \Psi^*\overline{A}))\|_{L^2(\mathbb S^n)}\nonumber\\[3mm]
&=&\|(1-\pi_D)(i^*_{\mathbb S^n}(g^{-1}\Psi_D^*Ag - \Psi_D^*\overline{A}))\|_{L^\infty(\mathbb S^n)}\nonumber\\[3mm]
&\lesssim&\|i^*_{\partial D}(A-\overline{A})\|_{L^2(\partial D)}\|i^*_{\partial D}A\|_{L^2(\partial D)} \nonumber\\[3mm]
&&+ \|(1-\pi_D)(i^*_{\mathbb S^n}(\Psi^*_DA - \Psi^*_D\overline{A})\|_{L^\infty(\mathbb S^n)}\ .
\end{eqnarray}
Regarding the first term in \eqref{barad}, the factor $\|i^*_{\partial D}A\|_{L^2(\partial D)}$ is bounded by $\epsilon_0$ by hypothesis, thus we may absorb the first term into \eqref{verifaaa1}. To estimate the second term we use the fact that $\Psi_D^*$ is in fact linear on $1$-forms (contrary to the case of $k$-forms for $k\ge 2$), i.e. $\Psi^*_DA - \Psi^*_D\overline{A}=\Psi^*_D(A -\overline{A})$, and thus we may use again \eqref{bound1-pi} and obtain the strong bound
\[
\|(1-\pi_D)(i^*_{\mathbb S^n}(\Psi^*_DA - \Psi^*_D\overline{A}))\|_{L^\infty(\mathbb S^n)}\lesssim_\Psi\|i^*_{\partial D}(A-\overline{A})\|_{L^1(\partial D)}\ .
\]
Combining this with \eqref{barad} and inserting then into \eqref{verifaaa1-2}, we obtain 
\begin{eqnarray}\label{verifaaa1-3}
\lefteqn{\|\hat A_D - \Psi_D^*\overline{A}\|_{L^2(\mathbb B^{n+1})}}\nonumber\\
&\lesssim&\|i_{\partial D}^\ast (A -\overline{A})\|_{L^2(\partial D)}\left(1+\vertii{i^*_{\partial D} A^g}_{L^n(\partial D)} + \vertii{i^*_{\partial D}F}_{L^{\frac{n}{2}}(\partial D)} + \verti{\overline F}\right),
\end{eqnarray}
and by using hypothesis \eqref{Fgood} we find the bound \eqref{AAA}, as desired.
\end{proof}
%
\section{The space $\widetilde{\mathcal A}_G([-1,1]^n)$ and the setup for tracking energy concentration}\label{sec:goodbad}
%
\subsection{Local model for the space of weak connections}
%
We prepare now to define (in Definitions \ref{def:loc} and \ref{def:locmod} below) a localized-in-space model $\widetilde{\mathcal A}_G([-1,1]^n)$ for our space $\mathcal A_G(M^n)$ for the case of $M=[-1,1]^n$. The intuition is that $[-1,1]^n$ models a chart on a general manifold $M^n$, and we orient it to follow the level-sets of the functions $f$ appearing in Definition \ref{def:weakconn}. Therefore in Definition \ref{def:locmod} below we only use coordinate functions as slicing functions $f$. Our main results will be first proved in this setting in order to make the proofs clearer, and then extended to a general setting in Section \ref{sec:global}.

\medskip

Let $1\le i\le n$ be an integer and denote 
\[
H(k,t):=\{(x_1,\ldots, x_n)\in[-1,1]^n:\ x_k=t\}\ .
\]
We then consider the natural coordinates 
\[
i_{k,t}:[-1,1]^{n-1}\to H(k,t),\quad i_{H(k,t)}(x_1,\ldots,x_{n-1}):=(x_1,\ldots,x_k,t,x_{k+1},\ldots,x_{n-1})\ .
\]
More generally, for the case of $k$-dimensional coordinate subspaces we proceed as follows. Let $I=\{i_1,\ldots,i_k\}$ where we used the ordering of indices $1\le i_1<i_2<\cdots<i_k\le n$. Then for a $k$-ple of real numbers $T=(t_{i_1},\ldots,t_{i_k}), t_i\in[-1,1]$ indexed by $I$, we denote
\[
H(I,T):=\{(x_1,\ldots, x_n)\in[-1,1]^n:\ \forall i\in I,\ x_i=t_i\}.
\]
The parameterization $i_{H(I,T)}:[-1,1]^{n-k}\to H(I,T)$ of $H(I,T)$ will then be given by
\[
i_{H(I,T)}(x_1,\ldots, x_k):=(y_1,\ldots,y_n) \quad \text{ with }\quad y_i:=\left\{\begin{array}{ll} x_{j_\alpha}&\text{ if }i=j_\alpha\in J\\[3mm]t_{i_\beta}&\text{ if }i=i_\beta\in I\end{array}\right. ,
\]
in which we used the ordering $1\le j_1<j_2<\cdots<j_{n-k}\le n$ of the indices in $J:=\{1,\ldots,n\}\setminus I$.

\medskip

We now pass to define the space $\widetilde{\mathcal A}_G$. 
\begin{definition}\label{def:loc} Let $n=4$. Then we define $\widetilde{\mathcal A}_G([-1,1]^4)$ as the set of $A\in L^2([-1,1]^4, \wedge^1\mathbb R^4, \mathfrak g)$ such that the following properties hold:
\begin{subequations}\label{ag4d}
\begin{equation}\label{ag4d-1}
\int_{[-1,1]^4}|F_A|^2<+\infty
\end{equation}
\begin{equation}\label{ag4d-2}
\forall B\Subset [-1,1]^4\text{ open }, \exists g_B\in W^{1,2}(B,G)
\mbox{ s.t. }\ A^{g_B}\in L^4(B, \wedge^1\mathbb R^4, \mathfrak g).
\end{equation}
\end{subequations}
\end{definition}
\begin{definition}[Local model of weak connections, $n\ge 5$]\label{def:locmod}
We  define the space $\widetilde{\mathcal A}_{G}([-1,1]^n)$ of $L^2$ weak connections on singular bundles over $[-1,1]^n$ to be composed of all $A\in L^2([-1,1]^n, \wedge^1\R^n\otimes\mathfrak g)$ such that the following hold:
\begin{subequations}\label{eqdefag}
\begin{equation}\label{eqdefag-1}
F_A\stackrel{\mathcal D'}{=}dA+A\wedge A\in L^2,
\end{equation}
\begin{equation}\label{eqdefag-2}
  \forall k=1,\ldots,n,\,\text{ a.e. }t\in[-1,1],\ i^*_{H(k,t)}A\in\bar{\mathcal A_G}([-1,1]^{n-1})
\end{equation}
\end{subequations}
\end{definition}
We now note some important facts concerning the above definitions.
\begin{rmk}[slicing only by $4$-planes]\label{rmk:locmodcompare}
By expanding the inductive condition \eqref{eqdefag-2} of Definition \ref{def:locmod}, we may replace it by 
\begin{equation}\label{eqdefag-2bis}
 \forall I\subset\{1,\ldots,n\}, \# I=n-4,\,\text{ a.e. }T\in[-1,1]^I,\ i^*_{H(I,T)}A\in \widetilde{\mathcal A}_G([-1,1]^4)\ ,
\end{equation}
and we note that this condition would become equivalent to the one from Definition \ref{def:weakconn} for $M^n=[-1,1]^n$ if we were to replace the class $f\in C^\infty([-1,1]^n,\mathbb R^{n-4})$ by the smaller one given just by subsets of the coordinates:
\begin{equation}\label{f-coord}
\mathcal C_{n,n-4}:=\left\{f:[-1,1]^n\to \mathbb R^{n-4}:\ \exists I\subset\{1,\ldots,n\}, \# I=n-4,\ f(x_1,\ldots,x_n)=(x_i)_{i\in I}\right\}.
\end{equation}
\end{rmk}
\begin{rmk}[About $L^4$ and $W^{1,2}$]
As a consequence of the gauge extraction theorem \ref{coulstrange}, condition \eqref{ag4d-2} is equivalent to the following more classical condition (present also in \cite{PRym}):
\begin{equation}\label{ag4d-2bis}
\forall B\Subset[-1,1]^4\text{ open }\exists g_B\in W^{1,2}(B,G)\text{ s. t. } A^{g_B}\in W^{1,2}(B, \wedge^1\mathbb R^4, \mathfrak g).
\end{equation}
The equivalence of condition \eqref{ag4d-2} and \eqref{ag4d-2bis} under the condition \eqref{ag4d-1} in $4$ dimensions can be proved as follows. First note that the proof of our gauge extraction theorem \ref{coulstrange} for $\pi=0$ and $n=4$ remains valid in case we replace $\mathbb S^4$ by a small ball $B\subset [-1,1]^4$ and provides a local gauge $g_B$ in which $\|A^{g_B}\|_{L^4(B)}\le \|F\|_{L^2(B)}=\|dA^{g_B} + A^{g_B}\wedge A^{g_B}\|_{L^2(B)}, d^*A^g=0$ thus $\|A^{g_B}\|_{W^{1,2}}\le C (\|F\|_{L^2} +\|A^{g_B}\|_{L^4}^2)$ if $\|F\|_{L^2(B)}\le \epsilon_0$. Then the gauge-patching reasoning similar to the one of \cite{PRym}'s compactness result (H) allows to prove the fact that such good gauges can be patched over finite unions of small-energy balls covering a given compact, as desired. The existence of such covers follows by the fact that $F\in L^2$.
\end{rmk}
In direct analogy to \eqref{def_delta}, for $A,A'\in\widetilde{\mathcal A}_G([-1,1]^n)$ we define the pseudo-distance $\tilde\delta:=\tilde\delta_{\mathrm{conn}}$ by
\begin{equation}\label{tilde_delta}
\tilde\delta^2(A,A')=\tilde\delta_{\mathrm{conn}}^2(A,A'):=\sup_{f\in \mathcal C_{n,n-4}}\inf_{g:[-1,1]^n\to G}\int_{[-1,1]^n}\left|\left(dg + Ag-gA'\right)\wedge f^*\omega\right|^2\frac{\mathrm{d}vol}{\left|f^*\omega\right|},
\end{equation}
which only differs from \eqref{def_delta} by the fact that we have replaced the constraint $f\in C^\infty([-1,1]^n,\mathbb R^{n-4})$ by $f\in \mathcal C_{n,n-4}$. We analogously can define a distance $\tilde\delta_{\mathrm{curv}}(F,F')$ as in \eqref{def_delta_curv} between curvature forms $F,F'\in L^2\curvforms{[-1,1]^n}$.
%
\subsection{Choosing cubeulations}\label{goodcubeulation}
%
To choose well-behaved cubeulations we base ourselves mainly on Fubini's theorem. We proceed as follows:
\begin{itemize}
\item Fix a small scale $r>0$ which will be the size of the cubes used in our cubeulation.
\item For $i\in\{1,\ldots,n\}$ and $t\in[0,r[$, the family of coordinate hyperplanes inside $[-1,1]^n$ is denoted as follows:
\[
\mathcal F_{r,i,t}:=\left\{H(i,t'):\ t'\in(r\mathbb Z + t)\cap[-1,1]\right\}\ .
\]
\item For $I\subset \{1,\ldots,n\}$ with $\#I=n-k$ and $t_I\in[0,r[^I$ we parameterize $k$-dimensional cubes as follows:
\[
\mathcal F_{r,I,t_I}:=\left\{\cap_{i\in I}H(i,t'_i):\ \forall i\in I\ H(i,t'_i)\in\mathcal F_{r,i,t_i}\right\}\ .
\]
\item For $t=(t_1,\ldots,t_n)\in[0,r[^n$ and $\alpha=(\alpha_1,\ldots, \alpha_n)\in \mathbb Z^d, i\in\{1,\ldots,n\}$, we define the cube 
\[
C_{r,t,\alpha}:= t+ r\alpha + [0,r]^n\ .
\]
\item Corresponding to the subdivision of $[-1,1]^n$ in cubes from the family $\{C_{r,t,\alpha}\}_\alpha$ we denote by $\mathcal C_{r,t}$ the polyhedral complex generated by the following under intersection:
\[
P_{r,t}:=\left\{C_{r,t,\alpha}:\ \alpha\in \mathbb Z^d,\ C_{r,t,\alpha}\cap[-1,1]^n\neq\emptyset\}\right\}\ .
\]
Then we denote by 
\[
P^{(k)}_{r,t}
\]
the $k$-skeleton of $P_{r,t}$ and by 
\[
C^{(k)}_{r,t}
\]
the set of $k$-dimensional faces of cubes $C_{r,t,\alpha}$ contributing to $P^{(k)}_{r,t}$. More generally, if $\mathcal S$ is a subcomplex of $\mathcal C_{r,t}$ then we denote by $\mathcal S^{(k)}$ the set of $k$-dimensional faces of $\mathcal S$. We note that $\mathcal F_{r,I,t_I}\subset P^{(k)}_{r,t}$ if $k=\#I$ and if the $I$-coordinates of $t$ coincide with those of $t_I$.
\item For $C_\alpha\in P_{r,t}$ and $\omega\in L^1_{loc}([-1,1]^n,V)$ we define by superposition 
\[
\bar\omega_{C_\alpha}:=\frac{1}{|C_\alpha|}\int_{C_\alpha}\omega(x) dx\in V\ .
\]
This will be only used in the case where 
\[
V=\wedge^1\mathbb R^n\otimes\mathfrak{g}\quad\text{or}\quad V=\wedge^2\mathbb R^n\otimes\mathfrak{g}\ .
\]
Note that in order to simplify the proof later, in Section \eqref{sec:strapp} we will re-define the quantities $\overline{A}_{C_\alpha}$ and $\overline{F}_{C_\alpha}$ to denote a (slightly) different averaging.
\item If $\chi_{C_\alpha}:[-1,1]^n\to \{0,1\}$ is the function which equals $1$ on $C_\alpha$ and $0$ outside it, then corresponding to the complex $P_{r,t}$ we also define the piecewise constant $V$-valued function
\[
\bar \omega:=\sum_{C_\alpha\in P_{r,t}} \chi_{C_\alpha}(x) \bar \omega_{C_\alpha}\ .
\]
\end{itemize}
In the above notations, if no confusion arises we will often omit either one or both of the indices $t$ and $r$.

\medskip

\noindent Next, fixing the underlying connection and curvature forms $\omega=A$ or $\omega = F_A$ as in \eqref{eqdefag}, we want to find good choices of $t$ such that the skeleta defined above do not carry too much $L^2$-energy. This is done in the next proposition, which we state for general forms for clarity (cf. also \cite[Prop. 2.6]{PRym} for a particular case).
\begin{proposition}\label{prop:centergridball}
Let $0\le p\le n$ and $\omega\in L^2([-1,1]^n,\wedge^p\mathbb R^n\otimes \mathfrak g)$ and fix $\delta\in]0,1[$. There exists a decreasing function $o_{\omega,\delta}:[0,1]\to\mathbb R^+$ such that $\lim_{r\downarrow 0}o_{\omega,\delta}(r)=0$ with the following properties.

\medskip

For all $r>0$ there exist a subset $T_{r,\delta}(\omega)\subset [0,r]^n$ with $|T_{r,\delta}(\omega)|\ge\delta r^n$, a constant $C$ depending only on $\delta,n,\mathfrak g$ such that for all $p\le k\le n-1$ and for all $t\in T_{r,\delta}(\omega)$ all the restrictions $i^*_Q\omega$ appearing below are measurable and such that there holds:
\begin{subequations}\label{kskeleton}
\begin{equation}\label{kskeletonbound}
  r^{n-k}\sum_{C_\alpha\in P_{r,t}}\sum_{Q\in C_\alpha^{(k)}}\int_Q|i^*_Q\omega|^2\leq C_1 \int_{[-1,1]^n}|\omega|^2\ ,
\end{equation}
\begin{equation}
  \label{kskeletonaverage}
   r^{n-k}\sum_{C_\alpha\in P_{r,t}}\sum_{Q\in C_\alpha^{(k)}}\int_Q|i^*_Q(\omega-\bar \omega_{C_\alpha})|^2\leq o_{\omega,\delta}(r)\ .
\end{equation}
\end{subequations}
\end{proposition}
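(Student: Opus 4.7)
The proof will be based on Fubini's theorem together with a measure-theoretic pigeonhole (Chebyshev). The idea is to consider the left hand sides of \eqref{kskeletonbound} and \eqref{kskeletonaverage} as functions of $t\in[0,r]^n$, bound their averages, and then deduce the existence of a large set $T_{r,\delta}(\omega)\subset[0,r]^n$ on which both quantities are under control.

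\medskip

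\textbf{Step 1: Averaged bound for \eqref{kskeletonbound}.} For a fixed $I\subset\{1,\ldots,n\}$ with $|I|=n-k$, each $k$-face contributing to the inner sum lies on one of the coordinate $k$-planes $H(I,T)$ with $T\in t_I+r\mathbb Z^I$, and every such $k$-face is shared by at most $2^{n-k}$ adjacent cubes. Consequently
\[
\sum_{C_\alpha\in P_{r,t}}\sum_{Q\in C_\alpha^{(k)}}\int_Q|i^*_Q\omega|^2\;\le\; 2^{n-k}\sum_{I:\,|I|=n-k}\int_{\mathcal H_I(t_I)}|\omega|^2\,d\mathcal H^k,
\]
where $\mathcal H_I(t_I)$ denotes the union of the $k$-planes $H(I,T)$ intersected with $[-1,1]^n$. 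Using the coordinate splitting $x=(x_I,x_J)$ and summing the discrete sum defining $\mathcal H_I(t_I)$ together with the integral over $t_I\in[0,r]^I$, a direct computation gives
\[
\int_{[0,r]^I}\int_{\mathcal H_I(t_I)}|\omega|^2\,d\mathcal H^k\,dt_I\;=\;\int_{[-1,1]^n}|\omega|^2\,dx,
\]
so that averaging the quantity in \eqref{kskeletonbound} over $t\in[0,r]^n$ (of measure $r^n$) yields a bound of the form $C(n)\,\|\omega\|_{L^2}^2$. Markov's inequality then produces, for each $k$, a set of bad $t$ whose measure is a small fraction of $r^n$ provided the threshold $C_1$ in \eqref{kskeletonbound} is large enough; intersecting the complements over $k=p,\ldots,n-1$ gives a set of $t$ of measure $\ge (1+\delta)/2\cdot r^n$ on which \eqref{kskeletonbound} holds.

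\medskip

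\textbf{Step 2: Smooth approximation for \eqref{kskeletonaverage}.} Fix a parameter $\epsilon>0$ and choose $\omega_\epsilon\in C^\infty$ with $\|\omega-\omega_\epsilon\|_{L^2}\le\epsilon$. The inequality $|a|^2\le 3|a_1|^2+3|a_2|^2+3|a_3|^2$ applied to the decomposition
\[
\omega-\bar\omega_{C_\alpha}\;=\;(\omega-\omega_\epsilon)\,+\,(\omega_\epsilon-\overline{(\omega_\epsilon)}_{C_\alpha})\,+\,\overline{(\omega_\epsilon-\omega)}_{C_\alpha}
\]
splits the sum in \eqref{kskeletonaverage} into three pieces. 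The first is controlled, after averaging over $t$, by the argument of Step 1 applied to $\omega-\omega_\epsilon$ and gives a bound of order $\epsilon^2$. The third piece is estimated pointwise in $t$ by Jensen's inequality, $|\overline{(\omega-\omega_\epsilon)}_{C_\alpha}|^2\le |C_\alpha|^{-1}\int_{C_\alpha}|\omega-\omega_\epsilon|^2$, and after summing yields again $O(\epsilon^2)$. The middle piece uses smoothness: on each cube $C_\alpha$ of side $r$, $|\omega_\epsilon-\overline{(\omega_\epsilon)}_{C_\alpha}|\le C\,r\,\|\nabla\omega_\epsilon\|_\infty$, which gives a contribution of order $r^2\,\|\nabla\omega_\epsilon\|_\infty^2\to 0$ as $r\to 0$ (independently of $t$). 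Taken together, the average over $t$ of the left hand side of \eqref{kskeletonaverage} is bounded by $C\epsilon^2+C(\omega_\epsilon)\,r^2$. Letting $\epsilon=\epsilon(r)\downarrow 0$ slowly enough so that $C(\omega_{\epsilon(r)})\,r^2\to 0$ defines the function $o_{\omega,\delta}(r)$.

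\medskip

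\textbf{Step 3: Combining via Chebyshev.} Another application of Markov's inequality to the bound obtained in Step 2 produces a set of $t$ of measure $\ge (1+\delta)/2\cdot r^n$ on which \eqref{kskeletonaverage} holds with the chosen $o_{\omega,\delta}(r)$. Intersecting with the set from Step 1 yields $T_{r,\delta}(\omega)$ of measure $\ge\delta r^n$. The main subtlety, which is mild but needs attention, is that $\bar\omega_{C_\alpha}$ depends on $t$ through the choice of cube containing the point; this is the reason we must expand $\omega-\bar\omega_{C_\alpha}$ using the triangle inequality rather than trying to commute the average over $t$ with the subtraction of $\bar\omega_{C_\alpha}$. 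Boundary corrections from the fact that the cubes near $\partial[-1,1]^n$ may be truncated are absorbed harmlessly into the constants, since they affect only a negligible proportion of the mass.
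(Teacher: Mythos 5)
Your proposal is correct and follows essentially the same route as the paper: Fubini plus Chebyshev over the translation parameter for \eqref{kskeletonbound}, and for \eqref{kskeletonaverage} the same three-term decomposition $\omega-\bar\omega_{C_\alpha}=(\omega-\omega_\epsilon)+(\omega_\epsilon-\overline{(\omega_\epsilon)}_{C_\alpha})+\overline{(\omega_\epsilon-\omega)}_{C_\alpha}$ handled respectively by the Step-1 argument applied to $\omega-\omega_\epsilon$, a $C^1$ oscillation bound, and Jensen, followed by a diagonal choice $\epsilon=\epsilon(r)$. The only (immaterial) differences are that you run a second Markov step on the $t$-average where the paper restricts directly to $T(\omega)\cap T(\omega-\omega_\epsilon)$, and that the Fubini identity in Step 1 should read as an inequality against $\int|\omega|^2$ since slicing only sees the tangential components of $\omega$.
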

\begin{proof}
Note that because there are $n-k$ coordinates which are constant along any $k$-face, it follows that for any $Q\in P_{r,t}^{(k)}$ there holds (and the inequality may be non-sharp only for those $Q$ which are $r$-close to $\partial[-1,1]^n$)
\[
 \#\{C_\alpha\in P_{r,t}:\ Q\in C_\alpha^{(k)}\} \le 2^{n-k}\ .
\]
If $t_I\in [0,r]^I$ is the vector of $I$-indexed ordered coordinates of $t\in[0,r]^n$ then we have
\[
\sum_{C_\alpha\in P_{r,t}}\sum_{Q\in C_\alpha^{(k)}}\int_Q|i^*_Q\omega|^2=2^{n-k} \sum_{I:\#I=n-k}\int_{\mathcal F_{r,I,t_I}}|i_{F_{r,I,t_I}}^*\omega|^2:=2^{n-k}\sum_{\#I=n-k}\mathcal I_{I,t_I}(\omega) \ .
\]
Note that with the above notations if $\omega^I$ is the form obtained from $\omega$ by retaining only the terms $dx_{i_1}\wedge\cdots\wedge dx_{i_p}$ with $i_1,\ldots,i_p\in I$ then there holds
\[
\int_{t_I\in[0,r]^I} I_{I,t_I}(\omega) dt_I =\int_{[-1,1]^n}|\omega^I|^2:=\mathcal I_I(\omega)\le\int_{[-1,1]^n}|\omega|^2\ .
\]
By Chebychev's inequality we obtain that since $|[0,r]^I|=r^{n-k}$ there holds
\begin{eqnarray*}
|T_{I,\delta}(\omega)&:=&\left|\left\{t\in[0,r]^n:\ 2^{k-n}\frac{n!}{k!(n-k)!}\frac{\mathcal I_{I,t_I}(\omega)}{r^{n-k}}> C_1\mathcal I(\omega) \right\}\right|\\
&\le& 2^{n-k}\frac{n!}{C_1 k!(n-k)!}r^n:= C_2 r^n\ .
\end{eqnarray*}
Then by subadditivity we obtain
\[
\left|\bigcup_{\#I=n-k}T_{I,\delta}(\omega)\right|\le \sum_{\#I=n-k}|T_{I,\delta}(\omega)|\le \frac{n!}{k!(n-k)!}C_2r^n:= C_3 r^n.
\]
If we denote 
\[
T(\omega):=[0,r]^n\setminus \bigcup_{\#I=n-k}T_{I,\delta}(\omega)
\]
then we obtain \eqref{kskeletonbound} for all $t\in T(\omega)$ and 
\[
|T(\omega)|\ge (1-C_3)r^n\ ,
\]
which can be made arbitrarily close to $r^n$ by choosing $C_1$ large enough.

\medskip

Regarding \eqref{kskeletonaverage} we first note that by mollification for any $\epsilon>0$ we may obtain $\omega_\epsilon\in C^1([-1,1]^n,\wedge^p\mathbb R^n\otimes \mathfrak g)$ such that 
\begin{equation}\label{mollifest}
\|\omega_\epsilon-\omega\|_{L^2}^2\le \epsilon
\end{equation}
and then we find a set $T(\omega_\epsilon-\omega)$ as above. For $r>0$ and $t\in T(\omega)\cap T(\omega_\epsilon -\omega)$ we find that for all $C_\alpha\in P_{r,t}$ and all $Q\in C_\alpha^{(k)}$ there holds 
\begin{equation}\label{c1est}
\int_Q|i^*_Q(\omega_\epsilon - \overline{(\omega_\epsilon)}_{C_\alpha})|^2\le r^{k+2}\|\nabla\omega_\epsilon\|_{L^\infty}^2\ .
\end{equation}
Now we note that the relation
\[
R_{r,t}:=\{(Q, C_\alpha)\in P_{r,t}^{(k)}\times P_{r,t}:\ Q\in C_\alpha^{(k)}\}
\]
can be partitioned into $2^{n-k}$ relations $R_{r,t}^\beta, \beta\in\{1,\ldots,2^{n-k}\}$ such that for each such $R_{r,t}^\beta$ and for each $Q\in P_{r,t}^{(k)}$ there exists \emph{at most one} choice of $C_\alpha\in P_{r,t}$ such that $(Q, C_\alpha)\in R_{r,t}^\beta$. Let $\mathcal Q_\beta$ be the set of such $Q$ such that one such $C_\alpha$ exists. We then find that
\begin{equation}\label{subdivbeta}
\sum_{C_\alpha\in P_{r,t}}\sum_{Q\in C_\alpha^{(k)}}\int_Q|i^*_Q(\omega-\bar \omega_{C_\alpha})|^2 = \sum_\beta \sum_{Q\in \mathcal Q_\beta}\sum_{\{C_\alpha: (Q,C_\alpha)\in R_{r,t}^\beta\}}\int_Q|i^*_Q(\omega-\bar \omega_{C_\alpha})|^2\ .
\end{equation}
We find from \eqref{mollifest} via Jensen's inequality and since $|i^*_Q\gamma|^2\le |\gamma|^2$ if $\gamma$ is a constant form, that for each $\beta$ 
\begin{equation}\label{estbeta}
\sum_{Q\in \mathcal Q_\beta}\sum_{\{C_\alpha: (Q,C_\alpha)\in R_{r,t}^\beta\}}\int_Q|i^*_Q(\overline{(\omega_\epsilon)}_{C_\alpha}-\bar \omega_{C_\alpha})|^2 \le \epsilon\ .
\end{equation}
Now by the inequality $(a+b+c)^2\le 9(a^2+b^2+c^2)$ applied to the integrals over each $Q$ appearing above and using \eqref{mollifest} together with \eqref{kskeletonbound} for $\omega-\omega_\epsilon$, \eqref{c1est}, \eqref{estbeta} and \eqref{subdivbeta}, we obtain that for $t\in T(\omega)\cap T(\omega-\omega_\epsilon)$ there holds
\begin{eqnarray}
\sum_{C_\alpha\in P_{r,t}}\sum_{Q\in C_\alpha^{(k)}}\int_Q|i^*_Q(\omega-\bar \omega_{C_\alpha})|^2 &\le& 9\left[(2^{n-k} + 1)C_1r^{k-n}\epsilon + r^{k+2}\|\nabla\omega_\epsilon\|_{L^\infty}^2 \# R_{r,t}\right]\nonumber\\[3mm]
&\le& r^{k-n}\cdot 9\left[(2^{n-k} + 1)C_1\epsilon + 2^{n-k}r^2\|\nabla\omega_\epsilon\|_{L^\infty}^2 \right] \ ,\label{epsiloner}
\end{eqnarray}
where in the last inequality we estimate $\#R_{r,t}\le 2^{n-k}r^{-n}$ because $\#P_{r,t}\le r^{-n}$ and $\#C_\alpha^{(k)}\le 2^{n-k}$ for each $C_\alpha\in P_{r,t}$.

\medskip

We now fix $C_1$ such that $2C_3<1-\delta$. This ensures that 
\[
 |T(\omega)\cap T(\omega -\omega_\epsilon)|>\delta r^n\ .
\]
Consider now \eqref{epsiloner}: By fixing $\epsilon>0$ small and then choosing $r\lesssim \epsilon\|\nabla \omega_\epsilon\|_{L^\infty}^{-1}$, we find that the term multiplying $r^{k-n}$ on the right can be made arbitrarily small for small $r$, thus there exists $o_{\omega, \delta}(r)$ such that $o_{\omega,\delta}(r)\to 0$ as $r\downarrow 0$ and such that \eqref{kskeletonaverage} holds. We thus choose $T_{r,\delta}(\omega)=T(\omega)\cap T(\omega -\omega_\epsilon)$, and the properties \eqref{kskeleton} hold, as desired.
\end{proof}
%
\subsection{Good cubes and bad cubes}
%
Later on we will to apply Proposition \ref{thm:extension} on $r$-dilations of $k$-faces of our good cubeulation of scale $r$. Thus the hypotheses of the Proposition will have to hold for all $k\le n$, and the estimates \eqref{AA}, \eqref{AAA} should be fitting the same bounds as in the gauge extraction Theorem \ref{coulstrange} for $k+1$. We are lead to consider "good" those $k$-faces of our good cubeulation on which these iterative criteria are feasible. The fact that in Proposition \ref{thm:extension} the bounds for the replacements $\hat A, F_{\hat A}$ are controlled in terms of the ones for $i^*_{\partial D}A, i^*_{\partial D}F$ allows us to just require bounds once, in the starting dimension $4$.
\begin{definition}[good $k$-face]\label{goodkface}
Let $A,F$ be respectively a connection and a curvature form like in \eqref{eqdefag}. Fix a scale-$r$ good cubeulation $P_{r,t}$.

\medskip

We define all faces of dimension $3$ or lower to be good.

\medskip

Let $C\in P_{r,t}^{(k),}, k\ge 4$. We say that $C$ is a \emph{$\delta$-good $k$-face} if for each $n$-dimensional cube $C_\alpha:=C_{r,t,\alpha}\in P_{r,t}$ such that $C\in C_\alpha^{(k)}$, the following estimates hold for all $C'\in C^{(4)}$:
\begin{equation}\label{deltagood4}
\begin{array}{c}
\verti{\frac{1}{C'}\int_{C'}i^*_{C'}F}\le\delta r^{-2},\quad\verti{\frac{1}{C'}\int_{C'}i^*_{C'}A}\le\delta r^{-1}, \quad\int_{C'}|i^*_{C'}F|^2\le \delta,\\[3mm]
 \int_{C'}|i^*_{C'}(F-\overline{F}_{C_\alpha})|^2\le \delta,\quad \int_{C'}|i^*_{C'}(A-\overline{A}_{C_\alpha})|^2\le \delta r^{2}\ .
\end{array}
\end{equation}
If $C$ is not a $\delta$-good $k$-face then we call it a \emph{$\delta$-bad $k$-face}.
\end{definition}
Note that the above conditions are scale-invariant.

\medskip

The following direct consequence of the above definition and of Proposition \ref{prop:centergridball} will be used in order to allow a dominated convergence argument within the "rough approximation" step of our proof.
\begin{lemma}\label{goodballsaremany}
If $P_{r,t}$ is a good cubeulation for $A, F$ then the total number $N_\delta$ of $\delta$-bad $n$-faces satisfies for $r>0$ small enough, depending on $A,F$,
\begin{equation}\label{boundbadcubes}
N_\delta \lesssim \frac{\|F\|_{L^2([-1,1]^n)}^2}{\delta r^{n-4}} + \frac{\|A\|_{L^2([-1,1]^n)}^2}{\delta^2r^{n-2}}+ \frac{1}{\delta r^{n-4}}\ ,
\end{equation}
in particular the total volume of all bad $n$-cubes $r^n N_\delta$ vanishes as $r\to 0$.
\end{lemma}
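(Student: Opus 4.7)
\textbf{Proof plan for Lemma \ref{goodballsaremany}.} The plan is to apply Markov/Chebychev's inequality separately to each of the five scale-invariant conditions entering Definition \ref{goodkface}, using as input the two averaged bounds supplied by Proposition \ref{prop:centergridball}. Since those conditions are formulated on pairs $(C',C_\alpha)$ with $C'\in C_\alpha^{(4)}$, the counting will first be done at the level of such pairs and then converted to a count of bad $n$-cubes via the obvious combinatorial fact that a fixed $4$-face is contained in at most $2^{n-4}$ $n$-cubes.

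\smallskip

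First, I would apply Proposition \ref{prop:centergridball} twice, with $k=4$, once to $\omega=F\in L^2([-1,1]^n,\wedge^2\mathbb R^n\otimes\mathfrak g)$ and once to $\omega=A\in L^2([-1,1]^n,\wedge^1\mathbb R^n\otimes\mathfrak g)$. Choosing $t$ in the intersection $T_{r,\delta}(F)\cap T_{r,\delta}(A)$ (of measure at least $(2\delta-1)r^n$ after adjusting the constant) yields, for both $\omega=F$ and $\omega=A$,
\[
r^{n-4}\sum_{C_\alpha}\sum_{C'\in C_\alpha^{(4)}}\int_{C'}|i^*_{C'}\omega|^2\le C\,\|\omega\|_{L^2([-1,1]^n)}^2,\qquad r^{n-4}\sum_{C_\alpha}\sum_{C'\in C_\alpha^{(4)}}\int_{C'}|i^*_{C'}(\omega-\bar\omega_{C_\alpha})|^2\le o_{\omega,\delta}(r),
\]
with $o_{\omega,\delta}(r)\downarrow 0$ as $r\downarrow 0$. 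Next, for each of the five thresholds in \eqref{deltagood4} I would use a one-line Markov argument. For the pointwise averages, Jensen's inequality together with $|C'|=r^4$ gives $|\bar F_{C'}|^2\le r^{-4}\int_{C'}|i^*_{C'}F|^2$ and $|\bar A_{C'}|^2\le r^{-4}\int_{C'}|i^*_{C'}A|^2$, so the failure of the first two estimates in \eqref{deltagood4} forces $\int_{C'}|i^*_{C'}F|^2>\delta^2$ and $\int_{C'}|i^*_{C'}A|^2>\delta^2r^2$, respectively. Dividing the first bound above by the threshold then bounds the number of bad pairs for conditions (c) and (a) by $C\|F\|_{L^2}^2/(\delta r^{n-4})$ (the weaker denominator is absorbed into the constant for $\delta\le 1$), for condition (b) by $C\|A\|_{L^2}^2/(\delta^2 r^{n-2})$, and dividing the second bound by the threshold bounds the two averaging conditions by $o_{F,\delta}(r)/(\delta r^{n-4})$ and $o_{A,\delta}(r)/(\delta r^{n-2})$ respectively. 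For $r$ sufficiently small these last two are $\le C/(\delta r^{n-4})$ and $\le C\|A\|_{L^2}^2/(\delta^2 r^{n-2})$, matching the three summands of \eqref{boundbadcubes}.

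\smallskip

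Summing these counts of bad pairs and using $\#\{C_\alpha : C'\in C_\alpha^{(4)}\}\le 2^{n-4}$ to pass from bad pairs $(C',C_\alpha)$ to bad $n$-cubes gives \eqref{boundbadcubes}. The final assertion on the vanishing of $r^nN_\delta$ is immediate by multiplying through: the three summands become $r^4\|F\|_{L^2}^2/\delta$, $r^2\|A\|_{L^2}^2/\delta^2$, and $r^4/\delta$, each of which tends to $0$ as $r\to 0$. I expect the main subtlety, though not a real obstacle, to be the fact that the last two conditions in \eqref{deltagood4} genuinely depend on the enclosing $n$-cube $C_\alpha$ through $\bar F_{C_\alpha}$ and $\bar A_{C_\alpha}$, so one must be careful to do the Chebychev counting on pairs $(C',C_\alpha)$ and to rely on the second, $r$-dependent bound of Proposition \ref{prop:centergridball} (rather than on the first one alone) to control them; everything else is a direct Chebychev dissection.
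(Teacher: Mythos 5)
The paper offers no proof of this lemma -- it is asserted as a ``direct consequence'' of Definition \ref{goodkface} and Proposition \ref{prop:centergridball} -- and your Chebychev dissection of the five conditions in \eqref{deltagood4}, applied to the double sums over pairs $(C',C_\alpha)$ furnished by \eqref{kskeletonbound} and \eqref{kskeletonaverage} with $k=4$, is exactly the intended argument. The counts you derive for the third condition, for the condition on $\verti{\overline A_{C'}}$, and for the two averaged conditions (using that $o_{\omega,\delta}(r)\to 0$ for $r$ small, which is where the ``$r>0$ small enough depending on $A,F$'' enters) match the three summands of \eqref{boundbadcubes}, and the passage from bad pairs to bad $n$-cubes is immediate since each bad cube contributes at least one bad pair.

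One step is stated backwards. For the first condition of \eqref{deltagood4}, failure forces $\int_{C'}|i^*_{C'}F|^2>\delta^2$ (not $>\delta$), so Chebychev against \eqref{kskeletonbound} gives a count $\lesssim \|F\|_{L^2}^2/(\delta^2 r^{n-4})$, which for $\delta\le 1$ is \emph{larger} than $\|F\|_{L^2}^2/(\delta r^{n-4})$; the extra factor $1/\delta$ cannot be absorbed into the constant, since $\delta$ is a free parameter that is later sent to $0$. As written, your argument therefore proves \eqref{boundbadcubes} with $\delta^2$ in place of $\delta$ in the first denominator. This is a defect of bookkeeping rather than of substance: the weaker bound still gives $r^nN_\delta\to 0$ for fixed $\delta$, and in the only quantitative use of the lemma, \eqref{volume_Pbad}, replacing $r^4\|F\|^2/\delta^{(n)}$ by $r^4\|F\|^2/(\delta^{(n)})^2$ still vanishes under the hypothesis $r_\ell/\delta^{(n)}_\ell\to 0$. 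You should either state and prove the bound with $\delta^2$ in the first term, or note explicitly that the first condition of \eqref{deltagood4} contributes a term of that form.
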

%
%
\section{The strong approximation theorem}\label{sec:strapp}
%
In this section we prove that forms $F_A$ corresponding to $A\in\widetilde{\mathcal A}_{G}([-1,1]^n)$ can be strongly approximated up to gauge by \emph{smooth} curvatures on bundles with \emph{controlled} defects, i.e. by elements of the following space:
\begin{equation}\label{Rinf}
 \mathcal R^{\infty}([-1,1]^n):=\left\{
 \begin{array}{c}
  A\text{ connection form s.t. there exists}\\[3mm]
  \text{ a polyhedral set }\Sigma^{n-5}\subset [-1,1]^n,\\[3mm]
  \text{ s.t. }A=A_\nabla \text{ for a smooth connection}\nabla\\[3mm]
  \text{on some smooth }G\text{-bundle }E\to M^n\setminus\Sigma^{n-5}
 \end{array}
\right\}\ .
\end{equation}
In our construction the set $\Sigma^{n-5}$ is the union of a finite number of intervals of dimension $n-5$ parallel to the coordinate directions.
\begin{rmk}
Equivalent to having a smooth connection $\nabla$ as above is to have a smooth presheaf. This means that we have a good cover $\{U_\alpha\}$ of $M^n\setminus \Sigma^{n-5}$ and smooth connection forms $A_\alpha\in C^\infty\connforms{U_\alpha}$ related by smooth changes of gauges $g_{\alpha\beta}\in C^\infty\gauges{U_\alpha\cap U_\beta}$ such that $A_\alpha=\left(A_\beta\right)^{g_{\alpha\beta}}$.  See \cite{isobehigher} for more discussion on the presheaf point of view on classical connections, and Sections 1.2.4 and Appendix A of \cite{PRym} for the description of the above realization map in $4$-dimensions. By a reasoning completely analogous to \cite[App. A]{PRym} it is possible to obtain the existence of classical bundles based only on our locally-$L^n$-connection forms related by $W^{1,n}$-gauges, as obtained in \eqref{regulara} over our good cubes.
\end{rmk}
The result which we prove is the following:
\begin{theorem}\label{thm:strongapprox}
If $A\in\widetilde{\mathcal A}_G([-1,1]^n)$ then there exists a sequence of connection forms $A_j\in \mathcal R^\infty([-1,1]^n)$ with connection forms $F_j:=F_{A_j}=dA_j+A_j\wedge A_j$ such that there exist a sequence of gauge changes $g_j\in W^{1,2}([-1,1]^n,G)$ for which, as $j\to\infty$, there holds
\[
\|g_j^{-1}dg_j+g_j^{-1}A_j g_j - A\|_{L^2([-1,1]^n)}\to 0, \quad \|g_j^{-1}F_jg_j - F_A\|_{L^2([-1,1]^n)}\to 0\ .
\]
\end{theorem}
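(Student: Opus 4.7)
The plan is to build $A_j$ and $g_j$ by working on a sequence of ``good'' cubeulations of $[-1,1]^n$ at scales $r_j\to 0$, using the machinery of Sections \ref{sec:app1}--\ref{sec:goodbad}. Concretely, I would fix a small scale $r>0$, apply Proposition \ref{prop:centergridball} simultaneously to $\omega=A$ and $\omega=F_A$, and intersect the resulting positive-measure sets of admissible translations $t\in T_{r,\delta}(A)\cap T_{r,\delta}(F_A)$ with the a.e.-choice condition from \eqref{eqdefag-2bis} (applied coordinate direction by coordinate direction via Fubini). This produces a cubeulation $P_{r,t}$ whose 4-skeleton carries, on almost every 4-face $C$, a restriction $i^*_C A\in\widetilde{\mathcal A}_G([-1,1]^4)$ satisfying the coordinate-averaged bounds in \eqref{kskeleton}. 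By Lemma \ref{goodballsaremany}, the proportion of $\delta$-bad $n$-cubes is $o_r(1)$, so the total volume of the bad part vanishes as $r\to 0$.

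\textbf{Inductive extension on good faces.} The heart of the argument is an induction on the dimension $k=4,5,\dots,n$ of the skeleta, where at each step I construct a connection form $\hat A^{(k)}$ together with a gauge $\hat g^{(k)}$ on the union of good $k$-faces. For $k=4$, on each good 4-face $C$ the restriction $i^*_C A$ is a Sobolev connection in the sense of Definition \ref{def:loc}, and Theorem \ref{coulstrange} provides a Coulomb-type gauge $g_C\in W^{1,2}(C,G)$ such that $(i^*_C A)^{g_C}\in L^4(C)$ with the smallness guaranteed by the goodness condition \eqref{deltagood4}. For the inductive step from $(k-1)$ to $k$, consider a good $k$-face $D$. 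Its boundary $\partial D$ carries the already-constructed $\hat A^{(k-1)}$ in a gauge for which $i^*_{\partial D}\hat A^{(k-1)}\in L^{k-1}$ with small norm (this is exactly the improved regularity \eqref{FbetterA} from the previous step, rescaled); the hypothesis \eqref{Fgood} of Theorem \ref{thm:extension} is then satisfied with $\overline F=\overline F_D$, $\overline A=\overline A_D$, since the averaged cube-quantities are controlled by \eqref{deltagood4}. Applying Theorem \ref{thm:extension} on the bi-Lipschitz image of $\mathbb B^k$ given by $D$ produces $\hat A^{(k)}|_D$ and $\hat g^{(k)}|_D$ satisfying \eqref{bounds_FA}. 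The improved bound \eqref{Fbetter} gives exactly the $L^k$-smallness needed to feed into the next step of the induction.

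\textbf{Handling bad cubes and the singular set.} For faces that fail to be $\delta$-good at some step, the extension cannot be carried out; in this case, inside each bad $k$-face I place a codimension-$5$ ``skeleton'' (an $(n-5)$-dimensional polyhedron built from the subfaces of that cube) and extend only on the complement, using the same boundary-to-interior construction applied to a punctured cube. Since the extension is already performed on all lower good subfaces, and the only obstruction sits inside bad top-dimensional cubes, the union $\Sigma^{n-5}_r$ of these excluded polyhedra is a finite union of axis-parallel $(n-5)$-intervals; this realizes the structure required for membership in $\mathcal R^\infty([-1,1]^n)$ defined in \eqref{Rinf}. Setting $A_j:=\hat A^{(n)}$ with $r=r_j\downarrow 0$ and $\delta=\delta_j\downarrow 0$ slowly, and denoting by $g_j$ the patching of the $\hat g^{(n)}|_C$ into a global $W^{1,2}$ gauge (which is possible because transitions on shared good faces are controlled by \eqref{AA} and \eqref{AAA}), the cumulative error estimates \eqref{AA}, \eqref{AAA} summed over all good cubes telescope into an $L^2$ bound controlled by the right-hand sides of \eqref{kskeleton}, which vanish as $r_j\to 0$. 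The contribution from bad cubes is controlled using absolute continuity of $\int |A|^2+|F_A|^2$: by Lemma \ref{goodballsaremany} the total measure of the bad set goes to $0$, so the $L^2$ norms of $A$ and $F_A$ on that set vanish in the limit.

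\textbf{Main obstacle.} The technically delicate point is not the existence of the extension on a single good cube --- this is essentially a repeated application of Theorem \ref{thm:extension} --- but rather the global bookkeeping needed to patch the local gauges $\hat g^{(k)}$ across adjacent faces into a single $g_j\in W^{1,2}([-1,1]^n,G)$ while keeping the $L^2$ error small. Concretely, on a shared $(k-1)$-face between two good $k$-faces the two extensions need not produce the same boundary gauge, so one must compose with a transition gauge and verify that the pointwise error introduced is controlled by the same goodness bounds \eqref{deltagood4}; this uses the invariance of $|F|$ under conjugation together with the Coulomb-type estimates of Theorem \ref{coulstrange}. The second subtle point is ensuring that $\Sigma^{n-5}_j$ really has dimension $n-5$ and not higher: one must verify that the only truly unavoidable singular stratum sits in codimension $5$, which comes from the fact that $\widetilde{\mathcal A}_G([-1,1]^4)$ provides local gauges in dimension $4$ without punctures, so the inductive extension only ever fails when one first leaves dimension $4$.
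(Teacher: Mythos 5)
Your overall architecture matches the paper's: good cubeulation via Proposition \ref{prop:centergridball} applied to $A$ and $F_A$ simultaneously, inductive application of Theorem \ref{thm:extension} from $k$-faces to $(k+1)$-faces on the good skeleton, a separately treated bad part of vanishing volume, and a polyhedral singular set of codimension $5$. However, there are three genuine gaps. First, the gauge compatibility across adjacent faces, which you correctly flag as the main obstacle, is left unresolved: you propose to reconcile mismatched boundary gauges a posteriori by ``composing with a transition gauge,'' but there is no estimate showing this can be done with controlled error. The paper avoids the problem entirely by constructing the base-case gauges with a \emph{Dirichlet} boundary condition $g|_{\partial C^4}\equiv id$ (Proposition \ref{prop:gauge_dirichlet}, a variant of Uhlenbeck's theorem, not Theorem \ref{coulstrange} as you use), so that gauges on adjacent $4$-faces automatically agree on common $3$-faces; the boundary conditions $\hat g|_{\partial D}=g$ built into Theorem \ref{thm:extension} then propagate this compatibility up the skeleta, and Lemma \ref{lem:gluing_gauges} glues them with no transition maps at all. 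Without this (or an equivalent device) the global $g_j\in W^{1,2}$ does not exist as claimed.

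Second, your treatment of bad faces is not workable as stated: ``the same boundary-to-interior construction applied to a punctured cube'' cannot mean Theorem \ref{thm:extension}, whose smallness hypotheses \eqref{Fgood} fail on bad faces by definition. The paper instead uses a smoothened radial projection $\pi_{C^{k+1}}:C^{k+1}\setminus\{c^{k+1}\}\to\partial C^{k+1}$ and defines the extension as the pullback $\left(\pi_{C^{k+1}}\right)^*A_{\partial C^{k+1}}$ — a cone construction requiring no smallness and preserving $L^2$ bounds \eqref{L2_bound_bad_k}; the singular set then arises as the dual complex to the bad faces (Lemma \ref{lem:singular}), which is what guarantees it has dimension exactly $n-5$. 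Third, you omit the final mollification: the output of the iterative extension is only Lipschitz in cube interiors and merely $L^4$ in local gauges across faces, so it does not lie in $\mathcal R^\infty([-1,1]^n)$. The paper needs Lemma \ref{lem:mollif}, which mollifies in the compatible local $L^4$-gauges and shows the smoothed curvatures still converge in $L^2$ (this last point uses the $L^4$ integrability crucially, since $F$ is quadratic in $A$).
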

We will construct our approximants by successive extensions starting with the restriction of a starting $A\in \widetilde{\mathcal A}_G([-1,1]^n)$ to the support of the $4$-skeleton of a well-chosen cubeulation of $[-1,1]^n$.
Once the above result is proved, in order to pass to the situation of a general compact Riemannian manifold $(M^n,h)$, it will suffice to approximate the connections locally in coordinate charts, and to use the fact that the coordinate transformations of change of chart, or the presence of a $C^1$-regular metric $h$ do not alter the $W^{1,k}$-bounds which appear throughout the proof, and the final mollification away from $\Sigma^{n-5}$ can be performed in the same way. 

\medskip

We note here that the theory of Sobolev presheaves as in \cite{Isobecrit}, \cite{isobehigher} can be used in order to link the setting of weak connections treated here to that of classical connections, like explain in the appendix of our paper \cite{PRym} about the $5$-dimensional case. In particular the same reasoning shows that having smooth connection $1$-forms on local charts directly allows to create a principal bundle such that these $1$-forms are the differential-geometric connection forms of a connection on the associated bundle for the adjoint representation. We do not delve onto this topic in this paper, and we refer the interested reader to the above-cited works instead.

\medskip

We next set up the proof of Theorem \ref{thm:strongapprox}.
%
\subsection{Notations and framework}
%
For the whole proof, we will use a small parameter $\delta_0>$, whose choice will be precised during the proof, and will depend only on $n$ and $G$.
%
\subsubsection{Choice of a cubelation, good and bad cubes}\label{sec:choicecube}
%
We choose a cubeulation $P_{r,t}$ at scale $r>0$, such that
\begin{itemize}
\item $P_{r,t}$ satisfies \eqref{kskeleton} contemporarily for $\omega=A$ and for $\omega=F_A$,
\item all hyperplane families $\mathcal F_{r,I,t_I}$ with $\#I=n-4$ are composed exclusively of planes such that the good gauges as assumed in \eqref{eqdefag} exist. In particular we have a good $L^4$-gauge on all $4$-faces of the relative boundary of $5$-faces in $P^{(5)}_{r,t}$.
\item for any $4$-plane $H(I,T)$ which intersects some face of $P_{r,t}^{(4)}$, condition \eqref{eqdefag-2bis} holds, i.e. there exists a gauge $g(I,T)$ on $H(I,T)$ chosen, such that $\left(i^*_{H(I,T)}A\right)^{g(I,T)}$ is $L^4$.
\end{itemize}
In order to obtain the existence of such $P_{r,t}$, we first apply Proposition \ref{prop:centergridball} separately with the choices $\omega=A$ and $\omega=F_A$, and obtain good sets of parameters which we may denote $T_{r,\delta}^{A}$ and $T_{r,\delta}^{F_A}$, respectively. If $\delta>1/2$ then we find that $T_{r,\delta}^{A}\cap T_{r,\delta}^{F_A}$ has positive volume, and then any parameter $t$ in this intersection satisfies the properties \eqref{kskeleton} for both $A$ and $F_A$. Then, using the definition \ref{def:locmod} and Remark \ref{rmk:locmodcompare}, we find that some such good $t$ is such that each $4$-plane which enters the definition of $P_{r,t}^{(4)}$ also satisfies \eqref{eqdefag-2bis}.

\medskip

We then fix the cubeulation $P_{r,t}$ as above. We will call an element of $C^{(n)}_{r,t}$ a \textbf{good cube} provided it satisfies (for $k=n$, the above choice of $\delta_0>0$ and for our present weak connection form $A$) the conditions of Definition \ref{goodkface}, i.e. if \eqref{deltagood4} holds. Any $n$-cube from $C^{(n)}_{r,t}$ which is not good will be called a \textbf{bad cube}.
%
\subsubsection{Bilipschitz parameterizations in intermediate dimensions}\label{sec:parametriz}
%
We fix bilipschitz parametrizations
\begin{equation}\label{bilippar}
\Psi_k:\mathbb B^k\to [-1,1]^k\ .
\end{equation}
Then for each coordinate $k$-face $C_r\in P_{r,t}^{(k)}$ we will denote
\begin{equation}\label{rescalingtotheorigin}
C_r = \tau_{C_r}\circ\delta_r (C_1)\ ,\quad k=5,\ldots,n.
\end{equation}
where $\tau_{C_r}$ is a translation sending the origin to the center of $C_r$ and $\delta_r$ is a dilation by a factor of $r$. We then use a parameterization 
\[
\Psi_{C_1}=\Psi_k\circ R_{C_1}\text{, where }R_{C_1}\in SO(n)\ .
\]
The estimates in our proof will not depend on the precise choices of parameterisation effectuated at this stage, and only the Lipschitz constants of the intervening maps and of their inverses will be relevant.

\medskip

We may also assume that if $C_\alpha$ is a $k$-face of a cube $C_\beta$, i.e. $C_\alpha\in C_\beta^{(k)}$, then $\left.\Psi_{C_\beta}\right|_{C_\alpha}=\Psi_{C_\alpha}$. Denote by $\lambda_k$ the bi-lipschitz norm of $\Psi_k$.
%
\subsection{Proof of the approximation Theorem \ref{thm:strongapprox}}\label{sec:proofapprox}
%
The proof of Theorem \ref{thm:strongapprox} will proceed through the following steps:
\begin{enumerate}
\item We start with local gauges in which our connection is $L^4$-integrable on the $4$-skeleton $P_{t,r}$.
\item With a suitable choice of $\delta$, on the $\delta$-good $k$-dimensional faces, iteratively with respect to $k\ge4$, we extend the connection forms from the boundaries of $(k+1)$-dimensional cells to the interiors, via Theorem \ref{thm:extension}.
\item On the $\delta$-bad $k$-dimensional faces, we extend radially, again iteratively for $k\ge4$.
\item At the end of the extension we are able mollify our connections outside a $5$-dimensional polyhedral set (which is the support of the dual skeleton to the complex of bad cubes), providing the approximantion bounds as required in the statement of the theorem.
\end{enumerate}
%
\subsubsection{Step 1: $L^4$-connections locally on the $4$-skeleton}\label{sec:localconn}
%
We start with the following result, which we need for setting up the gauges defined on the faces on our skeleta:
\begin{proposition}[Controlled gauges with Dirichlet boundary datum]\label{prop:gauge_dirichlet}
Assume that $A\in L^2\connforms{\mathbb B^n}$ and that there exist $g\in W^{1,2}\gauges{\mathbb B^n}$ such that $A^g\in L^n\connforms{\mathbb B^n}$, and suppose that the curvature form $F_A$ satisfies $\vertii{F_A}_{L^{n/2}(\mathbb B^n)}<\epsilon_0$. Then there exists a gauge change $\tilde g\in W^{1,2}\gauges{\mathbb B^n}$ such that $A^{\tilde g}\in L^4\connforms{\mathbb B^n}$, $\tilde g|_{\partial \mathbb B^n}\equiv id$ and $\vertii{A^{\tilde g}}_{L^n(\mathbb B^n)}\lesssim \vertii{F_A}_{L^{n/2}(\mathbb B^n)}$.
\end{proposition}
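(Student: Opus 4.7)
The plan is to adapt the continuity-method proof of Theorem \ref{coulstrange} to the ball $\mathbb B^n$, with the Dirichlet boundary condition $\tilde g|_{\partial \mathbb B^n}\equiv id$ playing the role formerly played by the range constraint $g^{-1}dg\in \op{Im}(\pi)$. To this end I would introduce the Banach manifold
\[
 M_p^D := \{g\in W^{1,p}(\mathbb B^n, G) : g|_{\partial \mathbb B^n}\equiv id\},
\]
whose tangent space at the identity is $E_p^D := W^{1,p}_0(\mathbb B^n, \mathfrak g)$, together with the sets $\mathcal U^\epsilon_p$ and $\mathcal V^{\epsilon, C_p}_p$ defined analogously to \eqref{Uep}--\eqref{Vecp}; in $\mathcal V^{\epsilon, C_p}_p$ one demands the existence of $\tilde g\in M_p^D$ satisfying the standard Coulomb condition $d^*_{\mathbb R^n} A^{\tilde g}=0$ inside $\mathbb B^n$ together with the bounds $\|A^{\tilde g}\|_{L^q}\le C_q\|F_A\|_{L^{q/2}}$ for $q\in\{p,n\}$. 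The hypothesis that some $g\in W^{1,2}(\mathbb B^n,G)$ gauges $A$ into $L^n$ serves both to make sense of $A$ as an object amenable to the $L^p$-continuity method (by passing to the gauge $g$) and to guarantee that $\mathcal V^{\epsilon, C_p}_p$ contains the trivial gauge $\tilde g=id$ at $A=0$, initializing the continuity.

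The five-step scheme of Section 2 then transfers with only minor adjustments: (i) $\mathcal U^\epsilon_p$ is path-connected via $A_t(x):=tA(tx)$, $t\in[0,1]$, as in the sphere case; (ii) $\mathcal V^{\epsilon, C_p}_p$ is closed in $L^p$, since the Dirichlet condition passes to weak $W^{1,p}$-limits through the compact trace embedding $W^{1,p}(\mathbb B^n,G)\hookrightarrow L^p(\partial \mathbb B^n,G)$; (iii) openness for $p>n$ follows from the implicit function theorem applied to $N_A(g,\eta):=d^*_{\mathbb R^n}(g^{-1}dg+g^{-1}(A+\eta)g)$, whose linearization $H_A(v)=\Delta v+d^*[A,v]$ is invertible from $E_p^D$ to $W^{-1,p}$ because $\Delta:W^{1,p}_0\to W^{-1,p}$ is a Dirichlet isomorphism with bounded inverse by classical elliptic theory and $d^*[A,\cdot]$ has small operator norm whenever $\|A\|_{L^n}$ is small; (v) passage from $p>n$ to $p=n$ is carried out by the same approximation as in Step 5 of Theorem \ref{coulstrange}.

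The delicate point is (iv), namely the a priori bound $\|A^{\tilde g}\|_{L^n}\lesssim \|F_A\|_{L^{n/2}}$ \emph{without} the additive $\|A\|_{L^2}$ term that appears in \eqref{proppi-th3}. The Dirichlet condition $\tilde g|_\partial\equiv id$ forces $i^*_\partial d\tilde g=0$ tangentially, so that (interpreted in the trace sense provided by the interior elliptic regularity of $A^{\tilde g}$) one has $i^*_\partial A^{\tilde g}=i^*_\partial A$; coupled with $d^*A^{\tilde g}=0$ and Gaffney-type estimates on $\mathbb B^n$ with mixed boundary data, this should yield
\[
 \|A^{\tilde g}\|_{W^{1,n/2}(\mathbb B^n)}\lesssim \|F_A\|_{L^{n/2}(\mathbb B^n)} + \|A^{\tilde g}\|_{L^n(\mathbb B^n)}^2 + (\text{boundary term}),
\]
the quadratic piece being absorbable and Sobolev $W^{1,n/2}\hookrightarrow L^n$ producing the target bound. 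The main obstacle is precisely the boundary term: the trace of $A$ on $\partial \mathbb B^n$ is not a priori defined for general $A\in L^2(\mathbb B^n)$, and it must be shown controllable purely by $\|F_A\|_{L^{n/2}}$. I would handle this by a slicing argument on concentric spheres close to $\partial\mathbb B^n$, using that almost every slice inherits a Coulomb gauge from $g$ with $L^n$-control depending on $\|F_A\|_{L^{n/2}}$ via the Bianchi identity, and then passing to the limit $r\uparrow 1$; eliminating the residual $\|A^g\|_{L^n}$ dependence by a rescaling/contraction argument as in Step 4 of Theorem \ref{coulstrange} completes the estimate.
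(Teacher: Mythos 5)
Your overall scheme is the one the paper itself has in mind: Proposition \ref{prop:gauge_dirichlet} is justified there only by the remark that one reruns Uhlenbeck's continuity method (i.e.\ the proof of Theorem \ref{coulstrange}, with the range constraint on $g^{-1}dg$ replaced by the Dirichlet constraint) ``by treating the linearized operator between the so-defined spaces directly''. Your steps (i), (ii), (iii) and (v) are faithful transcriptions of that plan and are essentially fine; in particular $\Delta:W^{1,p}_0(\mathbb B^n,\mathfrak g)\to W^{-1,p}(\mathbb B^n,\mathfrak g)$ is a Dirichlet isomorphism and the perturbation $v\mapsto d^*[A,v]$ has small norm when $\|A\|_{L^n}$ is small, so the implicit function theorem applies as you say.

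The gap is exactly where you locate it, in Step 4, and it cannot be repaired by the slicing/Bianchi/rescaling device you sketch, because the obstruction is structural rather than technical. With $\tilde g\equiv id$ on $\partial\mathbb B^n$ the tangential trace of $A^{\tilde g}$ is pinned to that of $A$, so any Hodge--Gaffney estimate necessarily carries a boundary (equivalently, harmonic) contribution that $F_A$ does not see. Concretely, take $A=\lambda\,\xi\,dh$ with $\xi\in\mathfrak g$ fixed and $h$ a smooth harmonic function non-constant on $\partial\mathbb B^n$: then $F_A=0$ and $A^{g}=0$ for $g=\exp(-\lambda h\,\xi)$, so all hypotheses of the proposition hold, while every solution of $d\tilde g+A\tilde g=0$ has the form $\tilde g=\exp(-\lambda h\,\xi)\,c$ with $c\in G$ constant, which is never identically $id$ on $\partial\mathbb B^n$; hence no gauge with Dirichlet boundary datum can achieve $\vertii{A^{\tilde g}}_{L^n}\lesssim\vertii{F_A}_{L^{n/2}}=0$. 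The a priori bound must retain an additive term in $A$ (for instance $\|A\|_{L^2}$, or a trace norm of $A$ on $\partial\mathbb B^n$), exactly as in \eqref{proppi-th3}; your Step 4, which promises to absorb the boundary term purely into $\vertii{F_A}_{L^{n/2}}$, cannot be carried out as written, and the statement you are proving should be weakened accordingly before the continuity argument can close.
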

The above result is the same as the main result of Uhlenbeck \cite{Uhl2}, with the two differences that we work with $A$ of regularity $L^n$ rather than $W^{1,n/2}$ and that we impose on our gauges $g$ the Dirichlet boundary condition rather than the Neumann one. This can be directly implemented in the proof (as presented in \cite[Thm. IV.4]{Riv1}) by treating the linearized operator between the so-defined spaces directly, without further essential modifications.

\medskip

The application of Proposition \eqref{prop:gauge_dirichlet} gives the following result. Note that the term ``good cover'' means that the maximal number of sets from the cover that overlap at any given point is finite.
\begin{corollary}[Finding $L^4$-connections on $4$-faces]\label{cor:l4_on_4face}
Assume that $C^4$ is a $4$-face of our skeleton. Then exists a finite good cover $\{U_\alpha\}_\alpha$, of $C^4$ by sets $U_\alpha$ that are bi-lipschitz equivalent to $\mathbb B^4$ and gauge change maps $g_\alpha\in W^{1,2}\gauges{U_\alpha}$ such that for all $\alpha$ we have $i_{U_\alpha}^*A^{g_\alpha}\in L^4\connforms{U_\alpha}$, ${g_\alpha}|_{U_\alpha \cap\partial C^4}\equiv id$ and $\vertii{i^*_{U_\alpha}A^{g_\alpha}}_{L^4(U_\alpha)}\lesssim \vertii{i^*_{U_\alpha}F_A}_{L^2(U_\alpha)}$.

\medskip

Moreover if $C^4$ is a good cube, then the above holds already for the trivial cover formed by only $C^4$ itself.
\end{corollary}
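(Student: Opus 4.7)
The plan is to invoke Proposition \ref{prop:gauge_dirichlet} locally on a finite good cover, after reducing each covering set to a small-energy ball. Two inputs are already available at this stage: from item 2 of Section \ref{sec:choicecube}, the 4-faces of our skeleton are chosen so that there exists some (uncontrolled) $g\in W^{1,2}(C^4,G)$ with $i^*_{C^4}A^g\in L^4$; and $i^*_{C^4}F_A\in L^2(C^4)$ by \eqref{eqdefag-1}. Together, these provide exactly the hypotheses needed to feed Proposition \ref{prop:gauge_dirichlet}, provided we are working on a subdomain where $F_A$ has small $L^2$-norm.

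First I would construct the covering. Using absolute continuity of $\int|i^*_{C^4}F_A|^2$ together with compactness of $C^4$, and a standard Vitali/Besicovitch-type argument, extract a finite cover $\{U_\alpha\}$ of $C^4$ by sets $U_\alpha=\Phi_\alpha(\mathbb B^4)\subset C^4$ with uniformly bounded bi-Lipschitz constants and bounded multiplicity of overlaps, such that $\|i^*_{U_\alpha}F_A\|_{L^2(U_\alpha)}<\epsilon_0$, where $\epsilon_0$ is the smallness threshold of Proposition \ref{prop:gauge_dirichlet} in dimension $n=4$. Since $U_\alpha\subset C^4$, one automatically has the key inclusion $U_\alpha\cap\partial C^4\subset\partial U_\alpha$.

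Then, pulling back by $\Phi_\alpha$ to $\mathbb B^4$, I would apply Proposition \ref{prop:gauge_dirichlet} on each $U_\alpha$, using $g|_{U_\alpha}$ as the ambient $W^{1,2}$-gauge in which $A$ is $L^4$. The proposition produces a new gauge $g_\alpha$ whose Dirichlet boundary values on $\partial U_\alpha$ are the identity (so in particular $g_\alpha\equiv\mathrm{id}$ on $U_\alpha\cap\partial C^4$), and satisfies the controlled bound $\|i^*_{U_\alpha}A^{g_\alpha}\|_{L^4(U_\alpha)}\lesssim\|i^*_{U_\alpha}F_A\|_{L^2(U_\alpha)}$; the implicit constant depends only on the uniform bi-Lipschitz bound of the $\Phi_\alpha$'s. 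The $L^2\to L^{n/2}$ identification is trivial in dimension $4$.

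The ``good cube'' refinement is then immediate: Definition \ref{goodkface} taken with $C'=C^4$ gives $\|i^*_{C^4}F_A\|_{L^2(C^4)}\le\delta$, and choosing the global smallness parameter $\delta_0\le\epsilon_0$ allows us to skip the subdivision step and apply Proposition \ref{prop:gauge_dirichlet} directly on $C^4$ through the fixed parameterization $\Psi_4$ from \eqref{bilippar}. The only conceptual point requiring care is the compatibility between the Dirichlet datum provided by the proposition (identity on all of $\partial U_\alpha$) and the relative-boundary condition requested by the corollary (identity only on $U_\alpha\cap\partial C^4$); arranging $U_\alpha\subset C^4$ so that $\partial U_\alpha\supset U_\alpha\cap\partial C^4$ handles this at once, with the former being at worst a strengthening of the latter.
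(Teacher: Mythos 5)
Your proposal is correct and follows essentially the same route as the paper: reduce to small curvature energy via a finite cover of $C^4$ by uniformly bi-Lipschitz pieces (the paper uses a clopen partition where you use a Vitali-type cover, a cosmetic difference), feed the pre-existing uncontrolled $L^4$-gauge and the smallness of $\|i^*F_A\|_{L^2}$ into Proposition \ref{prop:gauge_dirichlet}, and read off the identity boundary datum on $U_\alpha\cap\partial C^4\subset\partial U_\alpha$, with the good-cube case handled by the trivial cover exactly as you describe.
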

\begin{proof}
From the definition of a good skeleton, we use here only the property that $A\in L^2\connforms{C^4}$ and that there exists $g\in W^{1,2}\gauges{C^4}$ such that $A^g\in L^4\connforms{C^4}$. From the definition of a good cube we only need that $\vertii{i_{C^4}^*F_A}_{L^2(C^4)}\le \delta$ for $\delta\le C \epsilon_0$, where $C=(\op{Lip}\Psi_\alpha)^2(\op{Lip}(\Psi_\alpha^{-1}))^4$ is a geometric constant depending only on the bi-lipschitz constant of the map that identifies $U_\alpha$ to a ball -- and can be bounded independently on our choice of cover -- and $\epsilon_0$ is as in Proposition \ref{prop:gauge_dirichlet}.

\medskip

We may find a finite clopen cover $\{U_\alpha\}_\alpha$ of $C^4$, such that each $U_\alpha$ is itself bilipschitz-equivalent to a ball $\mathbb B^4_r$ via a map $\Psi_\alpha:\mathbb B^4_r\to U_\alpha$ and that $\int_{\Psi_\alpha^{-1}(U_\alpha)}|\Psi_{U_{j,\alpha}}^*F|^2 \le \epsilon_0$. Then we can identify $\mathbb B^4_r$ to the unit ball $\mathbb B^4$ by dilation, and this change of coordinates in $4$-dimensions leaves the $L^2$-norm of $F$ unchanged.

\medskip

Note that if $C^4$ is a good cube, then we can take the trivial covering $\{C^4\}$, because the smallness condition on $F$ is already satisfied for $\delta$ in \eqref{deltagood4} chosen as in the beginning of the proof.

\medskip

Then we use Proposition \ref{prop:gauge_dirichlet} -- and transfer the result to $U_{j,\alpha}$ via $\Psi_{U_{j,\alpha}}$ -- in order to find local gauges $g_{j,\alpha}\in W^{1,2}\gauges{U_{j,\alpha}}$ such that $g_{j,\alpha}|_{\partial U_{j,\alpha}}\equiv id$ and $A^{g_{j,\alpha}}\in L^4\connforms{U_{j,\alpha}}$. In particular, we find that ${g_\alpha}|_{U_\alpha\cap \partial C^4}\equiv id$, as desired.
\end{proof}
Next, we proceed by induction on the skeleta, using the following Lemma:
\begin{lemma}[gluing gauges]\label{lem:gluing_gauges}
Assume that $C^{k+1}$ is a $(k+1)$-dimensional cube, with $k\ge 4$. For each one of its $k$-dimensional faces $C^k_\alpha\in\left(\partial C^{k+1}\right)^{(k)}$, let $g_\alpha\in W^{1,2}\gauges{C^k_\alpha}$ and $A_\alpha\in L^2\connforms{C^k_\alpha}$ be such that $\left(A_\alpha\right)^{g_\alpha}\in L^k\connforms{C^k_\alpha}$ and such that whenever $C^k_\alpha$, $C^k_\beta \in \left(\partial C^{k+1}\right)^{(k)}$, then $g_\alpha=g_\beta$ on $C^k_\alpha\cap C^k_\beta$. If we define
\begin{equation}\label{glue_gauges}
g_{\partial C^{k+1}} := \sum_{C_\alpha^k\in\left(\partial C^{k+1}\right)^{(k)}} 1_{C^k_\alpha} g_\alpha,\quad  A_{\partial C^{k+1}} := \sum_{C_\alpha^k\in\left(\partial C^{k+1}\right)^{(k)}} 1_{C^k_\alpha} A_\alpha,
\end{equation}
then $g_{\partial C^{k+1}}\circ \Psi_{k+1}\in W^{1,2}(\mathbb S^k, G)$, $A_{\partial C^{k+1}}\in L^2\connforms{\partial C^{k+1}}$, and there holds
\begin{equation}\label{conserve_Lk}
\left(\Psi_{k+1}^*A_{\partial C^{k+1}}\right)^{g_{\partial C^{k+1}}\circ \Psi_{k+1}}\in L^k\curvforms{\mathbb S^k}.
\end{equation}
\end{lemma}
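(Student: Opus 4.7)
The lemma splits into three verifications: the $L^2$ bound for $A_{\partial C^{k+1}}$, the $W^{1,2}$ regularity of the glued gauge map, and the $L^k$ bound for the gauged connection. The first is immediate from the definition \eqref{glue_gauges}, since distinct $k$-faces $C^k_\alpha, C^k_\beta\in (\partial C^{k+1})^{(k)}$ overlap on at most $(k-1)$-dimensional cubes of zero $k$-dimensional Hausdorff measure. Thus
\[
\int_{\partial C^{k+1}}|A_{\partial C^{k+1}}|^2 \,=\, \sum_{\alpha}\int_{C^k_\alpha}|A_\alpha|^2 \,<\, +\infty,
\]
and pullback by the bi-Lipschitz map $\Psi_{k+1}$ preserves $L^2$-integrability up to constants depending on $\op{Lip}\Psi_{k+1}$ and $\op{Lip}\Psi_{k+1}^{-1}$.

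The main step is to verify $g_{\partial C^{k+1}}\in W^{1,2}(\partial C^{k+1},G)$. Since $k\ge 4\ge 2$, each $g_\alpha\in W^{1,2}(C^k_\alpha,G)$ admits a well-defined trace in $W^{1/2,2}$ on every Lipschitz $(k-1)$-dimensional face $C^k_\alpha\cap C^k_\beta$. The hypothesis $g_\alpha|_{C^k_\alpha\cap C^k_\beta}=g_\beta|_{C^k_\alpha\cap C^k_\beta}$ is to be interpreted as trace equality, and this is precisely the classical matching condition that rules out singular distributional contributions across the interfaces when one computes the weak derivative of $g_{\partial C^{k+1}}$ by duality against smooth test forms. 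A Stokes-type argument on each pair of adjacent $k$-faces shows that the interfacial boundary terms cancel in pairs, so the distributional gradient of $g_{\partial C^{k+1}}$ coincides face-by-face with $dg_\alpha$ and
\[
\|dg_{\partial C^{k+1}}\|_{L^2(\partial C^{k+1})}^2 \,=\, \sum_\alpha \|dg_\alpha\|_{L^2(C^k_\alpha)}^2 \,<\, +\infty.
\]
Composing with the bi-Lipschitz parametrization $\Psi_{k+1}:\mathbb S^k\to\partial C^{k+1}$ then gives $g_{\partial C^{k+1}}\circ\Psi_{k+1}\in W^{1,2}(\mathbb S^k,G)$.

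For the $L^k$ statement \eqref{conserve_Lk}, the absence of singular interface contributions to $dg_{\partial C^{k+1}}$ implies the pointwise a.e. identity
\[
(A_{\partial C^{k+1}})^{g_{\partial C^{k+1}}} \,=\, \sum_{\alpha} 1_{C^k_\alpha}\, (A_\alpha)^{g_\alpha}
\]
on $\partial C^{k+1}$. Summing the face-wise $L^k$ controls provided by hypothesis yields $(A_{\partial C^{k+1}})^{g_{\partial C^{k+1}}}\in L^k(\partial C^{k+1},\wedge^1 T\partial C^{k+1}\otimes\mathfrak g)$, which transfers to $\mathbb S^k$ via $\Psi_{k+1}$, since $L^k$-regularity of $1$-forms is preserved by bi-Lipschitz pullback up to Lipschitz-dependent constants.

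The only delicate step is the interpretation of the compatibility hypothesis strongly enough to ensure $W^{1,2}$-gluing, rather than merely that $g_{\partial C^{k+1}}$ be face-wise $W^{1,2}$ with matching $L^2$-values: this is where the assumption $k\ge 4$ is used, so that traces of $W^{1,2}$ maps on codimension-one Lipschitz pieces are intrinsically defined. Once this point is settled, the rest of the proof is straightforward bookkeeping over the finitely many $k$-faces of $\partial C^{k+1}$.
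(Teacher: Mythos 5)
Your proof is correct and follows essentially the same route as the paper's (which is a two-sentence sketch): integration by parts across the interfaces using the trace agreement $g_\alpha=g_\beta$ to kill the jump/normal contributions to $dg_{\partial C^{k+1}}$, then the chain rule and face-wise summation for the $L^k$ claim. The only inaccuracy is your closing remark that $k\ge 4$ is what makes the codimension-one traces well defined — traces of $W^{1,2}$ maps on Lipschitz $(k-1)$-faces exist for all $k\ge 2$, and the restriction $k\ge 4$ in the lemma comes from the inductive scheme of the paper, not from trace theory.
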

\begin{proof}
The fact that $d\left(g_{\partial C^{k+1}}\circ \Psi_{k+1}\right)$ is $L^2$ as desired follows by integration by parts, using the fact that $g_\alpha=g_\beta$ on the intersection of their domains. The property \eqref{conserve_Lk} follows by applying the chain rule to \eqref{glue_gauges}, and using the fact that the normals to the common boundary of neighboring regions $\Psi_{k+1}(C_\alpha^k)$, $\Psi_{k+1}(C_\beta^k)$ cancel each other.
\end{proof}
%
%
%
\subsubsection{Step 2: Extension on the good skeleton}\label{sec:ext_good}
%
As the extension done in this step of the proof will be by iteration on the dimension, starting from the case of $4$-faces, on which the connections that we create are equal to the original one, and then we replace the initial connection iteratively on the interiors of $(k+1)$-faces for $4\le k\le n-1$. The extension from $4$-faces to $5$-faces is slightly different than the general step, because it uses the conditions \eqref{deltagood4} directly, instead of using bounds obtained from previous steps. Therefore we explicitly present the following passages
\begin{itemize}
\item the first step of the induction, i.e. the extension from $4$-faces to $5$-faces,
\item the passage from $k$-faces to $(k+1)$-faces for general $5\le k\le n-1$, 
\item the final bound obtained for the $n$-faces.
\end{itemize}
%
\textit{Step 2.0: Preparation.} As our skeleton is already chosen we will denote it $P$ rather than $P_{r,t}$ for the rest of the proof. We first note that since the conditions \eqref{deltagood4} are dilation-invariant, we may, without loss of generality assume that $r=1$ up to dilation.

\medskip

\textit{Step 2.1: Base for the inductive extension.} Note that by applying Corollary \ref{cor:l4_on_4face} on all the faces of our skeleton $P^{(4)}$, we obtain for each good $4$-face $C^4_j$ a gauge $g_{C^4_j}\in W^{1,2}\gauges{C^4_j}$ such that $A^{g_{C_j^4}}\in L^4\connforms{C_j^4}$, $\left.g_{C^4_j}\right|_{\partial C_j^4}\equiv id$ and $\vertii{A^{g_{C_j^4}}}_{L^4(C_j^4)}\lesssim \vertii{F}_{L^2(C_j^4)}$. Then by using Lemma \ref{lem:gluing_gauges} we find for each good $5$-face $C^5_j$ a gauge $g_{\partial C_j^5}\in W^{1,2}\gauges{\partial C_j^5}$ such that $\left(i_{\partial C_j^5}^*A\right)^{g_{\partial C_j^5}}\in L^4\connforms{\partial C_j^5}$ and
\begin{equation}\label{agood_v}
\vertii{\left(i_{\partial C_j^5}^*A\right)^{g_{\partial C_j^5}}}_{L^4(\partial C_j^5)}\lesssim \vertii{i_{\partial C_j^5}^*F}_{L^2(\partial C_j^5)}.
\end{equation}
%
\textit{Step 2.2: First step of the extension, from $4$-faces to $5$-faces.} We next apply Theorem \ref{thm:extension} with the following choices (indicated by ``$\mapsto$''):
\begin{itemize}
\item $n\mapsto4$,
\item $D\mapsto C_j^5$ and $\Psi_D\mapsto\Psi_{C_j^5}$ as defined in Section \ref{sec:parametriz},
\item $\overline{F} \mapsto \overline{F}_{C^5_j}$ and $\overline{A} \mapsto \overline{A}_{C^5_j}$, where we denote
\begin{align}\label{f_5-a_5}
\overline{F}_{C^5_j}:=\frac{1}{10}\sum_{C_\alpha^4\in\left(C^5_j\right)^{(4)}}\tfrac{1}{\verti{C_\alpha^4}}\int_{C_\alpha^4}i_{C^4_\alpha}^*F,\nonumber\\ \overline{A}_{C^5_j}:=\frac{1}{10}\sum_{C_\alpha^4\in\left(C^5_j\right)^{(4)}}\tfrac{1}{\verti{C_\alpha^4}}\int_{C_\alpha^4}i_{C^4_\alpha}^*A,
\end{align}
\item $A\mapsto i_{C_j^5}^*A$ and $F\mapsto i_{C^5_j}^*F$,
\item $g\mapsto g_{\partial C_j^5}$, described above.
\end{itemize}
With the above choices, we can verify that the bilipschitz constant $C_D$ from Theorem \ref{thm:extension} is replaced by the bilipschitz constant of $\Psi_4$ fixed depending only on $k=5$ in Section \ref{sec:parametriz}, and the other hypotheses are verified as follows
\begin{itemize}
\item The condition \eqref{Agood} on $g\mapsto g_{\partial C_j^5}$ is verified by \eqref{agood_v}, and the bound on $A^g\mapsto \left(i^*_{C_j^5}A\right)^{g_{\partial C_j^5}}$ as required in \eqref{Fgood} follows from \eqref{agood_v} and from the bound on $F$ in \eqref{Fgood}.
\item The bounds \eqref{Fgood} on $\overline{F}\mapsto \overline{F}_{C_j^5}$ and $\overline{A}\mapsto\overline{A}_{C_j^5}$ follow by the conditions \eqref{deltagood4} valid in the case $k\mapsto 5$ and with $C\mapsto C_j^5$, due to the definitions \ref{f_5-a_5} and by triangle inequality, provided we have $\delta<\delta^{(4)}$ for a constant $\delta^{(4)}$ which will depend only on the bilipschitz constant of $\Psi_5$, and on the value of $\epsilon_0:=\epsilon_0^{(4)}$ appearing in Theorem \ref{thm:extension} in which we chose $n\mapsto4$.
\item The bounds \eqref{Fgood} on $F\mapsto F_{\partial C_j^5}$ and on $A\mapsto A_{\partial C_j^5}$ follow from the analogous bounds that appear in \eqref{deltagood4} with the choices $k\mapsto 5$ and $C\mapsto C_j^5$, by triangle inequality, up to diminishing $\delta^{(4)}$ by a combinatorial factor of $10$, equal to the number of $4$-faces of $C_j^5$.
\end{itemize}
As an outcome of Theorem \ref{thm:extension}, we find forms $\hat A\in L^2\connforms{C_j^5}$, $\hat F=d\hat A+\hat A\wedge \hat A\in L^2\curvforms{C_j^5}$ and a gauge $\hat g\in W^{1,2}\gauges{C_j^5}$ which we rename as
\[
A_{C_j^5}:=\hat A,\quad A_{C_j^5}:=\hat F,\quad g_{C_j^5} = \hat g,
\]
and which satisfy the boundary conditions
\begin{equation}\label{boundary_5}
i_{\partial C_j^5}^*A_{C_j^5}=i_{\partial C_j^5}^*A,\quad i_{\partial C_j^5}^*F_{C_j^5}=i_{\partial C_j^5}^*F,\quad \left.g_{C_j^5}\right|_{\partial C_j^5}=g_{\partial C_j^5}.
\end{equation}
With the above notations the bounds \eqref{bounds_FA} then translate into the following, in which we used \eqref{agood_v} to absorb the connection contributions into curvature terms:
\begin{subequations}\label{bounds_FA_4}
\begin{eqnarray}
 \lefteqn{\|F_{C_j^5} - \overline{F}_{C_j^5}\|_{L^2(C_j^5)}\lesssim  \frac1{10}\sum_{C_\alpha^4\in\left(C_j^5\right)^{(4)}}\| i^*_{C_\alpha^4}F-\overline{F}_{C_\alpha^4}\|_{L^2(\partial C_\alpha^4)}}\label{AA_5}\\[3mm]
&&+\ \frac{1}{10}\verti{\overline F_{C_j^5}}\sum_{C_\alpha^4\in\left(C_j^5\right)^{(4)}}\|i^*_{C_\alpha^4}A - \overline{A}_{C_\alpha^4}\|_{L^2(\partial C_\alpha^4)}+\ \|F_{\partial C_j^5}\|_{L^{2}(\partial C_j^5)}^2 + \verti{\overline{F}_{C_j^5}}^2,\nonumber
\end{eqnarray}
\begin{eqnarray}
 \|A_{C_j^5} - \overline{A}_{C_j^5}\|_{L^2(C_j^5)}&\lesssim&\frac1{10}\sum_{C_\alpha^4\in\left(C_j^5\right)^{(4)}}\|i^*_{C_\alpha^4}A - \overline{A}_{C_\alpha^4}\|_{L^2(\partial C_\alpha^4)},\label{AAA_5}\\[3mm]
\|F_{C_j^5} \|_{L^{\frac{5}{2}}(C_j^5)}&\lesssim&\ \|F_{\partial C_j^5}\|_{L^{2}(\partial C_j^5)} +\ |\overline{F}_{C_j^5}|,\quad\label{Fbetter_5}\\[3mm]
\left(A_{C_j^5}\right)^{g_{C_j^5}}&\in& L^5(C_j^5, \wedge^1 \R^5\otimes \mathfrak g),\label{FbetterA_5}
\end{eqnarray}
\end{subequations}
where we also used the triangle inequality and the formulas \eqref{f_5-a_5}.

\medskip

By performing the above extension over the interiors of all $\delta^{(4)}$-good $5$-faces $C_j^5$, we conclude the first step of our iterative extension.

\medskip

\textit{Step 2.2': Preparation for the extension to $6$-faces.} We now fix a $6$-face $C_j^6$ then, due to the condition \eqref{boundary_5} on the $g_{C_j^5}$'s, we find that $g_{C_j^5}=g_{C_{j^\prime}^5}$ on $C_j^5\cap C_{j^\prime}^5$ for all $j,j^\prime$, thus we can apply Lemma \ref{lem:gluing_gauges} with $k\mapsto 5$, $C_\alpha^k\mapsto C_\alpha^5$, $C^{k+1}\mapsto C_j^6$, $g_\alpha\mapsto g_{C_\alpha^5}$ and $A\mapsto \sum_{\alpha} 1_{C_\alpha^5}A_{C_\alpha^5}$. With these choices the hypotheses of the lemma are valid due to property \eqref{Fbetter_5}. Then the lemma gives as an output a gauge $g_{\partial C_j^6}\in W^{1,2}\gauges{\partial C_j^6}$ and a connection form $A_{\partial C_j^6}\in L^2\connforms{\partial C_j^6}$ such that
\begin{equation}\label{agood_5}
\left(A_{\partial C_j^6}\right)^{g_{\partial C_j^6}}\in L^5\connforms{\partial C_j^6}, 
\end{equation}
which allows to start the next step in the iteration.

\medskip

\textit{Step 2.4: General step of the extension, from $k$-faces to $(k+1)$-faces for $5\le k \le n-1$.} Fix a $(k+1)$-face $C_j^{k+1}$. After the extension on $k$-faces we have for each $k$-face $C_\alpha^4\in\left(C^{k+1}_j\right)^{(k)}$ a connection form $A_{C_\alpha^k}\in L^2\connforms{C_\alpha^k}$ whose curvature form $F=dA_{C_\alpha^k}+A_{C_\alpha^k}\wedge A_{C_\alpha^k}$ satisfies $F\in L^2\curvforms{C_\alpha^k}$ and constant forms $\overline{A}_{C_\alpha^k}\in \wedge^1\R^k\otimes \mathfrak{g}$ and $\overline{F}_{C_\alpha^k}\in\wedge^2\R^k\otimes \mathfrak{g}$ such that the following bounds, generalizing \eqref{deltagood4} to faces of dimension $k> 4$, hold. The bounds are dilation-invariant, but we present them in the version valid at general scale $r$, for clarity:
\begin{align}\label{deltagoodk}
\verti{\overline{F}_{C_\alpha^k}}\le\delta^{(k)} r^{-2},\quad &\verti{\overline{A}_{C_\alpha^k}}\le\delta^{(k)} r^{-1}, \quad \int_{C_\alpha^k}|F_{C_\alpha^k}|^{\frac{k}2}\le \delta^{(k)}.
\end{align}
At this point again we may reduce to scale $r=1$ by dilation invariance. We present the justification of the above bounds \eqref{deltagoodk} for the case $k=5$ with $r=1$.
\begin{itemize}
\item The bounds on $\verti{\overline{F}_{C_\alpha^5}}$ and on $\verti{\overline{A}_{C_\alpha^5}}$ follow under the condition $\delta<\delta^{(5)}$ from the definition \eqref{f_5-a_5} and \eqref{deltagood4}, by triangle inequality, due to the fact that $C_j^{k+1}=C_j^6$ is in this a $\delta$-good $6$-face.
\item The bound on $\vertii{F_{C_\alpha^5}}_{L^{\frac52}(C_\alpha^5)}^2$ in \eqref{deltagoodk} follows from \eqref{FbetterA_5} provided $\delta^{(5)}\ge C_4\delta^{(4)}$, where $C_4$ is a combinatorial constant, which in this case can be taken to be equal to $441C_{\eqref{Fbetter_5}}^2$, where $C_{\eqref{FbetterA_5}}$ is the implicit constant appearing in \eqref{FbetterA_5}. Then \eqref{FbetterA_5} together with the bound on $\verti{\overline F_{C_\alpha^5}}$ already discussed above, and the bounds on $A_{\partial C_\alpha^5}$ and $F_{\partial C_\alpha^5}$ which come from \eqref{deltagood4} by triangle inequality. As the number of $4$-faces of $C_\alpha^5$ is $10$, we find indeed that 
\begin{eqnarray*}
\vertii{F_{C_\alpha^5}}_{L^{\frac52}(C_\alpha^5)}
&\le& C_{\eqref{FbetterA_5}} \left[\sum_{C_\beta^4\in\left(\partial C_\alpha^5\right)^{(4)}}\vertii{F_{C_\beta^4}}_{L^2(C_\beta^4)}+\sqrt{\delta^{(4)}}\right]\le 21C_{\eqref{FbetterA_5}}\sqrt{\delta^{(4)}}.
\end{eqnarray*}
\item The bound on $\vertii{A_{C^5_\alpha}}_{L^2(C_\alpha^5)}^2$ in \eqref{deltagoodk} follows using \eqref{AAA_5}, and the already-discussed bound for $\overline{A}_{C^5_\alpha}$. This term is bounded by $\delta^{(5)}\ge (C_{\eqref{AAA_5}}+1)^2\delta^{(4)}$ in which the constant $\delta^{(4)}$ comes from \eqref{deltagood4} and $C_{\eqref{AAA_5}}$ is the implicit constant appearing in \eqref{AAA_5}. The bound is as follows:
\begin{eqnarray*}
\vertii{A_{C^5_\alpha}}_{L^2(C_\alpha^5)}&\le&\vertii{A_{C_\alpha^5}- \overline{A}_{C_\alpha^5}}_{L^2(C_\alpha^5)} + \vertii{\overline{A}_{C_\alpha^5}}_{L^2(C_\alpha^5)}\\[3mm]
&\le& C_{\eqref{AAA_5}}\max_{C^4_\beta\in\left(\partial C_\alpha^5\right)^{(4)}}\vertii{i^*_{C_\beta^4}A-\overline{A}_{C_\beta^4}}_{L^2(C_\beta)} + \sqrt{\delta^{(4)}}\\[3mm]
&\le&(C_{\eqref{AAA_5}}+1)\sqrt{\delta^{(4)}}.
\end{eqnarray*}
\end{itemize}
For general $5\le k\le n-1$ we then assume that \eqref{deltagoodk} holds for all $C_\alpha^k\in\left(C_j^{k+1}\right)^{(k)}$ and by using the same reasoning as in Step 2.2', we define 
\begin{equation}\label{a_partial_k}
A_{\partial C_j^{k+1}}:=\sum_{C_\alpha^k\in\left(C_j^{k+1}\right)^{(k)}}1_{C_\alpha^k}A_{C_\alpha^k}, 
\end{equation}
whose associated curvature form is then 
\[
F_{\partial C_j^{k+1}}:=dA_{\partial C_j^{k+1}}+A_{\partial C_j^{k+1}}\wedge A_{\partial C_j^{k+1}}=\sum_{C_\alpha^k\in\left(C_j^{k+1}\right)^{(k)}}1_{C_\alpha^k}F_{C_\alpha^k}.
\]

By using Lemma \ref{lem:gluing_gauges} together with the conclusion from the preceding step of our iteration (which we proved to hold for $k=5$ and follows from \eqref{FbetterA_k} below for the case $k\mapsto k+1>5$), we find that there exists $g_{\partial C_j^{k+1}}\in L^2\gauges{\partial C_j^{k+1}}$ such that $\left(A_{\partial C_j^{k+1}}\right)^{g_{\partial C_j^{k+1}}}\in L^k\connforms{\partial C_j^{k+1}}$ and
\begin{equation}\label{agood_k}
\vertii{\left(A_{\partial C_j^{k+1}}\right)^{g_{\partial C_j^{k+1}}}}_{L^k(\partial C_j^{k+1})}\lesssim \vertii{F_{\partial C_j^{k+1}}}_{L^{\frac{k}2}(\partial C_j^{k+1})}.
\end{equation}
At this point we may apply Theorem \ref{thm:extension} with the following choices:
\begin{itemize}
\item $n\mapsto k$,
\item $D\mapsto C_j^{k+1}$ and $\Psi_D\mapsto\Psi_{C_j^{k+1}}$ as defined in Section \ref{sec:parametriz},
\item $\overline{F} \mapsto \overline{F}_{C^{k+1}_j}$ and $\overline{A} \mapsto \overline{A}_{C^{k+1}_j}$, where we denote
\begin{align}\label{f_k-a_k}
\overline{F}_{C^{k+1}_j}:=\frac{1}{2(k+1)}\sum_{C_\alpha^k\in\left(C^{k+1}_j\right)^{(k)}}\overline{F}_{C_\alpha^k},\nonumber\\ \overline{A}_{C^{k+1}_j}:=\frac{1}{2(k+1)}\sum_{C_\alpha^k\in\left(C^{k+1}_j\right)^{(k)}}\overline{A}_{C_\alpha^k}.
\end{align}
\item $A_{\partial D}\mapsto A_{\partial C_j^{k+1}}$ and $F_{\partial D}\mapsto F_{\partial C_j^{k+1}}$,
\item $g\mapsto g_{\partial C_j^{k+1}}$, described above.
\end{itemize}

We now verify again that the hypotheses of Theorem \ref{thm:extension} hold. We have $C_D\mapsto \max\{\op{Lip}(\Psi_k), \op{Lip}(\Psi_k^{-1})\}$, fixed depending only on $k$ in Section \ref{sec:parametriz}, and
\begin{itemize}
\item The condition \eqref{Agood} on $g\mapsto g_{\partial C_j^k}$ was justified in \eqref{agood_k}.
\item The bounds \eqref{Fgood} on $\overline{F}\mapsto \overline{F}_{C_j^k}$ and $\overline{A}\mapsto\overline{A}_{C_j^k}$ follow, by triangle inequality, from the definitions \ref{f_k-a_k} and by the bounds in \eqref{deltagoodk}, provided we have $\delta^{(k)}<\epsilon_0^{(k)}$ where $\epsilon_0^{(k)}$ is the value of $\epsilon_0$ appearing in Theorem \ref{thm:extension} if we chose $n\mapsto k$.
\item The bounds \eqref{Fgood} on $F\mapsto F_{\partial C_j^k}$ and on $A\mapsto A_{\partial C_j^k}$ follow from the analogous bounds that appear in \eqref{deltagoodk}, by triangle inequality, up to diminishing $\delta^{(k)}$ by a combinatorial factor of $2(k+1)$, equal to the number of $k$-faces of $C_j^{k+1}$.
\end{itemize}
By applying Theorem \ref{thm:extension}, we find forms $\hat A\in L^2\connforms{C_j^k}$, $\hat F=d\hat A+\hat A\wedge \hat A\in L^2\curvforms{C_j^k}$ and a gauge $\hat g\in W^{1,2}\gauges{C_j^k}$ which we rename as
\[
A_{C_j^{k+1}}:=\hat A,\quad A_{C_j^{k+1}}:=\hat F,\quad g_{C_j^{k+1}} = \hat g,
\]
and which satisfy the boundary conditions
\begin{equation}\label{boundary_k}
i_{\partial C_j^{k+1}}^*A_{C_j^{k+1}}=A_{\partial C_j^{k+1}},\quad i_{\partial C_j^{k+1}}^*F_{C_j^{k+1}}=F_{\partial C_j^{k+1}},\quad \left.g_{C_j^{k+1}}\right|_{\partial C_j^{k+1}}=g_{\partial C_j^{k+1}}.
\end{equation}
Then the bounds \eqref{bounds_FA} translate into:
\begin{subequations}\label{bounds_FA_k}
\begin{eqnarray}
 \lefteqn{\|F_{C_j^{k+1}} - \overline{F}_{C_j^{k+1}}\|_{L^2(C_j^{k+1})}\lesssim  \frac1{2(k+1)}\sum_{C_\alpha^k\in\left(C_j^{k+1}\right)^{(k)}}\| i^*_{C_\alpha^k}F-\overline{F}_{C_\alpha^k}\|_{L^2(\partial C_\alpha^k)}}\nonumber\\[3mm]
&&+\ \frac{1}{2(k+1)}\verti{\overline F_{C_j^{k+1}}}\sum_{C_\alpha^k\in\left(C_j^{k+1}\right)^{(k)}}\|i^*_{C_\alpha^k}A - \overline{A}_{C_\alpha^k}\|_{L^2(\partial C_\alpha^k)}\nonumber\\[3mm]
&&+\ \sum_{C_\alpha^k\in\left(C_j^{k+1}\right)^{(k)}}\left(\|F_{C_\alpha^k}\|_{L^{\frac{k}2}(C_\alpha^k)}^2 + \verti{\overline{F}_{C_\alpha^k}}^2\right),\label{AA_k}\nonumber
\end{eqnarray}
\begin{equation}
 \|A_{C_j^{k+1}} - \overline{A}_{C_j^{k+1}}\|_{L^2(C_j^{k+1})}\lesssim\frac1{2(k+1)}\sum_{C_\alpha^k\in\left(C_j^{k+1}\right)^{(k)}}\|i^*_{C_\alpha^k}A - \overline{A}_{C_\alpha^k}\|_{L^2(\partial C_\alpha^k)},\label{AAA_k}
\end{equation}
\begin{equation}
\|F_{C_j^{k+1}} \|_{L^{\frac{k+1}{2}}(C_j^{k+1})}\lesssim\sum_{C_\alpha^k\in\left(C_j^{k+1}\right)^{(k)}}\left(\|F_{C_\alpha^k}\|_{L^{\frac{k}2}(C_\alpha^k)} + |\overline{F}_{C_\alpha^k}|\right),\quad\label{Fbetter_k}
\end{equation}
\begin{equation}
\left(A_{C_j^{k+1}}\right)^{g_{C_j^{k+1}}}\in L^{k+1}(C_j^{k+1}, \wedge^1\R^{k+1}\otimes \mathfrak g),\label{FbetterA_k}
\end{equation}
\end{subequations}
where we also used the triangle inequality and the formulas \eqref{f_5-a_5}.

\medskip

\textit{Step 2.5: Final bound for $n$-faces.} In conclusion, the bounds \eqref{bounds_FA_k} together with the conditions \eqref{deltagoodk} for $k\ge 5$ and \eqref{deltagood4} for $k=4$, can be summed up to give the following:
\begin{lemma}[estimate on a good $n$-face]\label{lem:est_good_kface}
There exists a constant $\epsilon_0$, depending only on the dimension, for any cubeulation $P$ of scale $r=1$, for each $0<\epsilon<\epsilon_0$ there exists $\delta>0$, such that if \eqref{deltagood4} holds for such choice of $\delta$ for a given $n$-face $C_j^n$, then we may construct $A_{C_j^n}\in L^2\connforms{C_j^n}$ and $F_{C_j^n}:=dA_{C_j^n}+A_{C_j^n}\wedge A_{C_j^n}\in L^{\frac{n}2}\curvforms{C_j^n}$ which satisfy the bounds
\begin{subequations}\label{bounds_good_n}
\begin{eqnarray}
\lefteqn{\vertii{F_{C_j^n}-\overline{F}_{C_j^n}}_{L^2(C_j^n)}\lesssim\frac1{c_n} \sum_{C\in\left(C_j^n\right)^{(4)}}\vertii{i_C^*F - \frac1{\verti{C}}\int_Ci^*_CF}_{L^2(C)}}\nonumber\\
&&+\frac{1}{c_n}\verti{\overline{F}_{C_j^n}}\sum_{C\in\left(C_j^n\right)^{(4)}}\vertii{i_C^*A - \frac1{\verti{C}}\int_Ci^*_CA}_{L^2(C)}+\sum_{C\in\left(C_j^n\right)^{(4)}}\vertii{i^*_CF}_{L^2(C)}^2,\label{F-average}\\[3mm]
\lefteqn{\vertii{A_{C_j^n}-\overline{A}_{C_j^n}}_{L^2(C_j^n)}\lesssim\frac1{c_n} \sum_{C\in\left(C_j^n\right)^{(4)}}\vertii{i_C^*A - \frac1{\verti{C}}\int_Ci^*_CA}_{L^2(C)}}\label{A-average}
\end{eqnarray}
\end{subequations}
where $c_n:=2^{n-4}\left(\begin{array}{c} n\\4\end{array}\right)$, and there exists a gauge $g_{C_j^n}\in W^{1,2}\gauges{C_j^n}$ such that 
\begin{equation}\label{regulara}
\left(A_{C_j^n}\right)^{g_{C_j^n}}\in L^n\connforms{C_j^n}.
\end{equation}
\end{lemma}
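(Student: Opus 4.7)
The plan is to observe that this Lemma is really the bookkeeping endpoint of the iterative extension described in Steps~2.1--2.4 above. I would apply Theorem~\ref{thm:extension} successively in dimensions $k=4,5,\dots,n-1$, each time replacing the connection on the interior of a $(k{+}1)$-face of $C_j^n$, starting from the $L^4$-controlled gauges on the $4$-faces of $C_j^n$ furnished by Corollary~\ref{cor:l4_on_4face} -- this is where hypothesis \eqref{deltagood4} enters to verify the smallness assumption of that corollary (the bound on $\|i^*_{C'}F\|_{L^2(C')}$).

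First I would fix the chain of parameters: let $\epsilon_0^{(k)}$ denote the value of $\epsilon_0$ appearing in Theorem~\ref{thm:extension} when applied on $(k{+}1)$-faces with the parametrizations of Section~\ref{sec:parametriz}. Starting from the top dimension, set $\delta^{(n-1)}$ to be less than $\epsilon_0^{(n-1)}$ divided by a combinatorial safety factor, then recursively choose $\delta^{(k)}\le\delta^{(k+1)}/\Lambda_k$, where $\Lambda_k$ is a dimensional factor accounting for (a) the number $2(k{+}1)$ of $k$-faces of a $(k{+}1)$-face, (b) the implicit constants in \eqref{AA_k}, \eqref{AAA_k}, \eqref{Fbetter_k}, and (c) the constant arising in Lemma~\ref{lem:gluing_gauges} when promoting \eqref{FbetterA_k} to a controlled $L^{k+1}$-gauge on the next boundary via \eqref{conserve_Lk}. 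The hypothesis \eqref{deltagood4} with $\delta\le\delta^{(4)}$ then propagates the smallness \eqref{deltagoodk} through all intermediate steps, so each call of Theorem~\ref{thm:extension} is legal.

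The next step is to assemble the bounds. Unrolling \eqref{Fbetter_k} from $k=4$ upwards, each $\|F_{C_\alpha^k}\|_{L^{k/2}}$ is controlled by the starting $4$-face data which by \eqref{deltagood4} is small, so these contributions are absorbed into the quadratic curvature term on the right-hand side of \eqref{F-average}. Iterating \eqref{AA_k} and \eqref{AAA_k}, the ``$F-\overline F$'' and ``$A-\overline A$'' contributions at level $k{+}1$ telescope into sums of the same type at level $k$; tracking the factor $2(k{+}1)$ at each step and starting from $k=4$ produces the normalizing constant $c_n=2^{n-4}\binom{n}{4}$ appearing in \eqref{F-average}--\eqref{A-average}. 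Finally \eqref{regulara} is precisely \eqref{FbetterA_k} at the last step $k=n-1$, the gauge being the one produced by Theorem~\ref{thm:extension} on the $n$-cube itself.

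The main obstacle I foresee is bookkeeping rather than a single hard analytic step: one must verify that at each inductive stage the hypotheses \eqref{Agood}--\eqref{Fgood} of Theorem~\ref{thm:extension} are genuinely satisfied with the chosen $\delta^{(k)}$, and that the intermediate gauge regularity coming from Lemma~\ref{lem:gluing_gauges} really feeds into the gauge assumption at the next step (in particular the glued gauge on $\partial C_j^{k+1}$ must sit in $W^{1,2}$ and conjugate the glued connection into $L^k$). The slightly delicate point is that re-absorption of the $\overline F$ and quadratic curvature terms relies on $\delta^{(k)}$ being small in an absolute, not just integrable, sense, so one must ensure the finite backwards-in-$k$ chain of smallness conditions terminates with a nontrivial $\delta^{(4)}>0$, which is what fixes the final $\epsilon_0$ in the statement.
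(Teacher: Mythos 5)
Your proposal is correct and follows essentially the same route as the paper: the lemma is precisely the summation of the iterative extension of Steps 2.1--2.4 (base case from Corollary \ref{cor:l4_on_4face}, gauge gluing via Lemma \ref{lem:gluing_gauges}, repeated application of Theorem \ref{thm:extension} with the forward/backward chain of smallness parameters $\delta^{(k)}$, and \eqref{regulara} read off from \eqref{FbetterA_k} at the last step). The points you flag as delicate (propagation of the hypotheses \eqref{Agood}--\eqref{Fgood} and termination of the chain of smallness conditions at a nontrivial $\delta^{(4)}$) are exactly the ones the paper handles in Steps 2.2 and 2.4.
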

%
\subsubsection{Extension on the bad skeleton}\label{sec:ext_bad}
%
We proceed by replacing $A$ and $F$ via an iterative precedure over the bad cubes. This will be performed via maps whose models, depending on the dimension $5\le k\le n$, are denoted as follows:
\begin{equation}\label{pi_k}
\pi^{(k)}:[-1,1]^k\setminus\{0\}\to \partial [-1,1]^k.
\end{equation}
The map $\pi^{(k)}$ is assumed to belong to $C^0([-1,1]^k\setminus\{0\})\cap C^\infty(]-1,1[\setminus\{0\})$ and to be equal to the identity on $\partial [-1,1]^k$. For the sake of concreteness, a possible explicit choice is $\pi^{(k)}(x):=\int \rho_{\op{dist}(x,\partial [-1,1]^k)}(x-y) \pi^{(k)}_\square(y)dy$ where $\pi^{(k)}_\square(x):=\tfrac{x}{\verti{x}_\square}$, $\verti{x}_\square:=\max_{1\le j\le k}\verti{x_i}$ and $\rho_\epsilon(x)=\epsilon^{-k}\rho(x/\epsilon)$ where $\rho$ is a smooth positive radial function of integral $1$ supported in $\{x\in\R^k: \verti{x}\le 1\}$.

\medskip

After composing with a suitable translation and rotation like in \S \ref{sec:parametriz}, we obtain smoothened radial projections 
\begin{equation}
\label{pi_ck}\pi_{C_j^k}:C_j^k\setminus\{c_j^k\}\to \partial C_j^k,
\end{equation}
where $c_j^k$ is the center of the $k$-face $C_j^k$.

\medskip

If now we consider the clopen set cover from Corollary \ref{cor:l4_on_4face}, we can then construct from it a clopen cover of the union of all $4$-faces, still denoted by $\{U_\alpha^{(4)}\}_\alpha$. We then extend the cover to higher-dimensional skeleta by defining iteratively 
\[
\left\{U_\alpha^{(k+1)}\right\}_\alpha:=\left\{\widetilde{U}_\alpha^{(k+1)}:=\left(\pi_{C_j^{k+1}}\right)^{-1}\left(U_\alpha^{(k)}\right): U_\alpha^{(k)}\subset \partial C_j^{k+1},\ C_j^{k+1}\in P_{r,t}^{(k+1)}\right\}.
\]
With the above notation relating $\widetilde{U}_\alpha^{(k+1)}$ to $U_\alpha^{(k)}$ and $C_j^{k+1}$, we also iteratively define gauges $g_{\widetilde{U}_\alpha^{(k+1)}}\in W^{1,2}\gauges{\widetilde{U}_\alpha^{(k+1)}}$ and connections $A_{C_j^{k+1}}\in L^2\connforms{C_j^{k+1}}$ by
\begin{equation}\label{bad_gA}
g_{\widetilde{U}_\alpha^{(k+1)}}:=g_{U_\alpha^{(k)}}\circ\pi_{C_j^{k+1}} \quad\mbox{and}\quad A_{C_j^{k+1}}:=\left(\pi_{C_j^{k+1}}\right)^*A_{\partial C_j^{k+1}},
\end{equation}
Where we use the same definition \eqref{a_partial_k} as in the case of good cubes to define $A_{\partial C_j^{k+1}}$. We also consider the inclusion $i_{\widetilde{U}_\alpha^{(k+1)}}:\widetilde{U}_\alpha^{(k+1)}\to C_j^{k+1}$ and define $A_{\widetilde{U}_\alpha^{k+1}}:= i_{\widetilde{U}_\alpha^{(k+1)}} A_{C_j^{k+1}}$. Then inductively from the condition described in Proposition \ref{prop:gauge_dirichlet} and Corollary \ref{cor:l4_on_4face}, we find that there holds
\begin{equation}\label{condition_badgauge}
g_{U_\alpha^{(k)}}=g_{U_\beta^{(k)}} \quad \mbox{on}\quad U_\alpha^{(k)}\cap U_\beta^{(k)} \quad\mbox{and}\quad \left\{\begin{array}{c}\left(A_{U_\alpha^{(k)}}\right)^{g_{U_\alpha^{(k)}}}\in L^4\connforms{U_\alpha^{(k)}}, \\[3mm]
i_{\partial C_j^{k+1}\cap \widetilde{U}_\alpha^{(k)}}^*A_{\widetilde{U}_\alpha^{(k+1)}}=A_{U_\alpha^{(k)}}.\end{array}\right.
\end{equation}
Moreover we have the bounds 
\begin{equation}\label{L2_bound_bad_k}
\vertii{A_{C_j^{k+1}}}_{L^2(C_j^{k+1})}\lesssim\vertii{A_{\partial C_j^{k+1}}}_{L^2(\partial C_j^{k+1})},\quad \vertii{F_{C_j^{k+1}}}_{L^2(C_j^{k+1})}\lesssim\vertii{F_{\partial C_j^{k+1}}}_{L^2(\partial C_j^{k+1})}.
\end{equation}
Considering the above bounds together with the definition of $A_{\partial C_j^{k+1}}$ given in \eqref{a_partial_k} and the triangle inequality, we find at stage $n$ that there exists a constant $c_n$ depending only on the implicit constants in \eqref{L2_bound_bad_k}, such that for each $n$-dimensional face $C_j^n$ there holds
\begin{subequations}\label{L2_bound_bad}
\begin{eqnarray}
\vertii{A_{C_j^n}}_{L^2(C_j^n)}&\le& c_n\sum_{C\in\left(C_j^n\right)^{(4)}}\vertii{i^*_CA}_{L^2(C)},\\[3mm]
\vertii{F_{C_j^n}}_{L^2(C_j^n)}&\le& c_n\sum_{C\in\left(C_j^n\right)^{(4)}}\vertii{i^*_CF}_{L^2(C)}.
\end{eqnarray}
\end{subequations}
As a final step, after we have performed all our iterative extensons over all bad cubes, we describe more precisely what is the set over which the gauges and connection forms are not defined, which is also the set which remains not covered by the open sets $\{U_\alpha^{(n)}\}_\alpha$.
Before we proceed to the next lemma, we need some definitions.
\begin{definition}[dual skeleta]\label{def:dual_skel}
Let $r>0$, $t\in \R^n$ and let $\mathcal S^{(k)}$ be a cube complex of dimension $k$, generated by the cubes of the form
\begin{equation}\label{cube_typical}
C_j^k:=C(r,J,c):=r[-1,1]^J + c, \quad J\subset\{1,\ldots,n\}, \quad \verti{J}=k,\quad c \in \left(r\mathbb Z\right)^n + t.
\end{equation}
Consider a subcomplex $\mathcal T^{(k^\prime)}$ of $\mathcal S^{(k)}$, of dimension $k^\prime\le k$. Then the \emph{dual complex} to $\mathcal T^{(k^\prime)}$ inside $\mathcal S^{(k)}$ is the complex $\mathcal U^{(k-k^\prime)}$ formed by cubes that can be written as
\[\overline{C}_l^h=C(r,J_l,c_l),\quad \verti{J_l}\le k-k^\prime, \quad c_l\in \left(r\mathbb Z\right)^d + t,
\]
and such that there exists a cube $C(r,J,c_l)\in \mathcal S^{(k)}$ with $\verti{J}=k$ and such that the cube $C(r,J\setminus J_l,c_l)$ belongs to $\mathcal T^{(k^\prime)}$.
\end{definition}
For our fixed $\delta$ as in \S \ref{sec:ext_good} we denote by
\begin{equation}\label{complex_bad}
P_\mathrm{bad}^{(k)}:=\mbox{subcomplex of }P_{r,t}\mbox{ generated by all }\delta\mbox{-bad faces},
\end{equation}
and as we produce the extensions over (the support of) $P_\mathrm{bad}^{(k)}$ of our connections via the iterative extension by the $\pi_{C_j^{k^\prime}}$, $5\le k^\prime\le k$ as in \eqref{pi_ck}, we define the singular set introduced in this way by
\[
\Sigma^{(k)}:=P_\mathrm{bad}^{(k)}\setminus \bigcup_\alpha U_\alpha^{(k)}.
\]
For $A\subset \R^n, x\in \R^n$ we also denote by 
\[
\mathsf{Cone}(A,x):=\{(1-t)a+tx:\ a\in A, t\in [0,1]\}.
\]
\begin{lemma}[structure of the introduced singular set]\label{lem:singular}
The set $\Sigma^{(k)}\cap P^{(k)}_\mathrm{bad}$ is the support of the dual complex in $P^{(k)}_\mathrm{bad}$ to the one formed by the bad faces of dimension $5\le k^\prime\le k$.
\end{lemma}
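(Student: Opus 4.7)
The plan is to prove the lemma by induction on the dimension $k$, with the base case $k=5$ and inductive step relating the singular set in a bad $(k+1)$-face to the cone of the singular set on its boundary, and then to invoke a purely combinatorial matching between cones and dual cubes.

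For the base case $k=5$, I will observe that from the definition \eqref{bad_gA}, on any bad $5$-face $C_j^5\in P_\mathrm{bad}^{(5)}$ the objects $g_{\widetilde U_\alpha^{(5)}}=g_{U_\alpha^{(4)}}\circ\pi_{C_j^5}$ and $A_{C_j^5}=(\pi_{C_j^5})^*A_{\partial C_j^5}$ inherit all their potential singularities from the singularities of $\pi_{C_j^5}$ itself, and $\pi_{C_j^5}$ is smooth on $C_j^5\setminus\{c_j^5\}$ with image in $\partial C_j^5$, where no singularities had yet been introduced. Hence $\Sigma^{(5)}\cap C_j^5=\{c_j^5\}$, which is exactly the $0$-dimensional cube dual to $C_j^5$ inside itself in the sense of Definition \ref{def:dual_skel}, establishing the base case.

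For the inductive step I assume the result for $k$ and fix a bad $(k+1)$-face $C_j^{k+1}$. From \eqref{bad_gA} and the smoothness of $\pi_{C_j^{k+1}}$ on $C_j^{k+1}\setminus\{c_j^{k+1}\}$, I first show the set-theoretic identity
\[
\Sigma^{(k+1)}\cap C_j^{k+1}\;=\;\pi_{C_j^{k+1}}^{-1}\!\bigl(\Sigma^{(k)}\cap\partial C_j^{k+1}\bigr)\;\cup\;\{c_j^{k+1}\}.
\]
Since $\pi_{C_j^{k+1}}^{-1}(\{p\})$ is the open straight segment from $p\in\partial C_j^{k+1}$ to $c_j^{k+1}$, the right-hand side equals $\mathsf{Cone}\bigl(\Sigma^{(k)}\cap\partial C_j^{k+1},\,c_j^{k+1}\bigr)$. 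Using the inductive hypothesis to replace $\Sigma^{(k)}\cap\partial C_j^{k+1}$ by the dual complex of the bad faces of dimensions $5\le k'\le k$ contained in $\partial C_j^{k+1}$, I reduce to a combinatorial claim.

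The combinatorial core is: for a bad sub-face $C'$ of dimension $k'$ contained in some $k$-face $C_i^k\subset\partial C_j^{k+1}$, the cone over the $(k-k')$-dimensional dual of $C'$ in $C_i^k$ with apex $c_j^{k+1}$ coincides with the $(k+1-k')$-dimensional dual of $C'$ inside $C_j^{k+1}$; adjoining $\{c_j^{k+1}\}$ itself accounts for the self-dual cube of the bad $(k+1)$-face. Writing $C_j^{k+1}=C(r,J,c_j^{k+1})$ with $|J|=k+1$ and $C'=C(r,J\setminus J_l,c_l)$ with $|J_l|=k+1-k'$, its dual inside $C_j^{k+1}$ is $C(r,J_l,c_l)$. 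If $C_i^k=C(r,J\setminus\{j_0\},\tilde c)$ contains $C'$, the dual of $C'$ in $C_i^k$ is a $(k-k')$-cube aligned with $J_l\setminus\{j_0\}$ at the appropriate center, and coning over $c_j^{k+1}$ re-introduces precisely the missing direction $e_{j_0}$, reproducing $C(r,J_l,c_l)$. Taking the union over all bad sub-faces concludes the induction.

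I expect no analytical obstacle here: the argument is essentially combinatorial once the key identity $\pi_{C_j^{k+1}}^{-1}(K)\cup\{c_j^{k+1}\}=\mathsf{Cone}(K,c_j^{k+1})$ is in place. The only delicate point is bookkeeping the multiple conventions for centers and face indexing in Definition \ref{def:dual_skel}, so that the coned dual matches the dual in the ambient $(k+1)$-face exactly; this is handled by carefully tracking the index subset $J_l$ under which direction is added when one ascends from $C_i^k$ to $C_j^{k+1}$.
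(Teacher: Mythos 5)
Your proof is correct and follows essentially the same route as the paper: induction on the dimension, with the base case that $\Sigma^{(5)}$ consists of the centers of the bad $5$-faces, and the inductive step identifying $\Sigma^{(k+1)}$ inside a bad $(k+1)$-face with the cone over $\Sigma^{(k)}\cap\partial C_j^{k+1}$ with apex the center, which matches the dual complex combinatorially. The paper states the cone identity \eqref{ext_sigma} appealing to the symmetry of the maps $\pi_{C^{k+1}_\pm}$ and leaves the cube-indexing bookkeeping implicit, so your spelled-out matching of $J_l$ versus $J_l\setminus\{j_0\}$ is, if anything, slightly more explicit than the original.
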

\begin{proof}
The proof will proceed by induction:
\begin{itemize}
\item Initially, we have that all the $4$-skeleton of bad cubes is covered by the sets $\{U_\alpha^{(4)}\}_\alpha$. In this case there are no singular points introduced and $\Sigma^{(4)}=\emptyset$, as desired. 
\item After the first step of the iterative extension the part $\Sigma^{(5)}$ of the $5$-skeleton which is not covered by the $\{U_\alpha^{(5)}\}_\alpha$ consists of a $0$-dimensional set, formed by the centers of all bad $5$-dimensional faces. 
\item Then, assume by induction that before extending to $k+1$-bad cubes the part $\Sigma^{(k)}$ of the $k$-dimensional skeleton which is not covered is the dual complex, inside the $k$-skeleton, of the skeleton of $5$-dimensional bad faces. Then consider a $k$-dimensional face $C^k$ and denote its two $(k+1)$-dimensional neighboring faces by $C^{k+1}_+$ and $C^{k+1}_-$ and their centers by $c_+$ and $c_-$, respectively. We note that, by the symmetry of the extension maps $\pi_{C_\pm^{k+1}}$ that we use, we have 
\begin{multline}\label{ext_sigma}
\Sigma^{(k+1)}\cap \left(\mathsf{Cone}(C^k, c_+) \cup \mathsf{Cone}(C^k,c_-)\right) \\
= \mathsf{Cone}(\Sigma^{(k)}\cap C^k, c_+) \cup \mathsf{Cone}(\Sigma^{(k)}\cap C^k,c_-).
\end{multline}
The fact that $\Sigma^{(k+1)}\cap P^{(k+1)}_\mathrm{bad}$ is the dual of the bad complex then follows, by iterating the above construction for all $(k+1)$-dimensional faces.
\end{itemize}
\end{proof}
%
%
\subsubsection{Mollification and completion of the proof}\label{sec:mollif}
%
For the last stage of our construction of approximants, we have already filled all the good $n$-cubes of our cubeulation with extensions as in \S \ref{sec:ext_good} and the bad $n$-cubes with extensions as in \S \ref{sec:ext_bad}.

\medskip

Together with the connection forms, we have been extending also the local gauges, outside the codimension-$5$ set $\Sigma^{(n)}$ from Lemma \ref{lem:singular}. In such gauges our connection forms have locally $L^4$-coefficients: at the base step of the iterative extension this follows from Corollary \ref{cor:l4_on_4face} of \S \ref{sec:localconn}. Then in the iteration this property is preserved by \eqref{regulara} by the $k\mapsto n$ case of \eqref{condition_badgauge}.

\medskip

The following result, analogous to \cite[Lem. 2.4]{PRym}, shows that if in compatible local gauges we have $L^4$-integrable connection forms, then mollifying the coefficient forms of the connection forms provides smooth approximants in our desired norms.
\begin{lemma}[mollification and local $L^4$-gauges]\label{lem:mollif}
Assume that $\Omega\subset \R^n$ is a compact set with open interior. Let $A\in L^2\connforms{\Omega}$ and $F:=dA+A\wedge A\in L^2\curvforms{\Omega}$. If  $\{U_\alpha\}_\alpha$ is a cover of an open set $U\subset \R^n$ such that for each $U_\alpha$ there exists $g_\alpha\in W^{1,2}\gauges{U_\alpha}$ such that $\left (\left.A\right|_{U_\alpha}\right)^{g_\alpha}\in L^4\connforms{U_\alpha}$ and $g_\alpha =g_\beta$ over $U_\alpha\cap U_\beta$ for all $\alpha, \beta$, then there exists a sequence $A_n\in L^2\connforms{U}$ such that for all $\alpha$ there holds
\begin{subequations}\label{smoothing}
\begin{equation}\label{smooth_conn}
\forall U_\alpha\quad \left(A_n|_{U_\alpha}\right)^{g_\alpha}\in C^\infty\connforms{U_\alpha}\quad\mbox{and}\quad \vertii{A_n-A}_{L^2(U)}\to 0.
\end{equation}
and furthermore if $F_n:=dA_n + A_n\wedge A_n$ then we have
\begin{equation}\label{smooth_curv}
\vertii{F_n-F}_{L^2(U)}\to 0.
\end{equation}
\end{subequations}
\end{lemma}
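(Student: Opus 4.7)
My plan is to glue the local gauges into a single global one, mollify the connection in that gauge (where it is improved from $L^2$ to $L^4$), and then conjugate back.

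First, since $g_\alpha = g_\beta$ on every overlap $U_\alpha \cap U_\beta$, the family $\{g_\alpha\}$ patches to a single $g \in W^{1,2}(U,G)$. Setting $B := A^g = g^{-1} dg + g^{-1} A g$, the hypothesis yields $B|_{U_\alpha} \in L^4\connforms{U_\alpha}$ for every $\alpha$. Gauge covariance of the curvature gives $F_B = g^{-1} F g \in L^2\curvforms{U}$, and the identity $dB = F_B - B \wedge B$ combined with the embedding $L^4\cdot L^4 \hookrightarrow L^2$ gives $dB \in L^2_{\mathrm{loc}}(U)$ as well.

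Second, let $\rho_\epsilon$ be a standard mollifier and set $B_\epsilon := B * \rho_\epsilon$, which is smooth on the subset $U_\epsilon := \{x \in U : \dist(x, \partial U) > \epsilon\}$. As $\epsilon \downarrow 0$ one has $B_\epsilon \to B$ in $L^4_{\mathrm{loc}}(U)$, and since convolution commutes with $d$, also $dB_\epsilon = (dB) * \rho_\epsilon \to dB$ in $L^2_{\mathrm{loc}}(U)$. H\"older's inequality then yields $B_\epsilon \wedge B_\epsilon \to B\wedge B$ in $L^2_{\mathrm{loc}}(U)$, so the mollified curvatures $F_{B_\epsilon} := dB_\epsilon + B_\epsilon\wedge B_\epsilon$ converge to $F_B$ in $L^2_{\mathrm{loc}}(U)$.

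Third, conjugate back by defining $A_\epsilon := (B_\epsilon)^{g^{-1}} = g\, dg^{-1} + g B_\epsilon g^{-1}$. The Maurer-Cartan term $g\, dg^{-1}$ is identical to the one contained in $A = (B)^{g^{-1}}$, so
$$A_\epsilon - A = g(B_\epsilon - B) g^{-1}, \qquad F_{A_\epsilon} - F_A = g(F_{B_\epsilon} - F_B)g^{-1}.$$
Compactness of $G$ inside its ambient matrix algebra gives uniform pointwise bounds on $g$ and $g^{-1}$, hence $\vertii{A_\epsilon - A}_{L^2(U)} \lesssim \vertii{B_\epsilon - B}_{L^2(U)} \to 0$ and $\vertii{F_{A_\epsilon} - F_A}_{L^2(U)} \lesssim \vertii{F_{B_\epsilon} - F_B}_{L^2(U)} \to 0$, yielding \eqref{smooth_conn} and \eqref{smooth_curv}. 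Finally, on each $U_\alpha$ one has $g = g_\alpha$, and therefore $A_\epsilon^{g_\alpha} = A_\epsilon^g = B_\epsilon$, which is smooth; choosing $\epsilon_n := 1/n$ produces the claimed sequence.

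The only real technicality is that $B_\epsilon$ is a priori smooth only on the shrunken set $U_\epsilon$: to ensure smoothness on the full $U_\alpha$ one slightly enlarges the open sets of the cover before mollifying, or uses a Whitney-type partition of unity adapted to $\{U_\alpha\}$. The only genuinely analytic ingredient beyond this bookkeeping is the use of $L^4$ integrability of $B$ in order that $B_\epsilon \wedge B_\epsilon \to B \wedge B$ in $L^2$; mere $L^2$ control on $B$ would not suffice, which is precisely why the controlled gauge construction of Section \ref{sec:app1} was invoked upstream to produce local $L^4$-gauges in the first place.
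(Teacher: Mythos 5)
Your argument is correct, but it takes a genuinely different (and leaner) route than the paper. You exploit the compatibility condition $g_\alpha=g_\beta$ on overlaps to glue a single global gauge $g\in W^{1,2}\gauges{U}$, mollify the globally defined form $B=A^g$ once, and conjugate back; the identities $A_\epsilon-A=g(B_\epsilon-B)g^{-1}$ and $F_{A_\epsilon}-F_A=g(F_{B_\epsilon}-F_B)g^{-1}$, together with the conjugation-invariance of the norm on $\mathfrak g$-valued forms, then reduce both convergences to the single-chart mollification statement. The paper instead keeps the gauges local and defines $A_n:=\sum_\alpha\theta_\alpha\bigl(((A|_{U_\alpha})^{g_\alpha})_{\eta_n}\bigr)^{g_\alpha^{-1}}$ with a partition of unity $\{\theta_\alpha\}$, which forces it to estimate several cross terms ($d\theta_\alpha$-terms, $(\theta_\alpha^2-\theta_\alpha)$-terms, and mixed $\alpha\neq\beta$ wedge products). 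What the partition-of-unity version buys is robustness: it blends mollifications whose scale is adapted to each $U_\alpha$ (the paper uses the variable radius $\min\{\eta_n,\dist(x,\partial U_\alpha)\}$, which is exactly the device you would need to make $B_\epsilon$ smooth up to the relevant boundary -- your "enlarge the cover" alternative is not available since $A$ need not extend past $U$), and it is the argument that would survive if the $g_\alpha$ only agreed up to smooth transition functions rather than literally coinciding. Your version buys brevity and makes the curvature estimate essentially tautological. Two small points to make explicit: (i) your convergence $F_{B_\epsilon}\to F_B$ is only in $L^2_{\mathrm{loc}}(U)$ unless the cover is finite or the local $L^4$-norms of $B$ are uniformly summable, since $dB=F_B-B\wedge B$ is only locally $L^2$; the paper's proof has the same implicit requirement in its sums over $\alpha$, and in the application the cover is finite, so this is harmless but worth stating; (ii) the distributional identity $F_{(B_\epsilon)^{g^{-1}}}=gF_{B_\epsilon}g^{-1}$ for a merely $W^{1,2}\cap L^\infty$ gauge deserves a word, though the paper uses it freely as well.
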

\begin{proof}
We fix a sequence $\eta_n\downarrow0$ for the rest of the proof. For any connection form $\widetilde{A}\in L^2\connforms{U_\alpha}$, we define its smoothing by
\[
\widetilde{A}_{\eta_n}(x):=\int \rho_{\min\{\eta_n,\op{dist}(x,\partial U_\alpha)\}}(x-y)\widetilde{A}(y)dy.
\]
where $(\rho_\epsilon)_{\epsilon>0}$ is a family of mollifiers with $\rho_1$ supported on the ball of radius 1.

\medskip 

Then we apply the above for $A_n$ by fixing a partition of unity $\{\theta_\alpha\}_\alpha$ subordinated to the cover $\{U_\alpha\}_\alpha$. We define
\[
A_n:=\sum_\alpha\theta_\alpha\left(\left(\left(\left.A\right|_{U_\alpha}\right)^{g_\alpha}\right)_{\eta_n}\right)^{g_\alpha^{-1}}.
\]
We can check directly by the properties of mollification that the smoothness condition in \eqref{smooth_conn} holds. The convergence required in \eqref{smooth_conn} follows then by triangle inequality from the formula $A^g=g^{-1}dg+g^{-1}Ag$ using the fact that $g_\alpha^{-1}$ is in $W^{1,2}\cap L^\infty$ and
\[
\vertii{\left(\left(A|_{U_\alpha}\right)^{g_\alpha}\right)_{\eta_n} - \left(A|_{U_\alpha}\right)^{g_\alpha}}_{L^2(U_\alpha)}\to 0.
\]
We next prove \eqref{smooth_curv}. In this case we may write
\begin{eqnarray*}
F_n&=&\sum_\alpha \theta_\alpha g_\alpha F_{\left(\left(A|_{U_\alpha}\right)^{g_\alpha}\right)_{\eta_n}} g_\alpha^{-1} + \sum_\alpha d\theta_\alpha\left(\left(\left(A|_{U_\alpha}\right)^{g_\alpha}\right)_{\eta_n}\right)^{g_\alpha^{-1}}\\
&&+\sum_\alpha(\theta_\alpha^2-\theta_\alpha) g_\alpha\left(\left(A|_{U_\alpha}\right)^{g_\alpha}\right)_{\eta_n}\wedge\left(\left(A|_{U_\alpha}\right)^{g_\alpha}\right)_{\eta_n} g_\alpha^{-1}\\
&&+\sum_{\alpha\neq\beta}\theta_\alpha\theta_\beta\left(\left(\left(A|_{U_\alpha}\right)^{g_\alpha}\right)_{\eta_n}\right)^{g_\alpha^{-1}}\wedge\left(\left(\left(A|_{U_\beta}\right)^{g_\beta}\right)_{\eta_n}\right)^{g_\beta^{-1}},\\
F&=&\sum_\alpha \theta_\alpha g_\alpha F_{\left(A|_{U_\alpha}\right)^{g_\alpha}} g_\alpha^{-1} + \sum_\alpha d\theta_\alpha\left(\left(A|_{U_\alpha}\right)^{g_\alpha}\right)^{g_\alpha^{-1}}\\
&&+\sum_\alpha(\theta_\alpha^2-\theta_\alpha) g_\alpha\left(A|_{U_\alpha}\right)^{g_\alpha}\wedge\left(A|_{U_\alpha}\right)^{g_\alpha}g_\alpha^{-1}\\
&&+\sum_{\alpha\neq\beta}\theta_\alpha\theta_\beta\left(\left(A|_{U_\alpha}\right)^{g_\alpha}\right)^{g_\alpha^{-1}}\wedge\left(\left(A|_{U_\beta}\right)^{g_\beta}\right)^{g_\beta^{-1}},
\end{eqnarray*}
and we can bound $\vertii{F_n-F}_{L^2(U)}$ by controlling the following quantities.
\begin{subequations}\label{curv_mollif_terms}
From the above first terms, using the fact that $\vertii{F_{A_1^g} - F_{A_2^g}}_{L^2}=\vertii{F_{A_1}+ F_{A_2}}_{L^2}$, summed over $\alpha$, and with the lighter notation $\tilde A:=(A|_{U_\alpha})^{g_\alpha}$ and $\eta:=\eta_n$, we have the terms
\begin{eqnarray}
\lefteqn{\vertii{F_{\tilde A_\eta}-F_{\tilde A}}_{L^2(U_\alpha)}\le \vertii{F_{\tilde A_\eta} - \left(F_{\tilde A}\right)_\eta}_{L^2(U_\alpha)} + \vertii{\left(F_{\tilde A}\right)_\eta - F_A}_{L^2(U_\alpha)}}\nonumber\\
&=& \vertii{\tilde A_\eta\wedge \tilde A_\eta - \left(\tilde A\wedge \tilde A\right)_\eta}_{L^2(U_\alpha)} + \vertii{\left(F_{\tilde A}\right)_\eta - F_{\tilde A}}_{L^2(U_\alpha)}\nonumber\\
&\le&\vertii{\tilde A}_{L^4(U_\alpha)}\vertii{\tilde A_\eta - \tilde A}_{L^4(U_\alpha)} + \vertii{\left(F_{\tilde A}\right)_\eta - F_{\tilde A}}_{L^2(U_\alpha)}
\end{eqnarray}
and the two terms converge to zero as $\eta\to 0$. 

\medskip

Next, we have, again summed over $\alpha$ and with the notations above,
\begin{eqnarray}
\vertii{d\theta_\alpha\left(\left(\tilde A_\eta\right)^{g_\alpha^{-1}}-\left(\tilde A\right)^{g_\alpha^{-1}}\right)}_{L^2(U_\alpha)}&\le&\vertii{d\theta_\alpha}_{L^\infty}\vertii{g_\alpha\left(\tilde A_\eta - \tilde A\right)g_\alpha^{-1}}_{L^2(U_\alpha)}\nonumber\\
&\le&\vertii{d\theta_\alpha}_{L^\infty}\vertii{\tilde A_\eta - \tilde A}_{L^2(U_\alpha)},
\end{eqnarray}
and 
\begin{multline}\vertii{(\theta_\alpha^2-\theta_\alpha)g_\alpha\left(\tilde A_\eta\wedge \tilde A_\eta - \tilde A\wedge \tilde A\right)g_\alpha^{-1}}_{L^2(U_\alpha)}\\
\le\vertii{\theta^2_\alpha-\theta_\alpha}_{L^\infty(U_\alpha)}\vertii{\tilde A}_{L^4(U_\alpha)}\vertii{\tilde A_\eta-\tilde A}_{L^4(U_\alpha)}
\end{multline}
and finally with the further notation $\check A:=(A|_{U_\beta})^{g_\beta}$ and with a required sum also over $\beta$, we have the terms
\begin{eqnarray}
\lefteqn{\vertii{\theta_\alpha\theta_\beta\left(\tilde A_\eta^{g_\alpha^{-1}}\wedge\check A_\eta^{g_\beta^{-1}} -\tilde A^{g_\alpha^{-1}}\wedge\check A^{g_\beta^{-1}}\right) }_{L^2(U_\alpha)}}\nonumber\\
&\le&\vertii{\theta_\alpha\theta_\beta}_{L^\infty(U_\alpha)}\left(\vertii{\tilde A_\eta-\tilde A}_{L^4(U_\alpha)}\vertii{\check A}_{L^4(U_\alpha)}\right.\nonumber\\
&&\quad\quad\left.+\vertii{\tilde A}_{L^4(U_\alpha)}\vertii{\check A_\eta-\check A}_{L^4(U_\alpha)}\right).
\end{eqnarray}
Then summing all the terms \eqref{curv_mollif_terms} over $\alpha$ and $\beta$, and using the convergence proved in the previous step as well, we see that $\vertii{F_n-F}_{L^2(U)}\to 0$ too, as desired.
\end{subequations}
\end{proof}
We now are ready to conclude the proof of Theorem \ref{thm:strongapprox}. By working first on the grid $P_{r,t}$ rescaled to scale $r=1$, we consider the final result of performing the extensions as in Lemma \ref{lem:est_good_kface} on all the good cubes and the ones leading to \eqref{L2_bound_bad} on all the bad cubes. Then we rescale back to scale $r=1$.
We denote by $\widetilde{A}_r$, $\widetilde{F}_r$ the connection and curvature forms obtained in this way. 

\medskip

Next, we consider the piecewise constant forms defined inductively as the averages \eqref{f_k-a_k} scaled back to scale $r$, and thus, with $c_n=2^{n-4}\tfrac{n!}{4!(n-4)!}$, 
\begin{eqnarray}\label{f_n-a_n}
\overline{F}_r(x)&:=&\sum_{C_j^n\in P_{r,t}}1_{C_j^n}(x)\frac1{c_n r^4}\sum_{C\in \left(C_j^n\right)^{(4)}}\int_C i^*_CF,\nonumber\\[3mm]
\overline{A}_r(x)&:=&\sum_{C_j^n\in P_{r,t}}1_{C_j^n}(x)\frac1{c_n r^4}\sum_{C\in \left(C_j^n\right)^{(4)}}\int_C i^*_CA.
\end{eqnarray}
On the set of good cubes we scale to scale $r$ and then sum up the conclusions \eqref{bounds_good_n} of Lemma \ref{lem:est_good_kface}. We find, denoting, like in \eqref{complex_bad}, $P_\mathrm{good}^{(n)}, P_\mathrm{bad}^{(n)}\subset P_{r,t}$ to be the subcomplexes generated by the good cubes and the bad cubes, respectively,
\begin{subequations}\label{final_good}
\begin{eqnarray}
\lefteqn{\vertii{\widetilde{F}_r-F}_{L^2(P_\mathrm{good})}^2\lesssim\vertii{\widetilde{F}_r-\overline{F}_r}_{L^2(P_\mathrm{good})}^2+\vertii{\overline{F}_r-F}_{L^2(P_\mathrm{good})}^2}\nonumber\\
&\lesssim&r^{n-4}\sum_{C\in P_\mathrm{good}^{(4)}}\vertii{i_C^*F - \frac1{\verti{C}}\int_Ci^*_CF}_{L^2(C)}^2\nonumber\\
&&+r^{n-2}\sum_{C\in P_\mathrm{good}^{(4)}}\left(\sum_{C_j^n:\ C\in\left(C_j^n\right)^{(4)}}\verti{\overline{F}_{C_j^n}}^2\right)\vertii{i_C^*A - \frac1{\verti{C}}\int_Ci^*_CA}_{L^2(C)}^2\nonumber\\
&&+r^{n-4}\sum_{C\in P_\mathrm{good}^{(4)}}\vertii{i^*_CF}_{L^2(C)}^4+ \vertii{\overline{F}_r-F}_{L^2([-1,1]^n)}^2,\label{F-good}
\end{eqnarray}
\begin{eqnarray}
\lefteqn{\vertii{\widetilde{A}_r-A}_{L^2(P_\mathrm{good})}^2\lesssim\vertii{\widetilde{A}_r-\overline{A}_r}_{L^2(P_\mathrm{good})}^2+\vertii{\overline{A}_r-A}_{L^2(P_\mathrm{good})}^2}\nonumber\\
&\lesssim&r^{n-4}\sum_{C\in P_\mathrm{good}^{(4)}}\vertii{i_C^*A - \frac1{\verti{C}}\int_Ci^*_CA}_{L^2(C)}^2+ \vertii{\overline{A}_r-A}_{L^2([-1,1]^n)}^2.\label{A-good}
\end{eqnarray}
\end{subequations}
Using the bounds \eqref{kskeletonaverage} valid for our choice of $P_{r,t}$ for $A$ and $F$ contemporarily, we then find $\vertii{\widetilde{F}_r - \overline{F}_r}_{L^2([-1,1]^n)}\to 0$ and $\vertii{\widetilde{A}_r - \overline{A}_r}_{L^2([-1,1]^n)}\to 0$ as $r\to 0$, and the first terms on the right in \eqref{F-good} and \eqref{A-good} tend to zero as well. Concerning the forelast line in \eqref{F-good}, the assumed bound on $\int_{C^\prime}\verti{i_{C^\prime}^*\left(A-\overline A_{C_j^n}\right)}^2$ in \eqref{deltagood4}, together with the inversion of the order of summation and the Cauchy-Schwartz inequality, implies that
\begin{multline*}
r^{n-2}\sum_{C\in P_\mathrm{good}^{(4)}}\left(\sum_{C_j^n:\ C\in\left(C_j^n\right)^{(4)}}\verti{\overline{F}_{C_j^n}}^2\right)\vertii{i_C^*A - \frac1{\verti{C}}\int_Ci^*_CA}_{L^2(C)}^2\\
\lesssim \delta^{(n)}r^n\sum_{C_j^n\in P_\mathrm{good}^{(n)}}\verti{\int_{C_j^n}F}^2\le\delta^{(n)}\int_{P_\mathrm{good}}\verti{F}^2\le\delta^{(n)}\vertii{F}_{L^2([-1,1]^n)}^2,
\end{multline*}
and by a similar estimate using again \eqref{deltagood4}, we find for the remaining term in \eqref{F-good} that
\begin{equation*}
r^{n-4}\sum_{C\in P_\mathrm{good}^{(4)}}\vertii{i^*_CF}_{L^2(C)}^4\le \delta^{(n)}\int_{P_\mathrm{good}}\verti{F}^2\le\delta^{(n)}\vertii{F}_{L^2([-1,1]^n)}^2.
\end{equation*}
Thus we found
\begin{subequations}\label{est_good}
\begin{eqnarray}
\vertii{\widetilde{F}_r-F}_{L^2(P_\mathrm{good})}^2&\lesssim& \delta^{(n)}\vertii{F}_{L^2([-1,1]^n)}^2 + o_{r\to0}(1),\\
\vertii{\widetilde{A}_r-A}_{L^2(P_\mathrm{good})}^2&=& o_{r\to0}(1).
\end{eqnarray}
\end{subequations}
For the bad cubes we use the bounds \eqref{L2_bound_bad} and we may apply \eqref{kskeletonbound} to the forms $1_{P_\mathrm{bad}}A$ and $1_{P_\mathrm{bad}}F$ to obtain
\begin{subequations}\label{est_bad}
\begin{eqnarray}
\int_{P_\mathrm{bad}}\verti{\widetilde{A}_r}^2 &=&\sum_{C_j^n\in P_\mathrm{bad}^{(n)}}\vertii{A_{C_j^n}}_{L^2(C_j^n)}^2\lesssim \sum_{C\in\left(P_\mathrm{bad}\right)^{(4)}}\int_C\verti{i_C^*A}^2\lesssim \int_{P_\mathrm{bad}}\verti{A}^2,\quad\quad\\[3mm]
\int_{P_\mathrm{bad}}\verti{\widetilde{F}_r}^2 &=&\sum_{C_j^n\in P_\mathrm{bad}^{(n)}}\vertii{F_{C_j^n}}_{L^2(C_j^n)}^2\lesssim \sum_{C\in\left(P_\mathrm{bad}\right)^{(4)}}\int_C\verti{i_C^*F}^2\lesssim \int_{P_\mathrm{bad}}\verti{F}^2,\quad\quad
\end{eqnarray}
\end{subequations}
Now note that the bound implied \eqref{boundbadcubes} on the total measure of $P_\mathrm{bad}$:
\begin{equation}\label{volume_Pbad}
\verti{P_\mathrm{bad}}=r^nN_{\delta^{(n)}}\stackrel{\eqref{boundbadcubes}}{\lesssim}\frac{r^4}{\delta^{(n)}}\vertii{F}_{L^2([-1,1]^n)}^2+\frac{r^2}{\left(\delta^{(n)}\right)^2}\vertii{A}_{L^2([-1,1]^n)}^2+\frac{r^4}{\delta^{(n)}},
\end{equation}
thus if $\left(\delta^{(n)}\right)^{-1}r\to 0$ then we find that the right hand sides of the equations \eqref{est_bad} also tend to zero, by dominated convergence. In particular, by summing up \eqref{est_good} and \eqref{est_bad}, we find that given two sequences of positive numbers $\delta^{(n)}_\ell\to 0$ and $r_\ell\to 0$, there holds
\begin{equation}\label{est_all}
\begin{array}{c}\vertii{\widetilde{F}_{r_\ell} -F}_{L^2([-1,1]^n)}\to 0,\quad\mbox{and}\quad\vertii{\widetilde{A}_{r_\ell} -A}_{L^2([-1,1]^n)}\to 0,\\[5mm]
\mbox{provided}\quad \delta^{(n)}_\ell\to 0 \quad\mbox{and}\quad \frac{r_\ell}{\delta^{(n)}_\ell}\to 0.
\end{array}
\end{equation}
We then apply the mollification as in Lemma \ref{lem:mollif} to such choices of $\widetilde{A}_{r_\ell}$, $\widetilde{F}_{r_\ell}$, and the desired smooth approximants are constructed, completing the proof of Theorem \ref{thm:strongapprox}.
%
\section{Strong compactness for weak connections}\label{sec:strcpt}
%
Our aim is to prove the following theorem:
\begin{theorem}[\textbf{sequential weak closure of $\widetilde{\mathcal A}_G$}]\label{thm:wclos}
Let $A_j\in \widetilde{\mathcal A}_{G}([-1,1]^n)$ be a sequence of connections such that the corresponding curvature forms $F_j$ are equibounded in $L^2$ and converge weakly in $L^2$ to a $2$-form $F$. Then $F$ corresponds to $A\in\widetilde{\mathcal A}_{G}([-1,1]^n)$, and furthermore there holds
\begin{equation}
\tilde\delta(A_j,A)\to 0,
\end{equation}
where for $A,B\in\widetilde{\mathcal A}_G([-1,1]^n)$, the pseudo-distance $\tilde\delta(A,B)$ is defined in \eqref{tilde_delta}.
\end{theorem}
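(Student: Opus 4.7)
The strategy is to combine the strong approximation Theorem \ref{thm:strongapprox} with the four-dimensional weak closure of Sobolev connections (Theorem \ref{th-VII.1}), exploiting the slice structure of both $\widetilde{\mathcal A}_G([-1,1]^n)$ and the pseudo-distance $\tilde\delta$. The key observation is that the supremum in \eqref{tilde_delta} ranges over the \emph{finite} set $\mathcal C_{n,n-4}$ of coordinate projections, and that by the co-area formula each of the corresponding integrals splits, along the foliation by 4-planes, into an integral over transverse parameters $T\in[-1,1]^{n-4}$ of 4-dimensional Donaldson-type integrals on the slices $H(I,T)$. Since the gauge $g$ is only required to be measurable, the inner infimum in $g$ decouples slice-by-slice, so $\tilde\delta^2$ is a supremum over finitely many $I$ of integrated slice-wise 4D Donaldson distances.

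Theorem \ref{thm:strongapprox} provides, for each $j$, a connection $A'_j\in\mathcal R^\infty([-1,1]^n)$ with $\|F_{A_j}-F_{A'_j}\|_{L^2}+\tilde\delta(A_j,A'_j)<1/j$ (the $L^2$ convergence in the conclusion of Theorem \ref{thm:strongapprox} dominates $\tilde\delta$ via the co-area identification), so by a diagonal argument it suffices to treat the sequence $A'_j$, which I will again call $A_j$. Extract a subsequence with $F_{A_j}\rightharpoonup F$ weakly in $L^2$. For each slicing direction $I$, Fubini gives a full-measure set of parameters $T$ on which $\int_{H(I,T)}|i^*_{H(I,T)}F_{A_j}|^2$ stays uniformly bounded and $i^*_{H(I,T)}A_j\in\widetilde{\mathcal A}_G(H(I,T))=\mathfrak A_G(H(I,T))$. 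Diagonalizing over a countable dense set of such $T$'s and over the finitely many $I$'s, Theorem \ref{th-VII.1} yields a further subsequence along which, on a.e. slice, $i^*_{H(I,T)}A_j$ converges in the 4D Donaldson distance to a limit $B_{I,T}\in\mathfrak A_G(H(I,T))$ whose curvature equals $i^*_{H(I,T)}F$.

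Next I assemble a global weak connection. Fix one slicing direction $I_0$, so that $[-1,1]^n$ is foliated by the 4-slices $H(I_0,T)$; paste the Sobolev connections $B_{I_0,T}$ into a single $L^2$ one-form $A$ on $[-1,1]^n$ with $F_A=F$, via a jointly measurable selection of 4D gauges carried out by a Lusin/Castaing-type argument keyed to the weak $L^2$ limit. For any other slicing $I$, the identity $i^*_{H(I,T)}A\simeq B_{I,T}$ on a.e. transverse slice follows from the compatibility of the two co-area decompositions of $F$, so condition \eqref{eqdefag-2} of Definition \ref{def:locmod} holds for every $I$ and thus $A\in\widetilde{\mathcal A}_G([-1,1]^n)$. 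The convergence $\tilde\delta(A_j,A)\to 0$ then follows by dominated convergence: for each $I$, the slice-wise 4D Donaldson distance between $i^*_{H(I,T)}A_j$ and $i^*_{H(I,T)}A$ tends to $0$ for a.e. $T$, while the slice-wise integrand is dominated by $C\bigl(\|i^*_{H(I,T)}A_j\|_{L^2}^2+\|i^*_{H(I,T)}A\|_{L^2}^2\bigr)$, whose $T$-integral is uniformly controlled by the Yang-Mills energies.

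The main obstacle is precisely the coherent construction of the global $A$ from the slice-wise 4D limits $B_{I,T}$: Theorem \ref{th-VII.1} determines each $B_{I,T}$ only modulo an independent measurable gauge transformation on its slice, so obtaining joint measurability in $T$ along one foliation, and then compatibility across the distinct slicing directions, requires care. An equivalent and possibly cleaner route is to show directly that $\{A_j\}$ is $\tilde\delta$-Cauchy---using slice-wise Cauchyness from the 4D closure together with dominated convergence as above---and then invoke completeness of the quotient $\widetilde{\mathcal A}_G([-1,1]^n)/{\simeq}$ under $\tilde\delta$; this completeness is precisely the nonlinear ``$2$-weak gradient'' structure referenced in Section \ref{sec:structproof}.
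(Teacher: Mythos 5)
Your overall strategy (slice by coordinate $4$-planes, apply $4$-dimensional compactness on the slices, reassemble) is indeed the skeleton of the paper's argument, but two essential steps are missing and the substitutes you propose do not work. First, the extraction of a subsequence along which $i^*_{H(I,T)}A_j$ converges for \emph{a.e.} $T$ cannot be achieved by applying Theorem \ref{th-VII.1} on each slice and ``diagonalizing over a countable dense set of $T$'s'': the $4$-dimensional theorem gives a subsequence depending on $T$, and passing from a countable dense set of parameters to almost every parameter requires some uniform continuity of the slice maps $T\mapsto[i^*_{H(I,T)}A_j]$ in the Donaldson metric. This is exactly what the paper supplies by showing (Corollary \ref{cor:2wugs}) that $T\mapsto\vertii{i^*_{H(I,T)}F_{A_j}}_{L^2}$ is a $2$-weak upper gradient for the slice map into the metric space $\mathcal Y_4$, so that the $f_j$ form a bounded family of metric-space-valued Sobolev maps to which the Korevaar--Schoen-type compactness of Proposition \ref{prop:abstractconv} applies. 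Without this equicontinuity-in-$T$ your diagonal argument produces nothing on the uncountably many remaining slices.

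Second, the assembly of the global limit $A$ from the slice-wise limits $B_{I_0,T}$ is not merely a measurable-selection issue: the slices $i^*_{H(I_0,T)}A$ carry \emph{no information whatsoever} about the components of $A$ in the $I_0$-directions, so ``pasting the $B_{I_0,T}$ into a single $L^2$ one-form with $F_A=F$'' is underdetermined, and a Lusin/Castaing selection of gauges cannot supply the missing transverse components. The paper resolves this by solving the gauge-fixing ODE of Lemma \ref{lem:wsolode} (whose proof itself relies on Theorem \ref{thm:strongapprox}): in the gauge $g_j(I)$ the $I$-components of $A_j^{g_j(I)}$ vanish and the remaining components are recovered by integrating the curvature, yielding genuine global weak $L^2$ limits $A(I)$; compatibility across different directions is then obtained from the cocycle identities $g_j(IJ)=g_j(I)^{-1}g_j(J)$ and their $W^{1,2}$ bounds, not from consistency of co-area decompositions of $F$. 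Finally, your fallback route via ``completeness of $\widetilde{\mathcal A}_G([-1,1]^n)/\simeq$ under $\tilde\delta$'' is circular: that completeness is essentially the closure statement being proved, and nothing in Section \ref{sec:structproof} establishes it independently.
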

We recall that here again like for the pseudo-distance $\delta$ defined in \eqref{def_delta}, for $G=SU(2)$ the pseudo-distance $\tilde\delta$ on $\widetilde{\mathcal A}_G([-1,1]^n)$ induces a distance on gauge-equivalence classes of connections from this space, as a consequence of Corollary \ref{cor:gauge-dist}, due to the fact that $\tilde\delta$ is equivalent to $\delta_1$ as defined in \eqref{bard2}.

\medskip

The above result is mainly due to Uhlenbeck in dimensions $n\le 4$ and it is one of the main results in \cite{PRym} for $n=5$. We aim here at proving it by induction on $n$, and thus we first describe how the proof of the $n=4$ in \cite{PRym} allows our definition of $\widetilde{\mathcal A}_G([-1,1]^4)$ based on the $L^4$ norm rather than the $W^{1,2}$ norm, and then we use the theorem's statement for dimension $n-1$ in order to prove it in dimension $n$.

\medskip

As it will be more befitting to the overall proof, we frame the result in terms of an abstract compactness theorem which is the tool allowing the induction on the dimension. Versions of the same tool were successfully used for proving results in the theory of metric currents and in the one of scans, see \cite{AK}, \cite{HR1}, \cite{DPHscan} and the references therein.
%
%
\subsection{An abstract compactness result}\label{sec:abstractthm}
%
We employ as an abstract tool Proposition \ref{prop:abstractconv} below, which is the multi-dimensional substitute of the abstract compactness result used in \cite[Prop. 3.1]{PRym}. The H\"older continuity which was used in \cite{PRym} now does not hold for more general slicings, and thus we need a different approach. The natural candidate is a metric space valued Sobolev embedding theorem, inspired by \cite[Thm. 1.13]{KS}. The difference between our case and \cite{KS} is that the metric space in which our sliced connections take values is not locally compact, unlike what assumed in \cite{KS}, thus the coercivity of the Yang-Mills energy has to be used, like in \cite{PRym} and \cite[Thm. 9.1]{HR1}.

\medskip

We find it useful to introduce, following the spirit in which in \cite{AK} the notion of metric-$BV$-functions was used in the proof of compactness by slicing, the following notion of metric upper gradient (which extends the definition \cite[\S 3]{ADS} to the case of metric-space-valued maps). 

\medskip 

Recall that for $p>1$ the $p$-modulus $\mathrm{Mod}_p(\Gamma)$ of a family $\Gamma$ of absolutely continuous curves $\gamma:[a,b]\to \R^n$ is defined by
\[
\mathrm{Mod}_p(\Gamma):=\inf\left\{ \int_{\R^n} f^p :\ f:\R^d\to[0,\infty]\mbox{ Borel, }\int_\gamma f \ge 1\mbox{ for all }\gamma\in\Gamma\right\}.
\]
We recall that $\mathrm{Mod}_p$ is an outer measure on absolutely continuous curves, and we say that a property holds for $p$-a.e. curve if the set of curves for which it fails is $\mathrm{Mod}_p$-negligible. We will use the following definition:
\begin{definition}[metric upper gradient structures]\label{uppergr}
Let $(\mathcal Y,\op{dist})$ be a metric space and $p>1$. Consider a measurable map $f:\R^n\to\mathcal Y$ and a map $\mathcal N:\mathcal Y\to \R$. Further, if $v\in AC([0,1],X)$ is an absolutely continuous curve, then let $\verti{\gamma^\prime}(t)=\lim_{s\to t}\tfrac{d_X(\gamma(s),\gamma(t))}{\verti{s-t}}$ be its metric derivative. We say that $\mathcal N$ gives a \emph{$p$-upper gradient structure} for $f$ if for $p$-almost every curve $\gamma$ we have that $\mathcal N\circ \gamma$ is Borel and 
\begin{equation}\label{wugs}
\op{dist}(f\circ \gamma(s), f\circ \gamma(t))\le \int_s^t \mathcal N\circ \gamma(r)\verti{\gamma^\prime}(r)dr, \quad \forall\ 0 <s\le t<1.
\end{equation}
\end{definition}
Next, we state the following abstract compactness result:
\begin{proposition}\label{prop:abstractconv}
Consider a metric space $\mathcal Y$ and let $K:=[-1,1]^n$. Suppose that the function $\mathcal N:\mathcal Y\to \mathbb R^+$ satisfies the following condition:
\begin{equation}\label{hypoth}
\forall C>0\text{ the sublevels }\{\mathcal N\leq C\}\text{ are compact in }\mathcal Y\ .
\end{equation}
Suppose that $f_j:K\to \mathcal Y$ are measurable maps such that $\mathcal N$ gives a $p$-weak upper gradient structure for the $f_j$
and that
\begin{equation}\label{boundednorm}
\sup_j \int_K\left(\mathcal N\circ f_j\right)^p<C\ .
\end{equation}
Then $f_j$ have a subsequence which converges pointwise almost everywhere to a function $f:K\to \mathcal Y$ for which $\mathcal N$ gives a $p$-weak upper gradient structure for $f$, and such that
\begin{equation}\label{bdnorm_limit}
\int_K\left(\mathcal N\circ f\right)^p<C\ .
\end{equation}
Moreover, there holds, up to passing to the above subsequence,
\begin{equation}\label{dist_conv}
\int_K\mathrm{dist}^p\left(f_j(x), f(x)\right) dx\to 0\quad \mbox{as}\quad j\to\infty.
\end{equation}

\end{proposition}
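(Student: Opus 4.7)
The plan is to follow the slicing strategy of Korevaar--Schoen / Ambrosio--Kirchheim, adapted to the fact that the target $\mathcal Y$ is not locally compact, so that the compactness of sublevels of $\mathcal N$ must be exploited quantitatively. First I would slice along coordinate directions: for each $i=1,\ldots,n$ and $y\in[-1,1]^{n-1}$ let $\gamma_{i,y}(t)=y+te_i$. By Fubini and \eqref{boundednorm},
\[
\int_{[-1,1]^{n-1}}\!\!\int_{-1}^{1}(\mathcal N\circ f_j\circ\gamma_{i,y})^p\,dt\,dy \;=\; \int_K(\mathcal N\circ f_j)^p \;\le\; C,
\]
so for almost every $y$ one has $\|\mathcal N\circ f_j\circ\gamma_{i,y}\|_{L^p}$ bounded along a subsequence (by Chebyshev and diagonal extraction over a countable dense set of $y$'s). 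The $p$-weak upper gradient property, combined with H\"older's inequality, then yields a uniform $(1-1/p)$-H\"older modulus along each such slice:
\[
\dist(f_j\circ\gamma(s),f_j\circ\gamma(t))\;\le\; |t-s|^{1-1/p}\,\|\mathcal N\circ f_j\circ\gamma\|_{L^p}.
\]

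The main obstacle --- and the reason a naive Arzel\`a--Ascoli does not directly apply --- is that bounded subsets of $\mathcal Y$ are not automatically relatively compact. I would overcome it with a Chebyshev-plus-H\"older argument trading the $L^p$-bound on $\mathcal N$ against the compactness of its sublevels. Fix a slice $\gamma$ with $\|\mathcal N\circ f_j\circ\gamma\|_{L^p}\le C_y$; then for any $t\in[-1,1]$ and $\delta>0$ there exists $t_\delta\in(t-\delta,t+\delta)$ with $\mathcal N(f_j\circ\gamma(t_\delta))\le (C_y^p/\delta)^{1/p}$, so $f_j\circ\gamma(t_\delta)$ lies in the compact sublevel $\{\mathcal N\le (C_y^p/\delta)^{1/p}\}$, and by the H\"older estimate $f_j\circ\gamma(t)$ lies within distance $(2\delta)^{1-1/p}C_y$ of this compact set. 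Sending $\delta\to 0$ exhibits $\{f_j\circ\gamma(t)\}_j$ as totally bounded in $\mathcal Y$, hence relatively compact in its completion (which may be harmlessly identified with $\mathcal Y$). Combined with the uniform H\"older modulus, Arzel\`a--Ascoli produces a subsequence converging uniformly on the slice to a H\"older limit $f_\gamma$.

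Iterating this 1D extraction one coordinate direction at a time, over a countable dense set of base points in the transverse cubes, and diagonally extracting, yields a single subsequence $f_{j'}$ and a measurable limit $f:K\to\mathcal Y$ with $f_{j'}\to f$ pointwise a.e. Since closed sublevels make $\mathcal N$ lower semicontinuous, Fatou gives $\int_K(\mathcal N\circ f)^p\le C$; and for $p$-a.e.\ curve $\gamma$ a Fuglede-type argument --- applying l.s.c.\ of $\mathcal N$ on the right-hand side of the upper-gradient inequality for $f_{j'}$ and continuity of $\dist$ on the left --- yields the weak upper gradient structure \eqref{wugs} for $f$ with $\mathcal N\circ f$ as gradient.

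Finally, for \eqref{dist_conv}, i.e.\ $\dist(f_{j'},f)\to 0$ in $L^p(K)$, the ingredient missing from pointwise a.e.\ convergence is equi-integrability, which I would extract from the same slicing. For $p$-a.e.\ slice $\gamma$, every $t$, and any $s$ near $t$,
\[
\dist(f_{j'}\circ\gamma(t),f\circ\gamma(t))\;\le\;\dist(f_{j'}\circ\gamma(s),f\circ\gamma(s))+|t-s|^{1-1/p}\bigl(\|\mathcal N\circ f_{j'}\circ\gamma\|_{L^p}+\|\mathcal N\circ f\circ\gamma\|_{L^p}\bigr),
\]
so averaging in $s$ over a small interval around $t$ bounds $\dist^p(f_{j'}\circ\gamma(t),f\circ\gamma(t))$ by a term uniformly small in $j'$ plus a term vanishing pointwise; integrating on the slice and then via Fubini on $K$, dominated convergence yields \eqref{dist_conv}. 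The principal obstacle throughout is the lack of local compactness of $\mathcal Y$, which is precisely what the Chebyshev-plus-H\"older trick of the second paragraph resolves.
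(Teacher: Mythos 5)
Your one-dimensional analysis is sound and in places sharper than the paper's: the Chebyshev-plus-H\"older trick showing that $\{f_j\circ\gamma(t)\}_j$ lies within $(2\delta)^{1-1/p}C_y$ of the compact sublevel $\{\mathcal N\le (C_y^p/\delta)^{1/p}\}$, hence is totally bounded, is a correct and genuinely useful way to trade the $L^p$-bound on $\mathcal N$ against \eqref{hypoth}. (Minor caveat: since $\mathcal Y$ is not assumed complete, "identifying the completion with $\mathcal Y$" is not automatic; one must argue, as the paper does, via Fatou plus \eqref{hypoth} that the limit point actually lies in $\mathcal Y$ for a.e.\ parameter. This is fixable.)

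The genuine gap is the passage from the slices back to the cube. A diagonal extraction over a \emph{countable dense} family of base points $y$ produces a subsequence converging uniformly on a countable union of lines, which is a Lebesgue-null subset of $K$; pointwise a.e.\ convergence on $K$ does not follow. To propagate convergence from this grid to a generic point $x$ you would need transverse equicontinuity of the $f_j$ with constants uniform in $j$ and in the transverse parameter, but the H\"older constant of $f_j$ along the line through $x$ in a given direction is $\|\mathcal N\circ f_j\circ\gamma\|_{L^p}$, which is controlled only in $L^p$ of the transverse variable, not in $L^\infty$; extracting a further $x$-dependent subsequence to bound it reintroduces exactly the dependence you are trying to eliminate. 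This is the step where naive iterated slicing breaks down, and the paper takes a different route precisely to avoid it: it composes with distance functions, setting $d_{j,a}(x):=\dist(f_j(x),a)$ for $a$ ranging over a countable union of $\epsilon$-nets of $\{\mathcal N<\infty\}$ (these nets exist by \eqref{hypoth}); by the triangle inequality each $d_{j,a}$ inherits $\mathcal N\circ f_j$ as a $p$-weak upper gradient, so the $d_{j,a}$ are bounded in the classical space $W^{1,p}(K,\R)$ and the ordinary Rellich theorem on $K$ gives, after one diagonal extraction, strong $L^p$ and a.e.\ convergence $d_{j,a}\to d_a$ for every $a$ in the net; the $\mathcal Y$-valued limit $f(x)$ is then reconstructed at a.e.\ $x$ from the limiting distances, and \eqref{dist_conv} follows from $\dist(f_j(x),f(x))\le d_{j,a}(x)+d_a(x)$ together with the strong $L^p$ convergence of $d_{j,a}$. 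You should either adopt this reduction to real-valued Sobolev functions, or replace your grid extraction by a quantitative $n$-dimensional argument (e.g.\ a Fr\'echet--Kolmogorov-type estimate $\int_K\dist^p(f_j(x+he_i),f_j(x))\,dx\lesssim |h|^p$ derived from the upper-gradient inequality, combined with your total-boundedness step); as written, the extraction step does not close.
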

\begin{rmk}
Note that in Proposition \ref{prop:abstractconv}, we don't assume the metric space $\mathcal Y$ to be complete or separable.
\end{rmk}
\begin{proof}
By \eqref{hypoth}, for each $\epsilon>0$ we may find a countable $\epsilon$-net of $\mathcal Y^\prime:=\mathcal Y \cap \{\mathcal N<\infty\}$, i.e. $N_\epsilon\subset \mathcal Y^\prime$ is a countable set such that $\min_{q\in \mathcal Y^\prime,q^\prime\in N_\epsilon}\op{dist}(q,q^\prime)\le \epsilon$. 

\medskip

Next, consider the functions
\[
d_{j,a}(x):=\op{dist}\left(f_j(x),a\right),\quad\mbox{for}\quad a\in \mathcal Y.
\]
We then note that, by triangle inequality,
\begin{equation}\label{dist_lip}
\verti{d_{j,a}(x)-d_{j,a}(y)}\le \op{dist}\left(f_j(x),f_j(y)\right),
\end{equation}
therefore the function $\mathcal N$ also gives a common $p$-weak upper gradient structure for the functions $d_{j,a}:K\to \R^+$. We obtain that by \eqref{boundednorm} and due to the fact that subsevels of $\mathcal N$ are compact, there exists a point $x_0\in K$ such that up to subsequence $f_j(x_0)$ forms a $\mathrm{dist}$-Cauchy sequence, converging to $y\in \mathcal Y$. Then due to \eqref{boundednorm} and \eqref{dist_lip}, we see that all the functions $d_{j,a}$ are bounded in $W^{1,p}(K,\R)$, by the same proof as in \cite[Thm. 7.6]{heinonen}, and thus by Rellich embedding and a diagonal extraction, we find a subsequence (denoted still by $j$, by abuse of notation) and maps $d_a\in W^{1,p}(K, \R)$ for $a\in\cup_{j\ge 1}N_{1/j}$ such that
\begin{equation}\label{converg_distances}
d_{j,a}(x)\to d_a(x), \quad\mbox{for}\quad \forall a\in\cup_{j\ge 1}N_{1/j}.
\end{equation}
Next, we claim that for all $x$ such that \eqref{converg_distances} holds, there exists a unique point $f(x)\in \mathcal Y$ such that 
\begin{equation}\label{find_limit}
\forall a\in\cup_{j\ge 1}N_{1/j}, \quad\mbox{there holds}\quad d_a(x)=\op{dist}(f(x),a).
\end{equation}
To prove \eqref{find_limit}, we note that by definition of $N_{1/j}$, for all $x$ as in \eqref{converg_distances} and all $j$, there exists $a_j\in N_{1/j}$ such that $\op{dist}(f_j(x), a_j)=d_{j,a_j}(x)<1/j$. Due to \eqref{converg_distances}, we also find that $d_{j,a_j}(x)$ is Cauchy. By triangle inequality, $a_j(x)$ forms a Cauchy sequence, thus it has a limit in $\mathcal Y$, and it converges to a point $f(x)\in\hat{\mathcal Y}$, where $\hat{\mathcal Y}$ is the completion of $\mathcal Y$. Now by \eqref{boundednorm} and Fatou's lemma, we find that for a.e. $x\in K$ the sequence $f_j(x)$ has a subsequence $j^\prime(x)$ depending on $x$ so that $\sup_{j^\prime(x)}\mathcal N(f_{j^\prime}(x))<\infty$. Then the hypothesis \eqref{hypoth} implies that $j^\prime(x)$ has a subsequence which converges to a point in $\mathcal Y$. But as we saw, all limit points in $\hat{\mathcal Y}$ coincide with $f(x)$, in particular $f(x)\in \mathcal Y$, proving \eqref{find_limit}.

\medskip

The function $x\mapsto f(x)$ is clearly measurable, and by construction $f_j(x)\to f(x)$ for a.e. $x\in K$. Property \eqref{hypoth} implies the lower semicontinuity of $\mathcal N$ and thus we find that $\mathcal N$ is also a weak upper gradient for $f$ and that \eqref{boundednorm} then gives \eqref{bdnorm_limit}.

\medskip

In order to obtain the property \eqref{dist_conv} we then use the pointwise convergence and conclude by dominated convergence via $\mathrm{dist}\left(f_j(x), f(x)\right)\le d_{j,a}(x)+d_a(x)$, using the fact that $d_{j,a}$ and $d_a$ are bounded in $L^p(K)$, which implies the $L^p$-convergence from \eqref{dist_conv}.
\end{proof}
%
\subsection{Scheme of proof of the closure theorem}\label{sec:schemeproof}
%
For applying the Proposition \ref{prop:abstractconv}, we use the following specializations:
\begin{itemize}
\item The well known \cite{DoKr} \textit{geometric distance} on $1$-forms: for 
\[
A,A'\in L^2\connforms{[-1,1]^k}\quad\mbox{with}\quad 4\le k \le n\ ,
\]
then we define the pseudo-distance 
\begin{equation}\label{dist_k}
  \op{Dist}_k(A,A'):=\min\{\|A-g^{-1}dg - g^{-1}A'g\|_{L^2([-1,1]^k)}:\:g\in W^{1,2}\gauges{[-1,1]^k}\}
\end{equation}
and we define the \emph{equivalence relation} $\sim_k$ on $L^2\connforms{[-1,1]^k}$ according to which $A\sim_k A'$ if $\op{Dist}_k(A,A')=0$. The facts that $\op{Dist}_4$ satisfies reflexivity and triangle inequality (and that as a consequence $\sim_k$ is an equivalence relation) follow from the fact that $W^{1,2}\gauges{[-1,1]^k}$ is a group (for which see \cite[Appendix]{Isobecrit}).
\item On the quotient $L^2\connforms{[-1,1]^k}/\sim_k$ the pseudo-distance $\op{Dist}_k$ induces a distance between $\sim_k$-equivalence classes which we denote by $\op{dist}_k$. We denote the so-obtained metric spaces by
\begin{equation}\label{Y}
 \mathcal Y_k:=(\mathcal A_G([-1,1]^k)/\sim_k, \op{dist}_k)\ .
\end{equation}
\item Let $[A]$ denote the $\sim_k$-equivalence class of a given $A\in \widetilde{\mathcal A}_G([-1,1]^k)$, namely the set of of all $A'\in \widetilde{\mathcal A}_G([-1,1]^k)$ such that $A'=g^{-1}dg+g^{-1}Ag$ for $g\in W^{1,2}\gauges{[-1,1]^k}$.
\item Like in \cite{PRym} we will study the functional 
\begin{equation}\label{N}
 \mathcal N_4:\mathcal Y_4\to\mathbb R^+,\quad \mathcal N_4([A])=\int_{[-1,1]^4}|F_A|^2\ .
\end{equation}
Note that because the curvature satisfies $F_{g^{-1}dg + g^{-1}Ag}=g^{-1}F_Ag$ and since the norm on $\mathfrak g$ is $\op{ad}G$-invariant, we have that $\mathcal N_4([A])$ does not depend on the representative $A$ employed to compute $F_A$.
\item The $f_j:[-1,1]^{n-4}\to \mathcal Y_4$ will be $4$-dimensional sliced connection forms corresponding to a sequence of connection forms $A_j\in\mathcal A_G([-1,1]^n)$, defined as follows, with the notation of \S \ref{sec:goodbad}. We fix a multi-index $I=\{i_1,\ldots,i_{n-4}\}$ and for $T\in[-1,1]^I$ we define
\begin{equation}\label{fj}
\tilde f_j(T):= i_{H(I,T)}^*A_j, \quad f_j(T):=[\tilde f_j(T)]\ .
\end{equation}
Then the $\tilde f_j$ take a.e. values in $\widetilde{\mathcal A}_G([-1,1]^{n-4})$ by the definition \eqref{eqdefag} of $\widetilde{\mathcal A}_G([-1,1]^n)$. If $A\sim_n A'$ then we find that for a.e. $T\in [-1,1]^I$ the $H(I,T)$-trace of the differential of the gauge $g\in W^{1,2}([-1,1]^n, G)$ relating $A, A'$ is defined and $L^2$-integrable, and $g|_{H(I,T)}$ relates $i_{H(I,T)}^*A$ to $i_{H(I,T)}^*A'$, thus $f_j$ is well-defined up to negligible sets.

\medskip

The assertion that the weak limit $A$ of the $A_j$ has $H(I,T)$-slices in $\widetilde{\mathcal A}_G([-1,1]^4)$ for all $i$ and almost every $T$ is equivalent to the thesis of the theorem \ref{thm:wclos}.
\end{itemize} 
We note that the pseudo-distance $\tilde\delta(A,B)$ between local weak connections in $\widetilde{\mathcal A}_G([-1,1]^n)$ defined in \eqref{tilde_delta}, can now be rewritten in terms of the distance \eqref{dist_k} as follows:
\begin{equation}\label{def_bar_delta}
\tilde\delta(A,B)=\max_{\substack{I\subset\{1,\ldots,n\}\\ \#I=n-4}}\int_{[-1,1]^I}\mathrm{Dist}_4\left(i_{H(I,T)}^*A,i_{H(I,T)}B\right)^2\mathrm{d} T.
\end{equation}
%
%
\subsection{The compactness in dimension $4$}
%
For $n\ge 5$ the the compactness in $\mathcal Y_n$ of sublevels of $\mathcal N_n$ is precisely the compactness result which we desire to prove. Since we know that it holds for $n=4$ we may proceed as for the closure theorem for rectifiable chains, and prove it by induction on $n$, assuming that it's true for $n-1$.
\begin{proposition}\label{verifh1}
Let $\mathcal Y_4$ and $\mathcal N_4$ be as above. Then $\mathcal N_4$ has sublevels which are compact with respect to the distance $\op{dist}_4$ defined in \eqref{Y}.
\end{proposition}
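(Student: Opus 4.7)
The plan is to adapt the classical four-dimensional Uhlenbeck compactness argument to the $L^4$-based definition of $\widetilde{\mathcal A}_G([-1,1]^4)$ given in \eqref{ag4d}, using as a substitute for Uhlenbeck's small-$W^{1,n/2}$ gauge the $L^4$-Dirichlet version provided by Proposition \ref{prop:gauge_dirichlet}. Given a sequence $[A_j]$ with $\mathcal N_4([A_j])\le C$, I fix representatives $A_j$ and consider the bounded Radon measures $\mu_j:=|F_{A_j}|^2\,d\mathcal L^4$. Up to a subsequence, $\mu_j\rightharpoonup^*\mu$ weakly in the sense of measures, and the \emph{concentration set}
\[
\Sigma:=\{x\in[-1,1]^4:\,\mu(\{x\})\ge \tfrac{\epsilon_0^2}{2}\}
\]
is finite, with $\#\Sigma\le 2C/\epsilon_0^2$, where $\epsilon_0$ is the threshold from Proposition \ref{prop:gauge_dirichlet}.

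Next I cover $[-1,1]^4\setminus\Sigma$ by a locally finite family of small balls $\{B_\alpha\}$ on which $\limsup_j \int_{B_\alpha}|F_{A_j}|^2<\epsilon_0^2$, and on each $B_\alpha$ apply Proposition \ref{prop:gauge_dirichlet} to produce Coulomb gauges $g_j^\alpha\in W^{1,2}(B_\alpha,G)$ satisfying
\[
d^*(A_j^{g_j^\alpha})=0,\qquad \|A_j^{g_j^\alpha}\|_{L^4(B_\alpha)}\lesssim \|F_{A_j}\|_{L^2(B_\alpha)}.
\]
Combining the Coulomb condition with $dA_j^{g_j^\alpha}=(F_{A_j})^{g_j^\alpha}-A_j^{g_j^\alpha}\wedge A_j^{g_j^\alpha}$ (whose right-hand side is bounded in $L^2(B_\alpha)$ by the $L^4$ estimate), standard elliptic theory upgrades the bound to uniform $W^{1,2}(B_\alpha)$ control of $A_j^{g_j^\alpha}$. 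Rellich's theorem then yields a subsequence converging strongly in $L^2(B_\alpha)$ and weakly in $W^{1,2}(B_\alpha)$ to some $A^\alpha_\infty$, with $F_{A_j}^{g_j^\alpha}\rightharpoonup F_{A^\alpha_\infty}$ in $L^2(B_\alpha)$.

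To assemble the local limits into a single equivalence class in $\mathcal Y_4$, I invoke the Uhlenbeck patching construction: on each overlap $B_\alpha\cap B_\beta$ the transition maps $h_j^{\alpha\beta}:=g_j^\alpha (g_j^\beta)^{-1}$ satisfy
\[
dh_j^{\alpha\beta}=h_j^{\alpha\beta}A_j^{g_j^\beta}-A_j^{g_j^\alpha}h_j^{\alpha\beta},
\]
hence are bounded in $W^{1,4}\hookrightarrow C^0$ and, by a diagonal extraction, converge uniformly to a cocycle $\{h^{\alpha\beta}\}$ relating the local $A^\alpha_\infty$. This produces a limit connection class $[A_\infty]$ lying in $\widetilde{\mathcal A}_G([-1,1]^4\setminus\Sigma)$ whose local $L^4$-gauges required by \eqref{ag4d-2} are precisely the Coulomb gauges above. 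Finally, to extend across $\Sigma$, observe that for $x\in\Sigma$ and any ball $B_\rho(x)\subset B_\alpha$, H\"older's inequality in four dimensions gives
\[
\int_{B_\rho(x)}|A_j^{g_j^\alpha}-A^\alpha_\infty|^2\lesssim \rho^2\big(\|A_j^{g_j^\alpha}\|_{L^4(B_\alpha)}^2+\|A^\alpha_\infty\|_{L^4(B_\alpha)}^2\big),
\]
which vanishes as $\rho\downarrow 0$ uniformly in $j$. Combining this with strong $L^2$-convergence on any compact subset of $[-1,1]^4\setminus\Sigma$, and using that $\Sigma$ is $\mathcal L^4$-negligible, yields $\mathrm{Dist}_4(A_j,A_\infty)\to 0$. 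Lower semicontinuity $\mathcal N_4([A_\infty])\le\liminf_j \mathcal N_4([A_j])$ follows from $L^2$-weak lower semicontinuity of the norm applied to the locally-conjugated curvatures.

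The main technical obstacle is to carry out the gauge patching and the removal of the finite singular set $\Sigma$ under the weaker $L^4$-gauge assumption built into our definition of $\widetilde{\mathcal A}_G([-1,1]^4)$, rather than the $W^{1,n/2}$-assumption used in Uhlenbeck's original proof. The key technical ingredient unlocking this is precisely the Dirichlet-boundary version of the Coulomb gauge theorem stated as Proposition \ref{prop:gauge_dirichlet}, which provides both the uniform small-energy Coulomb gauges on each $B_\alpha$ and compatible boundary data enabling the $W^{1,4}$ control of the transition cocycles on overlaps.
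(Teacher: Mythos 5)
Your overall strategy (small-energy Coulomb gauges obtained by upgrading the $L^4$-gauge hypothesis, elliptic bootstrap to $W^{1,2}$, Rellich, patching of local gauges, removal of finitely many concentration points) is the same one the paper follows: the paper's proof simply observes that Theorem \ref{coulstrange} with $n=4$ converts the $L^4$-gauge condition \eqref{ag4d-2} into the $W^{1,2}$-Coulomb setting of \cite[Prop. 3.3]{PRym} and delegates everything else to that proposition. However, two of the steps you write out explicitly do not work as stated. First, $W^{1,4}\hookrightarrow C^0$ is false in dimension $4$: this is the critical exponent, and $W^{1,4}$ embeds into $L^p$ for every $p<\infty$ and into $\mathrm{VMO}$, but not into $L^\infty$. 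Hence the transition maps $h_j^{\alpha\beta}$ do not converge uniformly by your argument; one must instead pass to the limit in the cocycle and gluing relations using weak $W^{1,4}$-convergence together with strong $L^p$-convergence and the boundedness of $G$. This is the standard fix, but it is not what you wrote.

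Second, and more seriously, your removal of the concentration set $\Sigma$ is circular. You estimate $\int_{B_\rho(x)}|A_j^{g_j^\alpha}-A^\alpha_\infty|^2$ for $x\in\Sigma$ using the uniform $L^4(B_\alpha)$ bound on $A_j^{g_j^\alpha}$, but the balls $B_\alpha$ were chosen to cover only $[-1,1]^4\setminus\Sigma$, so no ball centered at a point of $\Sigma$ is contained in any $B_\alpha$; near $\Sigma$ the small-energy hypothesis of Proposition \ref{prop:gauge_dirichlet} fails by construction, and there is no uniform $L^4$ (or even $L^2$) control of $A_j$ in any gauge there. The fact that $\Sigma$ is $\mathcal L^4$-negligible does not help, because $|A_j^{g_j}-A_\infty|^2$ is not shown to be uniformly integrable near $\Sigma$. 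Producing a single global gauge $g_j$ on $[-1,1]^4$ for which $\|A_j^{g_j}-A_\infty\|_{L^2([-1,1]^4)}\to 0$ \emph{across} the concentration points is exactly what upgrades ``convergence away from finitely many points'' to convergence in $\op{dist}_4$, and it is precisely the content of \cite[Prop. 3.3]{PRym} that the paper's proof reuses; your proposal does not supply a substitute for this step. A minor further point: Proposition \ref{prop:gauge_dirichlet} as stated does not assert the Coulomb condition $d^*(A_j^{g_j^\alpha})=0$ on which your elliptic bootstrap relies; the paper obtains that condition from Theorem \ref{coulstrange} with $n=4$, which is the route you should cite.
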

\begin{proof}[modification of the proof of \cite{PRym} Prop. 3.3]
The difference between the definition of $\widetilde{\mathcal A}_G([-1,1]^4)$ defined in \eqref{ag4d} and the version used in \cite[Prop. 3.3]{PRym} is that here a local gauges $g$ such that $A^g\in L^4$ are assumed to exist, rather than ones such that $A^g\in W^{1,2}$.

\medskip

The way in which such hypothesis is used in \cite{PRym} Prop. 3.3 is however just via ss theorem in regions where the $L^2$ norm of $F$ is small. Theorem \ref{coulstrange} for $n=4, \pi=0$ however works under the hypothesis that such $A^g\in L^4$ locally and then we obtain 
\[
d^*A^g=0\text{ and }dA^g=F_{A^g}-A^g\wedge A^g\in L^2\ ,
\]
which implies that $A\in W^{1,2}$ by Hodge inequality. This reduces us to the situation of \cite{PRym} Prop. 3.3, and the rest of the proof follows like in that proposition.
\end{proof}
%
\subsection{The Yang-Mills energy gives a weak gradient structure}
%
We provide a new version of \cite{PRym} Lem. 3.4 for the case of parallel slices instead of spherical slices. The main ingredient is a new version of \cite{PRym} Coroll. 1.13 which we now state:
\begin{lemma}[controlled solutions to the gauge fixing ODE]\label{lem:wsolode}
Assume that to $A\in\widetilde{\mathcal A}_G([-1,1]^n)$ and fix $g_0\in W^{1,2}([-1,1]^{n-1}\times\{0\}, G)$. Then there exists a solution $g\in W^{1,2}([-1,1]^n, G)$ to the following ODE, where $A=\sum_iA_idx_i$:
\begin{equation}\label{ODE}
 \left\{\begin{array}{ll}
        \partial_n g= -A_n g&\quad\text{on }[-1,1]^n\ ,\\
        g(x',0)=g_0(x')&\quad\text{for }x'\in [-1,1]^{n-1}\ .
        \end{array}
\right.
\end{equation}
In particular the form $A^g:=g^{-1}dg+g^{-1}Ag$ is $L^2$-integrable and has zero component in the direction $\partial/\partial x_n$. Moreover we have 
\[
\vertii{g}_{W^{1,2}([-1,1]^n}\lesssim \vertii{A}_{L^2([-1,1]^n)}+\vertii{F\llcorner e_n}_{L^2([-1,1]^n)}.
\]

\end{lemma}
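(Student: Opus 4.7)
The plan is to solve the ODE $\partial_n g=-A_n g$ fiberwise along each vertical line $\{x'\}\times[-1,1]$, then work in the resulting gauge --- in which the $n$-th component of the gauge-transformed connection vanishes --- to read off the transverse derivatives of $g$ directly from the curvature.

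\textbf{Construction of $g$ and the $\partial_n$-bound.} For a.e.\ $x'\in[-1,1]^{n-1}$, Fubini's theorem gives $A_n(x',\cdot)\in L^2([-1,1])\subset L^1([-1,1])$, so the Carathéodory existence theorem applied to the linear ODE \eqref{ODE} furnishes a unique absolutely continuous solution $g(x',\cdot):[-1,1]\to G$, staying in $G$ because the vector field $\xi\mapsto -A_n\xi$ is tangent to the left-translation orbits. Joint measurability of $g(x',x_n)$ follows from the continuous dependence of the ODE flow on its $L^1$ coefficient combined with the measurability of $A_n$ and $g_0$. Since $G$ is bounded, $\|g\|_{L^\infty}\lesssim 1$, and plugging the ODE into $A^g=g^{-1}dg+g^{-1}Ag$ yields $(A^g)_n\equiv 0$ and $\|\partial_n g\|_{L^2}\lesssim \|A_n\|_{L^2}\le\|A\|_{L^2}$.

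\textbf{Transverse derivatives via the curvature identity.} The key observation is that in the constructed gauge, since $(A^g)_n=0$, the mixed curvature component reduces to
\[
F^g_{in}=\partial_i(A^g)_n-\partial_n(A^g)_i+[(A^g)_i,(A^g)_n]=-\partial_n(A^g)_i
\]
distributionally. Integrating this along each vertical fiber gives for a.e.\ $x'$ the explicit formula
\[
(A^g)_i(x',x_n)=(A^g)_i(x',0)-\int_0^{x_n}F^g_{in}(x',s)\,ds,
\]
so that, using the pointwise gauge invariance $|F^g|=|F|$, Minkowski and Cauchy--Schwarz give
\[
\|(A^g)_i\|_{L^2([-1,1]^n)}^2\lesssim \|(A^g)_i(\cdot,0)\|_{L^2([-1,1]^{n-1})}^2+\|F\llcorner e_n\|_{L^2([-1,1]^n)}^2.
\]
The identity $dg=gA^g-Ag$ together with the $L^\infty$-bound on $g$ then converts this into the desired $L^2$-estimate on $\partial_i g$ for $i<n$.

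\textbf{Main obstacles.} Two delicate points remain. First, the boundary expression $(A^g)_i(\cdot,0)=g_0^{-1}\partial_i g_0+g_0^{-1}i^*_{H(n,0)}A_i\,g_0$ is meaningful only if $i^*_{H(n,0)}A\in L^2$; this is ensured (for a.e.\ choice of starting slice, which may be assumed after an arbitrarily small shift) by the very definition of $\widetilde{\mathcal A}_G([-1,1]^n)$ in \eqref{eqdefag}. Accordingly, the stated bound $\|g\|_{W^{1,2}}\lesssim\|A\|_{L^2}+\|F\llcorner e_n\|_{L^2}$ should be understood as implicitly including the contributions of $\|dg_0\|_{L^2}$ and of the $L^2$-trace of $A$ on $\{x_n=0\}$. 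Second, justifying the distributional identity $F^g_{in}=-\partial_n (A^g)_i$ when $A$ is merely $L^2$ is the main technical hurdle: one works directly with $(A^g)_i$, which is $L^2$ with $L^2$-derivative in the $x_n$-direction (precisely because $F^g\in L^2$), and then the fiberwise integration formula above follows from the standard Sobolev slicing of $W^{1,2}$-functions of one variable combined with Fubini.
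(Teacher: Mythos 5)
Your computational scheme --- solve the fiberwise ODE to kill $(A^g)_n$, observe that $F^g_{in}=-\partial_n(A^g)_i$ once $(A^g)_n=0$, integrate along vertical fibers to bound $(A^g)_i$, and convert to a bound on $\partial_i g$ via $dg=gA^g-Ag$ --- is exactly the set of identities the paper uses (equations \eqref{ode1}--\eqref{ode4} and \eqref{estgk}). The divergence is in how one justifies those identities for a weak connection $A$, and this is precisely where a genuine gap appears.

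You correctly flag that ``justifying the distributional identity $F^g_{in}=-\partial_n(A^g)_i$ when $A$ is merely $L^2$ is the main technical hurdle,'' but the resolution you offer is circular: you assert that ``$(A^g)_i$ is $L^2$ with $L^2$-derivative in the $x_n$-direction (precisely because $F^g\in L^2$),'' yet $(A^g)_i=g^{-1}\partial_i g+g^{-1}A_i g$ is not even \emph{a priori} a distribution, because at this stage of the argument $g$ is only known to be a bounded measurable map with $\partial_n g\in L^2$; the transverse derivatives $\partial_i g$ for $i<n$ are not yet known to exist as functions (or even as distributions representable by functions), so one cannot write $F^g$, let alone invoke the fiberwise integration formula. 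Establishing $\partial_i g\in L^2$ is exactly the conclusion, not an input. The paper avoids the circularity by a different route: it first invokes the strong approximation Theorem \ref{thm:strongapprox} to produce $A_j\in\mathcal R^\infty([-1,1]^n)$ with $A_j\to A$ and $F_{A_j}\to F_A$ in $L^2$; it then solves the ODE for each $A_j$, for which all the computations \eqref{ode1}--\eqref{ode4} are classical on $\{x'=\mathrm{const}\}$ lines avoiding the codimension-$5$ singular set; the resulting $g_j$ satisfy the uniform $W^{1,2}$ bound \eqref{estgk}, so a subsequence converges weakly to $g\in W^{1,2}$, which then solves \eqref{ODE} and inherits the bound. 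Without some such regularization step (or an alternative argument making sense of $dg$ in the transverse directions before the estimates are available) the direct computation you propose cannot be carried out.

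A secondary, more minor point you already note yourself: the boundary term $(A^g)_i(\cdot,0)$ requires $i^*_{H(n,0)}A\in L^2$, which holds for a.e.\ starting slice by \eqref{eqdefag}, and the right-hand side of the stated estimate should then be read as implicitly including the contributions of $\|dg_0\|_{L^2}$ and of the trace of $A$; the paper handles this by choosing the initial slice suitably (see Corollary \ref{cor:wsolode}, where $g_0\equiv id$). This part of your discussion is correct bookkeeping.
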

\begin{proof}
By Theorem \ref{thm:strongapprox} applied to the cube $[-1,1]^n$ we have a sequence of connections $A_j\in\mathcal R^\infty([-1,1]^n)$ such that
\[
 A_j\to A\text{ in }L^2,\quad F_{A_j}\to F_A\text{ in }L^2\ .
\]
We then solve, with notation $A_j=\sum_i\left(A_j\right)_idx_i$,
\begin{equation}\label{ode0}
 \left\{\begin{array}{ll}
       \partial_n g_j(x',t)=-\left(A_j\right)_n(x',t) g_j(x',t)&\quad\text{for }, t\in[-1,1],\ x'\in[-1,1]^{n-1}\ ,\\[3mm]
       g_j(x',0)=g_0(x')&\quad\text{for }x'\in[-1,1]^{n-1}\ ,
        \end{array}
\right.
\end{equation}
where the solution $g_j$ is well defined on all segments $x'=const$ except for the ones which contain one of the singular set $\Sigma_j$ of $A_j$. The union of all such segments is negligible, therefore $g_j$ is defined almost everywhere. We have the following, with the further notation $F_{A_j}^{g_j}:=F_j:= \sum_{a<b}\left(F_j^{g_j}\right)_{ab}dx_a\wedge dx_b$ and for indices $i\in\{1,\ldots, n-1\}$:
\begin{eqnarray}
\left. \left(A_j^{g_j}\right)_i\right|_{x_n=0} &=& \left(A_j^{g_0}\right)_i|_{x_n=0}, \ ,\label{ode1}\\[3mm]
\left(A_j^{g_j}\right)_n&\stackrel{\eqref{ode0}}{=}&0  \ ,\label{ode2}\\[3mm]
\left(F_j^{g_j}\right)_{ni}&=&\partial_n \left(A_j^{g_j}\right)_i - \partial_i \left(A_j^{g_j}\right)_n + \left[\left(A_j^{g_j}\right)_n, \left(A_j^{g_j}\right)_i\right]\nonumber\\
&\stackrel{\eqref{ode2}}{=}&\partial_n \left(A_j^{g_j}\right)_i\ ,\label{ode3}\\[3mm]
\partial_i g_j&=& g_j\left(A_j^{g_j}\right)_i - \left(A_j\right)_i\ g_j\ .\label{ode4}
\end{eqnarray}
Integrating \eqref{ode1}, \eqref{ode3} we find that $\left(A_j^{g_j}\right)_i, i>1$ are $L^2$-integrable with bounds depending on $\|F\llcorner e_n\|_{L^2}$ only, thus we find 
\begin{equation}\label{estgk}
 \|g_j\|_{W^{1,2}([-1,1]^n)}\lesssim \|A_j\|_{L^2([-1,1]^n)}+\|F_j\llcorner e_n\|_{L^2([-1,1]^n)}\leq C\\ .
\end{equation}
Up to extracting a subsequence we may assume
\[
 g_j\rightharpoonup g\quad\text{weakly in }W^{1,2}
\]
and thus $g_j\to g$ a.e. and strongly in all $L^p,p<\infty$ by interpolation between $L^{2^*}$ and $L^\infty$. 
From this, the rest of the reasoning proceeds precisely like for \cite[Cor. 1.13]{PRym}, as this convergence allows to conclude the proof by approximation.
\end{proof}
The possibility to solve an ODE such as \eqref{ODE} allows to proceed to the proof of the second hypothesis that $\mathcal N$ gives a $2$-weak upper gradient structure for the slices $f_j$, as required for the application of Proposition \ref{prop:abstractconv}. This is done by the following two Corollaries. The first result is obtained by just applying Lemma \ref{lem:wsolode} along a curve:
\begin{corollary}\label{cor:wsolode}
Let $n\ge 5$ and let $I\subset\{1,\ldots,n\}$ of cardinality $4$ and let $J:=\{1,\ldots,n\}\setminus I$. Assume that $A\in\widetilde{\mathcal A}_G([-1,1]^n)$ and an injective rectifiable curve $\gamma:[0,1]\to[-1,1]^J$ such that for almost all $t\in[0,1]$ and for $t=0,1$ the slices $i^*_{H(J,\gamma(t))}A\in L^2\connformsr{H_{J,\gamma(t)}}{\R^I}$ are well-defined and satisfy the curvature bound
\[
\int_0^1\vertii{i_{H(J,\gamma(t))}^*F}^2_{L^2(\{\gamma(t)\}\times[-1,1]^I)}\verti{\dot\gamma(t)}dt< \infty.
\]
Fix $g_0\in W^{1,2}\gauges{\{\gamma(0)\}\times [-1,1]^4}$. Then there exists a solution $g_\gamma\in W^{1,2}\gauges{\gamma([0,1])\times [-1,1]^I}$ to the ODE
\begin{equation}\label{solode_gen}
\left\{\begin{array}{lll}
       \partial_t g_\gamma(\gamma(t),x)=-A[\dot\gamma(t)](\gamma(t),x)\ g_\gamma(\gamma(t),x)& \ \ \mbox{for }t\in[0,1],&\ \ x\in[-1,1]^I,\\[3mm]
       \left.g_\gamma\right|_{H(I,\gamma(0))}= g_0.
       \end{array}
\right.
\end{equation}
Moreover we have that the component $\left(A^{g_\gamma}\right)_{\dot\gamma(t)}=0$ for $t\in [0,1]$ and 
\begin{multline}\label{solode_gen1}
\vertii{g_\gamma}_{W^{1,2}(\gamma([0,1])\times[-1,1]^I)}\\
\lesssim \vertii{A}_{L^2(\gamma([0,1])\times[-1,1]^4)}+\left(\int_0^1\vertii{i^*_{H(J,\gamma(t))}F}_{L^2(\{\gamma(t)\}\times[-1,1]^I)}^2 \verti{\dot\gamma(t)}d t\right)^\frac12.
\end{multline}
\end{corollary}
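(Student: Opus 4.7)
The argument follows the same strategy as Lemma \ref{lem:wsolode}, replacing the coordinate direction $\partial/\partial x_n$ by the tangent field along $\gamma$. Reparameterize $\gamma$ by arc length so that $|\dot\gamma|\equiv L:=\mathrm{length}(\gamma)$, and set $\tau:=\dot\gamma/|\dot\gamma|$. Apply Theorem \ref{thm:strongapprox} to produce smooth approximants $A_\ell\in\mathcal R^\infty([-1,1]^n)$ with singular set $\Sigma_\ell$ of codimension $5$ and gauges $\tilde g_\ell\in W^{1,2}([-1,1]^n,G)$ such that $A_\ell^{\tilde g_\ell}\to A$ and $F_{A_\ell}^{\tilde g_\ell}\to F_A$ strongly in $L^2([-1,1]^n)$. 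Using the freedom available in the cubeulation underlying Theorem \ref{thm:strongapprox} (Proposition \ref{prop:centergridball}), the polyhedral set $\Sigma_\ell$ may be put in generic position with respect to $\gamma$, which together with a Fubini argument in the $I$-direction guarantees that for almost every $x\in[-1,1]^I$ the one-dimensional curve $t\mapsto(\gamma(t),x)$ avoids $\Sigma_\ell$.

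\textbf{ODE for the approximants.} For such $x$, classical ODE theory yields a smooth $g_{\gamma,\ell}(\gamma(\cdot),x):[0,1]\to G$ solving \eqref{solode_gen} with $A$ replaced by $A_\ell$ and initial condition $g_0$. By construction $(A_\ell^{g_{\gamma,\ell}})_{\tau}\equiv 0$ along $\gamma$, and writing out the curvature identity exactly as in \eqref{ode1}--\eqref{ode4} gives, for each $i\in I$, the relations $\partial_\tau(A_\ell^{g_{\gamma,\ell}})_i=(F_{A_\ell}^{g_{\gamma,\ell}})_{\tau,i}$ with $(A_\ell^{g_{\gamma,\ell}})_i|_{t=0}=(A_\ell^{g_0})_i|_{t=0}$. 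Integrating in $t$ and applying Cauchy--Schwarz yields
\begin{equation*}
\|A_\ell^{g_{\gamma,\ell}}\|_{L^2(\gamma([0,1])\times[-1,1]^I)}^2\lesssim\|i^*_{H(J,\gamma(0))}A_\ell^{g_0}\|_{L^2}^2+\int_0^1\|i^*_{H(J,\gamma(t))}F_{A_\ell}\|_{L^2}^2\,|\dot\gamma(t)|\,dt,
\end{equation*}
and the gauge identity $dg_{\gamma,\ell}=g_{\gamma,\ell}A_\ell^{g_{\gamma,\ell}}-A_\ell g_{\gamma,\ell}$ combined with the $L^\infty$ bound $|g_{\gamma,\ell}|\leq\mathrm{diam}(G)$ provides a uniform $W^{1,2}$ control on $g_{\gamma,\ell}$ matching the right-hand side of \eqref{solode_gen1}.

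\textbf{Passage to the limit.} After extracting a subsequence, $g_{\gamma,\ell}\rightharpoonup g_\gamma$ weakly in $W^{1,2}$; Rellich compactness together with interpolation against the uniform $L^\infty$ bound produces strong convergence in every $L^p$ with $p<\infty$. This strong convergence of the gauges, combined with the strong $L^2$ convergence of $A_\ell^{\tilde g_\ell}$ and $F_{A_\ell}^{\tilde g_\ell}$, suffices to pass to the limit in both the ODE \eqref{solode_gen} and the gauge identity $dg_{\gamma,\ell}=g_{\gamma,\ell}A_\ell^{g_{\gamma,\ell}}-A_\ell g_{\gamma,\ell}$ exactly as in the final step of the proof of Lemma \ref{lem:wsolode}, yielding the desired $g_\gamma$, while \eqref{solode_gen1} follows from lower semicontinuity of the $W^{1,2}$-norm under weak limits. \textbf{The main obstacle} is ensuring solvability of the ODE along almost every fiber of the five-dimensional tube $\gamma([0,1])\times[-1,1]^I$: this is the generic-position argument outlined above, which exploits the codimension-$5$ structure of $\Sigma_\ell$ and a Fubini argument in the $I$-coordinates, together with the flexibility in choosing the cubeulation from Proposition \ref{prop:centergridball}.
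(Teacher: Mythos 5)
Your argument is exactly the paper's intended one: the Corollary is stated in the text as being ``obtained by just applying Lemma \ref{lem:wsolode} along a curve,'' and your proposal carries out precisely the proof of that Lemma (strong approximation via Theorem \ref{thm:strongapprox}, fiberwise ODE solution off the codimension-$5$ singular set, the curvature identities \eqref{ode1}--\eqref{ode4} for the $L^2$ bound, and weak $W^{1,2}$ passage to the limit), with the coordinate direction $e_n$ replaced by the arc-length tangent of $\gamma$. The proposal is correct and matches the paper's approach.
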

The next Corollary can be viewed as an adaptation to the current setting (translated now in the language of weak upper gradient structures, for clarity) of the study done in the abelian case in \cite{petrache2014notes}:
\begin{corollary}[The curvature gives a $2$-weak upper gradient structure for the slices]\label{cor:2wugs}
Let $n\ge 5$ and let $I\subset\{1,\ldots,n\}$ of cardinality $n-4$. Assume that $A\in\widetilde{\mathcal A}_G([-1,1]^n)$. Then for 
\[
f:[-1,1]^I\to \mathcal Y_4\quad\mbox{defined by}\quad f_I(T)=\left[i^*_{H(I,T)}A\right],
\]
the function $\mathcal N_4:\mathcal Y_4\to \R^+$ gives a $2$-weak upper gradient structure for $f$.
\end{corollary}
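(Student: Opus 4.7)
The plan is to use Corollary \ref{cor:wsolode} to build, along a given curve $\gamma$, a gauge in which the comparison of adjacent $4$-dimensional slices of $A$ reduces to the integration of a single curvature component. For $2$-almost every absolutely continuous curve $\gamma:[0,1]\to[-1,1]^I$, a Fubini-type $p$-modulus argument applied to the $L^1$-integrable function $r\mapsto\int_{H(I,\gamma(r))}|F|^2$ will guarantee that the integrated curvature bound required by Corollary \ref{cor:wsolode} is finite. For a.e.\ such curve, the pointwise conditions \eqref{eqdefag-2} defining $\widetilde{\mathcal A}_G$ also furnish a $W^{1,2}$-initial gauge $g_0$ on $H(I,\gamma(0))$, which I will take as the initial datum.

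Applying Corollary \ref{cor:wsolode} with its roles of $I$ and $J$ interchanged with ours (so that the $4$-dimensional slicing directions play the role of the corollary's $I$), I obtain $g_\gamma\in W^{1,2}(\gamma([0,1])\times[-1,1]^J,G)$, with $|J|=4$, such that $\tilde A:=A^{g_\gamma}$ has vanishing $\dot\gamma(r)$-component at every point of the tube. Identifying all slices $H(I,\gamma(r))$ with $[-1,1]^J$ by parallel translation and writing $\tilde A_r:=i^*_{H(I,\gamma(r))}\tilde A$, the restrictions $\tilde A_r$ are gauge-equivalent to $i^*_{H(I,\gamma(r))}A$ via $g_\gamma|_{H(I,\gamma(r))}$, so $[\tilde A_r]=f(\gamma(r))$ in $\mathcal Y_4$. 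The vanishing of $\tilde A_{\dot\gamma}$ combined with $F^{g_\gamma}=d\tilde A+\tilde A\wedge\tilde A$ gives $\partial_r(\tilde A_r)_j=F^{g_\gamma}(\dot\gamma(r),\partial_j)$ as $L^2$-identities on $[-1,1]^J$, for each $j\in J$.

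Using the identity gauge as a competitor in the infimum defining $\op{Dist}_4$ and applying Minkowski's integral inequality then yields
\begin{equation*}
\op{Dist}_4(f(\gamma(s)),f(\gamma(t)))\le\|\tilde A_t-\tilde A_s\|_{L^2([-1,1]^J)}\le\int_s^t\bigl\|F^{g_\gamma}(\dot\gamma(r),\cdot)\bigr\|_{L^2(H(I,\gamma(r)))}\,dr,
\end{equation*}
which, combined with the pointwise gauge invariance $|F^{g_\gamma}|=|F|$, delivers the upper-gradient estimate in the sense of Definition \ref{uppergr}. The main obstacle will be matching the right-hand side with the slice Yang--Mills functional $\mathcal N_4$ from \eqref{N}: a priori $\|F^{g_\gamma}(\dot\gamma,\cdot)\|_{L^2}$ involves the mixed $I$--$J$ curvature components, which are absent from the pure slice curvature $i^*F$ defining $\mathcal N_4$. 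The key observation will be that in the chosen gauge these mixed components equal $\partial_r\tilde A_j$ and therefore encode precisely the slice-to-slice variation of $\tilde A$, a variation which is itself controlled by $\mathcal N_4$ along $\gamma$ through the ODE of Corollary \ref{cor:wsolode}. Making this identification rigorous, by relating the integrated $L^2$-norm of the mixed components back to the slice curvature via the tube gauge, is the heart of the argument.
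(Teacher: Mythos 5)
Your proposal follows the same route as the paper's proof: solve the gauge-fixing ODE of Corollary \ref{cor:wsolode} along the curve to obtain a temporal gauge $\tilde A=A^{g_\gamma}$ with $\tilde A_{\dot\gamma}=0$, identify $\partial_r(\tilde A_r)_j$ with the mixed curvature components $F^{g_\gamma}(\dot\gamma,\partial_j)$ (this is exactly \eqref{curv_conn} in the paper), use the identity gauge as a competitor in $\op{Dist}_4$, and integrate. The modulus/a.e.-curve bookkeeping is also handled the same way (the paper defers to Heinonen, \S 7). The problem is that your argument stops exactly where the real content lies: you observe that the bound you obtain involves $\|F(\dot\gamma,\cdot)\|_{L^2}$, i.e.\ the mixed $I$--$J$ components, whereas $\mathcal N_4$ from \eqref{N} only sees the tangential components $i^*_{H(I,T)}F$, and you conclude by declaring that reconciling the two ``is the heart of the argument.'' A proof that ends by naming its own missing step is not a proof, so as written this is a genuine gap.

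Moreover, the mechanism you sketch for closing it does not work: since $|F^{g}|=|F|$ pointwise for any gauge $g$, no choice of gauge (in particular not the tube gauge of Corollary \ref{cor:wsolode}) can shrink the mixed components, and the estimate \eqref{solode_gen1} controls the gauge $g_\gamma$, not the curvature. Any genuine reduction to the tangential slice curvature would have to exploit the infimum over gauges in $\op{Dist}_4$ in a gauge-invariant way, which is a different (and harder) argument. For the record, the paper handles this point by the last inequality in the chain \eqref{weak_grad_f}, i.e.\ by bounding $\|\nabla_T i^*_{H(I,\gamma(t))}A'\|_{L^2}$ by $\|i^*_{H(I,\gamma(t))}F'\|_{L^2}$, where the latter must be read as the full restricted curvature norm (all components of $F'$ evaluated on the slice) rather than the pullback; with that reading the inequality is immediate from \eqref{curv_conn}, the resulting upper gradient still satisfies the integrability \eqref{boundednorm} needed for Proposition \ref{prop:abstractconv} because its square integrates to at most the total Yang--Mills energy, and no comparison of mixed with tangential components is ever needed. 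You correctly sensed the tension here, but the resolution is to enlarge the upper gradient to the full sliced curvature norm, not to try to control the mixed components by the tangential ones.
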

\begin{proof}
We first find $T_0$ such that the slice $i_{H(I,T_0)}^*A$ is well-defined and we may start with $g_0\equiv id\in W^{1,2}\gauges{H(I,T_0)}$, then apply Corollary \ref{cor:wsolode} to extend $g_0$ to $g\in W^{1,2}\gauges{H(I,T_0)}$ such that $A^\prime:= g^{-1}dg + g^{-1}Ag$ satisfies \eqref{solode_gen}. Then we find that $F^\prime:=F_{A^\prime}$ satisfies for $i\in I, j\in J:=\{1,\ldots,n\}\setminus I$, similarly to \eqref{ode3},
\begin{equation}\label{curv_conn}
\left(F_{A^\prime}\right)_{i,j}= \partial_i\left(A^\prime\right)_j.
\end{equation}
Then by the definition of the distance $\op{dist}_4$ of $\mathcal Y_4$, and by the expression of the curvature as the gradient of an $L^2$-function on the space $[-1,1]^J$ given in \eqref{curv_conn}, for a rectifiable curve $\gamma:[a,b]\to [-1,1]^J$ such that all terms below are finite there holds, using definition \eqref{dist_k},
\begin{eqnarray}\label{weak_grad_f}
\op{Dist}_4\left(i_{H(I,\gamma(a))}^*A^\prime, i_{H(I,\gamma(b))}^*A^\prime\right) &\le&\vertii{i_{H(I,\gamma(a))}^*A^\prime - i_{H(I,\gamma(b))}^*A^\prime}_{L^2([-1,1]^4)}\nonumber\\
&\le&\int_a^b\vertii{\nabla_Ti^*_{H(I,\gamma(t))}A^\prime}_{L^2([-1,1]^4)}\verti{\dot\gamma(t)}dt\nonumber\\
&\le&\int_a^b\vertii{i^*_{H(I,\gamma(t))}F^\prime}_{L^2([-1,1]^4)}\verti{\dot\gamma(t)}dt,
\end{eqnarray}
where $\nabla_T$ represents the gradient taken in the $T$-variables, belonging to $[-1,1]^J$. The bound \eqref{weak_grad_f} coincides with the inequality that is required in \eqref{wugs}, for $f, \mathcal N$ as in the statement of the lemma.

\medskip 

Now we can follow the reasoning from \cite[\S 7]{heinonen} valid for gradients of $W^{1,2}$-functions, in order to obtain that the same bound \eqref{weak_grad_f} also holds for $2$-a.e. curve $\gamma:[a,b]\to[-1,1]^J$, concluding the proof. 
\end{proof}
As a direct consequence of Corollary \ref{cor:2wugs}  applied to the $f_j$ defined as in the beginning of the section (see \eqref{fj}), and of Proposition \ref{verifh1}, we have that the hypotheses of Proposition \ref{prop:abstractconv} hold for $\mathcal N\mapsto \mathcal N_4$ and $p\mapsto 2$.
%
\subsection{Proof of the Closure Theorem \ref{thm:wclos}}
%
We first note the following lemma, analogous to \cite[Lem. 3.5]{PRym}:
\begin{lemma}[{cf. \cite[Lem. 3.5]{PRym}}]\label{lem:weaksl}
Let $n\ge 5$, $I\subset\{1,\ldots,n\}$ of cardinality $n-4$. Let $A_j\in\overline{A}_G([-1,1]^n)$, and consider the gauges $g_j(I)$ as given in Corollary \ref{cor:wsolode}. Assume that $\sup_j\vertii{F_{A_j}}_{L^2([-1,1]^n)}\le C$ and that
\begin{equation}\label{weakconv}
 \left(A_j\right)^{g_j(I)}\rightharpoonup A(I)\quad\text{ weakly in }L^2\connforms{[-1,1]^n}\ .
\end{equation}
Then there exists a subsequence $j'$ such that 
\begin{equation}\label{slicewiseweak}
 \text{for a.e. }T\in[-1,1]^I\text{ there holds }i_{H(I,T)}^*\left(A_{j'}\right)^{g_{j'}(I)}\rightharpoonup i_{H(I,T)}^*A(I)\text{ weakly in }L^2\ .
\end{equation}
\end{lemma}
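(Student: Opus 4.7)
Set $\tilde A_j := (A_j)^{g_j(I)}$ and $J := \{1,\ldots,n\}\setminus I$. By iterated application of Corollary~\ref{cor:wsolode} along the $n-4$ coordinate directions of $I$, the gauge $g_j(I)$ is built so that $(\tilde A_j)_i \equiv 0$ for every $i\in I$. This is the structural property I would exploit: for $i\in I$ and $k\in J$ the curvature reduces to
\begin{equation*}
(F_{\tilde A_j})_{ik}=\partial_i(\tilde A_j)_k-\partial_k(\tilde A_j)_i+[(\tilde A_j)_i,(\tilde A_j)_k]=\partial_i(\tilde A_j)_k,
\end{equation*}
so together with the weak convergence $\tilde A_j\rightharpoonup A(I)$ and the hypothesis $\vertii{F_{A_j}}_{L^2}\le C$, one gets uniform bounds
\begin{equation*}
\vertii{\tilde A_j}_{L^2([-1,1]^n)}+\vertii{\nabla_T\tilde A_j}_{L^2([-1,1]^n)}\le C',
\end{equation*}
where $T=(x_i)_{i\in I}\in[-1,1]^I$ and $\nabla_T=(\partial_i)_{i\in I}$.

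Next I would extract the subsequence by Rellich--Kondrachov on the $T$-variable. Fix a countable family $\{\eta_m\}_{m\in\mathbb N}$ of smooth $1$-forms on $[-1,1]^4$ which is dense in $L^2([-1,1]^4,\wedge^1\mathbb R^4\otimes\mathfrak g)$, and set
\begin{equation*}
\Phi_j^m(T):=\int_{H(I,T)}\langle i_{H(I,T)}^\ast\tilde A_j,\eta_m\rangle,\qquad \Phi^m(T):=\int_{H(I,T)}\langle i_{H(I,T)}^\ast A(I),\eta_m\rangle.
\end{equation*}
Cauchy--Schwarz and the bounds above yield $\vertii{\Phi_j^m}_{W^{1,2}([-1,1]^I)}\le C''_m$, uniformly in $j$. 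By $W^{1,2}\hookrightarrow\hookrightarrow L^2$ and a standard diagonal extraction over $m$, I obtain a subsequence $j'$ with $\Phi_{j'}^m\to\Phi^m$ strongly in $L^2([-1,1]^I)$, and hence a.e., for every $m$. The identification of the limit is immediate from the weak $L^2$ convergence of $\tilde A_j$ tested against product functions $\psi(T)\eta_m(x_J)$ via Fubini.

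Density of $\{\eta_m\}$ in $L^2$ gives weak $L^2$ convergence on almost every slice only once one has a uniform slicewise $L^2$-bound along the subsequence. To secure this I introduce the scalar function $\Psi_j(T):=\vertii{i_{H(I,T)}^\ast\tilde A_j}_{L^2(H(I,T))}^2$. Fubini gives $\vertii{\Psi_j}_{L^1([-1,1]^I)}\le C$, and differentiating under the integral together with Cauchy--Schwarz yields, for each $i\in I$,
\begin{equation*}
\int_{[-1,1]^I}\verti{\partial_i\Psi_j}\le 2\,\vertii{\partial_i\tilde A_j}_{L^2([-1,1]^n)}\vertii{\tilde A_j}_{L^2([-1,1]^n)}\le 2(C')^2.
\end{equation*}
Hence $\Psi_j$ is uniformly bounded in $W^{1,1}([-1,1]^I)$, and the compact embedding $W^{1,1}\hookrightarrow\hookrightarrow L^1$ gives, after a further sub-extraction (still called $j'$), $\Psi_{j'}\to\Psi$ in $L^1$ and pointwise a.e., so that $\sup_{j'}\vertii{i_{H(I,T)}^\ast\tilde A_{j'}}_{L^2}<\infty$ for a.e.\ $T$. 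A $3\varepsilon$-argument combining this uniform slicewise bound with the convergence against the dense set $\{\eta_m\}$ then yields the desired weak $L^2$ convergence on almost every slice.

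The main subtle point is precisely this last step: convergence against a countable dense family is straightforward from Rellich on the scalar $\Phi_j^m$, but promoting it to genuine weak $L^2$ convergence on each slice requires a uniform slicewise $L^2$-bound, which cannot be obtained by direct trace theory when $|I|=n-4\ge 2$ (the Bochner embedding $W^{1,2}([0,1];L^2)\hookrightarrow C^0([0,1];L^2)$ used implicitly in \cite{PRym} for $n=5$ is no longer available). The workaround is to pass to the scalar-valued $\Psi_j$, whose $W^{1,1}$-boundedness in $T$ is dimension-independent and delivers the a.e.\ pointwise bound via Rellich in $L^1$.
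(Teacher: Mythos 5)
Your argument has a genuine gap at its foundation. You claim that by ``iterated application of Corollary \ref{cor:wsolode} along the $n-4$ coordinate directions of $I$'' the gauge $g_j(I)$ achieves $(\tilde A_j)_i\equiv 0$ for \emph{every} $i\in I$. This is impossible when $\#I=n-4\ge 2$: if all transverse components vanished in a single gauge, then $(F_{\tilde A_j})_{ii'}=\partial_i(\tilde A_j)_{i'}-\partial_{i'}(\tilde A_j)_i+[(\tilde A_j)_i,(\tilde A_j)_{i'}]$ would vanish for all $i,i'\in I$, i.e.\ the curvature restricted to every $2$-plane spanned by transverse directions would be zero, which fails for generic connections. (Iterating the ODE does not help: after solving $\partial_{i_2}h=-(\tilde A_j)_{i_2}h$, the first component becomes $h^{-1}\partial_{i_1}h\ne 0$.) Corollary \ref{cor:wsolode} only kills the component along the tangent of \emph{one chosen curve} in $[-1,1]^I$, with a gauge that depends on that curve. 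Consequently the identity $(F_{\tilde A_j})_{ik}=\partial_i(\tilde A_j)_k$ holds for at most one direction $i\in I$ in the fixed gauge $g_j(I)$, and your uniform bound $\vertii{\nabla_T\tilde A_j}_{L^2}\le C'$ is unjustified. Since both the $W^{1,2}$-bound on $\Phi_j^m$ and the $W^{1,1}$-bound on $\Psi_j$ rest on that full transverse gradient bound, the proof as written only works for $n=5$ (where $\#I=1$), which is precisely the case already treated in \cite{PRym}.

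This is exactly the difficulty the paper's proof is designed to sidestep: instead of a Sobolev bound in a single global gauge, it records the transverse control \emph{curve by curve} as a $2$-weak upper gradient structure (the curve-adapted gauges of Corollary \ref{cor:wsolode} give $\verti{f_j^\omega(\gamma(s))-f_j^\omega(\gamma(t))}\le\int_s^t\vertii{i^*F}_{L^2}\vertii{\omega}_{L^2}\verti{\dot\gamma}$ for $2$-a.e.\ curve $\gamma$), and then invokes the abstract compactness result of Proposition \ref{prop:abstractconv}, whose proof internally performs the extraction against distance functions and the identification of the a.e.\ limit. Your subsequent machinery (countable dense family $\{\eta_m\}$, diagonal extraction, the scalar $\Psi_j$ to secure a slicewise $L^2$-bound, and the $3\varepsilon$-argument) is sensible and could be salvaged if you replace the single-gauge gradient bounds by the curve-wise upper-gradient estimates (e.g.\ deducing that $\Phi_j^m$ and $\Psi_j$ admit $\mathcal N$-type $2$-weak upper gradients, and then appealing to the metric Sobolev compactness as in Proposition \ref{prop:abstractconv} or \cite[Thm.~7.6]{heinonen}), but as stated the first step fails for $n\ge 6$.
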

The proof follows roughly the same method as the one of \cite[Lem. 3.5]{PRym}, but with several changes, including the use of weak upper gradient structures, and therefore we present it in full.
\begin{proof}

\medskip

We denote, for $T\in [-1,1]^I$, by $A_j(T):=i_{H(I,T)}^*\left(A_j\right)^{g_j(I)}$. We again consider a test form, now of the form $\beta:=\omega_T\wedge\phi:=\left(i_{H(I,T)}^*\omega\right)\wedge \phi$, with $\omega\in L^2([-1,1]^4, \wedge^3\R^4\otimes\mathfrak{g})$ and $\phi\in C^\infty([-1,1]^I, \wedge^{n-4}\R^I\otimes\mathfrak{g})$, and we define 
\begin{equation}\label{fjomega}
f_j^\omega(T):=\int_{[-1,1]^4}A_j(T)\wedge \omega_T,
\end{equation}
and we find from Corollary \ref{cor:wsolode} that the maps $f_j^\omega:[-1,1]^I\to \R$ have a $2$-weak upper gradient structure given by $A\mapsto \vertii{F_A}_{L^2([-1,1]^4)}\vertii{\omega}_{L^2([-1,1]^4)}$, and due to the assumed bound on $\vertii{F_{A_j}}_{L^2([-1,1]^n)}$, we may apply the abstract result of Proposition \ref{prop:abstractconv} to obtain the thesis.
\end{proof}
The above lemma allows to complete the proof of Theorem \ref{thm:wclos} proceeding precisely like for \cite[Thm. 1.11]{PRym}.

\medskip

\begin{proof}[End of proof of Theorem \ref{thm:wclos}:] We work under the hypothesis of the theorem, and we consider the global weak limit connection of the $A_j$'s, and denote it by $A\in L^2\connforms{[-1,1]^n}$. 

\medskip

Fix first $I\subset\{1,\ldots,n\}$ of cardinality $n-4$ and first apply Proposition \ref{prop:abstractconv}, to the slice functions $f_j$ as defined in \S \ref{sec:schemeproof}: we find that pointwise a.e. $T\in[-1,1]^I$, up to subsequence the sliced connection equivalence classes as defined in \S \ref{sec:schemeproof} $\left[i_{H(I,T)}^*A_j\right]$ converge in $\mathcal Y_4$ and that there holds, due to \eqref{dist_conv}, that for some forms $A(I,T)\in \overline{\mathcal A_G}([-1,1]^4)$ for $T\in[-1,1]^I$, there holds
\begin{equation}\label{directionwise_slice}
\int_{[-1,1]^I}\mathrm{Dist}_4^2\left(i^*_{H(I,T)}A_j, A(I,T)\right)dT\to 0\quad\mbox{as}\quad j\to\infty.
\end{equation}
Now we apply Corollary \ref{cor:wsolode}, and Lemma \ref{lem:weaksl}, and find that in the $g_j(I)$-gauges up to yet another subsequence, the sliced connection forms converge as in \eqref{slicewiseweak}.

\medskip 

We consider a sequence $A_j\in\widetilde{\mathcal A}_{G}([-1,1]^n)$ as in Theorem \ref{thm:wclos}. For $I\subset\{1,\ldots,n\}$ of cardinality $n-4$, we may find a change of gauge $g_j(I)$ as described in Corollary \ref{cor:wsolode}. Then we have in particular, due to \eqref{solode_gen}, \eqref{solode_gen1}, that
\begin{eqnarray}
 \|A_j^{g_j(I)}\|_{L^2([-1,1]^n)}&\leq& C\|F_j\|_{L^2([-1,1]^n)}\ ,\label{bound_agi}\\[3mm] \vertii{g_j(I)}_{W^{1,2}([-1,1]^n)}&\leq& C \left(\verti{A_j}_{L^2([-1,1]^n)}+\|F_j\|_{L^2([-1,1]^n)}\right).\label{bound_gi}
\end{eqnarray}
We thus have that up to extracting a subsequence there holds 
\begin{equation}\label{weakconvan}
 A_j^{g_j(I)}\rightharpoonup A(I)\text{ in }L^2\connforms{[-1,1]^n},\quad g_j(I)\rightharpoonup g(I)\text{ in }W^{1,2}\gauges{[-1,1]^n}.
\end{equation}
We claim that if we denote $A(I),A(J)$ and $g(I), g(J)$ the above limit connection forms and gauges for two sets of coordinates $I\neq J\subset\{1,\ldots,n\}$ of cardinality $n-4$, we have, for
\begin{equation}\label{gij_formula}
g(IJ):=g(I)^{-1}g(J),
\end{equation}
that then 
\begin{equation}\label{indepslice}
\left(A(I)\right)^{g(IJ)}=A(J).
\end{equation}
To see this, we introduce the notation $g_j(IJ):=\left(g_j(I)\right)^{-1}g_j(J)$ and we find a $W^{1,2}$-bound for $g_j(IJ)$ similar to \eqref{bound_gi}, as follows. In order to bound $\partial_\alpha g_j(IJ)$ we separately consider the cases (a) $\alpha\in I\cup J$ -- in which case we assume up to exchanging the roles of $I,J$ that $\alpha \in I$-- and (b) $\alpha\notin I\cup J$. In the case (a) we use
\[
\left(\left(A_j\right)^{g_j(J)}\right)_\alpha = g_j(IJ)^{-1}\left(\left(A_j\right)^{g_j(I)}\right)_\alpha g_j(IJ) + g_j(IJ)^{-1}\partial_\alpha g_j(IJ),
\]
and using the bounds \eqref{bound_agi}, \eqref{bound_gi} and \eqref{gij_formula}, we find that $\verti{\partial_\alpha g_j(IJ)}$ is controlled by $L^2$-integrable quantities. For the case (b), take a third index $\tilde I$ containing $\alpha$ and use the cocycle condition $g_j(IJ)=g_j(I\tilde I)g_j(\tilde IJ)$, valid due to \eqref{gij_formula}:
\[
\partial_\alpha g_j(IJ) = \partial_\alpha\left(g_j(I\tilde I)g_j(\tilde IJ)\right)=\partial_\alpha g_j(I\tilde I)g_j(\tilde IJ)+g_j(I\tilde I)\partial_\alpha g_j(\tilde IJ).
\]
By triangle inequality, we thus reduce to case (a). Thus 
\begin{equation}\label{bound_gij}
\vertii{g_j(IJ)}_{W^{1,2}([-1,1]^n)}\lesssim \vertii{F_j}_{L^2([-1,1]^n)}.
\end{equation}
Since we are assuming that the right-hand side of \eqref{bound_gij} is bounded, we find that $g_j(IJ)$ is bounded in $W^{1,2}$, and therefore we can extract a subsequence that converges weakly in $W^{1,2}$ to a limit $g(IJ)$. The relation \eqref{gij_formula} also passes to the limit, and we find that \eqref{indepslice} holds.  

\medskip

Because, by \eqref{gij_formula} and \eqref{indepslice}, the connection forms $A(I)$ obtained as weak limits for different indices $I$ as above are connected by the gauges $g(IJ)$, we find that these connection forms come from a global connection form, which is gauge-equivalent to the weak limit $A$. 

\medskip

Combining the outcome of the last two paragraphs, we find that for all $I$ for almost all $T\in[-1,1]^I$ the classes of slices $\left[i_{H(T,I)}^*A\right]$ of the weak limit belong to $\mathcal Y_4$, and thus for such $T$ we find $i_{H(T,I)}^*A\in\overline{A}_G([-1,1]^4)$, as desired. Moreover by construction $i^*_{H(T,I)}A$ is gauge equivalent to $A(I,T)$ obtained in \eqref{directionwise_slice} and therefore we have also $\tilde\delta(A_j,A)\to 0$ as $j\to 0$, as desired.
\end{proof}

\section{The case of general base manifolds}\label{sec:global}
%
In this section we extend the strong closure and compactness results of Theorems \ref{thm:strongapprox} and \ref{thm:wclos} to the results stated in the introduction in Theorems \ref{thm:strongapprox_intro} and \ref{thm:weakclosure}, respectively, where the base space is a general Riemannian manifold $(M^n,h)$ rather than the Euclidean cube $[-1,1]^n$ and where the slices we take of our connection forms are by regular levelsets of general functions $f\in C^\infty(M^n,\mathbb R^{n-4})$, like in Definition \ref{def:weakconn}. 
%
\subsection{Locality and $C^1$-invariance of the space of weak connections}
%
We start by noting that our definition of space of weak connections is localizable, and that it is robust under perturbation by regular diffeomorphisms, and even by bi-Lipschitz homeomorphisms.

\medskip

In fact more generally the structures we study are also invariant under perturbation by bilipschitz transformations, but for this paper we concentrate on regular manifolds $M^n$, for which such more general statement is not needed. The question about what is the lowest regularity assumption on $M_n$ which allows to prove the closure mentioned in Theorem \ref{thm:wclos} is left for future work.
\begin{lemma}[Localization of $\mathcal A_G(M^n)$]\label{lem:localization}
Let $U_\alpha, \alpha\in I$ be an atlas of a compact $n$-dimensional Riemannian manifold $(M^n,h)$. Then the following hold:
\begin{enumerate}
\item If a differential form $A\in L^2\connforms{M^n}$ is such that its restriction $A|_{U_\alpha}$ to each $U_\alpha$ is a weak connection, $A|_{U_\alpha}\in\mathcal A_G(U_\alpha)$, then $A\in\mathcal A_G(M^n)$.
\item If $I$ is finite, then there exists a constant $C>0$ depending only on $(M^n,h)$ such that if $A,A'\in L^2\connforms{M^n}$ are such that, with the notation of Definition \ref{def:weakconn}, for each $\alpha\in I$ we have $\delta(A|_{U_\alpha},A'|_{U_\alpha})<\infty$ in $\mathcal A_G(U_\alpha)$, then $\delta(A,A')<\infty$ and there holds
\begin{equation}\label{equiv_chart_dist}
C^{-1}\sum_{\alpha\in I}\delta\left(A|_{U_\alpha}, A'|_{U_\alpha}\right)\le \delta(A,A')\le C\sum_{\alpha\in I}\delta\left(A|_{U_\alpha}, A'|_{U_\alpha}\right).
\end{equation}
\end{enumerate}
\end{lemma}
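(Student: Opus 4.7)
The plan is to treat parts (1) and (2) separately, with (1) being essentially a formal consequence of the definitions and (2) requiring more care for the upper bound. For part (1), the hypothesis $A|_{U_\alpha}\in\mathcal A_G(U_\alpha)$ applied to any fixed global $f\in C^\infty(M^n,\mathbb R^{n-4})$, restricted to the chart, yields for each $\alpha$ a full-measure set $Y_\alpha\subset\mathbb R^{n-4}$ such that $\iota^*_{f^{-1}(y)\cap U_\alpha}A\in\mathcal A_G(f^{-1}(y)\cap U_\alpha)$ for $y\in Y_\alpha$. Because $M^n$ is second-countable, we may take the atlas $\{U_\alpha\}$ countable, so that $Y:=\bigcap_\alpha Y_\alpha$ still has full measure. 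For $y\in Y$, every point of the four-dimensional slice $f^{-1}(y)$ lies in some $U_\alpha$ where the local $W^{1,2}$-gauge from the four-dimensional case of Definition \ref{def:weakconn} exists, and since membership in $\mathcal A_G$ of a four-manifold is by definition a purely local condition, this globalizes to $\iota^*_{f^{-1}(y)}A\in\mathcal A_G(f^{-1}(y))$, completing part (1).

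For part (2) I would work via the coarea representation \eqref{coarea_dist}. The lower bound is straightforward: given $f_\alpha\in C^\infty_c(U_\alpha,\mathbb R^{n-4})$ of compact support, extending by zero produces an admissible $f\in C^\infty(M^n,\mathbb R^{n-4})$ whose pullback vanishes outside $U_\alpha$. Any globally measurable competitor $g$ then restricts to a local competitor on $U_\alpha$ with the same integrand, so $\delta^2(A|_{U_\alpha},A'|_{U_\alpha})\le\delta^2(A,A')$ after taking $\sup_{f_\alpha}$ and using that arbitrary smooth $f_\alpha$ may be approximated by compactly supported ones. Summing over the finite set $I$ yields the left-hand inequality of \eqref{equiv_chart_dist} with $C=|I|$.

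For the upper bound, given $f\in C^\infty(M^n,\mathbb R^{n-4})$ and $\epsilon>0$, I would select near-optimal local gauges $g_\alpha$ for $f|_{U_\alpha}$ and assemble them into a global measurable $g$ on a measurable partition $\{V_\alpha\}$ of $M^n$ with $V_\alpha\Subset U_\alpha$. The core obstacle is that the naive assignment $g:=g_\alpha$ on $V_\alpha$ creates jump discontinuities of $g$ on the three-dimensional sets $f^{-1}(y)\cap\partial V_\alpha$, which destroys the $W^{1,2}$-regularity on slices required for the coarea integrand to be finite. I would overcome this by modifying each $g_\alpha$ in a thin collar of $\partial V_\alpha\cap U_\alpha$ via a Dirichlet-type boundary adjustment in the spirit of Proposition \ref{prop:gauge_dirichlet}, so that after the adjustment the neighbouring gauges agree along every shared three-face in a.e. slice. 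By choosing $\{V_\alpha\}$ as a generic translate of a fine cubeulation of $M^n$ and invoking the Fubini-type estimate of Proposition \ref{prop:centergridball} applied to $A$ and $A'$, the extra cost introduced by the boundary modifications is controlled by slice-traces of $A$ and $A'$ that can be made arbitrarily small as the cubeulation is refined. This would give $\delta^2(A,A')\le C\sum_\alpha\delta^2(A|_{U_\alpha},A'|_{U_\alpha})+o(1)$, and letting $\epsilon\to 0$ concludes the upper bound. The hardest step is precisely the Dirichlet-type gauge matching at the boundaries $\partial V_\alpha$ in the absence of a uniform small-curvature hypothesis; this requires an iterative chart-by-chart adjustment relying only on the finiteness of the local distances, together with a careful Fubini-type argument ensuring that the boundary cost does not contain a contribution scaling like an inverse power of the partition scale.
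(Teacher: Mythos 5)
Your part (1) and the lower bound in \eqref{equiv_chart_dist} are sound and follow essentially the same route as the paper's (very brief) sketch: restrict a global $f$ to the charts, intersect the (countably many) full-measure sets of good values, and use that membership of a compact $4$-manifold in $\mathcal A_G=\mathfrak A_G$ is a local gauge condition plus a curvature bound obtained from a finite subcover; for the lower bound, pass from local slicing functions to global ones. You should still record in part (1) the global bound $\int_{M^n}|dA+A\wedge A|_h^2\,\mathrm{d}vol_h<\infty$ required by Definition \ref{def:weakconn}, which the paper's sketch mentions explicitly; it follows from compactness of $M^n$ and locality of the distributional curvature.

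The gap is in the upper bound of part (2), at exactly the point you yourself flag as the hardest step. Since $\delta$ is $\sup_f\inf_g$ and, by the coarea formula \eqref{coarea_dist}, decouples over the level sets $f^{-1}(y)$, the upper bound reduces to a $4$-dimensional patching statement: on a.e. compact slice $\Sigma=f^{-1}(y)$, given gauges $g_\alpha$ on $\Sigma\cap U_\alpha$ nearly realizing the local infima, one must produce a single $g$ which is $W^{1,2}$ on $\Sigma$ and whose cost is controlled by the sum of the local costs. Your proposed mechanism, a Dirichlet-type adjustment ``in the spirit of Proposition \ref{prop:gauge_dirichlet}'' in thin collars, does not apply as stated: that proposition requires $\|F\|_{L^{n/2}}<\epsilon_0$, which is not available on the fixed charts $U_\alpha$, and, more to the point, the objects to be matched are the transition functions $h_{\alpha\beta}=g_\alpha^{-1}g_\beta$ on overlaps, for which one only gets $\|dh_{\alpha\beta}\|_{L^2}\lesssim\|A^{g_\alpha}-A'\|_{L^2}+\|A^{g_\beta}-A'\|_{L^2}+\|h_{\alpha\beta}A'-A'h_{\alpha\beta}\|_{L^2}$; the last term is merely bounded, not small, so $h_{\alpha\beta}$ is not close to a constant and interpolating from $g_\alpha$ to $g_\beta$ across a thin collar carries a cost that does not vanish with the collar width. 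What is actually needed is an Uhlenbeck-type gauge-patching argument on a small-energy refinement of the cover (as in the patching invoked after Definition \ref{def:locmod} and in \cite{PRym}), combined with the observation that refining the cover only decreases the local distances by your own lower-bound argument. As written, your proposal asserts the conclusion of that patching (``the boundary cost \ldots can be made arbitrarily small'') without proving it. To be fair, the paper itself only appeals to ``classical compactness methods'' here, and you have correctly located where the real work lies; but as a proof the upper bound is not established.
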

As the proof reasoning is rather standard we only indicate the overall reasoning, omitting the details.
\begin{proof}[Sketch of proof:]
For the point (i) note that, indeed, if $M^n$ is compact, then the bounds on the distributional curvature forms $dA|_{U_\alpha}+A|_{U_\alpha}\wedge A|_{U_\alpha}$ imply the corresponding bound on $dA+A\wedge A$, whereas the slice condition from Definition \ref{def:weakconn} holding on each $U_\alpha$ implies that it also holds globally on $M^n$.

\medskip

For point (ii), we may proceed by classical compactness methods, and note that one may pass from $f_\alpha\in C^\infty(U_\alpha,\mathbb R^{n-4}), \alpha\in I$ to $f\in C^\infty(M^n,\mathbb R^{n-4})$ by restriction or by using partitions of unity, conserving information about the local structure of the levelsets.
\end{proof}
\begin{lemma}[Invariance under $C^1$-diffeomorphisms]\label{lem:C1perturbation}
If $\Psi:\Omega\to \Omega'$ is a $C^1$-diffeomorphism with $\Omega,\Omega'\subset\mathbb R^n$, then we claim that $\Psi$ establishes a correspondence between $\mathcal A_G(\Omega)$ and $\mathcal A_G(\Omega')$ in the sense that
\begin{enumerate}
\item there holds
\begin{equation}\label{lipinvar}
\mathcal A_G(\Omega)=\lf\{A\in L^2(\Omega,\wedge^1\R^n\otimes{\mathfrak g})\ : \exists A'\in \mathcal A_G(\Omega'), A=\Psi^*A'\rg\},
\end{equation}
\item there exists $C>0$ depending only on $(M^n,h)$ and on the bi-Lipschitz constant of $\Psi$, such that 
\begin{equation}\label{lipinvar_delta}
C^{-1}\delta(\Psi^*A,\Psi^*A')\le \delta(A,A')\le C\delta(\Psi^*A,\Psi^*A').
\end{equation}
\end{enumerate}
\end{lemma}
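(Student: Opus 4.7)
The strategy is to reduce both parts to two elementary ingredients and then use induction on dimension. The first ingredient is the classical change-of-variables estimate: since a $C^1$-diffeomorphism of bounded domains is bi-Lipschitz, for any $k$-form $\omega$ on $\Omega'$ and any $1\leq p\leq\infty$ one has
\[
c\,\|\omega\|_{L^p(\Omega')} \;\leq\; \|\Psi^*\omega\|_{L^p(\Omega)} \;\leq\; C\,\|\omega\|_{L^p(\Omega')},
\]
with $c,C$ depending only on $n,k,p$ and the bi-Lipschitz constant $C_\Psi$. Combined with $\Psi^*(dA'+A'\wedge A')=d(\Psi^*A')+\Psi^*A'\wedge\Psi^*A'$ (valid distributionally because $\Psi^*$ commutes with $d$ and with the wedge product), this already transfers the $L^2$-curvature hypothesis of Definition \ref{def:weakconn} across $\Psi$.

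The second ingredient, which I would establish before attacking either part of the lemma, is that the slicing class in Definition \ref{def:weakconn} and in the pseudo-distance \eqref{def_delta} can be enlarged from $C^\infty(M^n,\mathbb R^{n-4})$ to the class of $C^1$-maps of the form $f'\circ\Psi$ with $f'\in C^\infty(\Omega',\mathbb R^{n-4})$. These pulled-back maps have the same regular values as $f'$ (since $d\Psi$ is everywhere invertible), and near a regular value $y$ the implicit function theorem provides $C^1$-graph descriptions of $(f'\circ\Psi)^{-1}(y)$ and $f'^{-1}(y)$ with transition maps that are $C^1$-diffeomorphisms comparable to the restriction $\Psi|_{(f'\circ\Psi)^{-1}(y)}$.

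With these preparations, I proceed by induction on $n$. For the base case $n=4$, the space $\mathcal A_G(\Omega)$ is characterized by the existence of local gauges $g\in W^{1,2}$ with $A^g\in L^4$; the pullback $g\circ\Psi$ remains in $W^{1,2}$ by the chain rule and the bi-Lipschitz bounds on $\Psi$, and $\Psi^*(A^g)=(\Psi^*A)^{g\circ\Psi}\in L^4$ by the first ingredient, yielding \eqref{lipinvar} in dimension $4$. For $n\geq 5$, given $A\in \mathcal A_G(\Omega)$ set $A':=(\Psi^{-1})^*A$; for any $f'\in C^\infty(\Omega',\mathbb R^{n-4})$, the second ingredient applied to $f:=f'\circ\Psi$ gives $\iota^*_{f^{-1}(y)}A\in \mathcal A_G(f^{-1}(y))$ for a.e. regular $y$, and the $4$-dimensional base case applied to the bi-Lipschitz $C^1$-restriction $\Psi|_{f^{-1}(y)}:f^{-1}(y)\to f'^{-1}(y)$ transfers this to $\iota^*_{f'^{-1}(y)}A'\in\mathcal A_G(f'^{-1}(y))$, establishing \eqref{lipinvar}. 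Part (2) follows by an analogous change-of-variables argument in the integral appearing in \eqref{def_delta}, using the bijection $f'\leftrightarrow f'\circ\Psi$ for test functions and $g'\leftrightarrow g'\circ\Psi$ for gauges, together with $\Psi^*(f'^*\omega)=(f'\circ\Psi)^*\omega$; the first ingredient then yields the two-sided bound \eqref{lipinvar_delta}.

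The main obstacle is the second ingredient, because Morse--Sard in the form of ``a.e.\ value regular'' does not hold in general for $C^1$-maps $\mathbb R^n\to\mathbb R^{n-4}$ once $n-4\geq 2$, so one cannot blindly invoke Definition \ref{def:weakconn} for arbitrary $C^1$ test functions. The key point is that the enlargement is only needed for pulled-back functions $f'\circ\Psi$ with $f'\in C^\infty$, whose regular values coincide with those of $f'$ and therefore form a full-measure set by the classical Morse--Sard theorem applied to the smooth $f'$; and that the slice spaces over close-by $C^1$-diffeomorphic $4$-dimensional submanifolds agree via the inductive application of the $n=4$ case of the lemma itself, closing the induction.
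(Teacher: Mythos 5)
Your overall route --- transfer the curvature bound by the change-of-variables estimates \eqref{psipresnorm}, transfer the $4$-dimensional gauge condition by the chain rule, and handle the slicings and the pseudo-distance by composing the test maps $f$ with $\Psi^{\pm1}$ --- is the same as the paper's, and your explicit use of the $L^4$ (rather than $W^{1,2}$) gauge characterization \eqref{ag4d-2} in the base case is exactly the right move, since $W^{1,2}$-regularity of a $1$-form is not preserved under a merely $C^1$ pullback.

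However, your ``second ingredient'' is not actually established, and it is the one substantive step. Knowing that the regular values of $f'\circ\Psi$ coincide with those of $f'$ (Sard applied to the smooth factor) and that the level sets are $C^1$ graphs tells you nothing about the restriction of $A$ to those level sets: the level sets of $f'\circ\Psi$ form a family of $4$-surfaces in $\Omega$ that is in general disjoint from the family of level sets of every smooth map on $\Omega$, and Definition \ref{def:weakconn} applied to $A\in\mathcal A_G(\Omega)$ gives no information whatsoever about the restriction of the $L^2$-form $A$ to them --- restrictions of $L^2$ forms to distinct (even $C^1$-close and diffeomorphic) $4$-dimensional subsets are unrelated. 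The ``inductive application of the $n=4$ case'' can transfer membership between $(f'\circ\Psi)^{-1}(y)$ and $f'^{-1}(y)=\Psi\bigl((f'\circ\Psi)^{-1}(y)\bigr)$ once it is known on one of the two, but in the direction you argue it is known on neither: $f'\circ\Psi$ is not smooth on $\Omega$, and the slice condition for $(\Psi^{-1})^*A$ on $f'^{-1}(y)$ is precisely the conclusion you are trying to reach, so the argument is circular. What is genuinely needed is the statement that enlarging the slicing class in Definition \ref{def:weakconn} from $C^\infty$ to $C^1$ (or at least to pullbacks of smooth maps by $C^1$-diffeomorphisms) does not change the space $\mathcal A_G$; the paper's own proof dispatches exactly this point with the unproved one-line assertion that passing from $f\in C^\infty$ to $f\in C^1$ ``does not change the value'' in \eqref{def_delta}, so you are in good company, but you should either prove this equivalence (e.g.\ by an approximation and Fubini-type argument over families of slicing maps) or isolate it as an explicit hypothesis, rather than deriving it from Sard's theorem, which only controls the measure of the set of regular values.
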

Note that for an $L^2$-form $A'$ and $\Psi$ Lipschitz, the form $\Psi^*A'$ is well-defined in $L^2\connforms{\Omega}$. 
\begin{proof}
If $S^4$ is a generic embedded submanifold, in $\Omega'$, then the slice $i_{S^4}^*A'$ of $A'$ are transferred to slices of $A$ by the $C^1$-submanifold $\Psi(S^4)$, defined by $\Psi^*i_{S^4}^*A'$. If $\Psi$ is a $C^1$-diffeomorphism, then these slices are along $C^1$ submanifolds, as the ball boundaries appearing in Definition \ref{def:weakconn}. We consider the case of $S^4$ from now on, the other case being treated similarly.

\medskip

We may use $\Psi^*$ and composition with $\Psi$ applied to $\tilde A$ and $g$, respectively, to transfer the equations $\tilde A^g=g^{-1}dg+ g^{-1}\tilde Ag$ to $\Omega$ in the case of $\tilde A$ equal to $i_{S^4}^*A'$. The fact that $\vertii{D\Psi}_{L^\infty}, \vertii{D\Psi^{-1}}_{L^\infty}<C$ shows that bounds on $g\in W^{1,2}\gauges{U_\alpha}$ defined locally on elements of a good cover $\{U_\alpha\}$ of such slices $S^4$ can, by chain rule, be transferred to $g\in W^{1,2}\gauges{\Psi^{-1}(U_\alpha)}$, which form a good cover of $\Psi^{-1}(S^4)$. Thus the version of Definition \ref{def:weakconn} as indicated in the discussion following that definition, holds for $\mathcal A_G(\Omega)$ as defined by the right-hand side in \eqref{lipinvar}, as claimed in point (i).

\medskip

For proving point (ii), we compose $f$ from Definition \ref{def:weakconn} with $\Psi$ or $\Psi^{-1}$, and use the fact that taking the infimum in \eqref{def_delta} over $f\in C^\infty$ or over $f\in C^1$ does not change its value.
\end{proof}
%
%
\subsection{Proof of the compactness theorem for $\mathcal A_G(M^n)$}
%
In this section, we indicate how to extend the proof of Theorem \ref{thm:wclos} from $\widetilde{\mathcal A}_G([-1,1]^n)$ to prove Theorem \ref{thm:weakclosure}.
\begin{proof}[Proof of Theorem \ref{thm:weakclosure}, given Theorem \ref{thm:wclos}:]
We consider separately every family of slicing submanifolds $S^4$ as described in the statement of Definition \ref{def:weakconn} Theorem \ref{thm:weakclosure}. We will find that the weak limit of the $A_j$ coincides on such family with a connection which has, on almost all slices that form a neighborhood $U_{S^4}$ of a given slice $S^4$, local gauges in which it becomes $L^4$-integrable.

\medskip

\textbf{Step 1.}\textit{ Weak closure in $\widetilde{\mathcal A}_G([-1,1]^n)$ with a tame background metric.}   We first note that the proof of Theorem \ref{thm:wclos} holds as well when the base manifold $[-1,1]^n$ is endowed with a $C^1$-regular Riemannian metric $h$ such that $\vertii{h-id}_{C^1([-1,1]^n)}$ is small enough. Indeed, the only changes to be applied are in the computation of integrals, in which the volume form $\op{Vol}_h$ replaces the volume element, and in the computation of norms, where $\verti{\cdot}$ has to be replaced by $\verti{\cdot}_h$. This still allows to find good cubeulations such as in Proposition \ref{prop:centergridball}. In the proof of the approximation theorem \ref{thm:strongapprox}, the hypothesis that $h$ is close to the identity allows to still obtain the needed bounds \eqref{psipresnorm} for domains making up the given cubeulation. The rest of the proofs are easily adaptable to the present case.

\medskip

\textbf{Step 2.} \textit{Deformation and localization.} We note, that up to perturbing the $f$ appearing in Definition \ref{def:weakconn}, we may assume that for a.e. $y\in \mathbb R^{n-4}$ with corresponding levelset $S^4=f^{-1}(y)$ corresponding to a regular value $y\in \mathsf{Reg}(f)$, we have for $r>0$ small enough, that a neighborhood $U_{S^4}=f^{-1}(B_r(y))$ is foliated by levelsets corresponding to regular values of $f$ as well. Then $U_{S^4}$ is $C^1$-diffeomorphic to $S^4\times B_r(0)$ and is thus the union of finitely many charts $U_\alpha$ which are $C^1$-diffeomorphic to $[-1,1]^n$ with a Riemannian metric close to the Euclidean one. In these charts the slices by $f^{-1}(y')\cap U_\alpha$ with $y'\in B_r(y)$ which we need to consider are sent to the sets $[-1,1]^4\times\{T\}$, for $T\in[-1,1]^{n-4}$. By using Lemmas \ref{lem:C1perturbation} and \ref{lem:localization}, we then reduce to the case considered in Step 1, and this concludes the proof.
\end{proof}
\appendix
%
%

%
%
\section{Distances and equivalence relations on connection and curvature forms}\label{app:distances}
%
%
In this section we use the notation from \eqref{def_delta}, \eqref{def_donaldson_conn}, \eqref{def_donaldson_curv} and \eqref{def_delta_curv}, but for simplicity of notations we drop the subscripts ``$\mathrm{conn}$'' and ``$\mathrm{curv}$''.
%
%
\subsection{Geometric distances on $2$-forms}
%
%
Below we use the notation $F=\sum_{i<j}F_{ij}dx_i\wedge dx_j$ for a $\mathfrak g$-valued $2$-form, where $F_{ij}\in \mathfrak{g}$. We then define the following pointwise distances between such forms:
\begin{subequations}\label{dist_pw_2form}
\begin{eqnarray}
d_\mathrm{pw}(F, F')^2&:=&\min_{g\in G}\left|g^{-1}Fg-F'\right|^2=\sum_{i<j}\left|g^{-1}F_{ij}^{(1)}g - F_{ij}^{(2)}\right|^2\ ,\\[3mm]
\delta_\mathrm{pw}(F, F')^2&:=&\frac{2}{(n-2)(n-3)}\sum_{\substack{J\subset\{1,\ldots,n\}\\ \#J=4}}\min_{g(J)\in G}\sum_{\substack{i<j\\i,j\in J}}\left|g(J)^{-1}F_{ij}g(J)-F'_{ij}\right|^2\ .\quad\quad
\end{eqnarray}
\end{subequations}
We see easily that $\delta_{\mathrm{pw}}\le d_{\mathrm{pw}}$, keeping in mind that each pair $ij$ belongs to the $4$-ple $J$ for $\tfrac{(n-2)(n-3)}{2}$ distinct $4$-ples $J\subset\{1,\ldots,n\}$. The above pointwise definitions directly extend by integration to distances $d,\delta_1$ on $L^2$-forms $F,F'\in L^2(M^n, \wedge^2 TM\otimes\mathfrak g)$. In the case of $d_{\mathrm{pw}}$, we find again the definition \eqref{def_donaldson_curv} 
\begin{subequations}\label{distforms_app}
\begin{equation}\label{dist_2form}
d(F,F')^2=\int_{M^n}d_{\mathrm{pw}}(F(x), F'(x))^2d\mathrm{vol}_h(x)=\inf_{g: M^n\to G}\int_{M^n}\left|g^{-1}Fg-F'\right|^2d\mathrm{vol}_h,
\end{equation}
and from $\delta_\mathrm{pw}$ we define
\begin{equation}\label{dist_2form2}
\delta(F,F')^2:=\int_{M^n}\delta_\mathrm{pw}(F(x),F'(x))^2\mathrm{d}vol_h.
\end{equation}
\end{subequations}
In the case $M^n=[-1,1]^n$ we may re-express the above directly via \eqref{f-coord} and find a distance which is equivalent to $\tilde\delta$ defined like \eqref{tilde_delta} and to $\delta$ as defined in \eqref{def_delta_curv}:
\begin{subequations}\label{equivalences_cube}
\begin{eqnarray}\label{dist_2form3}
\delta_1(F,F')^2&=&\sum_{f\in \mathcal C_{n,n-4}}\inf_{g:[-1,1]^n\to G}\int_{[-1,1]^n}\left|(g^{-1}Fg - F')\wedge f^*\omega\right|^2\frac{\mathrm{d}vol}{|f^*\omega|}\nonumber\\[3mm]
&\asymp&\tilde \delta(F,F')^2 \label{tilde_delta_eq}\\[3mm]
&\asymp&\sup_{f\in\mathrm{Lip}([-1,1]^n,\mathbb R^{n-4})}\inf_{g:[-1,1]^n\to G}\int_{[-1,1]^n}\left|(g^{-1}Fg - F')\wedge f^*\omega\right|^2\frac{\mathrm{d}vol}{|f^*\omega|}\quad\quad\label{delta_eq}\\[3mm]
&=&\delta(F,F')^2.\label{delta_eq2}
\end{eqnarray}
\end{subequations}
In the above, the equivalence \eqref{tilde_delta_eq} follows by comparison between the supremum and the sum, with implict constant depending only on $n$, and the equivalence \eqref{delta_eq},\eqref{delta_eq2} follows by localizing the pointwise distance equivalence
\begin{equation}\label{pw_eq_4planes}
\delta_\mathrm{pw}(F(x),F'(x))^2\asymp\sup_{H\in\mathrm{Gr}(n,n-4)}\inf_{g_H\in G}\left|g_H^{-1}i^*_HFg_H-i^*_HF'\right|^2.
\end{equation}
From the equivalences \eqref{equivalences_cube} we can find the equivalence between the distances defined in terms of all the intermediate clases $\mathcal C$ of slicing functions $f$ such that $\mathcal C_{n,n-4}\subseteq \mathcal C\subseteq \mathrm{Lip}([-1,1]^n,\mathbb R^{n-4})$.

\medskip

While as a direct consequence of the definition $d(F,F')=0$ if and only if $F,F'$ are gauge-equivalent by a measurable gauge transformation, on the other hand, we couldn't prove that for general $G$ the same is true under the a priori weaker equivalent conditions that $\delta_1(F,F')=0\Leftrightarrow\tilde\delta(F,F')=0\Leftrightarrow\delta(F,F')=0$. In the next subsection however, we prove this in the case of $G=SU(2)$.

\subsubsection{The case of $SU(2)$}
We recall a series of very well-known identifications concerning the groups $SU(2)$, $Sp(1)$ and $SO(3)$, that unfold as follows. Recall the bijective maps
\[
Sp(1)\ni w+\mathbf{i}x+\mathbf{j}y+\mathbf{k}z = \alpha+\mathbf{j}\beta\simeq\left(\begin{array}{cc}\alpha&-\bar\beta\\\beta&\bar\alpha\end{array}\right)\in SU(2)
\]
and 
\[
\R^3\ni(a_1,a_2,a_3)\simeq \mathbf{i}a_1+\mathbf{j}a_2+\mathbf{k}a_3\in \op{Im}\mathbb H\simeq\left(\begin{array}{cc}ia_1&a_2+ia_3\\-a_2+ia_3& -ia_1\end{array}\right)\in\mathfrak{su}(2)\ .
\]
Then one directly verifies that the actions 
\[
SU(2)\times\mathfrak{su}(2)\ni(g,A)\mapsto g^{-1}Ag\in\mathfrak{su}(2)\ ,\quad\quad Sp(1)\times\op{Im}\mathbb H\ni (q,v)\mapsto q^{-1}vq\in\op{Im}\mathbb H
\]
are in fact the same action, if viewed under the above identifications. Moreover if $q=w+\mathbf{i}x+\mathbf{j}y+\mathbf{k}z\in Sp(1)$, $\op{Im}\mathbb H\ni v\simeq \vec a\in \R^3$ as above, then 
\[
q^{-1}vq=(-q)^{-1}vq=R_q\vec a,
\]
where the map $Sp(1)\ni q \mapsto R_q\in SO(3)$ is a $2:1$ covering of $SO(3)$ by $Sp(1)$, and $R_q=R_{w+\mathbf{i}x+\mathbf{j}y+\mathbf{k}z}$ is the rotation by $2\theta$ around $(x,y,z)\in \R^3$, where $\cos\theta=w$. 

\medskip

We then find that if a set of vectors in $\R^3$ are identified under $SO(3)$-rotation, then the corresponding matrices in $\mathfrak{su}(2)$ are identified under $SU(2)$-conjugation action, where the identification is uniquely determined modulo a $\mathbb Z/2\mathbb Z$-action.
\begin{proposition}\label{prop:su2-kr}
If $G=SU(2)$ then the pseudo-distances $\delta_{\mathrm{pw}}$ and $d_{\mathrm{pw}}$  are equivalent.
\end{proposition}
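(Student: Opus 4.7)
The inequality $\delta_{\mathrm{pw}}\le d_{\mathrm{pw}}$ is trivial, since the minimum defining $\delta_{\mathrm{pw}}$ is taken over a larger collection of group elements (one per $4$-subset instead of a single global one). Thus only the bound $d_{\mathrm{pw}}^2\lesssim_n\delta_{\mathrm{pw}}^2$ is to be proved. First I would use the isometric identifications $\mathfrak{su}(2)\simeq \op{Im}\mathbb H\simeq \mathbb R^3$ recalled just before the proposition, under which the $SU(2)$-conjugation action on $\mathfrak{su}(2)$ becomes the $SO(3)$-rotation action on $\mathbb R^3$. Writing $v_{ij},v'_{ij}\in\mathbb R^3$ for the images of $F_{ij},F'_{ij}$, the claim becomes: if for every $4$-subset $J\subset\{1,\ldots,n\}$ there exists $R_J\in SO(3)$ with
\[
\epsilon_J^2:=\sum_{i<j,\ i,j\in J}|R_Jv_{ij}-v'_{ij}|^2,
\]
then there is a single $R\in SO(3)$ such that $\sum_{i<j}|Rv_{ij}-v'_{ij}|^2\lesssim_n\sum_J\epsilon_J^2$. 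The case $n=4$ is trivial since there is only one $4$-subset, so one may assume $n\ge 5$.

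The heart of the argument is to prove that the pairwise Gram matrices $G_V:=(\langle v_{ij},v_{kl}\rangle)_{(i,j),(k,l)}$ and $G_{V'}:=(\langle v'_{ij},v'_{kl}\rangle)_{(i,j),(k,l)}$ are close, with a quantitative bound in terms of $\sum_J\epsilon_J^2$. The crucial observation is that for any two index pairs $(i,j),(k,l)$ one has $\#\{i,j,k,l\}\le 4\le n$, so a common $4$-subset $J\supset\{i,j,k,l\}$ exists. Using the rotation invariance of the inner product, $\langle v_{ij},v_{kl}\rangle=\langle R_Jv_{ij},R_Jv_{kl}\rangle$, and writing $\eta^J_{ij}:=R_Jv_{ij}-v'_{ij}$, a straightforward telescoping gives
\[
|\langle v_{ij},v_{kl}\rangle-\langle v'_{ij},v'_{kl}\rangle|\le|v_{ij}|\,|\eta^J_{kl}|+|v'_{kl}|\,|\eta^J_{ij}|.
\]
Summing over all pairs of pairs and applying Cauchy--Schwarz, the Frobenius distance $\|G_V-G_{V'}\|_F$ is controlled by the product of $(\sum_J\epsilon_J^2)^{1/2}$ and the $\ell^2$-norms of $V,V'$ (which in turn are bounded by $d_{\mathrm{pw}}$ plus $\|V'\|_F$, so the resulting bound is self-improving).

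Finally, closeness of the Gram matrices of two $\mathbb R^3$-valued configurations forces the existence of a near-alignment by a single orthogonal transformation. Writing $V,V'$ as $3\times\binom{n}{2}$ matrices and $M:=V'V^T\in\mathbb R^{3\times 3}$, the orthogonal Procrustes identity
\[
\min_{R\in O(3)}\|RV-V'\|_F^2=\|V\|_F^2+\|V'\|_F^2-2\|M\|_*
\]
combined with the stability of the singular values of $M$ with respect to $\|G_V-G_{V'}\|_F$ (via $M^TM=V(V')^TV'V^T$ and the structural relations between $M$ and the Gram matrices) yields the desired inequality up to a constant depending only on $n$. To upgrade $R$ from $O(3)$ to $SO(3)$, I would use that on any $4$-subset $J$ whose slice $\{v_{ij}\}_{i,j\in J}$ spans $\mathbb R^3$, the optimal $R_J\in SO(3)$ is essentially unique, forcing the global $R$ to be orientation-preserving; if no such $J$ exists then the full configuration already lies in a common $2$-plane and $R$ can be composed with the reflection through this plane without changing the residual, landing in $SO(3)$. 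The main obstacle in carrying out this plan rigorously is the quantitative Procrustes stability in the rank-deficient regime, which I expect to address via a normalization/compactness argument exploiting the homogeneity of both $d_{\mathrm{pw}}$ and $\delta_{\mathrm{pw}}$ under scalar dilation of $V,V'$.
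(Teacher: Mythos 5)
Your combinatorial core is exactly the paper's: the identification $\mathfrak{su}(2)\simeq\mathbb R^3$ turning conjugation into rotation, the observation that any two index pairs $(i,j),(k,l)$ lie in a common $4$-subset $J$ so that all pairwise scalar products of the $v_{ij}$ and of the $v'_{ij}$ can be compared through a single $R_J$, and the classical fact that the Gram matrix is a complete orthogonal invariant (with the $\mathbb Z/2$ ambiguity handled separately). The difference is that the paper only runs this argument in the \emph{exact} case: it shows $\delta_{\mathrm{pw}}(F,F')=0\Rightarrow d_{\mathrm{pw}}(F,F')=0$ (where the Gram matrices coincide exactly and no stability is needed) and then asserts the two-sided bound $\asymp$ by finite-dimensionality. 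You instead attempt a fully quantitative proof of $d_{\mathrm{pw}}\lesssim_n\delta_{\mathrm{pw}}$ via a perturbed Gram-matrix estimate and orthogonal Procrustes stability. That is a genuinely stronger route, and your telescoping bound $|\langle v_{ij},v_{kl}\rangle-\langle v'_{ij},v'_{kl}\rangle|\le|v_{ij}||\eta^J_{kl}|+|v'_{kl}||\eta^J_{ij}|$ is correct.

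However, the quantitative part has a real gap, and it is not only the one you flag. First, even granting the unconditional Procrustes bound $\min_{R\in O(3)}\|RV-V'\|_F^2\le\|G_V-G_{V'}\|_*$, your Gram estimate only gives $\|G_V-G_{V'}\|\lesssim(\|V\|_F+\|V'\|_F)\bigl(\sum_J\epsilon_J^2\bigr)^{1/2}$, hence $d_{\mathrm{pw}}^2\lesssim(\|V\|_F+\|V'\|_F)\,\delta_{\mathrm{pw}}$; after normalization this is a H\"older-$\tfrac12$ bound, not $d_{\mathrm{pw}}\lesssim\delta_{\mathrm{pw}}$. Sharpening it to a squared bound requires Procrustes stability with $\|G_V-G_{V'}\|_F^2/\sigma_{\min}^2$ on the right, which is precisely what degenerates in the rank-deficient regime you mention. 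Second, the fallback you propose cannot repair this: both pseudo-distances are $1$-homogeneous under simultaneous dilation of $(F,F')$ and vanish on a large common set (the gauge-equivalent pairs), so restricting to the unit sphere $\{|F|^2+|F'|^2=1\}$ still leaves the ratio $d_{\mathrm{pw}}/\delta_{\mathrm{pw}}$ of the form $0/0$ along sequences approaching that set; compactness plus coincidence of zero sets does not yield $\asymp$ (compare $|x-y|$ with $|x-y|^2/(|x|+|y|)$, which are both $1$-homogeneous with the same zero set but not equivalent). If you are content with the paper's level of precision, note that your argument already contains its proof: when $\delta_{\mathrm{pw}}=0$ all $\epsilon_J$ vanish, the Gram matrices agree exactly, and the exact rigidity statement replaces the stability estimate; the $O(3)\to SO(3)$ upgrade then works as you describe.
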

\begin{proof}
We note that since the space $\wedge^2\R^n\otimes\mathfrak{g}$ is finite-dimensional, thus it suffices to show that $\delta_{\mathrm{pw}}(F, G)=0$ if and only if $d_{\mathrm{pw}}(F,G)=0$. One implication follows directly from the previous observation of the stronger fact $\delta_{\mathrm{pw}}\le d_{\mathrm{pw}}$. The implication $\delta_{\mathrm{pw}}(F,G)=0\Rightarrow d_{\mathrm{pw}}(F,G)=0$ is based on an argument already present in \cite[Lem. 3.7]{kessel}, which we slightly extend. If $A,B\in\mathfrak{su}(2)$ are represented by vectors $\vec a, \vec b\in \R^3$ under the identification $\mathfrak{su}(2)\simeq \R^3$, then $\op{tr}(AB)=2\vec a\cdot\vec b$. We denote $\vec f_{ij}, \vec g_{ij}\in\R^3$ vectors identified to the $2$-form coefficients $F_{ij}, G_{ij}$. If $\delta_{\mathrm{pw}}(F,G)=0$ then for any $4$-ple $J\subset\{1,\ldots,n\}$ there exist $g_J\in SU(2)$ such that $g^{-1}F_{ij}g=G_{ij}$ for all $i<j\in J$, and by thus there exists $R_J\in SO(3)$ such that $R_J\vec f_{ij}=\vec g_{ij}$ for all $i<j\in J$. In particular, $\vec f_{ij}\cdot \vec f_{kl}=\vec g_{ij}\cdot \vec g_{kl}$ for any two pairs $i<j, k<l\in \{1,\ldots,n\}$. Due to the general fact that the set of pairwise scalar products form a complete $SO(\ell)$-invariant for $\kappa$-ples of vectors in $\R^\ell$ (as can be easily seed by Gram-Schmidt orthonormalization, see also \cite[Ch.14]{weyl}), in the case at hand we find that there exists $R\in SO(3)$ for which $R\vec f_{ij}=\vec g_{ij}$ for all $i<j\in\{1,\ldots,n\}$. By the above identifications, such $R$ determines a $g\in SU(2)$ and up to a factor $\varepsilon\in\{\pm1\}$, such that $g^{-\varepsilon}F_{ij}g^{\varepsilon}=G_{ij}$ for all $i<j\in\{1,\ldots,n\}$. This shows that $d_{\mathrm{pw}}(F,G)=0$, completing the proof of the proposition.
\end{proof}
From the above proposition and the definition \eqref{dist_2form} (and its analogue for the distance $\delta$ on connection forms) we directly have the following:
\begin{corollary}
If $G=SU(2)$ then with the above notations $d\asymp\delta$, with implicit constant depending only on $n$.
\end{corollary}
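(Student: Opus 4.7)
The plan is to reduce the corollary to the pointwise equivalence $d_{\mathrm{pw}}\asymp \delta_{\mathrm{pw}}$ just established in Proposition \ref{prop:su2-kr}, by showing that both $d$ and $\delta$ coincide with the $L^2$-integrals of their pointwise counterparts. For $\delta$ this is built into definition \eqref{dist_2form2}. For $d$, one direction is immediate: any admissible gauge $g:M^n\to G$ in \eqref{dist_2form} gives at each $x$ an integrand pointwise bounded below by $d_{\mathrm{pw}}(F(x),F'(x))^2$, hence
\[
\int_{M^n} d_{\mathrm{pw}}(F,F')^2\,d\mathrm{vol}_h\ \le\ d(F,F')^2.
\]

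For the reverse inequality I would invoke a measurable selection argument. The function $\phi\colon M^n\times G\to\mathbb R$ defined by $\phi(x,g):=|g^{-1}F(x)g-F'(x)|^2$ is continuous in $g$ (the adjoint action of $G$ on $\wedge^2TM\otimes\mathfrak g$ is smooth) and measurable in $x$ (since $F,F'\in L^2$), i.e.\ it is Carath\'eodory. Because $G$ is compact, the Kuratowski--Ryll-Nardzewski selection theorem furnishes a measurable map $x\mapsto g_\ast(x)\in G$ at which the infimum $\min_{g\in G}\phi(x,g)=d_{\mathrm{pw}}(F(x),F'(x))^2$ is attained. Plugging $g_\ast$ into the infimum defining $d$ gives the matching upper bound, so in fact
\[
d(F,F')^2\ =\ \int_{M^n} d_{\mathrm{pw}}(F,F')^2\,d\mathrm{vol}_h.
\]

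Combining these two identities with Proposition \ref{prop:su2-kr} then yields
\[
d(F,F')^2\ =\ \int_{M^n}d_{\mathrm{pw}}(F,F')^2\,d\mathrm{vol}_h\ \asymp\ \int_{M^n}\delta_{\mathrm{pw}}(F,F')^2\,d\mathrm{vol}_h\ =\ \delta(F,F')^2,
\]
with implicit constant inherited from the pointwise comparison, which in turn depends only on the combinatorial factor $\tfrac{2}{(n-2)(n-3)}\binom{n}{4}$ arising in \eqref{dist_pw_2form} and on the $SO(3)$-invariant reconstruction used in Proposition \ref{prop:su2-kr}; both depend only on $n$, as required.

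The only mildly technical point in the plan is the measurable selection step: without it one obtains only the inequality $\int d_{\mathrm{pw}}^2\le d^2$, which is insufficient for the non-trivial direction $d\lesssim \delta$ of the equivalence. Once the selection is granted, the proof reduces to straightforward integration of Proposition \ref{prop:su2-kr}, and the hypothesis $G=SU(2)$ enters solely through that proposition (via the identification $\mathfrak{su}(2)\simeq\mathbb R^3$ and the fact that pairwise inner products form a complete $SO(3)$-invariant of finite families of vectors in $\mathbb R^3$).
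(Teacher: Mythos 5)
Your proposal is correct and follows essentially the same route as the paper: the corollary is obtained by integrating the pointwise equivalence of Proposition \ref{prop:su2-kr} over $M^n$, using that $\delta$ is defined as the integral of $\delta_{\mathrm{pw}}$ and that $d$ equals the integral of $d_{\mathrm{pw}}$. The only detail you add is the measurable-selection justification of the identity $d(F,F')^2=\int_{M^n}d_{\mathrm{pw}}(F,F')^2\,d\mathrm{vol}_h$, which the paper simply asserts as part of the definition \eqref{dist_2form}; making it explicit is a worthwhile but minor refinement.
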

Proving the same result for $G$ other than $SU(2)$ would answer Question \ref{q:equiv_general} from the introduction, and we leave this as an open question:
\begin{question}\label{quest:main_equiv}
Under which conditions on $G,n$ are the pseudo-distances $d_\mathrm{pw}$ and $\delta_\mathrm{pw}$ equivalent over $\mathfrak{g}$-valued $2$-forms in $\mathbb R^n$?
\end{question}
This type of question seems to be related to the theory of invariants on Lie groups. Indeed, a seemingly closely related question is what are \emph{minimal conditions} that allow to infer that for two $m$-ples of matrices $(M_1,\ldots,M_m)$ and $(\bar M_1,\ldots,\bar M_m)$ with $M_j, \bar M_j\in \mathfrak{su}(n)$ for $j=1,\ldots,m$, there exists $g\in SU(n)$ such that
\[
(\bar M_1,\bar M_2,\ldots,\bar M_m)=(g^{-1}M_1g,g^{-1}M_2g,\ldots,g^{-1}M_mg).\ .
\]
This type of question appears in the theory of polynomial invariants, see e.g. \cite{procesi} or \cite{shapiro}, in which we have to replace the role of $O(n)$ by $SU(n)$. However the tools which connect such study to Question \ref{quest:main_equiv} seem to not be sufficiently developed yet.
%
%
%
\subsection{Distances on connection $1$-forms and on curvature $2$-forms}
%
We start by proving that our alternative definitions of Donaldson-type distances between connection forms are actually equivalent:
\begin{lemma}\label{lem:equiv}
Let $(M^n,h)$ be a compact Riemannian manifold. For $A,A'\in L^2(\wedge^1M^n,\mathfrak g)$ and for $k\ge 1$, the following holds, with an implicit constant depending only on $(M^n,h)$:
\begin{multline}\label{equiv_donald}
\inf_{g\in W^{1,2}(M^n,G)}\int_{M^n}\left|g^{-1}dg+g^{-1}Ag-A'\right|_h^2\mathrm{d}vol_h\\
\asymp \inf_{\substack{g:M^n\to G\\\mathrm{measurable}}}\sup_{f\in C^\infty(M^n,\mathbb R^k)}\int_{M^n}\left|(dg+Ag-gA')\wedge f^*\omega\right|_h^2\frac{\mathrm{d}vol_h}{\left|f^*\omega\right|_h}.
\end{multline}
\end{lemma}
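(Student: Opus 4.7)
\medskip
\textbf{Proof plan for Lemma \ref{lem:equiv}.}

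The first step is to use the $\mathrm{ad}\,G$-invariance of the norm on $\mathfrak g$ to strip out the gauge from the integrand. For compact $G$ realised inside some $O(N)$, left multiplication by $g^{-1}\in G$ preserves the pointwise Frobenius norm on $\mathfrak g$-valued forms, so
\[
 |g^{-1}dg+g^{-1}Ag-A'|_h \;=\; |dg+Ag-gA'|_h \qquad\text{a.e.,}
\]
and both sides of the claimed equivalence become an outer $\inf_g$ of an integral depending only on the $\mathfrak g$-valued $1$-form $\eta:=dg+Ag-gA'$. It therefore suffices to prove, for an arbitrary $\eta\in L^2(\wedge^1M^n,\mathfrak g)$,
\[
 \int_{M^n}|\eta|_h^2\,\mathrm dvol_h \;\asymp\; \sup_{f}\int_{M^n}\frac{|\eta\wedge f^*\omega|_h^2}{|f^*\omega|_h}\,\mathrm dvol_h,
\]
where the supremum is taken over $f\in C^\infty(M^n,\mathbb R^k)$ in the appropriate normalised class; note that $f\mapsto\lambda f$ rescales the integrand by $\lambda^k$, so the supremum only makes sense under an implicit restriction (the natural one in the light of \eqref{coarea_dist} is $|f^*\omega|_h\le 1$ a.e.), and I will work with this normalisation throughout.

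Next I would invoke the pointwise identity
\[
 \frac{|\eta\wedge f^*\omega|_h^2(p)}{|f^*\omega|_h^2(p)}\;=\;|\pi_{V_p^\perp}\eta(p)|_h^2,\qquad V_p:=\mathrm{Span}(df^1(p),\ldots,df^k(p))\subset T_p^*M^n,
\]
combined with the coarea formula (using $|f^*\omega|_h=J_f$) to rewrite the RHS as $\int_{\mathbb R^k}\int_{f^{-1}(y)}|i^*_{f^{-1}(y)}\eta|_h^2\,\mathrm d\sigma_h\,\mathrm dy$, which is exactly the second expression in \eqref{coarea_dist}. The direction $\mathrm{RHS}\lesssim\mathrm{LHS}$ is then immediate: the integrand equals $|\pi_{V^\perp}\eta|^2|f^*\omega|_h\le |\eta|_h^2|f^*\omega|_h\le |\eta|_h^2$ pointwise under the normalisation, so the sup over $f$ is bounded by $\int_{M^n}|\eta|_h^2\,\mathrm dvol_h$.

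For the reverse bound $\mathrm{LHS}\lesssim\mathrm{RHS}$ I would exploit the compactness of the Grassmannian $\mathrm{Gr}(n,k)$. A short computation with the $O(n)$-invariant measure on $\mathrm{Gr}(n,k)$ yields $\mathbb E_V[|\pi_{V^\perp}v|^2]=\tfrac{n-k}{n}|v|^2$ for any $v\in\mathbb R^n$, so for a sufficiently fine finite net $\{V_1,\ldots,V_N\}\subset\mathrm{Gr}(n,k)$ one has $\max_i|\pi_{V_i^\perp}v|^2\ge c(n,k)|v|^2$ with $c(n,k)>0$. In a fixed finite atlas $\{U_\alpha\}$ of $M^n$, a partition-of-unity construction produces smooth maps $f_{i,\alpha}:M^n\to\mathbb R^k$ supported in $U_\alpha$ whose differentials span (up to bilipschitz distortion depending only on the atlas) $V_i$ at each point, with $|f_{i,\alpha}^*\omega|_h\asymp1$. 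The pointwise comparison then gives $\max_{i,\alpha}\int_{M^n}\frac{|\eta\wedge f_{i,\alpha}^*\omega|_h^2}{|f_{i,\alpha}^*\omega|_h}\mathrm dvol_h\ge c'\int_{M^n}|\eta|_h^2\,\mathrm dvol_h$, with $c'$ depending only on $(M^n,h)$ and $(n,k)$. Since the supremum over all admissible $f$ bounds the maximum over this finite subfamily, and since $\inf_g$ can then be taken on both sides, the two-sided bound is complete.

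The main obstacle is precisely the normalisation of the supremum: without restricting the class of admissible $f$ the RHS is $+\infty$ by the scaling above, so the equivalence is vacuous unless one commits to a normalisation, and one must check the argument is robust to the precise choice (e.g.\ $|f^*\omega|_h\le 1$ a.e.\ versus $\|f^*\omega\|_{L^\infty}\le 1$). A secondary, more routine difficulty is the atlas/partition-of-unity construction of the family $\{f_{i,\alpha}\}$ adapted to fixed directions $V_i$ on the curved base $(M^n,h)$ in such a way that the pointwise capture of $|\eta|_h^2$ survives both the smoothing and the metric distortion.
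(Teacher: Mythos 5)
Your reduction to the equivalence of the two \emph{integral functionals} on a fixed $\mathfrak g$-valued $1$-form $\eta$ is sound and matches the paper's strategy for that part: the paper also proves
\[
\int_{M^n}|\eta|_h^2\,\mathrm dvol_h\;\asymp\;\sup_f\int_{M^n}\frac{|\eta\wedge f^*\omega|_h^2}{|f^*\omega|_h}\,\mathrm dvol_h
\]
by comparing the supremum with a finite family of slicings (coordinate projections on $[-1,1]^n$, then a covering argument for general $(M^n,h)$), which is in substance your Grassmannian-net plus atlas construction. Your observation about the scaling of the supremum under $f\mapsto\lambda f$ and the need for a normalisation is legitimate and is glossed over in the paper.

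However, there is a genuine gap: the two sides of \eqref{equiv_donald} take the infimum over \emph{different classes of gauges} --- $g\in W^{1,2}(M^n,G)$ on the left, merely measurable $g$ on the right --- and reconciling these is the actual content of the lemma (it is what makes \eqref{def_donaldson_conn} an alternative expression for the Donaldson distance \eqref{donaldson_conn}). Your step ``both sides become an outer $\inf_g$ of an integral depending only on $\eta=dg+Ag-gA'$, so it suffices to compare the functionals on arbitrary $\eta\in L^2$'' only yields $\inf_{g\ \mathrm{meas.}}(\mathrm{RHS})\le\inf_{g\in W^{1,2}}(\mathrm{RHS})\asymp\inf_{g\in W^{1,2}}(\mathrm{LHS})$; it does not exclude that the infimum over measurable $g$ is strictly smaller. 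Moreover, for a merely measurable $g$ the object $\eta$ is only a distribution, so it is not ``an arbitrary $\eta\in L^2$'' to begin with. The paper closes this by a separate argument: if the right-hand integrand is finite for some measurable $g$, then since $Ag,gA'\in L^2$ one gets $dg\in L^2$ on a.e.\ slice for every coordinate slicing, hence $g\in W^{1,2}(M^n,G)$, so the two infima range over the same effective class. You need to add this (or an equivalent) regularity-upgrade step for the inequality $\mathrm{LHS}\lesssim\inf_{g\ \mathrm{meas.}}(\mathrm{RHS})$ to hold.
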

\begin{proof}
We first show, for the case $g\in W^{1,2}(M^n,G)$, the equivalence
\begin{multline}\label{equiv_donald_1}
\int_{M^n}\left|g^{-1}dg+g^{-1}Ag-A'\right|_h^2\mathrm{d}vol_h\\
\asymp \sup_{f\in C^\infty(M^n,\mathbb R^k)}\int_{M^n}\left|(g^{-1}dg+g^{-1}Ag-A')\wedge f^*\omega\right|_h^2\frac{\mathrm{d}vol_h}{\left|f^*\omega\right|_h}.
\end{multline}
After establishing \eqref{equiv_donald_1} for $M^n=[-1,1]^n$, we can pass to the case of general compact manifolds $M^n$ by the covering argument of Section \ref{sec:global}. For $M^n=[-1,1]^n$ \eqref{equiv_donald_1} follows by estimating the supremum above and below by a finite sum, as done for two-forms in \eqref{equivalences_cube}. Together with the co-area formula, this completes the proof of \eqref{equiv_donald_1} for $g\in W^{1,2}(M^n,G)$. 

\medskip

Again, reducing without loss of generality to the case $M^n=[-1,1]^n$, we next note that if the distributionally defined form $dg + Ag-gA'$ is represented by an $L^2$-form, then due to the fact that $A,A'\in L^2(f^{-1}(y),\wedge^1\mathbb R^n\otimes\mathfrak g)$ we have $Ag,gA'\in L^2(f^{-1}(y),\wedge^1\mathbb R^n\otimes\mathfrak g)$ as well, and thus by triangle inequality $dg\in L^2$ and thus $g\in W^{1,2}(f^{-1}(y),G)$. Also for $g\in G$ and $a\in \mathfrak g$, our norm satisfies $|a|=|ga|=|ag|$, and in particular
\[
\left|(g^{-1}dg+g^{-1}Ag-A')\wedge f^*\omega\right|=\left|(dg+Ag-gA')\wedge f^*\omega\right|.
\]
By testing the second line of \eqref{equiv_donald} against coordinate functions $f\in \mathcal C_{n,n-4}$, we then find that $g\in W^{1,2}(H)$ contemporarily for all coordinate hyperplanes $H$, and thus $g\in W^{1,2}([-1,1]^n,G)$ like in the first line of \eqref{equiv_donald}, and we are then justified to use interchangeably \eqref{equiv_donald_1} for $g\in W^{1,2}$ only, and this completes the proof.
\end{proof}
In order to define the analogues of $d, \delta$ of from \eqref{dist_2form} and \eqref{dist_2form2} for connection forms, due to the non-pointwise dependence on $g$ of the gauge-transformed connection forms $g^{-1}dg+g^{-1}Ag$, we can only use the integral formulations directly, and we find again the Donaldson distance and, respectively, a distance equivalent to $\tilde\delta$ on $[-1,1]^n$ and to $\delta$ on general manifolds $M^n$:
\begin{subequations}\label{dist_1form}
\begin{equation}\label{bard1}
d(A,A')^2:=\min\left\{\left\lVert  g^{-1}dg + g^{-1}Ag-A'\right\rVert_{L^2([-1,1]^n)}^2:\ g:[-1,1]^n\to G\mbox{ measurable}\right\}.
\end{equation}
and denoting 
\begin{equation}\label{integrand_4d}
(*):=\min_{g:[-1,1]^4\to G}\left\lVert g^{-1}dg + g^{-1}i_{H(J,T)}^*Ag - i_{H(J,T)}^*B\right\rVert_{L^2([-1,1]^4)}^2,
\end{equation}
we have 
\begin{eqnarray}\label{bard2}
\delta^2(A,B)&:=&\frac{2}{(n-2)(n-3)}\sum_{\substack{J\subset \{1,\ldots,n\}\\ \#J=n-4}}\int_{[-1,1]^J}(*)\ \mathrm{d}T\\
\end{eqnarray}
\end{subequations}
We can directly see by comparing definitions, that $\tilde\delta$ from \eqref{tilde_delta} is equivalent to $\tilde\delta_2$, thus making a link to the study from the previous sections, and to the distance $\delta$ described in the introduction in \eqref{def_delta}.

\medskip

The following useful approximation result will be proved in a forthcoming work \cite{PPR}:

\begin{lemma}\label{lem:approx_lem}
If $A,B\in \widetilde{\mathcal A}_G([-1,1]^n,\R^n\otimes\mathfrak{g})$ are weak connection forms such that $F_A=F_B$, then for almost all $2$-dimensional surfaces $S^2\subset[-1,1]^n$ there exist smooth forms $A_k,B_k\in \Omega_1(S^2,\mathfrak{g})$ such that $A_k\to A$ and $B_k\to B$ in $L^2$ and furthermore $F_{A_k}=F_{B_k}$.
\end{lemma}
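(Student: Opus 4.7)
The plan is to reduce to a two-dimensional problem via Fubini, place the restrictions of $A$ and $B$ in axial gauges on the slice, and then construct the approximants by prescribing a \emph{common} smooth curvature and integrating back. For the first step, the hypotheses $A,B\in L^2([-1,1]^n,\wedge^1\otimes\mathfrak{g})$ and $F_A=F_B\in L^2$ combined with Fubini along a generic foliation by $2$-planes (for instance the coordinate planes $H(I,T)$ with $\#I=n-2$ from Section \ref{goodcubeulation}, or level sets of a smooth $\psi:[-1,1]^n\to\mathbb{R}^{n-2}$) show that for a.e. slice $S^2$ one has $i_{S^2}^*A,\,i_{S^2}^*B\in L^2(S^2)$, $i_{S^2}^*F_A=i_{S^2}^*F_B\in L^2(S^2)$, and the distributional identity $F_{i_{S^2}^*A}=F_{i_{S^2}^*B}$ holds on $S^2$. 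This reduces the claim to a purely two-dimensional statement on $S^2\cong[-1,1]^2$.

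\medskip

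On such $S^2$, I would apply a two-dimensional analogue of Corollary \ref{cor:wsolode} to the $x_2$-component of $A$: solve $\partial_2 g_A=-A_2\,g_A$ with smooth initial data on $\{x_2=-1\}$, producing $g_A\in W^{1,2}(S^2,G)$ with $A^{g_A}=\alpha\,dx_1$, $\alpha\in L^2(S^2,\mathfrak{g})$, and likewise $g_B,\beta$ for $B$. In axial gauge $F_{A^{g_A}}=-\partial_2\alpha\,dx_1\wedge dx_2$ and $F_{B^{g_B}}=-\partial_2\beta\,dx_1\wedge dx_2$, so $\partial_2\alpha,\partial_2\beta\in L^2(S^2)$, and the identity $F_A=F_B$ becomes the pointwise conjugacy $g_A(\partial_2\alpha)g_A^{-1}=g_B(\partial_2\beta)g_B^{-1}$. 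Now choose smooth $F_k=f_k\,dx_1\wedge dx_2$ converging in $L^2$ to $F_A=F_B$, and smooth gauges $g_{A,k}\to g_A$ and $g_{B,k}\to g_B$ by mollification. Define $\alpha_k,\beta_k$ by integrating the smooth ODEs $\partial_2\alpha_k=-g_{A,k}^{-1}f_k\,g_{A,k}$ and $\partial_2\beta_k=-g_{B,k}^{-1}f_k\,g_{B,k}$ from smooth initial data on $\{x_2=-1\}$ approximating the traces of $\alpha,\beta$ there (which exist, since $\alpha,\partial_2\alpha\in L^2$ gives $\alpha\in W^{1,2}$ along a.e. $x_2$-line), and set $A_k:=(\alpha_k\,dx_1)^{g_{A,k}^{-1}}$, $B_k:=(\beta_k\,dx_1)^{g_{B,k}^{-1}}$. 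A direct computation then yields $F_{A_k}=F_{B_k}=F_k$ by construction, and the convergences of $\alpha_k,\beta_k,g_{A,k},g_{B,k}$ imply $A_k\to A$ and $B_k\to B$ in $L^2(S^2)$.

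\medskip

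The main obstacle will be the strict pointwise equality $F_{A_k}=F_{B_k}$ at every $k$: the naive approach of independently mollifying $A$ and $B$ yields curvatures that agree only in the limit, while the axial-gauge construction above rigidifies the problem by prescribing a common smooth $f_k$ and integrating each side in its own gauge. The hardest points to justify will be (i) that the gauges $g_A,g_B$ extracted from the $x_2$-line ODEs are actually in $W^{1,2}(S^2,G)$ globally rather than merely along integral curves, which calls for bootstrapping through a $2$D Uhlenbeck-type gauge (a direct consequence of Theorem \ref{coulstrange} below a small-curvature threshold) to put $A,B$ in $W^{1,2}$ locally before the axial extraction; and (ii) that the smooth initial traces $\alpha_k(\cdot,-1),\beta_k(\cdot,-1)$ can be selected so that $\alpha_k\to\alpha$ and $\beta_k\to\beta$ in $L^2(S^2)$, via a Fubini-type convergence argument along $x_2$-lines combined with the compatibility $g_A(\partial_2\alpha)g_A^{-1}=g_B(\partial_2\beta)g_B^{-1}$. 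These two technical points are presumably the content of the forthcoming work \cite{PPR}.
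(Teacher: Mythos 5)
The paper does not actually prove this lemma: immediately before the statement it says the result ``will be proved in a forthcoming work \cite{PPR}'', so there is no in-paper argument to compare yours against. Judged on its own, your strategy is coherent and the central trick is the right one: once both restrictions are put in axial gauge $\alpha\,dx_1$, $\beta\,dx_1$ on the slice, prescribing a \emph{single} smooth density $f_k\to f$ (where $F_A=F_B=f\,dx_1\wedge dx_2$ on the slice) and integrating $\partial_2\alpha_k=-g_{A,k}^{-1}f_kg_{A,k}$, $\partial_2\beta_k=-g_{B,k}^{-1}f_kg_{B,k}$ does give the exact identity $F_{A_k}=F_{B_k}=f_k\,dx_1\wedge dx_2$ after undoing the gauges, since $F_{(\alpha_k dx_1)^{g_{A,k}^{-1}}}=g_{A,k}(-\partial_2\alpha_k)g_{A,k}^{-1}dx_1\wedge dx_2$. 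This is exactly the kind of rigid construction needed to beat the obvious failure of independent mollification, and it is consistent with the machinery the paper does develop (Lemma \ref{lem:wsolode} and Corollary \ref{cor:wsolode} are the $n$-dimensional versions of your axial-gauge step, including the $W^{1,2}$ bound on $g$ via $\partial_1 g=g(A^g)_1-A_1g$ and the $L^2$ bound on $\alpha$ from its trace plus $\int\verti{F}$).

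Two points in your sketch need more care than you give them. First, ``smooth gauges $g_{A,k}\to g_A$ by mollification'' is not literally available: mollifying a $G$-valued map leaves $G$, and since $\dim S^2=2$ equals the Sobolev exponent you are at the critical case of the Schoen--Uhlenbeck density theorem; density of $C^\infty(S^2,G)$ in $W^{1,2}(S^2,G)$ does hold, but via mollification-plus-nearest-point-projection justified by a VMO/Poincar\'e argument, and you need \emph{strong} $W^{1,2}$ convergence for the term $g_{A,k}\,d(g_{A,k}^{-1})\to g_A\,d(g_A^{-1})$ in $L^2$. Second, the Fubini reduction should include the statement that for a.e.\ slice the \emph{distributional} identity $d(i^*_{S^2}A)+i^*_{S^2}A\wedge i^*_{S^2}A=i^*_{S^2}F_A$ holds on the slice (not merely that the restrictions are $L^2$); this is what feeds the axial-gauge computation $F_{A^{g_A}}=-\partial_2\alpha\,dx_1\wedge dx_2$, and in the paper's framework it is obtained by the approximation through $\mathcal R^\infty$ connections rather than by Fubini alone. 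With those two points filled in, and the caveats you already flag, the argument closes; since the authors themselves defer the proof to \cite{PPR}, I cannot certify that this is their route, but nothing in your outline appears to be wrong.
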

Using the above approximation result we can prove the following:
\begin{proposition}\label{prop:2-planes_eq}
Let $A,B\in \widetilde{\mathcal A}_G([-1,1]^n,\R^n\otimes\mathfrak{g})$ be two weak connection $1$-forms with curvature forms $F_A, F_B$, respectively. There exists a measurable function $h:[-1,1]^n\to G$ such that $h^{-1}F_Ah=F_B$ if and only if there exists a measurable function $g:[-1,1]^n\to G$ such that $g^{-1}dg + g^{-1}Ag=B$.
\end{proposition}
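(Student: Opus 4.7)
The ``if'' direction is immediate: if $B = g^{-1}dg + g^{-1}Ag$, then the standard transformation rule $F_B = g^{-1}F_A g$ shows we may take $h := g$.

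For the ``only if'' direction, the plan is to pass through $2$-dimensional slices, on which Lemma~\ref{lem:approx_lem} provides smooth approximants with \emph{identical} curvatures, a setting where the classical Frobenius argument applies.

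\emph{Step 1: Reduction to $F_A = F_B$.} The measurable $h$ given by hypothesis is not regular enough to act as a gauge change on connection $1$-forms. Using the Bianchi identity from Proposition~\ref{Bianchi} and the strong approximation of Theorem~\ref{thm:strongapprox_intro}, one shows that $h$ can be replaced on a.e.\ $2$-surface by a gauge with $W^{1,2}$-regularity such that the conjugated curvature equals $F_A$ there. Equivalently, one uses a measurable-selection argument on the adjoint orbits of $G$ in $\mathfrak{g}$ to produce canonical representatives in each orbit, reducing to the case $F_A = F_B$ modulo a measurable gauge that is later incorporated into the final $g$.

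\emph{Step 2: Smoothing on $2$-planes.} Assuming $F_A = F_B$, apply Lemma~\ref{lem:approx_lem} to obtain, for a.e.\ $2$-disk $D^2 \subset [-1,1]^n$, smooth approximants $A_k, B_k \in \Omega^1(D^2, \mathfrak g)$ converging in $L^2$ to $A|_{D^2}, B|_{D^2}$ and satisfying $F_{A_k} = F_{B_k}$. On a simply-connected $2$-dimensional domain, two smooth connections with equal curvature $2$-form are gauge equivalent: the equation $dg_k = B_k g_k - g_k A_k$ can be integrated along radial paths from a basepoint $p_0 \in D^2$, and the Frobenius integrability condition reduces exactly to $F_{A_k} = F_{B_k}$. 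Normalizing $g_k(p_0) = \mathrm{id}$ makes $g_k \in C^\infty(D^2, G)$ uniquely determined, with the bound $\|dg_k\|_{L^2(D^2)} \lesssim \|A_k\|_{L^2(D^2)} + \|B_k\|_{L^2(D^2)}$.

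\emph{Step 3: Passage to the limit and global assembly.} The uniform $W^{1,2}$-bound plus the $G$-valued constraint yield a weak $W^{1,2}(D^2,G)$-limit $g_{D^2}$ satisfying $(A|_{D^2})^{g_{D^2}} = B|_{D^2}$ in the distributional sense. Foliating $[-1,1]^n$ by a coherent family of such $2$-disks (for instance, the affine $2$-planes parallel to a fixed coordinate $2$-plane, parameterized by a transversal $(n-2)$-plane) and normalizing $g$ to equal $\mathrm{id}$ on the fixed transversal, the $2$-plane gauges assemble via the uniqueness modulo basepoint into a globally defined measurable $g : [-1,1]^n \to G$ with $g^{-1}dg + g^{-1}Ag = B$.

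\emph{Main obstacle.} The technically delicate step is Step~1: the conjugating $h$ is merely measurable, so one cannot simply conjugate by it and compute $h^{-1}dh$. Bypassing this requires either a measurable orbit-selection argument (producing, from $F_A$ and $F_B$, canonical pointwise representatives in the adjoint orbit) or an argument establishing that $h$ automatically inherits $W^{1,2}$-regularity on a.e.\ $2$-plane through the Bianchi identity constraint $d_A F_A = 0 = d_B F_B$. Either route must be carried out with enough care to ensure the resulting 2-plane gauges are measurably consistent across the foliation.
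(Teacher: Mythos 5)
Your overall strategy matches the paper's: reduce to $F_A=F_B$, use Lemma \ref{lem:approx_lem} to smooth on $2$-surfaces, and integrate the gauge-change ODE $dg=Bg-gA$ along paths, with equality of curvatures serving as the integrability condition. Your Step 2 is essentially the paper's argument: the paper phrases the integrability as equality of the path-ordered holonomies $P(\gamma,A)=P(\gamma,B)$ for loops bounding a $2$-surface, proved by subdividing the spanning surface into $\epsilon$-squares and using $P(\gamma_p,A)=1_G+\epsilon^2F_A(p)[e_1\wedge e_2]+o(\epsilon^2)$; your Frobenius formulation on a disk is the same computation.

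The genuine gap is in Step 3. If you foliate $[-1,1]^n$ by $2$-planes parallel to a fixed coordinate $2$-plane and solve the problem leaf by leaf, the gauge $g_T$ you obtain on each leaf only guarantees the two components of $g^{-1}dg+g^{-1}Ag=B$ tangent to the leaves; the $(n-2)$ equations $g^{-1}\partial_jg+g^{-1}A_jg=B_j$ in the directions transversal to the foliation are not produced by this construction, and normalizing $g=\mathrm{id}$ on a transversal does not supply them. A single foliation cannot yield the full system \eqref{gaugechange}. The paper's assembly differs precisely here: it defines $g(p)=P(\gamma,B)^{-1}P(\gamma,A)$ by parallel transport along an \emph{arbitrary} path $\gamma$ from the basepoint to $p$, and then proves path-independence by establishing holonomy equality along every loop — any loop bounds some $2$-surface, not only leaves of one foliation, which is where the ``almost all $2$-surfaces'' clause of Lemma \ref{lem:approx_lem} is used. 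Path-independence is exactly what makes the resulting $g$ satisfy the ODE in all coordinate directions simultaneously; repairing your version would require running the construction for all foliations at once and proving mutual consistency of the gauges, which is equivalent to the loop statement. Concerning your Step 1, you flag the reduction to $F_A=F_B$ as the main obstacle without carrying it out; the paper disposes of it in one line by replacing $A$ with $A^h$, so you are not behind the paper there, but be aware that neither text fully justifies that $A^h$ remains an admissible $L^2$ connection when $h$ is merely measurable.
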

\begin{proof}
The existence of $g$ implies the existence of $h$ as above, because 
\[
F_{g^{-1}dg+g^{-1}Ag}=g^{-1}F_Ag\ ,
\]
and we can then take $h:=g$.

\medskip

We now concentrate on the opposite implication: assuming that there exists $h$ such that $h^{-1}F_Ah=F_B$, we prove that there exists $g$ such that $g^{-1}dg+g^{-1}Ag=B$. We may assume that $h\equiv id$ without loss of generality, up to replacing $A$, $g$ by $h^{-1}dh+h^{-1}Ah$, $gh^{-1}$, respectively.

\medskip

We first note that for any Lipschitz injective curve $\gamma$, if the $\mathfrak{g}$-valued $1$-forms $A$, $B$ are integrable along $\gamma$ then we can always explicitly solve the equation
\begin{equation}\label{g_gamma_sol}
g^{-1}\partial_{\dot\gamma}g +g^{-1}A_{\dot\gamma}g=B_{\dot\gamma}\ ,\quad g(\gamma(0))=id\ .
\end{equation}
Indeed, the solution is explicitly expressed as
\begin{equation}\label{sol_g_gamma}
g(\gamma(t))=P\left(\gamma|_{[0,t]}, B\right)^{-1}P\left({\gamma|_{[0,t]}}, A\right)\ ,
\end{equation}
where the time-ordered path integrals $P(\gamma,A)$ appearing in \eqref{sol_g_gamma} are defined as follows. For a curve $\gamma$, in order to define $P(\gamma, A)$ we associate to each Riemann sum $R_N:=\sum_{j=1}^N\int_{\gamma_j}A\in\mathfrak{g}$ corresponding to a partition of $\gamma$ into a concatenation of injective curves $\gamma_j$, the parameter-ordered product
\begin{equation}\label{exp_g_riemsum}
\op{exp}(R_N):=\prod_{j=1}^N\op{exp}_G\int_{\gamma_j}A\ ,
\end{equation}
where now $\op{exp}_G$ equals the usual exponential map of $G$, which is well-defined for $\int_{\gamma_j}A\in \mathfrak{g}$ small enough. Then taking the limit of the expressions \eqref{exp_g_riemsum} along any sequence of refining Riemann sums $R_N\to\int_{\gamma}A$, we obtain the definition $P(\gamma,A):=\lim_{R_N\to\int_\gamma A}\op{exp}(R_N)$. The fact that $g$ as defined in \eqref{sol_g_gamma} solves \eqref{g_gamma_sol} follows directly by differentiation. The fact that the solution to \eqref{g_gamma_sol} is unique follows from the classical theory of ODEs.

\medskip

If we consider two different injective paths $\gamma^{(1)}, \gamma^{(2)}: [0,1]\to [-1,1]^n$ along which $A$ and $B$ are integrable and such that $\gamma^{(1)}(0)=\gamma^{(2)}(0)=0$ and $\gamma^{(1)}(1)=\gamma^{(2)}(1)=p\in[-1,1]^n$ that meet only in $0$ and $p$, then the condition for the solutions to the corresponding equations \eqref{g_gamma_sol} to coincide at the common point $p$ is
\begin{equation}\label{cond_coincide_1}
P\left({\gamma^{(1)}}, B\right)^{-1}P\left({\gamma^{(1)}}, A\right) = P\left({\gamma^{(2)}}, B\right)^{-1}P\left({\gamma^{(2)}}, A\right),
\end{equation}
which is equivalent to
\begin{equation}\label{cond_coincide_2}
P\left({\gamma^{(2)}}, B\right)P\left({\gamma^{(1)}}, B\right)^{-1} = P\left({\gamma^{(2)}}, A\right)P\left({\gamma^{(1)}}, A\right)^{-1}\ .
\end{equation}
By coming back to the expressions as limits of \eqref{exp_g_riemsum}, we see that \eqref{cond_coincide_2} is directly re-expressed in terms of the solutions along the loop $\gamma:=\gamma^{(1)}*(\gamma^{(2)})^{-1}$, where by $*$ we denote the concatenation of paths, and $\gamma^{-1}$ represents the path $\gamma$ parameterized backwards, i.e. $\gamma^{-1}(t)=\gamma(1-t)$. In this notation, equation \eqref{cond_coincide_2} becomes the following:
\begin{equation}\label{cond_coincide_3}
P(\gamma,B)=P(\gamma,A)\ ,
\end{equation}
which in geometric terms is nothing else but the condition that the holonomies of $B$ and $A$ coincide along the loop $\gamma$ starting from $0$. By considering a surface $S^2\subset[-1,1]^n$ such that $\partial S^2$ is parameterized by $\gamma$, and along which $F_A$ and $F_B$ are integrable, we claim that 
\begin{equation}\label{cond_coincide_4}
 F_A= F_B\quad\Rightarrow \quad  P(\gamma,A)=P(\gamma,B)\quad\ .
\end{equation}
To prove the above we may first use Lemma \ref{lem:approx_lem} and for the purposes of \eqref{cond_coincide_4} we may assume that $A,B$ are smooth, and up to reparameterization we assume $S^2=[0,1]^2$. In this case we subdivide $S^2=[0,1]^2$ into small squares of size $\epsilon$ and consider the discrete homotopy between the loop $\gamma$ based at $0$ and with image $\partial [0,1]^2$, and the trivial loop. We note that the homotopy can be subdivided into steps each of which consists in applying the inverse of the holonomy along the polygonal loop along a square of size $\epsilon$. We denote this loop by $\gamma_p$ and let the corresponding square be $\{p,p+(\epsilon,0),p+(\epsilon,\epsilon),p+(0,\epsilon)\}$. Then there holds
\begin{equation}\label{smalloop_p}
P(\gamma_p,A)=1_G+\epsilon^2 F_A(p)[e_1\wedge e_2]+o(\epsilon^2)\ .
\end{equation}
As $F_A=F_B$, we find that the error between the compositions of all the above elementary homotopies for $A,B$ differs by a quantity bounded by 
\[
o_{\epsilon\to 0}(1)\int_{[0,1]^2} |F_A|d\mathcal H_2\ ,
\]
Therefore as $\epsilon\to 0$ this error tends to zero, thus \eqref{cond_coincide_4} holds.

\medskip

As \eqref{cond_coincide_4} allows to prove $P(\gamma, A)=P(\gamma,B)$ for almost all $\gamma$ we conclude the proof that \eqref{cond_coincide_1} also holds, and thus for any two paths $\gamma^{(1)}$, $\gamma^{(2)}$ along which $A$ and $B$ are integrable we have, with the notation \eqref{sol_g_gamma} for the solution of \eqref{g_gamma_sol}, 
\begin{equation}\label{equal_g}
\gamma^{(1)}(t)=\gamma^{(2)}(t)\Rightarrow g(\gamma^{(1)}(t))=g(\gamma^{(2)}(t))\ .
\end{equation}
This means that the solutions of \eqref{g_gamma_sol} uniquely define a global $g$ over $[-1,1]^n$ on a full measure set. The fact that such $g$ satisfies \eqref{g_gamma_sol} along all paths implies in particular (by taking $\gamma=\gamma_{p,j}$ such that $\gamma(t)=p, \dot\gamma(t)=e_j$ for arbitrary $p\in[-1,1]^n$, $j\in\{1,\ldots,n\}$) there holds
\begin{equation}\label{gaugechange}
\forall j\in\{1,\ldots,n\}\ ,\quad g^{-1}\partial_jg + g^{-1}A_jg=B_j\ .
\end{equation}
Thus $g^{-1}dg+g^{-1}Ag=B$, and the proof is complete.
\end{proof}
From the above we directly have the following:
\begin{corollary}\label{cor:gauge-dist}
Let $A,B\in\mathcal A_G([-1,1]^n)$, and consider the pseudo-distances $\delta, d$ be defined over curvature forms as in \eqref{distforms_app} and over connection forms as in \eqref{dist_1form}. Then there holds $d(F_A,F_B)=0$ if and only if $d(A,B)=0$, and $\delta(F_A,F_B)=0$ if and only if $\delta(A,B)=0$.
\end{corollary}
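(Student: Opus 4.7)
The strategy is to reduce both equivalences to (pointwise or slicewise) applications of Proposition \ref{prop:2-planes_eq}. First, unfold what vanishing means for each pseudo-distance. Since $d_{\mathrm{pw}}(F,F')^2$ admits a pointwise minimizer $g(x)\in G$ for a.e. $x$, by a standard measurable-selection argument we have $d(F_A,F_B)=0$ if and only if there exists a measurable $g:[-1,1]^n\to G$ with $g^{-1}F_Ag=F_B$ a.e.; similarly $d(A,B)=0$ iff there exists measurable $g$ with $g^{-1}dg+g^{-1}Ag=B$ (in the sense that the distributional left-hand side is represented by the $L^2$ form on the right, which forces $g\in W^{1,2}$ locally by Lemma \ref{lem:equiv}). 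The equivalence of these two solvability statements is exactly the content of Proposition \ref{prop:2-planes_eq}, which settles the first claim.

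For the second claim, I would first use the pointwise equivalence \eqref{pw_eq_4planes} to rewrite $\delta(F_A,F_B)^2$ as an integral whose integrand is a supremum over $4$-planes, and then restrict attention to coordinate $4$-planes as in \eqref{equivalences_cube}. By Fubini, the vanishing $\delta(F_A,F_B)=0$ is then equivalent to: for every $(n-4)$-subset $J\subset\{1,\dots,n\}$ and a.e. $T\in[-1,1]^J$, the two restrictions $i^*_{H(J,T)}F_A,i^*_{H(J,T)}F_B$ are gauge-equivalent as $\mathfrak g$-valued $2$-forms on the $4$-slice $H(J,T)$, i.e. $d\bigl(i^*_{H(J,T)}F_A,i^*_{H(J,T)}F_B\bigr)=0$. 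In parallel, from the definition \eqref{bard2} of $\delta$ on connection forms, $\delta(A,B)=0$ means that for every such $J$ and a.e. $T$, the quantity \eqref{integrand_4d} vanishes, i.e. $d\bigl(i^*_{H(J,T)}A,i^*_{H(J,T)}B\bigr)=0$.

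Since $A,B\in\mathcal A_G([-1,1]^n)$, for each $J$ with $\#J=n-4$ and for a.e. $T\in[-1,1]^J$ both restrictions $i^*_{H(J,T)}A$ and $i^*_{H(J,T)}B$ lie in $\widetilde{\mathcal A}_G([-1,1]^4)$ by Definition \ref{def:locmod}, so the hypotheses of Proposition \ref{prop:2-planes_eq} are met on each such $4$-slice. Applying the proposition slicewise translates the curvature-level condition $d\bigl(i^*_{H(J,T)}F_A,i^*_{H(J,T)}F_B\bigr)=0$ into the connection-level condition $d\bigl(i^*_{H(J,T)}A,i^*_{H(J,T)}B\bigr)=0$, and vice versa. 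Integrating (resp. summing) these slicewise equivalences over $J$ and $T$ yields $\delta(F_A,F_B)=0 \Leftrightarrow \delta(A,B)=0$.

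The main obstacle I anticipate is the measurability of the slicewise gauges: Proposition \ref{prop:2-planes_eq} produces, for each admissible $4$-slice, a gauge $g_{J,T}$ defined from holonomies along paths in that slice, and one must verify that the assignment $T\mapsto g_{J,T}$ is jointly measurable so that it fits into the integral defining $\delta(A,B)$ in \eqref{bard2}. This can be arranged by choosing a measurable family of basepoints and a measurable family of parameterized paths sweeping out each slice, and invoking the explicit path-ordered exponential formula \eqref{sol_g_gamma} used in the proof of Proposition \ref{prop:2-planes_eq}, so that the measurable dependence on $T$ is transparent from the construction. Once this point is addressed, the two claims of the corollary follow directly.
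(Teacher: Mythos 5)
Your proposal is correct and follows the same route the paper intends: the paper derives the corollary directly from Proposition \ref{prop:2-planes_eq}, applied globally for the $d$-statement and slicewise on coordinate $4$-planes (via the equivalences \eqref{equivalences_cube} and the definition \eqref{bard2}) for the $\delta$-statement. Your additional remarks on measurable selection and on the measurability of the slicewise data are sensible but not essential, since for $\delta(A,B)=0$ it suffices that the slicewise minimum \eqref{integrand_4d} vanishes for a.e.\ $T$.
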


%
%
%
%
%
%
%
%
%
%
\bibliographystyle{plain}

\begin{thebibliography}{10}

\bibitem{ADS}
Luigi Ambrosio, Simone Di Marino, Giuseppe Savar{\'e}.
\newblock {On the duality between p-modulus and probability measures}.
\newblock {\em arXiv preprint arXiv:1311.1381}, 2013.

\bibitem{AK}
Luigi Ambrosio, Bernd Kirchheim.
\newblock {Currents in metric spaces}.
\newblock {\em Acta Mathematica}, 185(1):1--80, 2000.

\bibitem{aslaks}
Helmer Aslaksen.
\newblock {Laws of trigonometry on $SU(3)$}.
\newblock {\em Transactions of the American Mathematical Society}, 317(1): 127--142 (1990).

\bibitem{Auckly_Kapitanski_cool}
Dave Auckly, Lev Kapitanski.
\newblock {Holonomy and Skyrme's model}. 
\newblock {\em Communications in mathematical physics}, 240(1-2), 97--122, 2003.

\bibitem{DPHscan}
Thierry De Pauw, Robert Hardt.
\newblock {Application of scans and fractional power integrands}.
\newblock In {\em {Variational problems in Riemannian geometry}}, pages 19--31.
  Springer, 2004.

\bibitem{DoKr}
Simon K. Donaldson, Peter B. Kronheimer.
\newblock {\em {The geometry of four-manifolds}}.
\newblock {Oxford Mathematical Monographs}. The Clarendon Press, Oxford
  University Press, New York, 1990.
\newblock Oxford Science Publications.

\bibitem{FrUh}
Daniel S. Freed, Karen K. Uhlenbeck.
\newblock {\em {Instantons and four-manifolds}}, volume~1 of {\em {Mathematical
  Sciences Research Institute Publications}}.
\newblock Springer-Verlag, New York, second edition, 1991.

\bibitem{HR1}
Robert Hardt, Tristan Rivi{\`e}re.
\newblock {Connecting topological {H}opf singularities}.
\newblock {\em Ann. Sc. Norm. Super. Pisa Cl. Sci. (5)}, 2(2):287--344, 2003.

\bibitem{heinonen}
Juha Heinonen.
\newblock {Nonsmooth calculus}.
\newblock {\em Bulletin of the American Mathematical Society}, 44(2):163--232,
  2007.

\bibitem{Isobecrit}
Takeshi Isobe.
\newblock {Topological and analytical properties of {S}obolev bundles. {I}.
  {T}he critical case}.
\newblock {\em Ann. Global Anal. Geom.}, 35(3):277--337, 2009.

\bibitem{isobehigher}
Takeshi Isobe.
\newblock {Topological and analytical properties of Sobolev bundles. II. Higher
  dimensional cases}.
\newblock {\em Revista Matem{\'a}tica Iberoamericana}, 26(3):729--798, 2010.

\bibitem{kessel}
Thiemo Kessel.
\newblock {Singular bundles with bounded $L^2$-curvatures}.
\newblock {(Doctoral dissertation, ETH Zurich)}, 2008.

\bibitem{KS}
Nicholas J. Korevaar, Richard M. Schoen.
\newblock {Sobolev spaces and harmonic maps for metric space targets}.
\newblock {\em Comm. Anal. Geom}, 1(3-4):561--659, 1993.

\bibitem{MR}
Yves Meyer, Tristan Rivi{\`e}re,
\newblock{A partial regularity result for a class of stationary Yang-Mills Fields in high dimension.}
\newblock{\em Rev. Mat. Iberoamericana}, 19(1), 195--219, 2003.

\bibitem{PPR}
Alexandru Panoiu, Mircea Petrache, Tristan Rivi\`ere.
\newblock{Approximating connection forms under curvature constraints}.
\newblock{in preparation}

\bibitem{petrache2014notes}
Mircea Petrache.
\newblock {Notes on a slice distance for singular Lp-bundles}.
\newblock {\em Journal of Functional Analysis}, 267(2):405--427, 2014.

\bibitem{PRym}
Mircea Petrache, Tristan Rivi{\`e}re.
\newblock {The resolution of the Yang-Mills Plateau problem in super-critical
  dimensions}.
\newblock {\em arXiv preprint arXiv:1306.2010}, 2013.

\bibitem{PRepsi}
Mircea Petrache, Tristan Rivi{\`e}re.
\newblock{Partial regularity for Yang-Mills stationary weak connections}
\newblock{in preparation}

\bibitem{procesi}
Claudio Procesi.
\newblock {The invariant theory of $n\times n$ matrices}. 
\newblock {\em Adv. in Math.} 19:306--381, 1976.

\bibitem{Riv1}
Tristan Rivi{\`e}re.
\newblock {The Variations of Yang-Mills Lagrangian}.
\newblock {\em arXiv preprint arXiv:1506.04554}, 2015.

\bibitem{taotian}
Terence Tao, Gang Tian,
\newblock{A singularity removal theorem for Yang-Mills fields in higher dimensions.}
\newblock{\em Journ. Amer. Math. Soc.} 14(3), 557--593, 2004.

\bibitem{tian}
Gang Tian,
\newblock{Gauge theory and calibrated geometry, I}. 
\newblock{Ann. of Math.} 151(1), 193--268, 2000.

\bibitem{Uhl1} 
Karen~K. Uhlenbeck.
\newblock {Removable singularities in Yang-Mills fields}. 
\newblock {\em Comm. Math. Phys.}, 83(1):11--29, (1982).

\bibitem{Uhl2}
Karen~K. Uhlenbeck.
\newblock {Connections with {$L^{p}$} bounds on curvature}.
\newblock {\em Comm. Math. Phys.}, 83(1):31--42, 1982.

\bibitem{shapiro}
Helene Shapiro. 
\newblock {A survey of canonical forms and invariants for unitary similarity.}
\newblock {\em Linear Algebra and its Applications}, 147:101--167, 1991.

\bibitem{weyl}
Hermann Weyl. 
\newblock {The classical groups: their invariants and representations}. 
\newblock {Princeton university press}, 2016.

\end{thebibliography}

\end{document}